\newcommand{\const}{\mathop{\rm const}\nolimits}
\newcommand{\grad}{\mathop{\rm grad}\nolimits}
\newcommand{\rot}{\mathop{\rm rot}\nolimits}
\renewcommand{\div}{\mathop{\rm div}\nolimits}
\newtheorem{theorem}{Theorem}
\newtheorem{lemma}{Lemma}
\journal{arXiv.org}
\begin{document}

\begin{frontmatter}

\title{Numerical Methods for Solving Convection-Diffusion Problems}

\author{A. Churbanov}
\ead{achur@ibrae.ac.ru}
\author{P. Vabishchevich}
\ead{vabishchevich@gmail.com}
\address{Nuclear Safety Institute, Russian Academy of Sciences, \\52, B. Tulskaya, 115191 Moscow, Russia}

\begin{abstract}
Convection-diffusion equations provide the basis for describing
heat and mass transfer phenomena as well as processes of continuum mechanics.
To handle flows in porous media, the fundamental issue is to model correctly
the convective transport of individual phases.
Moreover, for compressible media, the pressure equation itself is just a time-dependent
convection-diffusion equation.

For different problems, a convection-diffusion equation may be be written in various forms. 
The most popular formulation of convective transport employs the divergent (conservative) form.
In some cases, the nondivergent (characteristic) form seems to be preferable.
The so-called skew-symmetric form of convective transport operators 
that is the half-sum of the operators in the divergent and nondivergent forms
is of great interest in some applications.

Here we discuss the basic classes of discretization in space: finite difference schemes
on rectangular grids, approximations on general polyhedra (the finite volume method),
and finite element procedures. The key properties of discrete operators
are studied for convective and diffusive transport.
We emphasize the problems of constructing approximations for convection and diffusion operators
that satisfy the maximum principle at the discrete level --- they are called monotone approximations.

Two- and three-level schemes are investigated for transient problems.
Unconditionally stable explicit-implicit schemes are developed for convection-diffusion problems.
Stability conditions are obtained both in finite-dimensional Hilbert spaces and in
Banach spaces depending on the form in which the convection-diffusion equation is written.
\end{abstract}

\begin{keyword}
fluid dynamics \sep numerical methods \sep convection-diffusion problems

\MSC 65M06 \sep 65M08 \sep 76M12 \sep 76M20 \sep 76S05
\end{keyword}

\end{frontmatter}

\section{Convection-diffusion problems} 

Convection-diffusion problems are governed by typical mathematical models, which are common in fluid and gas dynamics.
Heat and mass transfer is conducted not only via diffusion, but appears
due to motion of a medium, too. Here we present typical examples
of model convection-diffusion problems, which use various forms for the terms describing convective transport.

\subsection{Basic problems of continuum mechanics} 

Principal features of physical and chemical processes in fluid mechanics
\cite{landau_fluid_1987,batchelor_introduction_2000}
result from motion of a medium due to various forces.
Heat and mass transfer phenomena in a moving medium may be treated as the simplest examples of these peculiarities.

Let ${\bm v}({\bm x},t)$ be the velocity of a liquid at a point ${\bm x}$ and
at a time moment $t$, whereas $\rho$ is its density. The thermal state of the liquid
is governed by the equation of heat conduction
\begin{equation}\label{1}
c_p \rho \left (\frac{\partial T}{\partial t} + ({\bm v}\cdot\grad) T \right )
= \div (k \grad T) + q,
\end{equation}
where $T$ stands for the temperature, $c_p$ is the specific heat capacity at a constant
pressure, $k$ denotes the thermal conductivity of the liquid and $q$ describes the intensity
of volumetric heat sources.

The temperature at a given spatial point is governed not only by conduction (diffusion) of heat, 
but also by motion (convection) of fluid volumes.

The second typical example is the diffusion equation for a multicomponent mixture 
\cite{crank_mathematics_1980,cussler_diffusion}. 
We assume that a liquid is heterogeneous, more exactly, it is a mixture of two components. 
In this case, the mixture composition may be described by the concentration $c$ associated with only one
component. The corresponding equation for the concentration
(neglecting the diffusion flux caused by the temperature gradient) has the form
\begin{equation}\label{2}
\frac{\partial (\rho c)}{\partial t} + \grad ({\bm v}\rho c) =
\div (\rho k \grad c),
\end{equation}
where $k$ denotes the diffusivity and $\rho$ is treated
as the total density of the liquid.

The equation (\ref{2}) may be rewritten as
\begin{equation}\label{3}
\frac{\partial m}{\partial t} + \div ({\bm v} m)
+\div \left (m \frac k\rho \grad \rho \right )
=\div (k \grad m),
\end{equation}
where $m = \rho c$ is the mass of one of the components in a volume unit.
The equation (\ref{3}) may be reduced to the form
\begin{equation}\label{4}
\frac{\partial m}{\partial t} + \div (\tilde{\bm v} m)
=\div (k \grad m),
\end{equation}
where the expression
\[
\tilde{\bm v} = {\bm v} + \frac k\rho \grad \rho
\]
describes the effective convective transport.

Using the continuity equation
\begin{equation}\label{5}
\frac{\partial \rho}{\partial t} + \div ({\bm v}\rho) = 0,
\end{equation}
we obtain
\[
\frac{\partial (\rho c)}{\partial t} + \div ({\bm v}\rho c) =
\rho \left (\frac{\partial c}{\partial t} + ({\bm v}\cdot\grad)c \right ).
\]
Therefore, under the natural assumptions on the positiveness of $\rho$,
from (\ref{2}), we arrive at the equation
\begin{equation}\label{6}
\frac{\partial c}{\partial t} + ({\bm v}\cdot \grad)c -
\frac k\rho \grad \rho\cdot \grad c
=\div (k \grad c).
\end{equation}
Similarly to (\ref{4}), rewrite (\ref{6}) as
\begin{equation}\label{7}
\frac{\partial c}{\partial t} + (\tilde{\bm v}\cdot \grad)c
=\div (k \grad c),
\end{equation}
where now
\[
\tilde{\bm v} = {\bm v} - \frac k\rho \grad \rho.
\]
Thus, we come to the equation for the concentration, where
convective transport has the nondivergent form, as it takes place in the heat equation (\ref{1}).
In equation (\ref{4}) as well as in the continuity equation (\ref{5}),
convective transport is written in the divergent form.

More complete models of heat and mass transfer include also an equation
that describes the motion of the medium itself and determines, in particular, the velocity $\bm v$. 
For simplicity, we restrict ourselves to the Navier-Stokes equation
for an incompressible ($\rho = \const$) homogeneous medium. In this case, the momentum equation 
seems like this:
\begin{equation}\label{8}
\rho \left (\frac{\partial {\bm v}}{\partial t} + ({\bm v}\cdot \grad){\bm v} \right )
= - \grad p + \eta \div \grad {\bm v},
\end{equation}
whereas the continuity equation (\ref{5}) is reduced to
\begin{equation}\label{9}
\div {\bm v} = 0.
\end{equation}
Here $p$ denotes the pressure and $\eta=\const$ stands for the dynamic viscosity of the fluid.

The equations (\ref{8}) may be treated as the equations
of convective and diffusive (due to the viscosity) transport of each 
individual component of the velocity $\bm v$. In this situation, 
in order to evaluate the pressure $p$, it is necessary to involve equation (\ref{9}).

If we eliminate the pressure from equation (\ref{8}),
then, for the  vorticity ${\bm w} = \rot {\bm v}$, we obtain the equation
\[
\rho \left (\frac{\partial {\bm w}}{\partial t} + ({\bm v}\cdot \grad){\bm w}
- ({\bm w} \cdot \grad) {\bm v} \right )
= \eta \div \grad {\bm w}.
\]
It is easy to see that the dynamics of the vorticity for an incompressible fluid
is determined by a specific convective and diffusive transport.

More sophisticated models that include convective and diffusive transport as the most important element
are used in modeling compressible flows. It should be noted that
convective-diffusive transport is essential for predictions of various
gas and fluid flows. In particular, environmental problems are of great importance:
pollutants spreading in the atmosphere and water basins,
contaminants transport in groundwaters and so on.

\subsection{Various forms of the hydrodynamics equations} 

In theoretical studying applied problems, the conservative form of the hydrodynamics equations is in common use. 
In this case, the equations have the divergent form and express directly the corresponding laws of conservation (for the mass, momentum and energy).
On the other hand, we should pay attention to the nondivergent (characteristic) form of the hydrodynamics equations,
which is connected with the representation that is derived via differentiating the convective transport terms. 
The paper \cite{VabForm} presents a new form of the hydrodynamics equations that is characterized
by writing the convective terms in the skew-symmetric form.
New quantities --- the so-called SD--variables (\textbf{S}quare root from \textbf{D}ensity)
that are based on using not the density itself but the square root from the density 
--- are used as unknown variables.
Physical and mathematical arguments in favor of introducing this form of the hydrodynamics equations are discussed below.

The system of hydrodynamics equations includes, first of all,
the scalar equation of continuity and the vector equation of momentum.
In more common cases, there may be several motion equations as well as continuity equations
--- we speak of models for multicomponent media.
Furthermore, the system of equations may be supplemented with an energy equation.
Usually, the following scalar equation of convection-diffusion serves as the basic equation in continuum
mechanics and heat and mass transfer (see, e.g., \cite{Patankar:1980:NHT,Wesseling}), i.e.,
\begin{equation}\label{10}
  \frac{\partial (\varrho \varphi)}{\partial t} +
  \div (\varrho {\bm v} \varphi) =
  \div (D \grad \varphi) ,
\end{equation}
where $\varphi$ is a desired scalar function and $D$ denotes the diffusivity.
This equation is written in the conservative (divergent) form. 
Concerning to equation (\ref{10}), a number of problems are discussed in the literature,
such as the construction of discretization in space and in time,
the investigation of convergence of the approximate solution to the exact one for the
corresponding boundary value problem \cite{SamVabConv,SamVabHeatTran2}. 

The main peculiarities of the system of fluid dynamic equations
become evident in studying the system of two scalar equations
that includes not only equation(\ref{10}), but also the continuity equation
\begin{equation}\label{11}
  \frac{\partial \varrho }{\partial t} +
  \div (\varrho {\bm v} ) = 0 .
\end{equation}
Just this system of equations (\ref{10}), (\ref{11}) is said to be the basic system of scalar hydrodynamic equations. 

In investigation of transport phenomena in continuum mechanics, the primary features of transport
of scalar variables are represented in equation (\ref{10}). 
Concerning vector fields, the coordinatewise representation may be unsuitable. 
Thus, it seems reasonable to supplement the system of equations (\ref{10}), (\ref{11}) 
with the vector convection-diffusion equation
\begin{equation}\label{12}
  \frac{\partial (\varrho {\bm u})}{\partial t} +
  \div (\varrho {\bm v} \otimes {\bm u} )  =
  \div (D \grad {\bm u}),
\end{equation}
where ${\bm u}$ is the desired vector function.
This system of equations (\ref{10})--(\ref{12}) is called the basic system of hydrodynamics equations. 

Taking into account
\[
\div (\varrho {\bm v} \varphi ) = 
\varphi \div (\varrho {\bm v} ) +
\varrho {\bm v} \cdot \grad \varphi
\]
and the continuity equation (\ref{11}), we get
\begin{equation}\label{13}
\varrho \frac{\partial \varphi}{\partial t} + 
\varrho {\bm v} \cdot \grad \varphi =
  \div (D \grad \varphi) .
\end{equation} 
Similarly, we can rewrite equation (\ref{12}) as
\begin{equation}\label{14}
\varrho \frac{\partial {\bm u}}{\partial t} + 
\varrho {\bm v} \cdot \grad {\bm u} =
  \div (D \grad {\bm u}) .
\end{equation} 
The equations (\ref{13}), (\ref{14}) are written in the nonconservative (nondivergent) form. 
It should be noted that the continuity equation (\ref{11}) cannot be written in the nonconservative form. 
Therefore, the basic system of hydrodynamics equations may be written in the conservative 
(\ref{10})--(\ref{12}) or in the partially nonconservative form (\ref{11}), (\ref{13}), (\ref{14}). 
Only for an incompressible medium, where equation (\ref{11}) takes the form
\[
  \div {\bm v} = 0,
\]
it is possible to speak about the nonconservative form of the equations.

Let us write the operator of convective transport in the skew-symmetric form \cite{SamVabConv,SamVabHeatTran2} as
\begin{equation}\label{15}
  \mathcal{C} \theta = \frac{1}{2} \div ({\bm v} \theta) 
  + \frac{1}{2} {\bm v} \cdot \grad \theta , 
\end{equation} 
i.e., as the half-sum of the operators of convective transport 
in divergent (conservative) and nondivergent (nonconservative) forms. 

In the basic system of fluid dynamics equations (\ref{10})--(\ref{12}),
instead of $\varrho, \varphi, {\bm u}$, we introduce new unknown variables:
\begin{equation}\label{16}
  s = (\varrho)^{1/2},
  \quad \zeta = (\varrho)^{1/2} \varphi ,
  \quad {\bm w} = (\varrho)^{1/2} {\bm u} .
\end{equation} 
The main peculiarity of these unknowns consists in using the square root from the density $s = (\varrho)^{1/2}$
instead the density $\varrho$ itself. That is why we speak of SD-variables (\textbf{S}quare root from \textbf{D}ensity). 

For the new unknowns, the system of equations (\ref{10})--(\ref{12}) may be rewritten in the following way:
\begin{equation}\label{17}
  \frac{\partial s }{\partial t} +
  \frac{1}{2} \div ({\bm v} s) 
  + \frac{1}{2} {\bm v} \cdot \grad s = 0, 
\end{equation}
\begin{equation}\label{18}
  \frac{\partial \zeta }{\partial t} +
  \frac{1}{2} \div ({\bm v} \zeta) 
  + \frac{1}{2} {\bm  v} \cdot \grad \zeta =
  \frac{1}{s} \div \left ( D \grad \left (\frac{\zeta}{s} \right) \right) ,
\end{equation}
\begin{equation}\label{19}
  \frac{\partial {\bm w}}{\partial t} +
  \frac{1}{2} \div ({\bm v} \otimes {\bm w}) 
  + \frac{1}{2} {\bm v} \cdot \grad {\bm w} =
  \frac{1}{s} \div \left ( D \grad \left (\frac{{\bm w}}{s} \right) \right) .
\end{equation}
In this case, all three equations involve the convective terms that are written in the skew-symmetric form. 

As a typical example of using the new variables,
we study the Navier-Stokes equations for a viscous compressible medium, 
which express the conservations laws for the mass, momentum, and energy.
The continuity equation has the form (\ref{11}). 
Usually, the momentum equation is written in the conservative form 
\begin{equation}\label{20}
  \frac{\partial (\varrho {\bm v}) }{\partial t} + 
  \div (\varrho {\bm v} \otimes {\bm v}) = \div \mathsf{N} 
  - \grad p .
\end{equation}
Here
\[
  \mathsf{N} = - \frac{2}{3} \mu \div {\bm v} \, \mathsf{E} + 2 \mu \mathsf{S} ,
\]
As for $\mathsf{S}$, the coordinatewise representation seems like this:
\[
  \mathsf{S}_{ij} =
  \frac{1}{2} \left (
  \frac{\partial v_i}{\partial x_j}
  + \frac{\partial v_j}{\partial x_i}
  \right ) .
\]
Now introduce the energy equation
\begin{equation}\label{21}
\frac{\partial (\varrho e) }{\partial t} + 
\div (\varrho {\bm v} e) 
= \div ( k \grad T) + p \div {\bm v} +
\mathsf{N}: \grad {\bm v} .
\end{equation} 
The term $\mathsf{N}: \grad {\bm v}$ describes the heat dissipation due to the fluid viscosity
and  $\mathsf{N}: \grad {\bm v}$ is the scalar product of tensors:
\[
  \begin{array}{ll}
  \mathsf{N}: \grad {\bm v}   =    &  
  {\displaystyle  N_{xx} \frac{\partial v_x}{\partial x}
  + N_{xy} \frac{\partial v_x}{\partial y}
  + N_{xz} \frac{\partial v_x}{\partial z} } \\
    & + {\displaystyle N_{yx} \frac{\partial v_y}{\partial x}
  + N_{yy} \frac{\partial v_y}{\partial y}
  + N_{yz} \frac{\partial v_y}{\partial z}  } \\
    &  +  {\displaystyle N_{zx} \frac{\partial v_z}{\partial x}
  + N_{zy} \frac{\partial v_y}{\partial y}
  + N_{zz} \frac{\partial v_z}{\partial z} .   } \\
\end{array}
\]
Let us introduce the following new unknown variables:
\begin{equation}\label{22}
  s = (\varrho)^{1/2},
  \quad \mathbf{w} = (\varrho)^{1/2} {\bm v} ,
  \quad \zeta = (\varrho)^{1/2} e .
\end{equation} 
For the variables (\ref{22}), the system of the Navier-Stokes equations
(\ref{11}), (\ref{20}), (\ref{21}) has the following form:
\begin{equation}\label{23}
  \frac{\partial s }{\partial t} +
  \frac{1}{2} \left (\div \left ( \frac{\mathbf{w}}{s}  s \right ) 
  + \frac{\mathbf{w}}{s}  \cdot \grad s \right ) = 0, 
\end{equation}
\begin{equation}\label{24}
  \frac{\partial \mathbf{w}}{\partial t} +
  \frac{1}{2} \left (\div \left (\frac{\mathbf{w}}{s}  \otimes \mathbf{w} \right ) 
  + \frac{\mathbf{w}}{s}  \cdot \grad \mathbf{w} \right ) =
  \frac{1}{s}\div \mathsf{N} 
  - \frac{1}{s} \grad p ,
\end{equation}
\begin{equation}\label{25}
\begin{split}
  \frac{\partial \zeta }{\partial t} & +
  \frac{1}{2} \left (\div \left (\frac{\mathbf{w}}{s}  \zeta \right ) 
  + \frac{\mathbf{w}}{s}  \cdot \grad \zeta  \right ) \\
  & = \frac{1}{s} \div ( k \grad T) + 
  \frac{p}{s} \div \left (\frac{\mathbf{w}}{s}\right ) +
  \frac{1}{s}\mathsf{N}: \grad \left (\frac{\mathbf{w}}{s}\right ) .
\end{split}
\end{equation}
The system of equations (\ref{23})--(\ref{25}) needs to be supplemented with some equation of state. 
It should be highlighted that using the variables (\ref{22}), the convective terms are written in the skew-symmetric form. 

\subsection{The pressure problem for multiphase flows in porous media} 

The system of governing equations for multiphase flows includes 
\cite{Peaceman,Aziz} the continuity equation for each phase, 
where $\alpha =1,2,\dots, m$ is the phase index.
The mass conservation law for each individual phase is expressed by the following equation:
\begin{equation}\label{26}
  \frac{\partial (\phi \, b_\alpha S_\alpha)}{\partial t} +
  \div (b_\alpha  {\bm u}_\alpha) = 
  - b_\alpha q_\alpha,
  \quad \alpha =1,2,\dots, m .
\end{equation}
Here $\phi$ stands for the porosity,
$b_\alpha$ is the phase density,
$S_\alpha$ denotes the phase saturation,
${\bm u}_\alpha$ is the velocity, and
$q_\alpha$ describe the volumetric mass sources.

For simplicity, we neglect the capillary and gravitational forces.
In this simplest case, the equation of fluid motion in porous media 
has the form of Darcy's law, where the velocity is directly determined by the common pressure:
\begin{equation}\label{27}
  {\bm u}_\alpha =
  - \frac{k_\alpha}{\mu_\alpha} \, \mathsf{k} \cdot \grad p ,
  \quad \alpha =1,2,\dots, m .
\end{equation}
In (\ref{27}),  $\mathsf{k}$ is the absolute permeability
(in general, a symmetric second-rank tensor),
$k_\alpha$ denote the relative permeabilities,
$\mu_\alpha$ stands for the phase viscosity,
and $p$ is the pressure.

The unknown variables in the system of equations (\ref{26}), (\ref{27})  
are the phase saturations $S_\alpha, \ \alpha =1,2,\dots, m$ 
and the pressure ($m+1$ unknowns in all).
In the simplest case, the coefficients in equations (\ref{26}), (\ref{27}) are
defined as some relations
\[
  \phi = \phi(p),
  \quad b_\alpha = b_\alpha(p),
  \quad q_\alpha = q_\alpha(S_\alpha),
  \quad k_\alpha = k_\alpha(S_\alpha),
  \quad \mu_\alpha = \const .
\]
The summation of saturations over all phases yields
\begin{equation}\label{28}
  \sum_{\alpha =1}^{m} S_\alpha = 1.
\end{equation}
Substituting (\ref{27}) in (\ref{26}) and taking into account (\ref{28}), 
we obtain a system of $m+1$ equations for $m+1$ unknowns.

The system of equations (\ref{26})--(\ref{28}) provides the basis for the description 
of multiphase flows in porous media. We have no separate equation for the pressure in this system.
The equations (\ref{26}) may be treated as the transport equation for each individual phase, 
whereas the algebraic relation (\ref{28}) may be considered as the equation for the pressure.

Let us consider more convenient forms for the system (\ref{26})--(\ref{28}), 
which lead to the typical problems of mathematical physics for the pressure.
It should be noted that such equivalent formulations do exist only at the differential level.
At the discrete level, such equivalence of formulations is not valid even for linear problems.
Thus, a proper choice of the original form of the governing equations is essential for calculations.

The most natural way to derive the equation for the pressure is the following.
Dividing each equation (\ref{26}) by $\phi \, b_\alpha > 0$ and adding them together, we get
\begin{equation}\label{29}
  \left ( \sum_{\alpha =1}^{m} \frac{S_\alpha}{\phi \, b_\alpha}
  \frac{d (\phi \, b_\alpha) }{d p} \right ) 
  \frac{\partial p}{\partial t} =
  \sum_{\alpha =1}^{m} \frac{1}{\phi \, b_\alpha}
  \div  \left ( \frac{b_\alpha k_\alpha}{\mu_\alpha} \, \mathsf{k} \cdot \grad p \right ) -
  \frac{1}{\phi} \sum_{\alpha =1}^{m} q_\alpha .
\end{equation}
Under the natural assumption for compressible fluids that
\[
  \frac{d (\phi \, b_\alpha) }{d p}  > 0,
  \quad \alpha =1,2,\dots, m ,
\]
equation (\ref{29}) for the pressure is the standard parabolic equation of second order.
In particular, the maximum principle holds for its solutions \cite{friedman1964partial}. 

In accordance with (\ref{29}), we solve in $\Omega$ the boundary value problem for the equation
\begin{equation}\label{30}
   \frac{\partial u}{\partial t} +
   \sum_{\alpha =1}^{m} a_\alpha({\bm x}) \mathcal{L}_\alpha u = f({\bm x},t) ,
\end{equation}
where $a_\alpha({\bm x}) \geq \varrho_\alpha, \ \varrho_\alpha  > 0, \ \alpha = 1,2, ..., m$, 
and the elliptic operators $\mathcal{L}_\alpha$ are defined by
\begin{equation}\label{31}
   \mathcal{L}_\alpha u = - \frac{\partial }{\partial x_1} 
   \left (k_\alpha ({\bm x}) \frac{\partial u}{\partial x_1} \right ) 
   - \frac{\partial }{\partial x_2} \left (k_\alpha ({\bm x}) \frac{\partial u}{\partial x_2} \right ) ,
   \quad  \alpha = 1,2, ..., m,
\end{equation} 
under the standard assumptions $0 < \kappa_\alpha \leq k_\alpha  \leq \overline{\kappa}_\alpha$.

In some cases (incompressible media), it is reasonable to consider the steady-state problem.
The boundary value problem is formulated for the equation
\begin{equation}\label{32}
   \mathcal{L} u = f({\bm x}),
   \quad \mathcal{L} = 
   \sum_{\alpha =1}^{m} a_\alpha({\bm x}) \mathcal{L}_\alpha  ,
\end{equation}
which is supplemented by the boundary conditions.

From (\ref{31}) and (\ref{32}), we have the representation
\begin{equation}\label{33}
  \mathcal{L} = \sum_{\alpha =1}^{m} \mathcal{L}_\alpha,
  \quad \mathcal{L}_\alpha = \mathcal{D}_\alpha  + \mathcal{C}_\alpha, 
  \quad \alpha = 1,2, ..., m, 
\end{equation} 
where
\begin{equation}\label{34}
  \mathcal{D}_\alpha u  = - \div (d_\alpha ({\bm x}) \grad u),
\end{equation} 
\begin{equation}\label{35}
  \mathcal{C}_\alpha u  = \textbf{w}_{\alpha} \grad u .
\end{equation}   
The effective diffusivity and convection velocity for the individual phase $\alpha$ are
\[
  d_\alpha =  k_\alpha a_\alpha,
  \quad \textbf{w}_{\alpha}= k_{\alpha} \grad  a_\alpha .   
\] 
Then the pressure operator takes the form of the convection-diffusion operator 
with convective term in the nondivergent form.

\section{Time-dependent problems of convection-diffusion}

Convection-diffusion equations provide important examples of second-order parabolic equations.
In particular, they are considered as the basic equations for modeling 
continuum mechanics phenomena. Some aspects of numerical solving time-dependent problems
of convection-diffusion are discussed here. In these equations,
convective terms are formulated in the divergent, nondivergent, and
skew-symmetric forms. Some essential results are presented for a model initial-boundary
value problem with Dirichlet boundary conditions for the differential
equation of convection-diffusion. These results will serve us as a check
point in developing difference schemes.
Discrete operators of diffusion and convection are constructed and analyzed
with respect to their primary properties.

\subsection{Differential problems}

Time-dependent problems of convection-diffusion are treated as evolutionary
operator equations in the corresponding spaces. To investigate them, we start
with a study on properties of differential operators describing convective and diffusive transport.
As the basic problem, we consider a time-dependent problem of convection-diffusion
with Dirichlet boundary conditions in a rectangle. The convective terms are written in various forms.
We distinguish a class of model time-dependent problems of convection-diffusion with
a constant coefficient of diffusive transport (it is independent of time but depends on spatial coordinates). 
As for coefficients of convective transport, in applied problems, they are variable both in space and in time.

In a rectangle
\[
\Omega = \{ \bm{x} \ | \ \bm{x} =\left(x_1, x_2\right), \quad 0 < x_\alpha <
l_\alpha, \quad \alpha = 1,2 \} ,
\]
we study the time-dependent convection-diffusion
equation with the convective terms written in the nondivergent form:
\begin{equation} \label{36}
\begin{split}
\frac{\partial u}{\partial t} 
& + \sum_{\alpha = 1}^{2} v_\alpha\left(\bm{x}, t\right) \frac{\partial u}{\partial x_\alpha} \\
& - \sum_{\alpha = 1}^{2} \frac{\partial}{\partial x_\alpha} \left(
k\left(\bm{x}\right) \frac{\partial u}{\partial x_\alpha} \right) 
= f\left(\bm{x}, t\right), \quad \bm{x} \in \Omega, \quad 0 < t \leq T,
\end{split}
\end{equation}
considered under the standard assumptions
$\kappa_1  \leq k\left(\bm{x}\right) \leq \kappa_2$, $\kappa_1 > 0$, $T > 0$.
This equation is supplemented with homogeneous Dirichlet boundary conditions
\begin{equation} \label{37}
u(\bm{x}, t) = 0,
\quad \bm{x} \in \partial \Omega, \quad 0 < t \leq T .
\end{equation}
For the unique solvability of the unsteady problem, it is supplemented with the initial condition
\begin{equation} \label{38}
u\left(\bm{x}, 0\right) = u^0(\bm{x}), \quad \bm{x} \in \Omega.
\end{equation}

The second example is the time-dependent equation of convection-diffusion
with the convective transport written in the divergence form:
\begin{equation} \label{39}
\begin{split}
\frac{\partial u}{\partial t} 
& + \sum_{\alpha = 1}^{2} \frac{\partial}{\partial x_\alpha}\left(v_\alpha\left(\bm{x}, t\right) u\right) \\
& - \sum_{\alpha = 1}^{2} \frac{\partial}{\partial x_\alpha} \left(
k\left(\bm{x}\right) \frac{\partial u}{\partial x_\alpha} \right) 
= f\left(\bm{x}, t\right), \quad \bm{x} \in \Omega, \quad 0 < t \leq T .
\end{split}
\end{equation}
And finally, the primary object of our investigation is the convection-diffusion equation
with the convective terms written in the skew-symmetric form:
\begin{equation} \label{40}
\begin{split}
\frac{\partial u}{\partial t} 
& + \frac{1}{2} \sum_{\alpha = 1}^{2} \left(
v_\alpha\left(\bm{x}, t\right) \frac{\partial u}{\partial x_\alpha} + 
 \frac{\partial }{\partial x_\alpha}\left(v_\alpha\left(\bm{x}, t\right)
u\right)
 \right) \\
& - \sum_{\alpha = 1}^{2} \frac{\partial}{\partial x_\alpha} \left(
k\left(\bm{x}\right) \frac{\partial u}{\partial x_\alpha} \right) 
= f\left(\bm{x}, t\right), \quad \bm{x} \in \Omega, \quad 0 < t \leq T .
\end{split}
\end{equation}

We consider a set of functions $u({\bm x})$ that satisfy the boundary condition
(\ref{37}). The transient convection-diffusion problem may be formulated
as the operator-differential equation
\begin{equation}\label{41}
 \frac{d u}{d t} + {\cal A} u = f(t), \quad {\cal A} =
{\cal C} + {\cal D}.
\end{equation}
Here $\cal D$ is the diffusive transport operator that is defined by the expression
\begin{equation}\label{42}
 {\cal D} u = - \sum_{\alpha = 1}^{2} \frac{\partial}{\partial x_\alpha} \left(
 k\left(\bm{x}\right) \frac{\partial u}{\partial x_\alpha} \right) .
\end{equation}

According to (\ref{36}), (\ref{39}), (\ref{40}), the convective transport operator
is written in distinct forms. For the  convective transport operator in the nondivergent form,
from (\ref{36}), we set ${\cal C} = {\cal C}_1$, where
\begin{equation}\label{43}
  {\cal C}_1 u =
  \sum_{\alpha = 1}^{2} v_\alpha\left(\bm{x}, t\right) \frac{\partial u}{\partial x_\alpha}.
\end{equation}
Similarly, from (\ref{39}),  we have ${\cal C} = {\cal C}_2$, where now
\begin{equation}\label{44}
 {\cal C}_2 u = 
 \sum_{\alpha = 1}^{2} \frac{\partial}{\partial x_\alpha}\left(v_\alpha\left(\bm{x}, t\right) u\right).
\end{equation}
Taking into account (\ref{40}), the convective-transport operator in the skew-symmetric form is
\[
{\cal C} = {\cal C}_0 = \frac{1}{2}
({\cal C}_1 + {\cal C}_2),
\]
and
\begin{equation}\label{45}
   {\cal C}_0 u = 
\frac{1}{2} \sum_{\alpha = 1}^{2} \left(
v_\alpha\left(\bm{x}, t\right) \frac{\partial u}{\partial x_\alpha} + 
 \frac{\partial }{\partial x_\alpha}\left(v_\alpha\left(\bm{x}, t\right)
u\right)
 \right).
\end{equation}
Now we highlight the basic properties of the above-mentioned
operators of diffusive and convective transport.

\subsection{Properties of the operators of diffusive and convective transport}

The solution of a discrete problem should inherit the basic properties of the corresponding differential
problem. This can be achieved, in particular, if the grid operators have the same primary
properties as the differential ones.

As usually, let ${\cal H} =  {L}_2 (\Omega)$ be a Hilbert space 
for arbitrary functions $u({\bm x})$ and $w({\bm x})$ equal zero on $\partial \Omega$.
The diffusive transport operator defined by (\ref{42}) is self-adjoint in ${\cal H}$ on the set
of functions satisfying the boundary conditions (\ref{37}):
\begin{equation}\label{46}
 {\cal D} = {\cal D}^*.
\end{equation}

Note also that the diffusive transport operator under consideration at the above-mentioned restrictions
is positive definite, i.e., the estimate
\begin{equation}\label{47}
 {\cal D} \ge \kappa_1 \lambda_0 {\cal E},
\end{equation}
is valid, 
where $\cal E$ denotes the identity operator and $\lambda_0  > 0$ is the minimal eigenvalue 
of the Laplace operator with the Dirichlet boundary conditions. For the rectangle $\Omega$,
we have
\[
\lambda_0 = \pi^2 \left (\frac {1}{l_1^2} + \frac {1}{l_2^2}
\right ) .
\]
The estimate (\ref{47}) follows from
\[
({\cal D}u,u) \ge
\kappa_1 (\nabla u,\nabla u)
  \ge \kappa_1\lambda_0 (u,u).
\]

We now consider the convective transport operator in various formulations 
(see (\ref{43}), (\ref{44}), and (\ref{45})).
Taking into account the homogeneous boundary conditions (\ref{37}), we have
\[
({\cal C}_1u,w) = \sum_{\alpha=1}^{2} \int \limits_\Omega v_\alpha
\frac {\partial u}{\partial x_\alpha} w d{\bm x} = \null -
\sum_{\alpha=1}^{2} \int \limits_\Omega \frac {\partial}{\partial
x_\alpha} (v_\alpha w) u d {\bm x} = - (u,{\cal C}_2 w).
\]
Thus, we see that the convective transport operators in the divergent and nondivergent
forms are the adjoints of each other (with a precision of the sign):
\begin{equation}\label{48}
 {\cal C}_1^* = - {\cal C}_2.
\end{equation}

In view of (\ref{48}), the convective transport operator in the skew-symmetric form (\ref{45}) 
is skew-symmetric ($({\cal C}_0 u,u) = 0$):
\begin{equation}\label{49}
 {\cal C}_0 = - {\cal C}_0^*.
\end{equation}

Under the condition of incompressibility
\begin{equation}\label{50}
 \div {\bm v} \equiv
\sum_{\alpha=1}^{2} \frac{\partial v_\alpha}{\partial x_\alpha} = 0, \quad {\bm x} \in
\Omega ,
\end{equation}
the convective transport operator in the nondivergent (\ref{43}) and divergent (\ref{44}) forms are also
skew-symmetric. In constructing discrete approximations of the convective transport
operators, the principal moment is that the skew-symmetric property of the operator ${\cal C}_0$ is valid
for any $v_\alpha ({\bm x},t)$, $\alpha = 1,2$ including the compressible case.

It seems useful to give the upper bound for the convective transport operator. 
For (\ref{43}), (\ref{44}), we have
\[
({\cal C}_1u,u) = -
({\cal C}_2u,u) = \frac 12 \sum_{\alpha=1}^{2} \int \limits_\Omega
v_\alpha \frac {\partial u^2}{\partial x_\alpha} d{\bm x} = -
\frac 12 \int \limits_\Omega u^2 \div {\bm v} \ d{\bm x}
\]
and therefore
\begin{equation}\label{51}
      \left|({\cal C}_\alpha u,u) \right| \le
      \frac 12 \|\div{\bm v} \|_{C(\Omega)}~ \|u\|^2,
      \quad \alpha = 1,2,
\end{equation}
where
\[
      \|w\|_{C(\Omega)}  = \max_{{\bm x} \in \Omega} |w({\bm x})|.
\]
Thus, for the convective transport operators defined in accordance with (\ref{43}), (\ref{44})
(${\cal C} = {\cal C}_\alpha, \ \alpha =1,2$), we obtain
\begin{equation}\label{52}
 \left|({\cal C}u,u)\right| \le
{\cal M}_1 \|u\|^2,
\end{equation}
where a constant ${\cal M}_1$ depends only on
$\div{\bm v}$ and, in accordance with (\ref{51}), it follows that 
\begin{equation}\label{53}
 {\cal M}_1 =
  \frac 12 \|\div{\bm v} \|_{C(\Omega)}.
\end{equation}

In addition, we present the estimates of subordination of the convective transport operator to the
diffusive transport operator:
\begin{equation}\label{54}
 \|{\cal C}u\|^2 \le {\cal M}_2
({\cal D}u,u),
\end{equation}
with a constant ${\cal M}_2$ depending on the velocity.

For the nondivergent operator of convection (\ref{43}), we have
\[
\begin{split}
\|{\cal C}_1 u\|^2  & = \int \limits_\Omega \left(
\sum_{\alpha=1}^{2} v_\alpha \frac {\partial u}{\partial x_\alpha}
\right)^2 d{\bm x} \le 2 \sum_{\alpha=1}^{2} \int \limits_\Omega
v_\alpha^2 \left(\frac {\partial u}{\partial x_\alpha} \right)^2
d{\bm x} \\
& \le 2 \max _\alpha \left\{ \|v_\alpha^2\|_{C(\Omega)}
\right\} \frac {1}{\kappa_1} \sum_{\alpha=1}^{2} \int \limits_\Omega
k \left(\frac {\partial u}{\partial x_\alpha} \right)^2 d{\bm x} \\
& \le \frac {2}{\kappa_1} \max _\alpha
\left\{\|v_\alpha^2\|_{C(\Omega)}\right\}~ ({\cal D} u,u),
\end{split}
\]
i.e., in the inequality (\ref{54}), for ${\cal C}= {\cal C}_1$, we can assume
\begin{equation}\label{55} 
{\cal M}_2 = \frac {2}{\kappa_1} \max _\alpha
\left\{\|v_\alpha^2\|_{C(\Omega)}\right\}.
\end{equation}

Similarly, for ${\cal C}= {\cal C}_2$ (see (\ref{44})), we obtain
\[
\begin{split}
\|{\cal C}_2 u\|^2 & = \int \limits_\Omega \left(
\sum_{\alpha=1}^{2} v_\alpha \frac {\partial u}{\partial x_\alpha}
+ \div  {\bm v} u \right)^2 d{\bm x} \\
& \le 2 \int \limits_\Omega \left(
\sum_{\alpha=1}^{2} v_\alpha \frac {\partial u}{\partial x_\alpha}
\right)^2 d{\bm x} + 2 \int \limits_\Omega (\div  {\bm v})^2 u^2
d{\bm x}.
\end{split}
\]

Taking into account the Friedrichs inequality
\begin{equation}\label{56}
 \int \limits_\Omega u^2 d {\bm x} \le
{\cal M}_0 \sum_{\alpha=1}^{2} \int \limits_\Omega
\left(\frac{\partial u}{\partial x_\alpha} \right)^2 d{\bm x},
\end{equation}
where the constant ${\cal M}_0= \lambda_0^{-1}$, we obtain at
${\cal C}= {\cal C}_2$ the estimatr (\ref{54}) with
\begin{equation}\label{57}
 {\cal M}_2 = \frac {2}{\kappa_1} \left (2
\max_\alpha \left\{\|v_\alpha^2\|_{C(\Omega)}\right\}+ {\cal M}_0
\|\div{\bm v}\|_{C(\Omega)}^2 \right ).
\end{equation}

In a similar way, for the case ${\cal C}= {\cal C}_0$, we have
\[
\|{\cal C}_0 u\|^2 = \frac 14 \|{\cal
C}_1 u + {\cal C}_2 u\|^2 \le \frac 12 \|{\cal C}_1 u\|^2 + \frac
12 \|{\cal C}_2 u\|^2,
\]
i.e.,
\begin{equation}\label{58}
{\cal M}_2 = \frac {1}{\kappa} \left (3 \max _\alpha
\left\{\|v_\alpha^2\|_{C(\Omega)}\right\}+ {\cal M}_0 \|\div{\bm
v}\|_{C(\Omega)}^2 \right ).
\end{equation}
The above estimates (\ref{52}), (\ref{54})  serve as a reference point in studying discrete analogs of the
convective transport operator.

Summarizing the above properties of the convective transport operator, we obtain the
following statement.

\begin{theorem}\label{t-1}
The convective transport operators have the following properties:
\begin{itemize}
\item
the convective transport operators in the divergent and nondivergent forms are adjoint
to each other up to the sign --- the equality (\ref{48});
\item 
the convective transport operator in the skew-symmetric form is skew-symmetric
--- the equality(\ref{49});
\item
the convective transport operators in the divergent and nondivergent forms are
bounded --- the a priori estimates (\ref{52}) and (\ref{53});
\item
the convective transport operators are subordinated to the diffusion operator -- the estimate
(\ref{54}) with the constant ${\cal M}_2 $, defined according to (\ref{55}), (\ref{57}), (\ref{58}).
\end{itemize}
\end{theorem}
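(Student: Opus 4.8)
The plan is to treat the theorem as a bundle of four essentially independent assertions, each established by a direct computation in the Hilbert space ${\cal H} = L_2(\Omega)$ and each relying on the homogeneous Dirichlet conditions (\ref{37}) to discard boundary terms. The single recurring tool is integration by parts on $\Omega$: since both arguments vanish on $\partial\Omega$, every surface integral produced by the divergence theorem drops out, and differentiation may be freely transferred between factors. I would organize the argument in the order adjointness $\to$ skew-symmetry $\to$ boundedness $\to$ subordination, because each later item reuses the identities obtained in the earlier ones.

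First I would prove the adjointness relation (\ref{48}). Writing out $({\cal C}_1 u, w) = \sum_\alpha \int_\Omega v_\alpha (\partial u/\partial x_\alpha) w \, d{\bm x}$ and integrating by parts in each $x_\alpha$, the boundary contributions vanish by (\ref{37}) and the derivative is moved onto $v_\alpha w$, which is exactly $-({\cal C}_2 w, u)$ by the definition (\ref{44}); hence ${\cal C}_1^* = -{\cal C}_2$. The skew-symmetry (\ref{49}) is then immediate from the definition ${\cal C}_0 = \tfrac12({\cal C}_1 + {\cal C}_2)$: taking adjoints gives ${\cal C}_0^* = \tfrac12({\cal C}_1^* + {\cal C}_2^*) = -\tfrac12({\cal C}_2 + {\cal C}_1) = -{\cal C}_0$, equivalently $({\cal C}_0 u, u) = 0$. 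I would stress here that this cancellation uses only (\ref{48}) and not the incompressibility assumption (\ref{50}), which is precisely the structural reason the skew-symmetric form is attractive.

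For the boundedness estimates (\ref{52})--(\ref{53}) I would compute $({\cal C}_\alpha u, u)$ directly. Using $v_\alpha (\partial u/\partial x_\alpha)\, u = \tfrac12 v_\alpha\, \partial(u^2)/\partial x_\alpha$ and one more integration by parts, the quadratic form collapses to $-\tfrac12 \int_\Omega u^2 \div{\bm v}\, d{\bm x}$, and bounding the integrand by the maximum $\|\div{\bm v}\|_{C(\Omega)}$ yields (\ref{52}) with the constant (\ref{53}). The same identity covers both $\alpha = 1, 2$ because $({\cal C}_1 u, u) = -({\cal C}_2 u, u)$ follows from (\ref{48}).

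The subordination estimate (\ref{54}) is where the real work lies, and I expect the divergent operator ${\cal C}_2$ to be the main obstacle. For ${\cal C}_1$ the route is routine: apply the elementary inequality $(\sum_\alpha a_\alpha)^2 \le 2\sum_\alpha a_\alpha^2$ to $\|{\cal C}_1 u\|^2$, factor out $\max_\alpha \|v_\alpha^2\|_{C(\Omega)}$, and use $k \ge \kappa_1$ to recognize the diffusion energy $({\cal D}u,u)$, giving (\ref{55}). The difficulty for ${\cal C}_2$ is that expanding the divergence produces the nondivergent part plus a genuine zeroth-order term $(\div{\bm v})\, u$; controlling $\|u\|^2$ cannot be done by the diffusion energy alone, so I would invoke the Friedrichs inequality (\ref{56}) with constant ${\cal M}_0 = \lambda_0^{-1}$ to pass from $\|u\|^2$ to the gradient energy and thence to $({\cal D}u,u)$, which forces the compound constant (\ref{57}). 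The skew-symmetric case follows painlessly from $\|{\cal C}_0 u\|^2 \le \tfrac12\|{\cal C}_1 u\|^2 + \tfrac12\|{\cal C}_2 u\|^2$, combining the two previous bounds into (\ref{58}). The only delicate bookkeeping is tracking how the compressibility term and the Friedrichs constant enter the final coefficients; once those are accounted for, all four properties are assembled and the theorem follows.
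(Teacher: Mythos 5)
Your proposal is correct and follows essentially the same route as the paper: integration by parts with the Dirichlet conditions (\ref{37}) for the adjointness (\ref{48}), the half-sum identity for the skew-symmetry (\ref{49}), the identity $v_\alpha (\partial u/\partial x_\alpha)u = \tfrac12 v_\alpha\,\partial(u^2)/\partial x_\alpha$ leading to $-\tfrac12\int_\Omega u^2 \div{\bm v}\,d{\bm x}$ for (\ref{52})--(\ref{53}), and the elementary quadratic inequality plus the Friedrichs inequality (\ref{56}) to handle the zeroth-order term $(\div{\bm v})u$ in the divergent case, yielding the constants (\ref{55}), (\ref{57}), (\ref{58}). This is exactly how the paper establishes the theorem in the derivations preceding its statement, so there is nothing to correct.
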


It seems reasonable to construct difference operator of convective and diffusive transport in such
a way that they do have the same properties.

\subsection{A priori estimates}

We restrict ourselves to elementary a priori estimates for the time-dependent
equation (\ref{41}) supplemented with the initial condition
\begin{equation}\label{59}
  u(0) = u^0 .
\end{equation}
They are based on the above-established properties (see Theorem~\ref{t-1}) of the operators
of diffusive and convective transport.

\begin{theorem}\label{t-2}
For the solution of the problem (\ref{41}), (\ref{59}),
under the conditions (\ref{47}), (\ref{52}), (\ref{54}),
the following a priori estimate holds:
\begin{equation}\label{60}
\begin{split}
  \|u(t)\|^2 & \le \exp (2{\cal M}_1 t) \|u_0\|^2 \\
  & +
  \frac{1}{2}  \int\limits_0^t \exp (2{\cal M}_1 (t-\theta))
  \|f(\theta)\|^2_{{\cal D}^{-1}} d\theta,
\end{split}
\end{equation} 

\begin{equation}\label{61}
\begin{split}
  \|u(t)\| & \le \exp (\frac 14 {\cal M}_2 t) \|u_0\| \\
  & +
  \int\limits_0^t \exp (\frac 14 {\cal M}_2 (t-\theta)) \|f(\theta)\| d\theta,
\end{split}
\end{equation} 

\begin{equation}\label{62}
\begin{split}
  \|\nabla u(t)\|^2 & \le \frac {\kappa_2}{\kappa_1}
  \exp ({\cal M}_2 t) \|\nabla u_0\|^2 \\
  & +
  \frac {1}{\kappa}
  \int\limits_0^t \exp ({\cal M}_2 (t-\theta)) \|f(\theta)\|^2 d\theta,
\end{split}
\end{equation} 
where $\kappa_1 \le k({\bm x}) \le \kappa_2, \ {\bm x}\in \Omega \cup \partial \Omega$.
\end{theorem}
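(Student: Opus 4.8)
The plan is to treat (\ref{41}) as an evolution equation in $\mathcal{H}=L_2(\Omega)$ and derive each of the three bounds by the energy method: pair the equation with a suitable multiplier, reduce to a scalar differential inequality, and close with Gronwall's lemma. The estimates (\ref{60}) and (\ref{61}) are $L_2$ bounds obtained by testing against $u$ itself, while (\ref{62}) is an $H^1$-type bound that requires testing against $du/dt$ and using the self-adjointness (\ref{46}) of $\mathcal{D}$ together with the fact that $k$ (hence $\mathcal{D}$) is time-independent. Throughout I use $(du/dt,u)=\tfrac12\frac{d}{dt}\|u\|^2$, the positivity (\ref{47}) of $\mathcal{D}$, and the boundedness/subordination properties collected in Theorem~\ref{t-1}.

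For (\ref{60}) and (\ref{61}) I would take the inner product of (\ref{41}) with $u$, obtaining
\[
\tfrac12\frac{d}{dt}\|u\|^2+(\mathcal{D}u,u)=(f,u)-(\mathcal{C}u,u).
\]
To reach (\ref{60}) I control the convective term by the boundedness estimate (\ref{52}), $-(\mathcal{C}u,u)\le\mathcal{M}_1\|u\|^2$, and split the source in the $\mathcal{D}$-weighted pairing, $(f,u)=(\mathcal{D}^{-1/2}f,\mathcal{D}^{1/2}u)\le(\mathcal{D}u,u)+\tfrac14\|f\|^2_{\mathcal{D}^{-1}}$, the Young parameter chosen so that the diffusion term cancels exactly; this gives $\frac{d}{dt}\|u\|^2\le 2\mathcal{M}_1\|u\|^2+\tfrac12\|f\|^2_{\mathcal{D}^{-1}}$, and the integrating factor $\exp(-2\mathcal{M}_1t)$ yields (\ref{60}). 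To reach (\ref{61}) I instead bound the convective term through the subordination estimate (\ref{54}), $|(\mathcal{C}u,u)|\le\|\mathcal{C}u\|\,\|u\|\le\mathcal{M}_2^{1/2}(\mathcal{D}u,u)^{1/2}\|u\|\le(\mathcal{D}u,u)+\tfrac14\mathcal{M}_2\|u\|^2$; after the diffusion term cancels this leaves $\|u\|\frac{d}{dt}\|u\|\le\|f\|\,\|u\|+\tfrac14\mathcal{M}_2\|u\|^2$, and dividing by $\|u\|$ converts it to the first-power inequality $\frac{d}{dt}\|u\|\le\tfrac14\mathcal{M}_2\|u\|+\|f\|$, whose Gronwall solution is (\ref{61}).

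For the gradient estimate (\ref{62}) the multiplier is $du/dt$. Since $\mathcal{D}$ is self-adjoint and time-independent, $(\mathcal{D}u,du/dt)=\tfrac12\frac{d}{dt}(\mathcal{D}u,u)$, so the identity reads
\[
\|du/dt\|^2+\tfrac12\frac{d}{dt}(\mathcal{D}u,u)=(f,du/dt)-(\mathcal{C}u,du/dt).
\]
I estimate $(f,du/dt)\le\tfrac12\|du/dt\|^2+\tfrac12\|f\|^2$ and, via (\ref{54}), $|(\mathcal{C}u,du/dt)|\le\mathcal{M}_2^{1/2}(\mathcal{D}u,u)^{1/2}\|du/dt\|\le\tfrac12\|du/dt\|^2+\tfrac12\mathcal{M}_2(\mathcal{D}u,u)$, so the two half-copies of $\|du/dt\|^2$ exactly absorb the term on the left and give $\frac{d}{dt}(\mathcal{D}u,u)\le\mathcal{M}_2(\mathcal{D}u,u)+\|f\|^2$. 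Gronwall bounds $(\mathcal{D}u,u)(t)$, and the final step converts back to the gradient norm through $\kappa_1\|\nabla u\|^2\le(\mathcal{D}u,u)\le\kappa_2\|\nabla u\|^2$; the lower bound on the left and the upper bound applied to the initial data produce exactly the factor $\kappa_2/\kappa_1$ and the prefactor $1/\kappa_1$ appearing in (\ref{62}).

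The Cauchy--Schwarz/Young manipulations and the Gronwall integrations are routine; the step requiring care is the coupling of the subordination inequality (\ref{54}) with Young's inequality, where the free parameter must be tuned so that the diffusion term $(\mathcal{D}u,u)$ (respectively the $\|du/dt\|^2$ term in (\ref{62})) is \emph{absorbed} rather than merely dominated, since this is what pins down the constants $\tfrac14\mathcal{M}_2$ and $\mathcal{M}_2$. A secondary subtlety is the passage to the first-power differential inequality in (\ref{61}), which formally presupposes $\|u\|>0$; I would justify it by working with $(\|u\|^2+\varepsilon)^{1/2}$ and letting $\varepsilon\to0$, or by arguing on the open set where $u\neq0$.
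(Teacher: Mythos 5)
Your proposal reproduces the paper's proof essentially verbatim: the same multipliers ($u$ for (\ref{60}) and (\ref{61}), $du/dt$ for (\ref{62})), the same Young splittings tuned so that $({\cal D}u,u)$ (resp.\ $\|du/dt\|^2$) is absorbed exactly, and Gronwall's lemma plus the norm equivalence $\kappa_1\|\nabla u\|^2 \le ({\cal D}u,u) \le \kappa_2\|\nabla u\|^2$ to close each estimate. The only addition is your regularization remark justifying the division by $\|u\|$ in the derivation of (\ref{61}), a technical point the paper passes over silently.
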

\proof
Multiplying equation (\ref{41}) scalarly by $u(t)$, we get
\begin{equation}\label{63}
  \frac{1}{2} \frac {d}{dt} \|u\|^2 + ({\cal D}u,u) = -
  ({\cal C}u,u) + (f,u).
\end{equation} 
Taking into account (\ref{52}) and the inequality
\[
  (f,u) \le ({\cal D}u,u) + \frac 14 \|f\|^2_{{\cal D}^{-1}} ,
\]
from (\ref{63}), it follows that
\[
  \frac {d}{dt} \|u\|^2 \le 2{\cal M}_1 \|u\|^2 +
  \frac 12 \|f\|^2_{{\cal D}^{-1}} .
\]
Using Gronwall's lemma, we obtain from this inequality the required estimate (\ref{60}).

From (\ref{54}), we have
\[
  |- ({\cal C}u,u)| \le
  \|{\cal C}u\| \|u\| \le
  ({\cal D}u,u)  + \frac 14 {\cal M}_2 \|u\|^2.
\]
This allows to obtain from (\ref{63}) the inequality
\[
  \frac {d}{dt} \|u\| \le \frac 14 {\cal M}_2 \|u\| + \|f\|,
\]
which immediately implies the estimate (\ref{61}).

It remains to derive the estimate (\ref{62}). To do this, multiply equation
(\ref{41}) scalarly by $du/dt$ and obtain
\[
  \left\|\frac {du}{dt} \right\|^2 +
  \frac{1}{2} \frac {d}{dt} ({\cal D}u,u) =
  - \left ({\cal C} u, \frac {du}{dt} \right ) + \left (f, \frac {du}{dt} \right ) .
\]
For the right-hand side, we have
\[
  - \left ({\cal C} u, \frac {du}{dt} \right ) + \left (f, \frac {du}{dt} \right ) \le
  \left\|\frac {du}{dt} \right\|^2 +
  \frac{1}{2}  \|{\cal C} u\|^2 + \frac{1}{2} \|f\|^2.
\]
In view of (\ref{54}), we get the inequality
\begin{equation}\label{64}
  \frac {d}{dt}({\cal D} u,u) \le
  {\cal M}_2 ({\cal D}u,u) + \|f\|^2.
\end{equation} 
By
\[
  \kappa_1 \|\nabla u\|^2 \le ({\cal D}u,u)
  \le \kappa_2 \|\nabla u\|^2,
\]
from (\ref{64}), we obtain the desired estimate (\ref{62}).
\qed

The resulting estimates (\ref{60})--(\ref{62}) provide the continuity of the solution
of (\ref{41}), (\ref{59}) with respect to the initial data and the right-hand side. In these
estimates, the essential issue is that for the solution norm of the problem with
the homogeneous right-hand side, it is allowed an exponential growth with a
growth increment that depends on the constants ${\cal M}_1,~ {\cal M}_2$. It is necessary to
allow such a behavior for the solution at the discrete level. Thus, we need to
introduce the concept of $\varrho$-stability for the corresponding difference schemes.

\subsection{The maximum principle and a priori estimates}

Considering boundary value problems both for parabolic equations of the
second order in space and for second-order elliptic equations, special attention is
paid to the maximum principle \cite{protter_maximum_1967}. The corresponding statement
is formulated as follows.

\begin{theorem}\label{t-3}
Assume that in the Cauchy problem (\ref{41}), (\ref{59}) the right-hand side 
$f({\bm x},t) > 0~~ (f({\bm x},t) < 0)$ and the initial data $u_0({\bm x}) \ge 0~~ (u_0({\bm x})
\le 0)$, then $u({\bm x},0) \ge 0~~ (u({\bm x},0) \le 0)$ 
for all ${\bm x} \in \Omega$ and $ t > 0$.
\end{theorem}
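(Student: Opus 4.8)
The plan is to argue pointwise, following the classical parabolic maximum principle of \cite{protter_maximum_1967}, rather than through the Hilbert-space energy estimates of the previous subsection, since positivity of the solution is not naturally captured by the $L_2$-type bounds (\ref{60})--(\ref{62}). First I would rewrite (\ref{41}) as a scalar second-order parabolic equation in standard form,
\[
\frac{\partial u}{\partial t} + {\bm v}\cdot \grad u - \div (k \grad u) + c({\bm x},t)\, u = f ,
\]
where the zeroth-order coefficient $c$ records the form chosen for the convective term: $c \equiv 0$ in the nondivergent case (\ref{43}), $c = \div {\bm v}$ in the divergent case (\ref{44}), and $c = \frac12 \div {\bm v}$ in the skew-symmetric case (\ref{45}). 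The only delicate point is that $c$ need not be sign-definite when the medium is compressible.

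To neutralize an indefinite $c$, I would substitute $u = \exp(\lambda t)\, \tilde u$ with $\lambda$ chosen so large that $\lambda + c({\bm x},t) \ge 0$ throughout $\overline{\Omega}\times[0,T]$; this is possible because $\div {\bm v}$ is bounded (cf. the use of $\|\div{\bm v}\|_{C(\Omega)}$ in (\ref{51})). The transformed function $\tilde u$ solves an equation of the same type with the nonnegative zeroth-order coefficient $\lambda + c$ and the right-hand side $\exp(-\lambda t) f$, which is still strictly positive; moreover $\tilde u \ge 0$ is equivalent to $u \ge 0$, and $\tilde u(\cdot,0) = u_0 \ge 0$. Hence it suffices to prove the statement under the extra hypothesis $c \ge 0$.

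Assuming $c \ge 0$, I would argue by contradiction. Suppose $m := \min_{\overline{\Omega}\times[0,T]} u < 0$. The homogeneous Dirichlet condition (\ref{37}) forces $u = 0 > m$ on $\partial\Omega$, and the hypothesis $u_0 \ge 0$ forces $u \ge 0 > m$ at $t = 0$, so the minimum is attained at an interior spatial point ${\bm x}_0 \in \Omega$ and some time $t_0 \in (0,T]$. At such a point $\grad u = 0$, the spatial Hessian is positive semidefinite, so $-\div(k\grad u) = -k\,\Delta u \le 0$ there (using $k>0$ and $\grad u = 0$ to drop the $\grad k \cdot \grad u$ term), while $\partial u/\partial t \le 0$ (with equality if $t_0 < T$). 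Substituting into the equation gives a left-hand side that is $\le c\,m \le 0$, contradicting $f > 0$. This proves $u \ge 0$; the opposite case follows by applying the result to $-u$.

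The step I expect to be the main obstacle is the treatment of the zeroth-order term generated by $\div {\bm v}$ in the divergent and skew-symmetric formulations: without the rescaling it is precisely this term that can spoil the sign count at the minimum, and the bare argument would otherwise be valid only in the incompressible case (\ref{50}) or for the pure nondivergent form. A secondary technical point is regularity: the pointwise evaluation presupposes a classical solution whose second spatial derivatives and first time derivative exist and are continuous up to the relevant point; I would either assume this smoothness outright or, if only $f \ge 0$ were available, replace the strict-inequality argument by a vanishing perturbation $f \mapsto f + \varepsilon$ (equivalently, a linear-in-$t$ barrier) and pass to the limit $\varepsilon \to 0$.
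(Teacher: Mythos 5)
Your proof is correct (granted the classical smoothness you flag at the end), but it does not parallel the paper's treatment for a simple reason: the paper offers no proof of Theorem~\ref{t-3} at all. The result is stated as a known fact with a citation to \cite{protter_maximum_1967}, followed only by a remark that a variant with weak inequalities on the right-hand side also holds. Your write-up therefore supplies what the citation leaves implicit, and its genuinely useful content is exactly the step you single out as the main obstacle: equation (\ref{41}) covers the three convective forms (\ref{43})--(\ref{45}), and in the divergent and skew-symmetric cases the operator carries a zeroth-order coefficient $c = \div {\bm v}$, respectively $c = \frac{1}{2} \div {\bm v}$, which is of indefinite sign for a compressible medium, so the textbook interior-minimum count does not apply verbatim; your rescaling $u = \exp(\lambda t)\, \tilde u$ with $\lambda$ dominating $\|\div {\bm v}\|_{C(\Omega)}$ reduces everything to the benign case $c \ge 0$, after which the contradiction at an interior minimum (using $\grad u = 0$, $\Delta u \ge 0$, $\partial u/\partial t \le 0$, and $f > 0$) is standard. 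Your closing perturbation $f \mapsto f + \varepsilon$ likewise substantiates the paper's unproved remark about the weak-inequality form. One small tightening: writing $\div (k \grad u) = k \Delta u + \grad k \cdot \grad u$ and discarding the last term at the minimum point requires $k \in C^1$, whereas the paper assumes only $\kappa_1 \le k({\bm x}) \le \kappa_2$; your smoothness caveat should be understood to cover the diffusion coefficient as well as the solution.
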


Note that it is possible to use the maximum
principle in a stronger form that employs weak inequalities for the
right-hand side, i.e., the non-negativity of the solution takes place under
the condition of the non-negativity of the right-hand side and the initial data.

Here are some a priori estimates for the convection-diffusion problem (\ref{41}), (\ref{59}),
which are derived via the maximum principle. In the above-considered time-dependent
problems with Dirichlet boundary conditions, we can easily construct
a majorant function~--- a wide range of the appropriate estimates is
given, e.g.,  in the book~\cite{ladyzhenskaia_linear_1968}.
We also give an estimate for the convection-diffusion equation with convective
terms in the divergence form~--- the estimate in ${\cal L}_1{\Omega})$.

\begin{theorem}\label{t-4}
The solution of the problem (\ref{36})--(\ref{38})
satisfies the following a priori estimate in ${\cal L}_\infty{\Omega})$:
\begin{equation}\label{65}
  \|u({\bm x},t)\|_\infty \le
  \|u({\bm x},0)\|_\infty +
  \int_0^t \|f({\bm x},\theta)\|_\infty \,d\theta ,
\end{equation} 
whereas the solution of the problem (\ref{37})--(\ref{39}) satisfies the a priori estimate 
\begin{equation}\label{66}
  \|u({\bm x},t)\|_1 \le
  \|u({\bm x},0)\|_1 + \int_0^t \|f({\bm x},\theta)\|_1 d\theta .
\end{equation} 
\end{theorem}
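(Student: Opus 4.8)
The plan is to prove the two estimates with the two complementary tools suggested by the adjoint relation (\ref{48}): estimate (\ref{65}) for the nondivergent problem via the parabolic maximum principle applied to an explicit barrier, and estimate (\ref{66}) for the divergent problem by duality against the backward nondivergent problem, reusing (\ref{65}). The structural reason for pairing the forms this way is that in the nondivergent form both operators annihilate spatial constants ($\mathcal{C}_1 c = 0$, $\mathcal{D} c = 0$), which makes a constant-in-space majorant admissible, whereas in the divergent form $\mathcal{C}_2 1 = \div {\bm v}\neq 0$; by (\ref{48}) these two forms are adjoint, and so are the norms $L_\infty$ and $L_1$.

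For (\ref{65}) I would first introduce the majorant
\[
  M(t) = \|u({\bm x},0)\|_\infty + \int_0^t \|f({\bm x},\theta)\|_\infty \, d\theta
\]
and consider the comparison functions $w_\pm = M(t) \mp u$. Because $\mathcal{C}_1$ and $\mathcal{D}$ kill constants, each $w_\pm$ solves the same nondivergent equation (\ref{36}) with right-hand side $M'(t) \mp f = \|f({\bm x},t)\|_\infty \mp f \ge 0$. Since in addition $w_\pm({\bm x},0)\ge 0$ and $w_\pm = M(t)\ge 0$ on $\partial\Omega$, the classical parabolic comparison principle (of which Theorem~\ref{t-3} is the homogeneous-boundary instance) yields $w_\pm \ge 0$, i.e. $-M(t)\le u\le M(t)$, which is exactly (\ref{65}).

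For (\ref{66}) I would argue by duality, so as to avoid differentiating the sign function. Fix $t$ and a final datum $\psi_t$ with $\|\psi_t\|_\infty\le 1$, and let $\psi$ solve the backward adjoint problem $-\partial_\tau\psi + \mathcal{C}_2^*\psi + \mathcal{D}^*\psi = 0$ on $(0,t)$ with $\psi(\cdot,t)=\psi_t$ and homogeneous Dirichlet data. By (\ref{48}) and (\ref{46}) the adjoint operator is $-\mathcal{C}_1+\mathcal{D}$, a nondivergent convection-diffusion operator with reversed velocity, so after time reversal the already-proven estimate (\ref{65}) applies with zero source and gives the contraction $\|\psi(\cdot,\tau)\|_\infty\le\|\psi_t\|_\infty\le 1$ for all $\tau\in[0,t]$. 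Differentiating the pairing $\int_\Omega u\psi\,d{\bm x}$ in $\tau$, the adjoint relations cancel the convective and diffusive contributions and leave $\frac{d}{d\tau}\int_\Omega u\psi\,d{\bm x}=\int_\Omega f\psi\,d{\bm x}$; integrating over $(0,t)$ and applying H\"older's inequality with the contraction bound on $\psi$ gives
\[
  \left|\int_\Omega u({\bm x},t)\,\psi_t\,d{\bm x}\right| \le \|u({\bm x},0)\|_1 + \int_0^t \|f({\bm x},\theta)\|_1\,d\theta .
\]
Taking the supremum over $\|\psi_t\|_\infty\le 1$ produces (\ref{66}).

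As a direct alternative, (\ref{66}) can be obtained by multiplying (\ref{39}) by $\operatorname{sgn} u$ and integrating: the identity $\operatorname{sgn}(u)\,\partial_{x_\alpha}(v_\alpha u)=\partial_{x_\alpha}(v_\alpha|u|)$ turns the convective term into the flux of ${\bm v}|u|$, which vanishes since $u=0$ on $\partial\Omega$, while $-\int_\Omega \operatorname{sgn}(u)\,\mathcal{D}u\,d{\bm x}\le 0$ formally, whence $\frac{d}{dt}\|u\|_1\le\|f\|_1$. The main obstacle is rigor in this route: $\operatorname{sgn} u$ is not differentiable, so the sign of the diffusion term must be justified through a smooth regularization $\operatorname{sgn}_\varepsilon$ (or Kato's inequality), with care that the boundary terms generated in the limit vanish under (\ref{37}). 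This is precisely why I would prefer the duality argument, which offloads all the delicate positivity onto the maximum principle already established for (\ref{65}).
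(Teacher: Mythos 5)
Your proposal is correct, and it actually supplies more than the paper does: Theorem~\ref{t-4} is stated there without proof, the text before it only indicating that the estimates follow ``via the maximum principle'' by constructing a majorant function, with a pointer to the literature. Your barrier argument for (\ref{65}) --- the comparison functions $w_\pm = M(t) \mp u$ with $M(t)=\|u(\cdot,0)\|_\infty+\int_0^t\|f(\cdot,\theta)\|_\infty d\theta$, using that $\mathcal{C}_1$ and $\mathcal{D}$ annihilate spatial constants --- is exactly the majorant construction the paper has in mind; your remark that one needs the comparison principle with weak inequalities and nonnegative (not just zero) boundary data, rather than Theorem~\ref{t-3} verbatim, is the right technical caveat, and the paper itself notes that the stronger form of the maximum principle with weak inequalities is available. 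For (\ref{66}) the paper offers no argument at all, and here your route is a genuine contribution rather than a reconstruction: the duality argument transporting the already-proven $L_\infty$ contraction through the adjoint identity $\mathcal{C}_2^*=-\mathcal{C}_1$, $\mathcal{D}^*=\mathcal{D}$ of (\ref{46}), (\ref{48}) is clean, complete, and structurally explains why the two estimates pair the nondivergent form with $L_\infty$ and the divergent form with $L_1$. The direct alternative you mention (multiplying by $\operatorname{sgn} u$ and invoking Kato-type regularization) is the more traditional proof of $L_1$ contractivity for divergence-form equations, but as you say it requires care with the nonsmoothness of $\operatorname{sgn}$; the duality argument sidesteps this entirely by placing all positivity requirements on the maximum principle already established. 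No gaps.
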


The estimates (\ref{65}), (\ref{66}) complement the above a priori estimates
(\ref{60})--(\ref{62}) in the Hilbert spaces ${\cal L}_2(\Omega)$ and $W_2^1(\Omega)$.

\section{Discretization in space} 

In numerical solving transient problems, firstly we construct
discretization in space. The resulting operator-differential equation
should inherit the basic properties of the differential problem, i.e,
we speak of the positiveness (non-negativity) and
self-adjointness of the diffusive transport operator as well as
the adjointness of the convective transport operators in the corresponding finite-dimensional
Hilbert spaces. We consider the standard finite difference approximations
for model unsteady convection-diffusion problems in a rectangular domain.
In addition, we discuss the problem of constructing approximations by means of
the finite volume method.

\subsection{Difference operators} 

In a rectangle $\Omega$, we introduce a uniform in each direction grid.
For grids in individual directions $x_\alpha,~ \alpha =1,2$,
we use notation
\[
   \bar{\omega}_\alpha = \{x_\alpha~ |~ x_\alpha = i_\alpha h_\alpha,
   \quad i_\alpha = 0,1, \, \ldots, \, N_\alpha,
   \quad N_\alpha h_\alpha = l_\alpha\} ,
\]
where $\omega$ stands for the set of interior nodes.
On the set of grid functions that are equal to zero on the set of boundary nodes
$\partial \omega$ $(\bar{\omega} = \bar{\omega}_1 \times \bar{\omega}_2 =  \omega \cup \partial \omega)$, 
we define the Hilbert space $H = L_2({\omega})$ with the following scalar product and norm:
\[
  (y,w) \equiv \sum_{{\bm x} \in \omega}
  y({\bm x}) w({\bm x}) h_1 h_2,
  \quad \|y\| \equiv (y,y)^{1/2} .
\]

We use the standard index-free notations of the theory of difference schemes~\cite{2001Samarskii}.
For the backward difference derivative, we have the representation
\[
  u_{\overline x } \equiv \frac{u_i-u_{i-1}}h .
\]
Similarly, for the forward difference derivative, we get

\[
  u_x\equiv \frac{u_{i+1}-u_i}{h} .
\]
Using the three-point stencil (nodes $x_{i-1}$, $x_i$, $x_{i+1}$), we can employ
the central difference derivative
\[
  u_{\stackrel{\circ }{x}}\equiv \frac{u_{i+1}-u_{i-1}}{2h}.
\]
For the operator of the second derivative, we have
\[
  u_{\overline xx} = \frac{u_x- u_{\overline x}} h = \frac{u_{i-1}-
2u_i+ u_{i+1}}{h^2}. 
\]

The 2D difference diffusive transport operator is represented as the sum of the 1D ones:
\begin{equation}\label{67}
  D  = \sum_{\alpha =1}^{2} D^{(\alpha)},
  \quad D^{(\alpha)} y = - (a^{(\alpha)}
  y^{\bar {x}_\alpha})_{x_\alpha} ,~ \alpha =1,2,~
  {\bm x} \in \omega .
\end{equation}
For smooth diffusion coefficients, we can assume
\[
  a^{(1)} ({\bm x}) = k(x_1-0.5h_1, x_2),
  \quad a^{(2)} ({\bm x}) = k(x_1,x_2-0.5h_2) .
\]

Properties of the elliptic difference operator $D$
are well-known \cite{2001Samarskii,SamNik}.
For the 2D self-adjoint operator $D$, we have the lower bound
\begin{equation}\label{68}
  D = D^* \ge  \frac{1}{M_0} \kappa_1 E,
  \quad M_0 = \frac {8}{l_1^2} + \frac {8}{l_2^2}.
\end{equation}
We present also the upper bound for the diffusive transport operator, i.e.,
\begin{equation}\label{69}
  D \le M_3 E
\end{equation} 
with the constant
\[
\begin{split}
  M_3 & = \frac {4}{h_1^2} \max_{{\bm x} \in \omega}
   \frac {a^{(1)}({\bm x}) + a^{(1)}(x_1+h_1,x_2)}{2} \\
  & + \frac {4}{h_2^2} \max_{{\bm x} \in \omega}
   \frac {a^{(2)}({\bm x}) + a^{(2)}(x_1,x_2+h_2)}{2} .
\end{split}
\]

Now we consider the difference analogs of the differential convective transport operators written
in various forms. For the operator in the nondivergent form ${\cal C}_1$,
we put into the correspondence the 2D the difference convective transport operator
\begin{equation}\label{70}
  C_1  = \sum_{\alpha =1}^{2} C_1^{(\alpha)},
  \quad C_1^{(\alpha)} y =
  b^{(\alpha)} y^{\stackrel{\circ}{x}_\alpha} ,
  \quad \alpha =1,2,
  \quad {\bm x} \in \omega .
\end{equation}
In the simplest case of smooth enough convective transport coefficients, we assume
\[
  b^{(\alpha)} ({\bm x},t) = v_\alpha ({\bm x},t),
  \quad {\bm x} \in \omega .
\]

Similarly, for the approximation of the convective transport operator in the divergent form ${\cal C}_2$,
we employ the difference operator
\begin{equation}\label{71}
  C_2  = \sum_{\alpha =1}^{2} C_2^{(\alpha)},
  \quad C_2^{(\alpha)} y =
  (b^{(\alpha)}  y)_{\stackrel{\circ}{x}_\alpha} ,
  \quad \alpha =1,2,
  \quad {\bm x} \in \omega .
\end{equation} 

The approximation of the 2D convective transport operator in the skew-symmetric form is based on
the representation $C_0 = 0.5 (C_1 + C_2)$ such that
\begin{equation}\label{72}
\begin{split}
  C_0  = & \sum_{\alpha =1}^{2} C_0^{(\alpha)} , 
  \quad C_0^{(\alpha)} y = \frac{1}{2} 
  (b^{(\alpha)} y^{\stackrel{\circ}{x}_\alpha} +
  (b^{(\alpha)}  y)_{\stackrel{\circ}{x}_\alpha}) , \\
  & \alpha =1,2,
  \quad {\bm x} \in \omega . 
\end{split}
\end{equation} 

\begin{lemma}\label{l-1}
The difference operators $C_\alpha,~ \alpha =0,1,2$
have the following properties of adjointness:
\begin{equation}\label{73}
   C_1^* = - C_2,
   \quad C_0^* = - C_0
\end{equation} 
in the space of grid functions $H$.
\end{lemma}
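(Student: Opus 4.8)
The plan is to transfer the two continuous identities (\ref{48}), (\ref{49}) to the grid, with summation by parts playing the role of integration by parts. Since each $C_\alpha = \sum_{\beta=1}^{2} C_\alpha^{(\beta)}$ and the inner product is a sum over nodes, the adjointness relations split across coordinate directions, so it suffices to treat one direction; I would fix $x_1$ (step $h_1$) and suppress the other index.

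The key tool is a discrete summation-by-parts formula for the central difference: for any grid functions $f$ and $g$ vanishing on $\partial\omega$,
\[
  \sum_{{\bm x}\in\omega} f\, g_{\stackrel{\circ}{x}_1}\, h_1 h_2
  = - \sum_{{\bm x}\in\omega} f_{\stackrel{\circ}{x}_1}\, g\, h_1 h_2 .
\]
I would prove this by inserting $g_{\stackrel{\circ}{x}_1}=(g_{i+1}-g_{i-1})/(2h_1)$, splitting into two sums, and shifting the summation index by one in each. The shift generates correction terms supported at the nodes adjacent to $\partial\omega$, and every such term carries a factor that is a boundary value of $f$ or of $g$; these all vanish by the zero boundary data. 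This boundary bookkeeping is the only delicate point of the argument.

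Granting the identity, I would establish $C_1^* = -C_2$ by computing
\[
  (C_1^{(1)} y, w) = \sum_{{\bm x}\in\omega} b^{(1)}\, y_{\stackrel{\circ}{x}_1}\, w\, h_1 h_2
\]
and applying summation by parts with $g=y$ and $f=b^{(1)}w$; note that $f$ vanishes on $\partial\omega$ because $w$ does, while $g=y\in H$. The central difference then lands on the product $b^{(1)}w$, giving $-\sum_{{\bm x}\in\omega} y\,(b^{(1)}w)_{\stackrel{\circ}{x}_1}\,h_1 h_2 = -(y, C_2^{(1)} w)$. Summing over both directions yields $(C_1 y,w) = -(y, C_2 w)$, which is the first relation in (\ref{73}).

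The skew-symmetry $C_0^* = -C_0$ then follows with no further computation. Taking the adjoint of $C_1^* = -C_2$ gives $C_2^* = -C_1$, so from $C_0 = \tfrac12(C_1 + C_2)$ we obtain $C_0^* = \tfrac12(C_1^* + C_2^*) = -\tfrac12(C_1 + C_2) = -C_0$. This mirrors exactly the passage from (\ref{48}) to (\ref{49}) at the differential level, confirming that the chosen discretizations inherit the continuous adjointness structure.
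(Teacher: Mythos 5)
Your proposal is correct and follows essentially the same route as the paper's proof: reduce to one coordinate direction, establish the 1D adjointness of the divergent and nondivergent central-difference operators, sum over directions, and obtain the skew-symmetry of $C_0$ from its definition as the half-sum $\tfrac12(C_1+C_2)$. The only difference is that you write out the discrete summation-by-parts identity and the boundary bookkeeping explicitly, whereas the paper dismisses this 1D step as "easy to check directly"; your version simply supplies that omitted detail.
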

\proof
It is easy to check directly that the 1D convective transport operators in the divergent
and nondivergent forms are adjoint to each other up to the sign. Taking this into account,
we have
\[
\begin{split}
  (C_1y,w) &= (C_1^{(1)} y,w) + (C_1^{(2)} y,w) \\
  & = -(y,C_2^{(1)} w) - (y, C_2^{(2)} w) \\
  &= -(y,C_2 w) = (y,C_1^* w).
\end{split}  
\]
The skew-symmetric property of the operator $C_0$ follows from its definition as the half-sum of
the operators $C_1$ and $C_2$.
\qed

Similar properties can be proved for the 2D convective transport operators constructed with the
use of coefficients $v_\alpha({\bm x},t),~ \alpha = 1,2$  shifted by a half-step in the 
corresponding directions. Such staggered grids are in common use 
in computational fluid dynamics \cite{TannehillAndersonPletcher1997,PeyretTaylor1983}.

\begin{figure}[ht] 
  \begin{center}
  \begin{tikzpicture}[scale=0.75]
   \draw (0,0) -- (8,0);
   \draw (4,-4) -- (4,4);
   \filldraw [black] (0,0) circle (0.1);
   \filldraw [black] (4,0) circle (0.1);
   \filldraw [black] (8,0) circle (0.1);
   \filldraw [black] (4,-4) circle (0.1);
   \filldraw [black] (4,4) circle (0.1);
   \draw [black] (2-0.1,0-0.1) rectangle +(0.2,0.2);
   \draw [black] (6-0.1,0-0.1) rectangle +(0.2,0.2);
   \draw [black] (4,-2) circle (0.1);
   \draw [black] (4,2) circle (0.1);
   \draw [dotted] (2,-2) --(6,-2) -- (6,2) -- (2,2) -- (2,-2);
   \draw(7,-0.5) node {$x_1$};
   \draw(3.5,3) node {$x_2$};  
   \end{tikzpicture}
   \caption{Control volume: $\Box$ --- node for
    $v_1({\bm x}),t$; \quad   $ \ocircle$   --- node for $v_2({\bm x},t)$} 
   \label{f-1}
  \end{center}
\end{figure}

Let us refer the convective-transport coefficient $v_1({\bm x},t)$ with respect to the variable $x_1$ to
the nodes of the grid which is shifted by a half-step along this direction. The grid for the
coefficient $v_2({\bm x},t)$ is shifted along $x_2$ by $0.5h_2$ (see Fig.~\ref{f-1}).

For the difference convective transport operator in the nondivergent form, we get
\[
  C_1^{(1)} y = \frac{1}{2} 
  (b^{(1)}(x_1-0.5h_1,x_2,t) y^{\bar {x}_1} +
  b^{(1)}(x_1+0.5h_1,x_2,t) y^{x_1}),
\]
\[
  C_1^{(2)} y = \frac{1}{2} 
  (b^{(2)}(x_1,x_2-0.5h_2,t) y^{\bar {x}_2} +
  b^{(2)}(x_1,x_2+0.5h_2,t) y^{x_2}),
\]
\begin{equation}\label{74}
  C_1  = \sum_{\alpha =1}^{2} C_1^{(\alpha)} ,
 \quad  {\bm x} \in \omega .
\end{equation} 

For the difference convective transport operator in the divergent form, we employ the
representation
\begin{equation}\label{75}
  C_2 y = C_1 y + \left (b^{(1)} _{\stackrel{\bullet}{x_1}} +
  b^{(2)} _{\stackrel{\bullet}{x_2}} \right ) y ,
  \quad {\bm x} \in \omega .
\end{equation} 

The following notation is used here for the difference
derivative of the grid function given at half-integer nodes:
\[
  b_{\stackrel{\bullet}{x}} \equiv
  \frac {b(x+0.5h) - b(x-0.5h)}{h}.
\]
This expression is a difference analog of the differential equality
\[
  {\cal C}_2 u = {\cal C}_1 u + \div {\bm v}~ u
\]
with a special approximation of $\div {\bm v}$.

For the skew-symmetric convective transport operator
$C_0 = 0.5(C_1 + C_2)$, from (\ref{74}), (\ref{75}), we obtain
\[
\begin{split}
C_0^{(1)} y & = \frac {1}{2h_1}
  b^{(1)}(x_1+0.5h_1,x_2,t) y(x_1+h_1,x_2) \\
  & - \frac {1}{2h_1}
  b^{(1)}(x_1-0.5h_1,x_2,t) y(x_1-h_1,x_2),
\end{split}  
\]
\[
\begin{split}
  C_0^{(2)} y & = \frac {1}{2h_2}
  b^{(2)}(x_1,x_2+0.5h_2,t) y(x_1,x_2+h_2) \\
  & - \frac {1}{2h_2}
  b^{(2)}(x_1,x_2-0.5h_2,t) y(x_1,x_2-h_2),
\end{split}   
\]
\begin{equation}\label{76}
  C_0  = \sum_{\alpha =1}^{2} C_0^{(\alpha)} ,
 \quad  {\bm x} \in \omega .
\end{equation} 
For the convective transport operators $C_\alpha,~ \alpha =0,1,2$,
defined by (\ref{74})--(\ref{76}), Lemma~\ref{l-1} holds.

In the multidimensional case, the inequality
\begin{equation}\label{77}
 |(C_\alpha y,y) | \le M_1 \|y\|^2,
 \quad \alpha =1,2
\end{equation} 
with a constant $M_1$, independent of the grid steps, is also valid for the convective transport
operators under consideration. For operators (\ref{70}), (\ref{71}), we obtain a constant $M_1$, which
depends on the first derivatives of the convective transport coefficients, whereas in the case
(\ref{74}), (\ref{75}) it depends on the divergence, as in the continuous case. We formulate the corresponding
statement using the following notation for the grids in separate directions:
\[
   \omega_\alpha = \{x_\alpha~ |~ x_\alpha = i_\alpha h_\alpha,
   \quad i_\alpha = 1,2, \, \ldots, \, N_\alpha-1,
   \quad N_\alpha h_\alpha = l_\alpha\} ,
\]
\[
   \omega^+_\alpha = \{x_\alpha~ |~ x_\alpha = i_\alpha h_\alpha,
   \quad i_\alpha = 1,2, \, \ldots, \, N_\alpha,
   \quad N_\alpha h_\alpha = l_\alpha\} ,
   \quad \alpha = 1,2 .  
\]

\begin{lemma}\label{l-2}
For the difference convective transport operators $C_\alpha, \ \alpha =1,2,$, 
defined according to (\ref{70}), (\ref{71}),
the estimates (\ref{77}) hold with the constant
\begin{equation}\label{78}
  M_1 = \frac{1}{2} \max_{x_1 \in \omega^+_1}
  \max_{x_2 \in \omega_2} |b^{(1)}_{\bar{x}_1}| +
  \frac{1}{2} \max_{x_1 \in \omega_1}
  \max_{x_2 \in \omega^+_2} |b^{(2)}_{\bar{x}_2}| ,
\end{equation} 
whereas for (\ref{74}), (\ref{75})  --- with the constant
\begin{equation}\label{79}
  M_1 = \frac{1}{2}  \max_{{\bm x} \in \omega}
  \left | b^{(1)} _{\stackrel{\bullet}{x_1}} +
  b^{(2)} _{\stackrel{\bullet}{x_2}} \right | .
\end{equation} 
\end{lemma}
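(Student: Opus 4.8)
The plan is to bound $(C_1 y, y)$ only: since Lemma~\ref{l-1} gives $C_1^* = -C_2$, setting $w=y$ in $(C_1 y, w) = -(y, C_2 w)$ yields $(C_2 y, y) = -(C_1 y, y)$, so the estimate for $\alpha = 2$ follows from that for $\alpha = 1$ with the same constant. The two constants (\ref{78}) and (\ref{79}) correspond to the two families of coefficients, and I would treat them separately, disposing of the staggered case (\ref{74}), (\ref{75}) first because it is essentially immediate.

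For the operators (\ref{74}), (\ref{75}) I would start from $C_0 = \frac12(C_1 + C_2)$, which together with (\ref{75}) gives the decomposition $C_1 y = C_0 y - \frac12\left(b^{(1)}_{\stackrel{\bullet}{x_1}} + b^{(2)}_{\stackrel{\bullet}{x_2}}\right) y$. Taking the scalar product with $y$ and invoking the skew-symmetry $(C_0 y, y) = 0$ from Lemma~\ref{l-1} annihilates the first term, leaving only the contribution of the multiplication operator. Bounding its coefficient by $\max_{{\bm x}\in\omega} | b^{(1)}_{\stackrel{\bullet}{x_1}} + b^{(2)}_{\stackrel{\bullet}{x_2}}|$ and factoring $\|y\|^2 = \sum_{{\bm x}\in\omega} y^2 h_1 h_2$ out of the sum yields (\ref{77}) with the constant (\ref{79}) at once.

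For the central-difference operators (\ref{70}), (\ref{71}) the discrete divergence does not appear as a clean multiplication operator, so I would compute $(C_1 y, y)$ directly by discrete summation by parts, one direction at a time. Writing out $(C_1^{(1)} y, y)$ as a sum of the terms $b^{(1)}(y_{i+1}-y_{i-1}) y_i$ and shifting the index in the part carrying $y_{i-1}$, the homogeneous condition $y=0$ on $\partial\omega$ removes the boundary contributions and collapses everything into a single sum built from the backward difference $b^{(1)}_{\bar{x}_1}$ of the coefficient multiplied by the cross products $y_i y_{i+1}$; this is the exact discrete counterpart of the continuous identity $({\cal C}_1 u, u) = -\frac12 \int\limits_\Omega u^2 \div {\bm v}\, d{\bm x}$ used above. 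Bounding the cross terms by $|y_i y_{i+1}| \le \frac12(y_i^2 + y_{i+1}^2)$ and extracting the maximum of $|b^{(1)}_{\bar{x}_1}|$ gives the first half of (\ref{78}), and the direction $\alpha=2$ contributes the second half.

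The summation-by-parts algebra is routine; the step needing care is the bookkeeping of the index ranges. After the shift, the backward difference $b^{(1)}_{\bar{x}_1}$ is naturally evaluated on the extended grid $\omega_1^+$ along $x_1$ while the transverse index still runs over $\omega_2$, and this is precisely what forces the asymmetric maxima $\max_{x_1\in\omega_1^+}\max_{x_2\in\omega_2}$ in (\ref{78}); one must also confirm that each boundary term genuinely vanishes under $y=0$ on $\partial\omega$ rather than being discarded by hand. Once the index sets are fixed, summing the two directional contributions and using $(C_2 y, y) = -(C_1 y, y)$ completes the argument for all the operators under consideration.
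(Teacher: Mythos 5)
Your proposal is correct and follows essentially the same route as the paper: for the staggered operators (\ref{74}), (\ref{75}) the paper likewise reduces $(C_1y,y)$ to the multiplication operator $b^{(1)}_{\stackrel{\bullet}{x_1}}+b^{(2)}_{\stackrel{\bullet}{x_2}}$ via the adjointness/skew-symmetry relations of Lemma~\ref{l-1} together with (\ref{75}), and for the central-difference operators (\ref{70}), (\ref{71}) it invokes exactly the one-dimensional summation-by-parts estimates that you write out in detail, with the transfer from $C_1$ to $C_2$ by $(C_2y,y)=-(C_1y,y)$. Your version merely makes explicit the 1D index bookkeeping (the appearance of $\omega_1^+$ and the vanishing boundary terms) that the paper leaves implicit.
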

\proof
Taking into account (\ref{74}), (\ref{75}),  we get
\[
\begin{split}
(C_1 y,y) & = -(C_2 y,y) =
   \frac{1}{2}  ((C_1 y,y) + (y,C_1^*y)) \\
  & = - \frac{1}{2}  \left ( \left (b^{(1)} _{\stackrel{\bullet}{x_1}} +
  b^{(2)} _{\stackrel{\bullet}{x_2}} \right ) y,y \right ) .
\end{split}
\]
This implies the estimate (\ref{77}), (\ref{79}).

For the operators (\ref{70}), (\ref{71}), we  use the corresponding estimates for the 1D operators, too.
For instance, for the 2D convective transport operator in the nondivergent form, we have
\[
\begin{split}
  | (C_1 y,y) | & = \left | ((C_1^{(1)} + C_1^{(2)})y,y) \right | \\
  & = \left | \sum_{x_1 \in \omega_1} \sum_{x_2 \in \omega_2}
  (C_1^{(1)} + C_1^{(2)})y~ y h_1 h_2 \right | \\
  & \le \frac{1}{2} \max_{x_1 \in \omega^+_1}
  \max_{x_2 \in \omega_2} |b^{(1)}_{\bar{x}_1}| (y,y) +
  \frac{1}{2} \max_{x_1 \in \omega_1}
  \max_{x_2 \in \omega^+_2} |b^{(2)}_{\bar{x}_2}| (y,y) ,
\end{split}
\]
i. e., we arrive at the desired estimate (\ref{77}), (\ref{78}).
\qed

Finally, let us consider the subordination property of the convective transport operator
to the diffusive transport operator under the standard restrictions
$k({\bm x}) \ge \kappa_1 > 0$.

\begin{lemma}\label{l-3}
For the 2D convective transport difference operators $C=C_\alpha, \ \alpha=0,1,2$,
the following estimates hold:
 \begin{equation}\label{80}
  \|Cy\|^2 \le M_2 (Dy,y),
\end{equation} 
where the constant $M_2$ for the operators $C_1$, defined according to (\ref{70}), (\ref{74}), 
is, respectively,
\[
M_2 = \frac {2}{\kappa_1} \max_{\alpha}
\max_{{\bm x} \in \omega} (b^{(\alpha)}({\bm x},t))^2,
\]
\[
\begin{split}
M_2 = \frac {2}{\kappa_1} \max & \left \{
\max_{{\bm x} \in \omega^+_1\times\omega_2}
(b^{(1)}(x_1-0.5h_1,x_2,t))^2, \right. \\
& \left.
\max_{{\bm x} \in \omega_1\times\omega_2^+}
(b^{(2)}(x_1,x_2-0.5h_2,t))^2 \right \},
\end{split}
\]
for the operators (\ref{71}), (\ref{75}) ---
\[
  M_2 = \frac {2}{\kappa_1} \left ( 2 \max_{\alpha}
  \max_{{\bm x} \in \bar {\omega}} \left (b^{(\alpha)}({\bm x},t)\right )^2 +
  M_0 \max_{{\bm x} \in \omega}
  \left (b^{(1)}_{\stackrel{\circ}{x}_1} +
  b^{(2)}_{\stackrel{\circ}{x}_2} \right )^2 \right ) ,
\]
\[
  M_2 = \frac {2}{\kappa_1} \left (2
  \max \left \{
  \max_{{\bm x} \in \omega^+_1\times\omega_2}
  (b^{(1)}(x_1-0.5h_1,x_2,t))^2, \right. \right.
\]
\[
\left.
\max_{{\bm x} \in \omega_1\times\omega_2^+ }
(b^{(2)}(x_1,x_2-0.5h_2,t))^2 \right \} 
\]
\[
  \left.
  + M_0 \max_{{\bm x} \in \omega}
  \left (b^{(1)}_{\stackrel{\bullet}{x}_1} +
  b^{(2)}_{\stackrel{\bullet}{x}_2} \right )^2 \right ) ,
\]
and for the operators (\ref{72}), (\ref{76}) ---
\[
  M_2 = \frac {1}{\kappa_1} \left ( 3 \max_{\alpha}
  \max_{{\bm x} \in \bar {\omega}} \left (b^{(\alpha)}({\bm x},t) \right )^2 +
  M_0 \max_{{\bm x} \in \omega}
  \left (b^{(1)}_{\stackrel{\circ}{x}_1} +
  b^{(2)}_{\stackrel{\circ}{x}_2} \right )^2 \right ) ,
\]
\[
M_2 =   \frac {1}{\kappa_1} \left (3
\max \left \{
\max_{{\bm x} \in \omega^+_1\times\omega_2}
(b^{(1)}(x_1-0.5h_1,x_2,t))^2, \right. \right. 
\]
\[
 \left.
\max_{{\bm x} \in \omega_1\times\omega_2^+}
(b^{(2)}(x_1,x_2-0.5h_2,t))^2 \right \} 
\]
\[
  \left.
  + M_0 \max_{{\bm x} \in \omega}
  \left (b^{(1)}_{\stackrel{\bullet}{x}_1} +
  b^{(2)}_{\stackrel{\bullet}{x}_2} \right )^2 \right ) ,
\]
where $M_0$ --- is the constant from ( (\ref{68}).
\end{lemma}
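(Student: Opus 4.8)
The plan is to mirror, at the discrete level, the chain of estimates (\ref{54})--(\ref{58}) established for the differential operators, replacing integration by parts with summation by parts and the Friedrichs inequality (\ref{56}) by its grid counterpart. The starting point is the discrete energy identity for the diffusion operator: summing by parts for grid functions vanishing on $\partial\omega$ gives
\begin{equation*}
  (Dy,y) = \sum_{\alpha=1}^{2} \sum_{{\bm x}} a^{(\alpha)} \left( y^{\bar{x}_\alpha} \right)^2 h_1 h_2
  \ge \kappa_1 \sum_{\alpha=1}^{2} \| y^{\bar{x}_\alpha} \|^2 ,
\end{equation*}
where $a^{(\alpha)} \ge \kappa_1$ has been used. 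This identity converts every bound on the squared difference derivatives of $y$ into a bound by $(Dy,y)$, and it is the discrete analog of the relation $(\mathcal{D}u,u) = \sum_\alpha \int_\Omega k\,(\partial u/\partial x_\alpha)^2\,d{\bm x}$ used in the continuous argument.

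Next I treat the nondivergent operator $C_1$. For (\ref{70}) I use the elementary inequality $(a+b)^2 \le 2a^2 + 2b^2$ to write $\|C_1 y\|^2 \le 2\sum_\alpha \max (b^{(\alpha)})^2\, \|y^{\stackrel{\circ}{x}_\alpha}\|^2$, and then the key grid estimate
\begin{equation*}
  y^{\stackrel{\circ}{x}_\alpha} = \tfrac12\left(y^{x_\alpha} + y^{\bar{x}_\alpha}\right)
  \quad\Longrightarrow\quad
  \| y^{\stackrel{\circ}{x}_\alpha} \|^2 \le \| y^{\bar{x}_\alpha} \|^2 ,
\end{equation*}
the last step relying on the fact that the sums of $(y^{x_\alpha})^2$ and $(y^{\bar{x}_\alpha})^2$ coincide for functions vanishing on the boundary. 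Combining with the energy identity yields (\ref{80}) with the first stated constant. For the staggered discretization (\ref{74}) the central difference is replaced by the one-sided differences $y^{\bar{x}_\alpha}$ and $y^{x_\alpha}$ weighted by the half-node coefficients, and the same two inequalities give the second constant, now involving $\max (b^{(\alpha)}(x_\alpha \mp 0.5 h_\alpha))^2$.

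For the divergent operator $C_2$ I exploit the splitting $C_2 y = C_1 y + (b^{(1)}_{\stackrel{\bullet}{x_1}} + b^{(2)}_{\stackrel{\bullet}{x_2}}) y$ (and its smooth-coefficient analog via $b^{(\alpha)}_{\stackrel{\circ}{x}_\alpha}$), so that $\|C_2 y\|^2 \le 2\|C_1 y\|^2 + 2\|(\,\ldots\,)y\|^2$. The first term is already bounded; the zeroth-order term is controlled by $\max(\,\ldots\,)^2\,\|y\|^2$ followed by the discrete Friedrichs inequality $\|y\|^2 \le M_0 \sum_\alpha \|y^{\bar{x}_\alpha}\|^2$ --- the grid form of (\ref{56}), with $M_0$ the constant appearing in the lower bound (\ref{68}) --- which again reproduces a factor $(Dy,y)/\kappa_1$. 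This yields the two constants listed for $C_2$. Finally, for the skew-symmetric operator $C_0 = \tfrac12(C_1 + C_2)$ I use $\|C_0 y\|^2 \le \tfrac12\|C_1 y\|^2 + \tfrac12\|C_2 y\|^2$ exactly as in the derivation of (\ref{58}), which produces the last two constants with the factor $3$ and the halved diffusion-weighted terms.

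The routine parts are the summation-by-parts bookkeeping and the coefficient maximizations. The step requiring the most care is controlling the central (or one-sided) difference of $y$ by $(Dy,y)$ uniformly in the grid size; the identity $y^{\stackrel{\circ}{x}_\alpha} = \tfrac12(y^{x_\alpha} + y^{\bar{x}_\alpha})$ together with the boundary-vanishing condition is precisely what makes the constant $M_2$ independent of the mesh, and mishandling the boundary terms in the summation by parts is the main pitfall to avoid.
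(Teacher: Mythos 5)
Your strategy is the same as the paper's: bound $\|C_1y\|^2$ via the elementary inequality $(a+b)^2\le 2a^2+2b^2$, convert squared difference derivatives into $(Dy,y)$ through summation by parts and $a^{(\alpha)}\ge\kappa_1$, split $C_2$ into a first-order part plus a zeroth-order ``divergence'' part controlled by the discrete Friedrichs inequality, and treat $C_0$ as the half-sum. For $C_1$ in both variants, for the staggered-grid operator (\ref{75}) --- where the splitting $C_2y = C_1y + \bigl(b^{(1)}_{\stackrel{\bullet}{x_1}}+b^{(2)}_{\stackrel{\bullet}{x_2}}\bigr)y$ is the \emph{definition} of $C_2$ --- and for $C_0$, your argument is sound and reproduces the stated constants; your handling of the forward/backward difference bookkeeping at the boundary is also correct.

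The genuine gap is the smooth-coefficient divergent case (\ref{71}). The ``smooth-coefficient analog'' of the splitting, $C_2 y = C_1 y + \bigl(b^{(1)}_{\stackrel{\circ}{x}_1}+b^{(2)}_{\stackrel{\circ}{x}_2}\bigr)y$ with $C_1$, $C_2$ given by (\ref{70}), (\ref{71}), is false: central differencing does not satisfy the Leibniz rule with node-centered coefficients. In one dimension,
\[
  (by)_{\stackrel{\circ}{x}} - b\,y_{\stackrel{\circ}{x}} - b_{\stackrel{\circ}{x}}\,y
  = \frac{(b_{i+1}-b_i)(y_{i+1}-y_i)-(b_i-b_{i-1})(y_i-y_{i-1})}{2h} ,
\]
which does not vanish for variable $b$, so your inequality $\|C_2y\|^2\le 2\|C_1y\|^2+2\|(\cdot)y\|^2$ bounds a different operator than (\ref{71}). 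The paper avoids this by using the \emph{exact} expansion
\[
  (by)_{\stackrel{\circ}{x},\,i} =
  \frac{1}{2}\left(b_{i+1}\,\frac{y_{i+1}-y_i}{h} + b_{i-1}\,\frac{y_i-y_{i-1}}{h}\right)
  + \frac{b_{i+1}-b_{i-1}}{2h}\,y_i ,
\]
i.e.\ a $C_1$-type term whose coefficients are shifted by a \emph{full} grid step, plus the divergence term; this is precisely why the constant for (\ref{71}) in the lemma involves $\max_{{\bm x}\in\bar\omega}\bigl(b^{(\alpha)}\bigr)^2$ over the closed grid rather than over $\omega$ --- a discrepancy your derivation cannot produce. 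The repair is either to adopt this exact representation (as the paper does) or to estimate the commutator term above separately; note it carries a factor $h$ times first differences of $b$, so absorbing it changes the constant. As written, the constant you claim for (\ref{71}) is not justified.
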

\proof
Taking into account the inequality\[
  \left (\sum_{i=1}^{p} a_i \right )^2 \le p \sum_{i=1}^{p} a_i^2 ,
\]
for operator (\ref{70}), we have
\[
\begin{split}
  \|C_1 y\|^2 & = \sum_{{\bm x} \in \omega}
  \frac {1}{4} \left (b^{(1)}({\bm x},t) (y^{\bar{x}_1} + y^{x_1}) +
  b^{(2)}({\bm x},t) (y^{\bar{x}_2} + y^{x_2}) \right )^2 h_1 h_2 \\
  & \le 2 \max_{\alpha}
  \max_{{\bm x} \in \omega} (b^{(\alpha)}({\bm x},t))^2
  \|\nabla y\|^2 \le M_2 (Dy,y) .
\end{split}  
\]
The estimate for the operator (\ref{74}) is obtained in the same manner.

For the discrete operator (\ref{71}), we employ the representation
\[
\begin{split}
  C_2 y & = \frac{1}{2} \left (b^{(1)} (x_1-h_1,x_2,t) y^{\bar {x}_1} +
  b^{(1)} (x_1+h_1,x_2,t) y^{x_1}  \right. \\
  & \left.
  + \  b^{(2)} (x_1,x_2-h_2,t) y^{\bar {x}_2} +
  b^{(2)} (x_1,x_2+h_2,t) y^{x_2} \right ) \\
  &  + \left (b^{(1)}_{\stackrel{\circ}{x}_1} +
   b^{(2)}_{\stackrel{\circ}{x}_2} \right ) y .
\end{split}     
\]
Thus,
\[
\begin{split}
 \|C_2 y\|^2 & \le
  2 \sum_{{\bm x} \in \omega} (b^{(1)} (x_1-h_1,x_2))^2
  (y^{\bar {x}_1})^2 h_1 h_2 \\
  & +
  2 \sum_{{\bm x} \in \omega} (b^{(1)} (x_1+h_1,x_2))^2
  (y^{x_1})^2 h_1 h_2 \\
  & +
  2 \sum_{{\bm x} \in \omega} (b^{(2)} (x_1,x_2-h_2))^2
  (y^{\bar {x}_2})^2 h_1 h_2 \\
  & +
  2 \sum_{{\bm x} \in \omega} (b^{(2)} (x_1,x_2+h_2))^2
  (y^{x_2})^2 h_1 h_2 \\
  & + 2 \max_{{\bm x} \in \omega}
  (b^{(1)}_{\stackrel{\circ}{x}_1} +
  b^{(2)}_{\stackrel{\circ}{x}_2} )^2 (y,y) .
\end{split}    
\]
In view of the Friedrichs inequality, we obtain the estimate (\ref{80}) with the constant
$M_2$ given in the lemma.

For the grid operator in the divergent form with coefficients on staggered grids, on the
basis of (\ref{75}), we have
\[
  \|C_2 y\|^2 = 2\|C_1 y\|^2 +
  2 \left \| \left (b^{(1)} _{\stackrel{\bullet}{x_1}} +
  b^{(2)} _{\stackrel{\bullet}{x_2}} \right ) y \right \| ^2.
\]
For the first term in the right-hand side, we use the already derived estimate for $C_1$, 
whereas for the second one we apply the Friedrichs inequality.

Subordination estimates for the difference 2D convective transport operators in the skew-symmetric
form $C_0$ are derived from the estimates for the operators$C_\alpha,~ \alpha =1,2$.
\qed

The above values for the subordination constant $M_2$, in spite of their awkwardness,
demonstrate the fundamental independence of this constant of the computational grid. The
constant $M_2$ depends on the values of the convective transport coefficients 
$v_\alpha({\bm x},t),~ \alpha = 1,2$
(the velocity) and on $\div {\bm v}$, to be more precise, on their difference approximations.

In numerical solving the problem (\ref{41}), (\ref{59}),
using the above discretization in space, we obtain the operator-differential
equation
\begin{equation}\label{81}
  \frac{d y}{d t} + A y = \varphi(\bm{x},t),
  \quad A = C + D, 
  \quad  \bm{x} \in \omega,  
  \quad 0 < t \leq T,  
\end{equation} 
\begin{equation}\label{82}
  y(\bm{x},0) = u^0(\bm{x}),
  \quad  \bm{x} \in \omega .
\end{equation}
To investigate this semi-discrete problem, we employ the above properties
of the difference operators of convective and diffusive transport.
In particular, we can obtain analogues of a priori estimates of Theorem~\ref{t-2}.
 
\subsection{Monotone schemes for 2D convection-diffusion problems}

For 2D difference convection-diffusion problems in nondivergent and
divergent forms, the maximum principle is formulated. Unconditionally
monotone schemes are constructed on the basis
of the regularization principle  for difference schemes.

To simplify the material presentation,
we will consider difference schemes for
stationary 2D convection-diffusion equation on
uniform rectangular grids. The corresponding discrete analogues
on the standard five-point stencil \textit{cross} are written in the following form:
\begin{equation}\label{83}
\begin{split}
  \gamma({\bm x}) y({\bm x}) &- \alpha_1({\bm x}) y(x_1-h_1,x_2) -
  \beta_1 ({\bm x}) y(x_1+h_1,x_2) \\
  &- \alpha_2({\bm x})  y(x_1,x_2-h_2) -
  \beta_2 ({\bm x}) y(x_1,x_2+h_2) =
  \varphi({\bm x}),
  \quad {\bm x} \in \omega .
\end{split}
\end{equation}
This grid equations are considered with the boundary conditions
\begin{equation}\label{84}
   y({\bm x}) = 0,
  \quad {\bm x} \in \partial\omega .
\end{equation}
Assume that the coefficients of the difference scheme (\ref{83}) satisfy the conditions
\begin{equation}\label{85}
  \alpha_j ({\bm x}) > 0,
  \quad \beta_j ({\bm x}) > 0,
  \quad j = 1,2,
  \quad \gamma ({\bm x}) > 0,
  \quad {\bm x} \in \omega
\end{equation}
and suppose that
\[
  \alpha_j ({\bm x}) > 0,
  \quad \beta_j ({\bm x}) > 0,
  \quad j = 1,2,
  \quad {\bm x} \in \partial\omega .
\]

We highlight two classes of the monotone difference schemes (\ref{83}), (\ref{84}),
i.e., the schemes that satisfy the difference maximum principle.

\begin{theorem}\label{t-5}
Assume that in the scheme (\ref{83})--(\ref{85})
$\varphi ({\bm x}) \ge 0$ for all ${\bm x} \in \omega$
(or $\varphi ({\bm x}) \le 0$ for ${\bm x} \in \omega$).
Then for
\begin{equation}\label{86}
  \gamma ({\bm x}) \ge
  \alpha_1 ({\bm x}) + \alpha_2 ({\bm x}) +
  \beta_1 ({\bm x}) + \beta_2 ({\bm x}),
  \quad {\bm x} \in \omega
\end{equation} 
or for
\begin{equation}\label{87}
\begin{split}
  \gamma ({\bm x}) &\ge
  \alpha_1 (x_1+h_1,x_2) + \beta_1 (x_1-h_1,x_2) \\
  & +
  \alpha_2 (x_1,x_2+h_2) + \beta_2 (x_1,x_2-h_2),
  \quad {\bm x} \in \omega
\end{split}
\end{equation} 
we have that
$y ({\bm x}) \ge 0$ for all ${\bm x} \in \omega$
($y({\bm x}) \le 0$ for all ${\bm x} \in \omega$).
\end{theorem}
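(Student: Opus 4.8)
The plan is to recognize the five-point scheme (\ref{83})--(\ref{84}) as a linear system $Ay = \varphi$ on the space of interior grid functions vanishing on $\partial\omega$, where the matrix $A$ has diagonal entries $\gamma({\bm x}) > 0$ and off-diagonal entries $-\alpha_j, -\beta_j < 0$ by the positivity assumption (\ref{85}). The claim ``$\varphi \ge 0 \Rightarrow y \ge 0$'' is precisely the statement that $A$ is a monotone matrix, $A^{-1} \ge 0$, and I will establish this separately under each of the two sufficient conditions (\ref{86}) and (\ref{87}). Throughout, the case $\varphi \le 0 \Rightarrow y \le 0$ follows from the other by the substitution $y \mapsto -y$, $\varphi \mapsto -\varphi$, so I treat only $\varphi \ge 0$.

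First I would handle condition (\ref{86}) by the discrete minimum principle. Suppose, for contradiction, that $m = \min_{\bar{\omega}} y < 0$. Since $y = 0$ on $\partial\omega$ by (\ref{84}), this minimum is attained at some interior node ${\bm x}_* \in \omega$. Solving (\ref{83}) for the diagonal term at ${\bm x}_*$ and using that every neighbor value is $\ge m$ together with $\alpha_j, \beta_j > 0$ and $\varphi({\bm x}_*) \ge 0$, I obtain $\gamma({\bm x}_*) m \ge (\alpha_1 + \beta_1 + \alpha_2 + \beta_2) m$ at ${\bm x}_*$, that is, $(\gamma - \alpha_1 - \beta_1 - \alpha_2 - \beta_2) m \ge 0$. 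As $m < 0$, condition (\ref{86}) forces equality, $\gamma = \alpha_1 + \beta_1 + \alpha_2 + \beta_2$ at ${\bm x}_*$; feeding this back into (\ref{83}) shows that each nonnegative term $\alpha_j (y_{\mathrm{nb}} - m)$, $\beta_j (y_{\mathrm{nb}} - m)$ and $\varphi({\bm x}_*)$ must vanish, so every neighbor of ${\bm x}_*$ is again a minimizer. Because $\omega$ is connected to $\partial\omega$ through nearest-neighbor steps, I propagate this equality from ${\bm x}_*$ to a boundary node, where $y = 0 > m$ --- the desired contradiction. Hence $m \ge 0$ and $y \ge 0$.

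For condition (\ref{87}) I would reduce to the case just proved by transposition. Viewed as a matrix on interior grid functions, $A$ has $({\bm x}, \xi)$-entry equal to the coefficient of $y(\xi)$ in the equation centered at ${\bm x}$; consequently the transpose $A^T$ is itself a five-point operator with the same diagonal $\gamma({\bm x})$ and with off-diagonal coefficients $\alpha_1(x_1+h_1,x_2)$, $\beta_1(x_1-h_1,x_2)$, $\alpha_2(x_1,x_2+h_2)$, $\beta_2(x_1,x_2-h_2)$. The row-sum dominance condition (\ref{86}) applied to $A^T$ is then exactly inequality (\ref{87}). Since $A^T$ retains positive diagonal and negative off-diagonal entries, the first part yields that $A^T$ is monotone, $(A^T)^{-1} \ge 0$; transposing gives $A^{-1} = ((A^T)^{-1})^T \ge 0$, and therefore $y = A^{-1}\varphi \ge 0$ whenever $\varphi \ge 0$.

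I expect the only delicate point to be the equality case in the minimum-principle argument for (\ref{86}): when the diagonal dominance is not strict at the minimizer, one cannot conclude immediately and must invoke the strong form, propagating the minimum along nearest-neighbor chains to the boundary, which relies on the connectivity of $\omega \cup \partial\omega$. The transpose reduction for (\ref{87}) is then essentially bookkeeping, the one thing to verify being that transposition is carried out within the space of grid functions vanishing on $\partial\omega$, so that the homogeneous boundary condition (\ref{84}) is preserved for $A^T$.
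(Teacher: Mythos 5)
Your proof is correct, but it follows the paper only in the first half. For condition (\ref{86}) you and the paper run the same discrete minimum-principle contradiction; the difference is how the non-strict dominance is handled. The paper avoids any propagation by choosing an extremal minimizer --- a node of the (finite) minimizing set whose left neighbor is strictly larger, which exists because boundary values vanish while the minimum is negative --- and obtains a one-step strict-inequality contradiction, whereas you invoke the strong form: equality forces every neighbor into the minimizing set, and connectivity of the grid carries the minimum to the boundary. Both are standard and sound. For condition (\ref{87}), however, your route is genuinely different from the paper's. The paper argues directly: it sums the equations (\ref{83}) over the set $\omega^*$ of nodes where $y<0$ and regroups so that, after telescoping, every surviving term has a fixed sign and they cannot all vanish (the extreme nodes of $\omega^*$ contribute strictly), which gives the contradiction. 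You instead observe that $A^T$ is again a five-point operator of the form (\ref{83})--(\ref{85}) whose row-dominance condition is precisely (\ref{87}) --- diagonal dominance by columns for $A$, exactly as the paper remarks right after the theorem --- apply the first part to $A^T$, and pull back via $A^{-1}=((A^T)^{-1})^T\ge 0$. This duality argument is cleaner, explains structurally why (\ref{87}) is the column counterpart of (\ref{86}), and is in fact the same device the paper itself deploys later for triangular grids through the adjoint problem (\ref{138}) and the discrete Green's function (\ref{139}). What it costs are two small points you should make explicit: first, to write $(A^T)^{-1}\ge 0$ you need invertibility of $A^T$, which does follow from part one (a kernel element $z$ satisfies both $z\ge 0$ and $-z\ge 0$, hence $z=0$, and in finite dimensions a trivial kernel gives invertibility); second, at nodes adjacent to $\partial\omega$ the transposed row contains fewer off-diagonal entries than (\ref{87}) lists, since neighbors lying on $\partial\omega$ do not correspond to matrix entries, so (\ref{87}) implies, rather than coincides with, the dominance required --- harmless, because dropping positive terms only strengthens the inequality.
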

\proof
As usual, the argument is by reductio ad absurdum. Suppose, e.g.,
that the conditions (\ref{86}) are valid and the solution of equation (\ref{83})
with the non-negative right-hand side is not non-negative at all grid points.
Let $x^*$ be an interior grid point, where
the solution has the minimal negative value. If this value 
is achieved at several points, then we choose
the grid point, where $y(x_1^*-h_1,x^*_2) > y(x^*)$.
We write equation (\ref{83}) at this point as
\[
\begin{split}
 \gamma({\bm x}^*) y({\bm x}^*)  &-
  \alpha_1({\bm x}^*) y(x_1^*-h_1,x_2^*) -
  \beta_1 ({\bm x}^*) y(x_1^*+h_1,x_2^*) \\
  &  - \alpha_2({\bm x}^*)  y(x_1^*,x_2^*-h_2) -
  \beta_2 ({\bm x}^*) y(x_1^*,x_2^*+h_2) \\
  & = \varphi({\bm x}^*),
  \quad {\bm x}^* \in \omega .
\end{split}
\]
Under the theorem conditions, the right-hand side
is non-negative, whereas the left-hand side, in view of (\ref{85}), (\ref{86}),
\[
  \begin{split}
  (\gamma({\bm x}^*) & -
  \alpha_1({\bm x}^*) - \beta_1 ({\bm x}^*) -
  \alpha_2({\bm x}^*) - \beta_2 ({\bm x}^*)) y({\bm x}^*) \\
  & + \alpha_1({\bm x}^*) (y({\bm x}^*) - y(x_1^*-h_1,x_2^*)) \\
  & + \beta_1 ({\bm x}^*) (y({\bm x}^*) - y(x_1^*+h_1,x_2^*)) \\
  &+ \alpha_2({\bm x}^*)  (y({\bm x}^*) - y(x_1^*,x_2^*-h_2) ) \\
  & + \beta_2 ({\bm x}^*) (y({\bm x}^*) - y(x_1^*,x_2^*+h_2)) > 0 .
\end{split}
\]
We arrive at a contradiction, and therefore
$y ({\bm x}) \ge 0$ for all  ${\bm x} \in \omega$.

Now we consider a more complicated case of the difference scheme
(\ref{83}), (\ref{84}) with the conditions ((\ref{85}), (\ref{87}) satisfied. 
Suppose that for the non-negative right-hand side of equation
(\ref{83}),  there exists a subset of the interior grid points $\omega^*$, where
\[
  y({\bm x}) < 0,
  \quad {\bm x} \in \omega^*.
\]
Summarize equations (\ref{83}) over all these nodes:
\[
\begin{split}
  \sum_{{\bm x} \in \omega^*}
  \Big (\gamma({\bm x}) y({\bm x})
  &   - \alpha_1({\bm x}) y(x_1-h_1,x_2) -
  \beta_1 ({\bm x}) y(x_1+h_1,x_2) \\
  & - \alpha_2({\bm x})  y(x_1,x_2-h_2) -
  \beta_2 ({\bm x}) y(x_1,x_2+h_2) \Big ) \\
  &=
  \sum_{{\bm x} \in \omega^*}
  \varphi({\bm x}) \ge 0.
\end{split}
\]
For the left-hand side of this equality, we have
\[
\begin{split}
  \sum_{{\bm x} \in \omega^*}
  \Big (\gamma ({\bm x}) & -
  \alpha_1 (x_1+h_1,x_2) - \beta_1 (x_1-h_1,x_2) \\
  & - 
  \alpha_2 (x_1,x_2+h_2) - \beta_2 (x_1,x_2-h_2) \Big )
  y ({\bm x}) \\
  & +
  \sum_{{\bm x} \in \omega^*}
  (\alpha_1 (x_1+h_1,x_2) y({\bm x}) -
  \alpha_1({\bm x}) y(x_1-h_1,x_2)) \\
  & +
  \sum_{{\bm x} \in \omega^*}
  (\beta_1 (x_1-h_1,x_2) y({\bm x}) -
  \beta_1({\bm x}) y(x_1+h_1,x_2)) \\
  & +
  \sum_{{\bm x} \in \omega^*}
  (\alpha_2 (x_1,x_2+h_2) y({\bm x}) -
  \alpha_2({\bm x}) y(x_1,x_2-h_2)) \\
  & +
  \sum_{{\bm x} \in \omega^*}
  (\beta_2 (x_1,x_2-h_2) y({\bm x}) -
  \beta_2({\bm x}) y(x_1,x_2+h_2)) .
\end{split}  
\]

We see immediately that each of these terms
is non-negative and they cannot be equal to zero simultaneously. 
Thus, we again obtain a contradiction.
\qed

The maximum principle for multidimensional difference equations, where the
sufficient conditions of type (\ref{86}) are satisfied, is well-known in the theory
of difference schemes \cite{2001Samarskii}. For elliptic
difference problems, the maximum principle in the form (\ref{87}) is presented in
work \cite{vabmon}. Similarly to the 1D case, the conditions (\ref{86})
may be associated with the condition of diagonal dominance by rows 
if we use the natural order for the points of the numerical solution, whereas
the conditions (\ref{87}) correspond to diagonal dominance by columns.

Consider the 2D boundary value problem for the convection-diffusion equation
in nondivergent form:
\begin{equation}\label{88}
   \sum_{\alpha =1}^{2}
   v_\alpha ({\bm x}) \frac{\partial u}{\partial x_\alpha}
   - \sum_{\alpha =1}^{2}
   \frac{\partial } {\partial x_\alpha} \left ( k({\bm x}) \frac{\partial u}{\partial x_\alpha} \right ) = f({\bm x}),
   \quad {\bm x} \in \Omega,
\end{equation} 
\begin{equation}\label{89}
   u({\bm x}) = 0,
   \quad {\bm x} \in \partial\Omega .
\end{equation}
For numerical solving the problem (\ref{88}), (\ref{89}), we employ the scheme
\begin{equation}\label{90}
  C_1 y + D y = \varphi ({\bm x}),
  \quad {\bm x} \in \omega .
\end{equation} 
We restrict ourselves (see \cite{SamVabConv} for details) to the 2D operator
of convective transport in the form
\[
  C_1^{(1)} y = \frac {v_1({\bm x})}{2k({\bm x})}
  (k(x_1-0.5h_1,x_2) y^{\bar {x}_1} +
  k(x_1+0.5h_1,x_2) y^{x_1}),
\]
\[
  C_1^{(2)} y = \frac {v_2({\bm x})}{2k({\bm x})}
  (k(x_1,x_2-0.5h_2) y^{\bar {x}_2} +
  k(x_1,x_2+0.5h_2) y^{x_2}),
\]
\begin{equation}\label{91}
  C_1  = \sum_{\alpha =1}^{2} C_1^{(\alpha)} ,
 \quad  {\bm x} \in \omega .
\end{equation} 

We formulate the conditions of monotonicity for the difference scheme (\ref{90}), (\ref{91}). 
We write it in the form (\ref{83}), (\ref{84}) with
\[
  \alpha_1 ({\bm x}) = \left (\frac {v_1({\bm x})}{2h_1k({\bm x})} +
  \frac {1}{h_1^2} \right ) k(x_1-0.5h_1,x_2),
\]
\[
  \beta_1 ({\bm x}) = \left (- \frac {v_1({\bm x})}{2h_1k({\bm x})} +
  \frac {1}{h_1^2} \right ) k(x_1+0.5h_1,x_2),
\]
\[
  \alpha_2 ({\bm x}) = \left (\frac {v_2({\bm x})}{2h_2k({\bm x})} +
  \frac {1}{h_2^2} \right ) k(x_1,x_2-0.5h_2),
\]
\[
  \beta_2 ({\bm x}) = \left (- \frac {v_2({\bm x})}{2h_2k({\bm x})} +
  \frac {1}{h_2^2} \right ) k(x_1,x_2+0.5h_2),
\]
\[
  \gamma ({\bm x}) =
  \alpha_1 ({\bm x}) + \alpha_2 ({\bm x}) +
  \beta_1 ({\bm x}) + \beta_2 ({\bm x}),
  \quad {\bm x} \in \omega .
\]
The monotonicity condition (\ref{87}) is obviously satisfied, and
the positiveness of the coefficients of the difference scheme (\ref{83}) (the condition
(\ref{85})) leads to the natural restrictions
\[
  |\theta_\alpha ({\bm x})| \le 1,
  \quad \alpha =1,2,
  \quad {\bm x} \in \omega ,
\]
where
\[
   \theta_\alpha ({\bm x}) =
   \frac {v_\alpha ({\bm x}) h_\alpha}{2 k({\bm x})},
  \quad \alpha =1,2.
\]

Unconditionally monotone difference schemes for the 2D 
convection-diffusion equation (\ref{88}), (\ref{89}) are constructed by means of
regularization (disturbance) of the diffusion coefficient.
Instead of (\ref{90}), we consider the difference scheme
\begin{equation}\label{92}
  C_1 y + \sum_{\alpha=1}^{2} (1+ \varrho_\alpha) D^{(\alpha)} y =
  \varphi ({\bm x}),
  \quad {\bm x} \in \omega .
\end{equation}
The scheme (\ref{92}) is monotone under the condition
\[
  1 + \varrho_\alpha ({\bm x}) > |\theta_\alpha ({\bm x})|,
  \quad {\bm x} \in \omega ,
  \quad \alpha =1,2.
\]
We present some variants of regularizing grid functions
$\varrho_\alpha({\bm x}),~ \alpha =1,2$,, which lead to
unconditionally monotone difference schemes (\ref{92}).

For example, the choice
\[
  1 + \varrho_\alpha ({\bm x}) =
  \theta_\alpha ({\bm x}) {\rm cth} \theta_\alpha ({\bm x}),
  \quad {\bm x} \in \omega ,
  \quad \alpha =1,2
\]
corresponds to the use of  exponential schemes \cite{allsou,schgum} for each individual direction.
It is also possible to select the scheme with
\[
  \varrho_\alpha ({\bm x}) =
  \eta \theta_\alpha^2 ({\bm x}),
  \quad {\bm x} \in \omega ,
  \quad \alpha =1,2,
\]
which are monotone if $\eta > 0.25$.
Among unconditionally monotone difference schemes, we highlight
the regularized scheme (\ref{92}), where
\[
  \varrho_\alpha ({\bm x}) =
  \frac {\theta_\alpha^2 ({\bm x})}
  {1 + |\theta_\alpha ({\bm x})|},
  \quad {\bm x} \in \omega ,
  \quad \alpha =1,2 .
\]

The scheme with the upwind differences corresponds to
\[
  \varrho_\alpha ({\bm x}) =
  \eta |\theta_\alpha ({\bm x})|,
  \quad {\bm x} \in \omega ,
  \quad \alpha =1,2
\]
with $\eta = 1$. In this case, the difference operator of convective
transport seems like this:
\[
  C_1^{(1)} y = \frac {k(x_1-0.5h_1,x_2)}{k({\bm x})}
  v_1^+ ({\bm x}) y^{\bar {x}_1} +
  \frac {k(x_1+0.5h_1,x_2)}{k({\bm x})}
  v_1^- ({\bm x}) y^{x_1},
\]
\[
  C_2^{(1)} y = \frac {k(x_1,x_2-0.5h_2)}{k({\bm x})}
  v_2^+ ({\bm x}) y^{\bar {x}_2} +
  \frac {k(x_1,x_2+0.5h_2)}{k({\bm x})}
  v_2^- ({\bm x}) y^{x_2},
\]
\begin{equation}\label{93}
  C_1  = \sum_{\alpha =1}^{2} C_1^{(\alpha)} ,
 \quad  {\bm x} \in \omega .
\end{equation}
Obviously, the corresponding scheme has only the first-order approximation.

Now consider the convection-diffusion equation in the divergent form:
\begin{equation}\label{94}
   \sum_{\alpha =1}^{2}
   \frac{\partial } {\partial x_\alpha}
   (v_\alpha ({\bm x}) u)
   - \sum_{\alpha =1}^{2}
   \frac{\partial } {\partial x_\alpha} \left ( k({\bm x}) \frac{\partial u} {\partial x_\alpha} \right ) = f({\bm x}),
   \quad {\bm x} \in \Omega,
\end{equation}
which is supplemented with the boundary conditions (\ref{89}).
For this equation, we consider the difference scheme
\begin{equation}\label{95}
  C_2 y + D y = \varphi ({\bm x}),
  \quad {\bm x} \in \omega .
\end{equation}
Define the difference operator of the convective transport as
\[
\begin{split}
  C_2^{(1)} y & = \frac {1}{2h_1}
  v_1(x_1+0.5h_1,x_2) (y(x_1+h_1,x_2)+ y({\bm x})) \\
  &- \frac {1}{2h_1}
  v_1(x_1-0.5h_1,x_2) (y(x_1-h_1,x_2)+ y({\bm x})) ,
\end{split}
\]
\[
\begin{split}
  C_2^{(2)} y & = \frac {1}{2h_2}
  v_2(x_1,x_2+0.5h_2) (y(x_1,x_2+h_2)+ y({\bm x})) \\
  & - \frac {1}{2h_1}
  v_2(x_1,x_2-0.5h_2) (y(x_1,x_2-h_2)+ y({\bm x})) ,
\end{split}  
\]
\begin{equation}\label{96}
  C_2  = \sum_{\alpha =1}^{2} C_2^{(\alpha)} ,
 \quad  {\bm x} \in \omega .
\end{equation}
This is typical for coefficients of convective transport defined
on staggered grids.

The difference scheme (\ref{95}), (\ref{96}) may be written in the form
(\ref{83}) with the coefficients
\[
  \alpha_1 ({\bm x}) = \frac {v_1(x_1-0.5h_1,x_2)}{2h_1} +
  \frac {k(x_1-0.5h_1,x_2)}{h_1^2},
\]
\[
  \beta_1 ({\bm x}) = - \frac {v_1(x_1+0.5h_1,x_2)}{2h_1} +
  \frac {k(x_1+0.5h_1,x_2)}{h_1^2},
\]
\[
  \alpha_2 ({\bm x}) = \frac {v_2(x_1,x_2-0.5h_2)}{2h_2} +
  \frac {k(x_1,x_2-0.5h_2)}{h_2^2},
\]
\[
  \beta_2 ({\bm x}) = - \frac {v_2(x_1,x_2+0.5h_2)}{2h_2} +
  \frac {k(x_1,x_2+0.5h_2)}{h_2^2},
\]
\[
\begin{split}
  \gamma ({\bm x}) & =
  \alpha_1 (x_1+h_1,x_2) + \beta_1 (x_1-h_1,x_2) \\
  & +
  \alpha_2 (x_1,x_2+h_2) + \beta_2 (x_1,x_2-h_2),
  \quad {\bm x} \in \omega.
\end{split}    
\]
Therefore, the condition (\ref{87}) is valid, and from (\ref{85}), we get
\[
  |\theta_1 (x_1-0.5h_1,x_2)| \le 1,
  \quad {\bm x} \in \omega_1^+\times \omega_2,
\]
\[
  |\theta_2 (x_1,x_2-0.5h_2)| \le 1,
  \quad {\bm x} \in \omega_1\times \omega_2^+ .
\]

A class of regularized difference schemes for
the convection-diffusion equation in the divergent form is defined as
\begin{equation}\label{97}
\begin{split}
  C_2 y & - ((1 + \varrho_1(x_1-0.5h_1,x_2)) k(x_1-0.5h_1,x_2)
  y^{\bar{x}_1} )_{x_1} \\
  &-
  ((1 + \varrho_2(x_1,x_2-0.5h_2)) k(x_1,x_2-0.5h_2)
  y^{\bar{x}_2} )_{x_2} \\
  & = \varphi({\bm x}),
  \quad {\bm x} \in \omega.
\end{split}    
\end{equation}
Sufficient conditions for the monotonicity of the scheme (\ref{97})
are written as
\[
  1 + \varrho_1(x_1-0.5h_1,x_2) > |\theta_1 (x_1-0.5h_1,x_2)|,
  \quad {\bm x} \in \omega_1^+\times \omega_2,
\]
\[
  1 + \varrho_2(x_1,x_2-0.5h_2) > |\theta_2 (x_1,x_2-0.5h_2)|,
  \quad {\bm x} \in \omega_1\times \omega_2^+ .
\]

Some approaches to select regularizing grid functions
$\varrho_\alpha ({\bm x}),~ \alpha =1,2$ in order to obtain unconditionally
monotone difference schemes were considered above for the
convection-diffusion equations in the nondivergent form.

\subsection{Triangular grids}

The possibilities of solving boundary value problems for PDEs
on irregular grids are discussed here. The focus is on constructing
difference schemes on triangular grids (as the most common
unstructured grids). We emphasize approximations on Delaunay grids
(triangulations) that demonstrate optimal properties.
The problem of discretization in space is illustrated considering steady-state problems.

The basis for the construction of discrete analogs is the balance method
(the integro-interpolation method) \cite{balansametod,2001Samarskii}, which nowadays
(in the English literature) is referred to as
the finite volume method \cite{finitevolumebook,FVM_China}.
The efficiency of this approach becomes evident
in designing difference schemes on irregular grids.
As a control volume in Delaunay triangulations appears Voronoi diagrams.

Among general irregular grids we distinguish structured grids that
are topologically equivalent to regular grids. A typical example
of unstructured meshes are triangular grids. There are discussed
general issues of designing grids and discretization on them.

Numerical solving boundary value problems of
mathematical physics in complicated domains is carried out
using irregular grids. A computational domain
$\Omega$ is assumed to be irregular (nonrectangular and not
composed of rectangles). Because of this, we have to
use nonrectangular grids. Among irregular grids,
we emphasize two main classes.

\begin{description}
\item[Structured grids.] 
The most important example of such a type of grids is irregular quadrangular grids 
that inherit, in many senses, properties of standard rectangular
grids (they are topologically equivalent to them).

\item[Unstructured grids.]
In this case, a stencil of a difference scheme has no fixed
structure. It is impossible to connect topologically such a computational grid to a
regular rectangular grid. In particular, schemes have a different number of neighbors
at each grid point.
\end{description}

Approximations on structured grids can be performed on the basis of the above-mentioned 
closeness of these grids to standard rectangular grids. The simplest realization of
this situation is to use new independent variables. In this case, a grid that is irregular in the original
coordinates is transformed into a regular one in new independent coordinates.

The second possibility is not associated with a formal introduction of new
coordinates; it is implemented using an approximation of the original problem on
an irregular grid. It is clear that the use of simple
approaches for the construction of difference schemes on the basis of uncertain
coefficients for irregular grids is possible, but it seems not so reasonable.

Advantages of structured grids results from the conservation of the canonical structure
of neighbors for each grid node, i.e., the conservation of the stencil. This simplifies, in
particular, the process of programming and solving difference problems. But the problems of
constructing difference schemes on such grids are not much less difficult than for the general
unstructured grids.

Among structured grids, it is necessary to distinguish an important class of orthogonal
grids. In this case, the advantages of structured grids over unstructured ones become
evident because a lot of problems connected with the development of difference schemes,
and the solution of grid equations is radically simplified. If it is necessary to use the
advantages of structured irregular grids over unstructured ones, it is better to restrict ourself
to orthogonal curvilinear grids. Problems of grid generation are not necessarily more
difficult than the problems being solved. Moreover, this situation is the most typical one.
Therefore, it is better to make efforts (that are comparable to the solution of the original problem) to
optimize the computational grid. In complicated computational domains, it is reasonable to
use the multiblock technology of generation of orthogonal grids that is based on modern CAD systems.

An arbitrary grid is generated from a set of nodes.
The most simple and natural approach is to define a triangulation, i.e., to construct a triangular
grid. There is no need to use more complicated structures of unstructured grids.

For the given points, a triangulation can be performed in different ways. Note also
that for a given set of nodes, we obtain the same number of triangles by any triangulation
method. 

Thus, we need to optimize the triangulation by some criteria. The main optimization
criterion consists in the following: the obtained triangles should be close to equilateral ones
(they should be without too sharp angles). This is a local criterion governing to an individual triangle.
The second (global) criterion declares that adjacent triangles must not differ too widely in
an area --- the criterion of grid uniformity.

There is a special triangulation --- the Delaunay triangulation \cite{george1998delaunay,VoronoiDiagr}, 
which has a number of optimal properties. One of them is the tendency of obtained triangles to be equiangular
ones. The above-mentioned property can be formulated more exactly in the following way:
in the Delaunay triangulation, the minimal value of inner angles of triangles is maximized.
The formal definition of the Delaunay triangulation 
is associated with the property that for each triangle all the other nodes
are located outside of the circumcircle. For our further presentation, the relation between the
Delaunay triangulation and the Voronoi diagram (\textup{tesselation}) is very important.

A Voronoi polygon for a separate node is a set of points lying closer to this node than
to all the other nodes. For two points, the sets are defined by the half-plane bounded by
a perpendicular to the middle of the segment connecting these two points. The Voronoy
polygon thereby will be the intersection of such half-planes for all pairs of nodes created by
this node and all the other nodes. Note that this polygon is always convex. 

Each vertex of a Voronoi polygon is a point of contact of three Voronoi polygons. The
triangle constructed by the corresponding nodes of contacting Voronoi polygons is associated
with each of these vertices. This is exactly the Delaunay triangulation. Thus, between the
Voronoi diagram and the Delaunay triangulation a unique correspondence is established.

In the case of the Delaunay triangulation, we obtain the optimal decomposition of a
computational domain according to the given set of nodes. The decomposition is optimal
in terms of maximization of minimal angles of triangles. For the Delaunay triangulation,
there does exist the corresponding Voronoi diagram that uniquely determines a set of points of the
domain for each node. This separation of the set points is made by the clear geometrical
criterion of optimal closeness to the node. Thus, the Delaunay triangulation and the Voronoi
diagram determine completely (optimally and uniquely) a computational triangular grid
and a control volume. 

The Delaunay triangulation is widely used in numerical practice
for constructing finite element procedures. There also exist a lot of
developed numerical methods for generating such triangular grids, the appropriate software
is also available.

\subsection{Difference schemes on triangular grids}

We start with a discussion of some possible
general approaches that may be applied to (and find) practical applications.

The simplest (from the methodological viewpoint)
approach to construct discrete analogs on triangular
grids consists in using the finite element procedures \cite{Strang:1973:AFE}. 
However, it is not always possible to employ standard variants
of finite elements.

In computational practice, there are widely used piecewise linear
finite elements, which correspond to the approximation
of the numerical solution on each triangle via linear functions.
In the convection-diffusion problems, we obtain analogues of schemes
with central difference approximation of the convective terms.

In constructing finite element approximations, there do exist some problems to obtain monotone
procedures, i.e., the schemes that satisfy the maximum principle. In
the theory of finite elements, the problem is resolved
not using the Galerkin method, but using its generalization --- the projection 
Petrov-Galerkin method. In this case, probe functions that are used to construct the solution, and
test functions that generate a system of equations, are distinct. In
this sufficient artificial way, we obtain schemes, which are,
e.g., very similar to the schemes with the upwind differences \cite{MortonKellogg1996}.

In the method of support operators \cite{samkol}, the original problem is formulated in terms
of differential operators from the vector analysis: the divergence,
gradient, and rotor. Next, only one of them is freely approximated on a selected grid. 
Other operators are defined by some prescribed relations of integral nature that exist between the differential
operators. This ensures consistent approximations of the operators that provide the fulfillment of
such essential properties as conservatism, adjointness and so on.

The method of support operators has been developed by many researchers just for triangular grids.
The main peculiarities are associated with a selection of the set of grid functions. 
For example, the solution can be approximated at vortices of the triangular grid, whereas
fluxes can be refereed to cells (the cell-vortex arrangement of variables) or to the the midpoints of cell faces
(staggered grids).

One of the basic approaches to the construction of difference schemes on irregular
grids is the classical integro-interpolation method \cite{2001Samarskii}.
This balance method is based on the following main points. First,
we must specify a grid (determine a set of nodes and a set of grid functions). 
Secondly, for each node, we define a neighbor domain (control volume) --- a part
of the computational domain adjoining to a given computational node. And finally,
a difference scheme is obtained by integrating the original equation over the control
volume using some assumptions about the solution behavior.
A set of these three components specify a particular variant of the control volume method.

In constructing difference schemes on triangular grid via the control volume method,
it seems natural to set grid functions at the grid nodes. 
This is the standard, but not the only variant. As an alternative,
we can reffer grid functions to some points being connected
with a triangular cell. The specification of grid functions
at the vertices of the Voronoi polygons provide an example.

The second problem is connected with selecting a control volume. During
a triangulation procedure, in many approaches, a part of the
triangle appearing from the intersection of medians is separated as as a control volume.
In this case, each node obtains a part of the triangle with an equal area.

An interesting variant of the control volume selection is associated with the
Voronoi diagrams. In this case, each individual node has a part of the whole computational
domain that is closest to it. In this case, there is no division into triangles with equal areas.
It is important that in the both above approaches, a criterion for selecting
a control volume is clear and is connected with geometric requirements: in the first case ---
the triangle is decomposed into three parts of equal area, in the second ---
we obtain geometric proximity of points of the computational domain.

Among the merits of the division by medians, we emphasize that it may be conducted
for an arbitrary partitioning into triangles, i.e., not only for the Delaunay triangulation.
Advantages of the Voronoi tesselation are more essential and
associated with the orthogonality of the triangle sides to the faces of Voronoi polygons.

Heuristic arguments in favor of Voronoi polygons are associated with the idea of globalization
(optimization) of grids and control volumes~--- optimization for all
nodes, rather than for a single triangle.

The balance method for triangular grids need to be implemented using the Delaunay triangulation
with Voronoi polygons as control volumes. This is the most natural way that allows
to construct difference schemes with optimized triangular grids \cite{vago}.

\subsection{Diffusive transport operator}

Assume that a computational domain is a convex polygon $\Omega$ with the boundary $\partial\Omega$.
The points of the domain are denoted by $\bm{x} = (x^{(1)},x^{(2)})$.

In the domain $\overline{\Omega} = \Omega \bigcup \partial\Omega$,
we consider the grid $\overline{\omega}$, which consists of nodes 
$\bm{x}_{i}, \ i = 1,2, \, \ldots, \, M$,
and the angles of the polyhedron $\Omega$ are nodes. Let $\omega$ be a set of interior nodes 
and $\partial \omega$ is a set of boundary nodes, i.e., 
$\omega=\overline{\omega} \bigcap \Omega$, 
$\partial \omega = \overline{\omega} \bigcap \partial \Omega$.

Each node $\bm{x}_i, \ i = 1,2, \, \ldots, \, M,$ is associated with a certain part of 
the computational domain $\Omega_i$ treated as a control volume.
A Voronoi polygon or its part belonging to $\Omega$ are selected as the control volumes. 
A Voronoi polygon for an individual node is a set of points lying 
closer to this node than to all the other ones.
For two nodes, the sets are defined by the half-plane bounded by
the perpendicular to the midpoint of the segment connecting these two nodes. The Voronoi
polygons thereby will be the intersection of such half-planes for all pairs of nodes created by
this node and all the other nodes. 
Each vertex of a Voronoi polygon is a point of contact of three Voronoi polygons. The
triangle constructed by the corresponding nodes of contacting Voronoi polygons is associated
with each of these vertices. This is exactly the Delaunay triangulation. 

Control volumes cover the whole computational domain, so that
\[
  \overline{\Omega} = \bigcup_{i=1}^{M} \overline{\Omega}_{i},
  \quad \overline{\Omega}_{i} = \Omega_{i} \bigcup \partial \Omega_{i},
  \quad \Omega_{i} \bigcap \Omega_{j} = \emptyset,
\]
\[
  \quad i \neq j,
  \quad i, \ j = 1,2, \, \ldots, \, M.
\]
For the common faces of control volumes, we use notation
\[
 \partial \Omega_{i} \bigcap \partial \Omega_{j} = \Gamma_{ij},
  \quad i \neq j,
  \quad i, \ j = 1,2, \, \ldots, \, M.
\]
For the node $i$, we define a set of neighboring nodes $\mathcal{W}(i)$
that have the control volumes with common faces with the control volume for the node $i$, i.e.,
\[
  \mathcal{W}(i) = \{ j \ | \ \partial \Omega_{i} \bigcap \partial \Omega_{j} \neq \emptyset,
  \ j =1,2, \, \ldots, \, M\},
  \quad i = 1,2, \, \ldots, \, M.
\]
Introduce notation
\[
  V_{i} = \int\limits_{\Omega_{i}} d{\bm x},
  \quad l_{ij} = \int\limits_{\Gamma_{ij}} d{\bm x},
  \quad i, \ j =1,2, \, \ldots, \, M ,
\]
for the area of the control volume and the length of the edge of the  Voronoi polyhedron, respectively.

For the grid functions $y({\bm x}), \ w({\bm x})$ that are specified at the nodes
${\bm x} \in \bar \omega$ and vanish at the boundary nodes
${\bm x} \in \partial \omega$, in $H=L_2(\omega)$, we define the scalar product
and norm
\[
  (y,w) =  \sum_{\bm{x}_i \in \omega}
  V_i y({\bm x}_i) w({\bm x}_i),
  \quad \|y\| = (y,y)^{1/2} .
\]

Define the distance between the nodes $\bm{x}_i$, $\bm{x}_j$ as
\[
  d(\bm{x}_i,\bm{x}_j) =
  \left [  \sum_{\alpha=1}^{2} (x_i^{(\alpha)} - x_j^{(\alpha
  )})\right]^{1/2} ,
\]
and the midpoint of the segment connecting these nodes as follows:
\[
  \bm{x}_{ij} = (x_{ij}^{(1)},x_{ij}^{(2)}),
  \quad x_{ij}^{(\alpha)} = \frac{1}{2} (x_i^{(\alpha)} + x_j^{(\alpha)}),
  \quad \alpha = 1,2.
\]

For simplicity, we assume that the coefficients of the convection-diffusion equation and
its solution itself are sufficiently smooth.
The discrete operator of diffusive transport corresponding to
the interior node of the computational grid $\bm{x}_i \in \omega$ is defined
according to the integro-interpolation method by means of the integration
over the control volume $\Omega_i$:
\begin{equation}\label{98}
  Du = (Du)_{i} \approx \frac{1}{V_{i}} \int\limits_{\Omega_i}
  \mathcal{D}u d \bm{x}.
\end{equation}

For the diffusive flux vector, we use the expression
\[
  \bm{q} = - k(\bm{x}) \grad u,
\]
so that
\[
  \mathcal{D} u = \div \bm{q} .
\]
From this, we obtain for the right-hand side of (\ref{98}) that
\begin{equation}\label{99}
  \int\limits_{\Omega_i} \mathcal{D}u dx =
  \int\limits_{\partial \Omega_i} (\bm{q},\bm{n}) d \bm{x},
\end{equation}
where $\bm{n}$ is the outer normal.

A difference approximation for the normal component of the diffusive flow through the face
$\gamma_{ij}$ is denoted by $\bm{q}^h_{ij}$, and therefore, from
(\ref{98}), (\ref{99}), we get for the difference diffusive transport operator
the following representation:
\begin{equation}\label{100}
  (D y)_{i} = \frac{1}{V_{i}}
  \sum_{j \in \mathcal{W}(i)} l_{ij} q^h_{ij},
  \quad \bm{x}_i \in \omega .
\end{equation}

In the case of smooth enough coefficients and the solution itself,
it is natural to employ elementary approximations for the flux along
the normal at the point $\bm{x}_{ij}$:
\begin{equation}\label{101}
  q^h_{ij} =  - k(\bm{x}_{ij})
  \frac{y^{j}- y^{i}}{d(\bm{x}_i,\bm{x}_j)} .
\end{equation}

From (\ref{100}), (\ref{101}), we obtain the difference operator
of the diffusive transport:
\begin{equation}\label{102}
  (D y)_{i} = - \frac{1}{V_{i}}
  \sum_{j \in \mathcal{W}(i)} l_{ij}
  k(\bm{x}_{ij})
  \frac{y^{j}- y^{i}}{d(\bm{x}_i,\bm{x}_j)},
  \quad \bm{x}_i \in \omega .
\end{equation}

As in the case of difference schemes on regular grids,
it is possible to introduce various approximations for the diffusive flows. This
issue is very important, in particular, for problems with piecewise-smooth
coefficients of the equation.

For the grid functions $y_i \equiv y(\bm{x}_i) = 0, \ w_i=0$,
$\bm{x}_i \in \partial \omega$, we have
\[
  (D y, w) =
  \sum_{\bm{x}_i \in \omega}
  \sum_{j \in \mathcal{W}(i)} l_{ij} q^h_{ij} w_i =
  - \sum_{\bm{x}_i \in \omega}
  \sum_{j \in \mathcal{W}(i)} l_{ij}
  k(\bm{x}_{ij})
  \frac{y^{j}- y^{i}}{d(\bm{x}_i,\bm{x}_j)} w_i .
\]
This summation over all faces of the Voronoi polygons for
all interior nodes may be rewritten in the more convenient form:
\[
\begin{split}
  (D y, w) & =
  \frac{1}{2}  \sum_{\bm{x}_i \in \omega}
  \sum_{j \in \mathcal{W}(i)}
  \frac {l_{ij}} {d(\bm{x}_i,\bm{x}_j)}
  k(\bm{x}_{ij})
  ((y_j-y_i)w_j - (y^{j}- y^{i}) w_i) \\
  & =
  \frac{1}{2}  \sum_{\bm{x}_i \in \omega}
  \sum_{j \in \mathcal{W}(i)}
  l_{ij} d(\bm{x}_i,\bm{x}_j) k(\bm{x}_{ij})
  \frac{y_j-y_i}{d(\bm{x}_i,\bm{x}_j)}
  \frac{z_j-z_i}{d(\bm{x}_i,\bm{x}_j)} .
\end{split}  
\]
Thus, $(Dy, w) = (y,D^* w)$, i.e, the difference operator
(\ref{102}) is self-adjoint in $H$. In view of
\begin{equation}\label{103}
  (D y, y) =
  \frac{1}{2}  \sum_{\bm{x}_i \in \omega}
  \sum_{j \in \mathcal{W}(i)}
  l_{ij} d(\bm{x}_i,\bm{x}_j) k(\bm{x}_{ij})
  \left (\frac{y_j-y_i}{d(\bm{x}_i,\bm{x}_j)} \right)^2 ,
\end{equation}
it is also  positive ($D = D^* > 0$).

Now we establish a discrete analog of the Friedrichs lemma. In various
formulations, it is proved in the theory of finite elements
for considering difference schemes on irregular structured and unstructured grids. Difference
schemes that are based on the Delaunay triangulation and the Voronoi diagram demonstrate
some peculiarities, and therefore the discrete analog of the Friedrichs lemma
must be proved for them separately.

\begin{lemma}\label{l-4}
For the grid functions
$y_i \equiv y(\bm{x}_i) = 0$, $\bm{x}_i \in \partial \omega$, 
the following inequality is valid:
\begin{equation}\label{104}
  \|y\|^{2} \le \frac{M_0}{2}
  \sum_{\bm{x}_i \in \omega}
  \sum_{j \in \mathcal{W}(i)}
  l_{ij} d(\bm{x}_i,\bm{x}_j)
  \left (\frac{y_j-y_i}{d(\bm{x}_i,\bm{x}_j)} \right)^2
\end{equation}
with the constant
\[
  M_0 = \frac{l_1^2}{16} + \frac{l_2^2}{16},
\]
where $l_{\alpha}, \ \alpha =1,2$ are the side lengths of the rectangle with
sides parallel to the coordinate axes that contains completely the whole polygon $\Omega$.
\end{lemma}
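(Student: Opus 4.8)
The plan is to reduce the inequality to the classical finite-volume (discrete Poincaré–Friedrichs) argument, run separately in the two coordinate directions, and then to bookkeep the numerical constants so as to land on $M_0=(l_1^2+l_2^2)/16$. First I would enclose $\Omega$ in the rectangle $R=[0,l_1]\times[0,l_2]$ and extend the grid function by zero outside $\Omega$; this is legitimate precisely because $y_i=0$ at the boundary nodes $\bm{x}_i\in\partial\omega$. Since $y$ is reconstructed as the piecewise-constant function equal to $y_i$ on the control volume $\Omega_i$, the discrete norm satisfies $\|y\|^2=\sum_{\bm{x}_i\in\omega}V_iy_i^2=\int_R y^2\,d\bm{x}$, so it suffices to estimate this integral by the right-hand side of (\ref{104}).

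The structural input I would exploit is the orthogonality of the Voronoi–Delaunay pair: the common face $\Gamma_{ij}$ is perpendicular to the segment $[\bm{x}_i,\bm{x}_j]$, so that $(y_j-y_i)/d(\bm{x}_i,\bm{x}_j)$ is exactly the difference quotient along the unit normal $\bm{n}_{ij}$ to $\Gamma_{ij}$. Fix a coordinate direction $\bm{e}_\alpha$. For almost every line parallel to $\bm{e}_\alpha$ I would walk from the side of $R$ on which the extended function vanishes up to a point $\bm{x}\in\Omega_i$; the line crosses a finite ordered sequence of faces $\Gamma_{ij}$, and telescoping gives $|y_i|\le\sum|y_j-y_i|$ over the crossed faces. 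A weighted Cauchy–Schwarz inequality, with weights equal to the projections $d(\bm{x}_i,\bm{x}_j)\,|\bm{n}_{ij}\cdot\bm{e}_\alpha|$ of the Delaunay segments onto $\bm{e}_\alpha$, separates a purely geometric factor from the energy factor. Integrating the resulting pointwise bound over $R$ and invoking two elementary counting facts — that the projected lengths of the crossed segments sum to at most the chord of $R$ in direction $\bm{e}_\alpha$, and that the crossing indicator of a fixed face integrates over $R$ to at most $l_\alpha\,l_{ij}|\bm{n}_{ij}\cdot\bm{e}_\alpha|$ — yields a bound of the shape $\|y\|^2\le c\,l_\alpha^2\sum_{\bm{x}_i\in\omega}\sum_{j\in\mathcal{W}(i)}\frac{l_{ij}}{d(\bm{x}_i,\bm{x}_j)}(y_j-y_i)^2$ with an explicit numerical $c$. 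Choosing, for each point, to walk toward the \emph{nearer} of the two parallel sides of $R$ (both of which carry the zero value) shortens the chord and sharpens $c$.

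Finally I would run this for $\alpha=1,2$ and combine the two directional inequalities, so that the coefficients $l_1^2$ and $l_2^2$ enter additively; together with the halving gained from the two-sided vanishing and the factor correcting the double counting of each face in $\sum_i\sum_{j\in\mathcal{W}(i)}$, the bookkeeping collapses to the constant $M_0/2$ standing in front of the double sum in (\ref{104}). The main obstacle is concentrated entirely in the geometry of the middle paragraph: proving the two counting inequalities rigorously on an \emph{arbitrary} admissible Voronoi mesh — in particular controlling faces that are nearly parallel to $\bm{e}_\alpha$, for which the projection $|\bm{n}_{ij}\cdot\bm{e}_\alpha|$ degenerates and the weighted Cauchy–Schwarz step must be handled with care — and then checking that the constants so produced are exactly the claimed ones. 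The telescoping, the Cauchy–Schwarz estimate, and the passage from the pointwise bound to the integral bound are all routine once the orthogonality of the diagram is in hand.
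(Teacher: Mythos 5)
Your approach proves \emph{a} discrete Friedrichs inequality, but the step on which the specific constant $M_0=(l_1^2+l_2^2)/16$ rests --- ``run this for $\alpha=1,2$ and combine the two directional inequalities, so that the coefficients $l_1^2$ and $l_2^2$ enter additively'' --- does not go through on an unstructured Voronoi mesh, and that is a genuine gap rather than loose bookkeeping. Additive combination works in the continuous setting because the $\alpha$-th directional inequality controls $\|u\|^2$ by the directional energy $\|\partial u/\partial x_\alpha\|^2$ alone; dividing each inequality by its constant and adding then produces a constant in which $l_1^2$ and $l_2^2$ enter additively. Here, however, a walk in the direction $\bm{e}_\alpha$ crosses faces of \emph{all} orientations, and your weighted Cauchy--Schwarz with weights $w_{ij}=d(\bm{x}_i,\bm{x}_j)\,|\bm{n}_{ij}\cdot\bm{e}_\alpha|$ yields, after your two counting facts, the \emph{full} energy $\sum_i\sum_{j\in\mathcal{W}(i)}(l_{ij}/d_{ij})(y_j-y_i)^2$ on the right-hand side: the projection factor $|\bm{n}_{ij}\cdot\bm{e}_\alpha|$ supplied by the second counting fact cancels exactly against the one in the weights. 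To obtain instead a directional energy (weighted by $|\bm{n}_{ij}\cdot\bm{e}_\alpha|^2$, so that summation over $\alpha$ reassembles the full energy), the weights would have to be proportional to $d_{ij}/|\bm{n}_{ij}\cdot\bm{e}_\alpha|$, and then the first counting fact would require bounding $\sum d_{ij}/|\bm{n}_{ij}\cdot\bm{e}_\alpha|$ over crossed faces by the chord --- which fails precisely for faces nearly parallel to $\bm{e}_\alpha$, the degenerate case you yourself flag. Consequently your two directional estimates have identical right-hand sides and can only be combined by taking the minimum: even granting both counting facts with the sharpest factors your ``nearer side'' splitting can produce, one gets $\|y\|^{2}\le \frac{l_\alpha^2}{8}\sum_i\sum_{j\in\mathcal{W}(i)}\frac{l_{ij}}{d_{ij}}(y_j-y_i)^2$ per direction, hence a constant proportional to $\min(l_1^2,l_2^2)$; for a square this is $l^2/8$ against the claimed $M_0/2=l^2/16$. (A secondary gap: your telescoping assumes every backward ray reaches a cell carrying the value zero before leaving $\Omega$; if a control volume of an \emph{interior} node touches $\partial\Omega$, the final boundary jump is not matched by any term of the right-hand side of (\ref{104}), so an explicit mesh assumption is needed.)

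For contrast, the paper gets the additive constant by an entirely different, operator-theoretic route: the best constant in (\ref{104}) is expressed through the minimal eigenvalue of the discrete Laplacian $\Lambda$ of (\ref{105}); a Rayleigh-quotient and constant-sign argument shows one may take $M_0=\max_{\omega}w$ where $\Lambda w=1$; and this auxiliary problem is majorized, via the discrete maximum principle, by the explicit quadratic $W(\bm{x})=-\frac14\bigl((x^{(1)}-a_1)(x^{(1)}-b_1)+(x^{(2)}-a_2)(x^{(2)}-b_2)\bigr)$. The two geometric identities that make the majorant computable --- $\Lambda$ annihilates affine functions (closedness of each Voronoi polygon) and $(\Lambda\, d^2(\cdot,\bm{x}_i))_i=4$, i.e. $\sum_{j\in\mathcal{W}(i)}l_{ij}\,d(\bm{x}_i,\bm{x}_j)=4V_i$ --- are exactly what transfers the continuous computation $-\Delta W=1$, $\max W=(l_1^2+l_2^2)/16$ to the mesh, and they are where the additivity in $l_1^2,l_2^2$ comes from. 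If you wish to salvage your route, either settle for a constant of the form $c\,\min(l_1^2,l_2^2)$ (sufficient for the later uses of the lemma, e.g. Theorem~\ref{t-6}) or switch to the eigenvalue-plus-majorant argument.
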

\proof
On the set of the grid functions
$y_i \equiv y(\bm{x}_i) = 0$, $\bm{x}_i \in \partial \omega$,
in view of (\ref{102}), we define the discrete Laplace operator $\Lambda$ via the expression
\begin{equation}\label{105}
  (\Lambda  y)_{i} = - \frac{1}{V_{i}}
  \sum_{j \in \mathcal{W}(i)} l_{ij}
  \frac{y^{j}- y^{i}}{d(\bm{x}_i,\bm{x}_j)},
  \quad \bm{x}_i \in \omega .
\end{equation}
Using this notation, the inequality (\ref{104}) may be rewritten
in the more compact form:
\begin{equation}\label{106}
  \|y\|^{2} \le M_0
  (\Lambda y,y) .
\end{equation}

To estimate the lower bound of the discrete Laplace operator, we employ
the solution of an auxiliary boundary value problem. We will show
that in the inequality (\ref{106}) under consideration, we can put
\begin{equation}\label{107}
  M_0 = \max_{{\bm x} \in \omega} w({\bm x}),
\end{equation}
where $w({\bm x})$ is the solution of the problem
\begin{equation}\label{108}
  \Lambda w = 1,
  \quad {\bm x} \in \omega .
\end{equation}

We consider in $H$  the eigenvalue problem
\begin{equation}\label{109}
  \Lambda y = \lambda y,
  \quad {\bm x} \in \omega
\end{equation}
for the grid operator $\Lambda = \Lambda^* > 0.$ For the problem (\ref{109}),
the following inequality holds:
\begin{equation}\label{110}
  (\Lambda y,y) \ge \lambda_{\min} \|y\|^2
\end{equation}
for any $y({\bm x})$. The equality in (\ref{110}) is achieved only for the eigenfunctions $v({\bm x})$
that correspond to the minimal eigenvalue $\lambda_{\min}.$ Thus,
the estimate (\ref{106}) will be established if we will show that
$M_0^{-1} \le \lambda_{\min}$.

First of all, let us explain that $v({\bm x})$ is a constant-sign
function. Suppose that this is not true and $v({\bm x})$ changes its sign
on the grid $\omega$. Now consider the function $|v({\bm x})|$. Then taking into account
\[
  (\Lambda y, y) =
  \frac{1}{2}  \sum_{\bm{x}_i \in \omega}
  \sum_{j \in \mathcal{W}(i)}
  l_{ij} d(\bm{x}_i,\bm{x}_j)
  \left (\frac{y_j-y_i}{d(\bm{x}_i,\bm{x}_j)} \right)^2 ,
\]
we obtain
\[
  \frac{(\Lambda |v({\bm x})|, |v({\bm x})|)}{(|v({\bm x})|, |v({\bm x})|)}
  < \frac{(\Lambda v({\bm x}), v({\bm x}))}{(v({\bm x}), v({\bm x}))} .
\]
This contradicts the fact that the minimal Rayleigh ratio
\[
(\Lambda y({\bm x}), y({\bm x})) \ (y({\bm x}), y({\bm x}))^{-1}
\]
is achieved for $y({\bm x}) = v({\bm x}).$

From $\Lambda v = \lambda_{\min} v$, we have
\begin{equation}\label{111}
  \lambda_{\min} =
  \frac{(\Lambda v({\bm x}), 1)}{(v({\bm x}), 1)}.
\end{equation}
In view of (\ref{108}), we obtain for the denominator
\[
  (v({\bm x}), 1) = (v({\bm x}), \Lambda w ({\bm x})) =
  (w({\bm x}), \Lambda v({\bm x})) \le
  \max_{{\bm x} \in \omega} w({\bm x}) \ (\Lambda v ({\bm x}), 1),
\]
since $\Lambda v ({\bm x}) \ge 0, \ {\bm x} \in \omega$.
Thus, from (\ref{111}), it follows immediately that $M_0^{-1} \le \lambda_{\min}$, and
for the constant $M_0$, we can use the expression (\ref{107}).

Next, we apply the maximum principle to discrete elliptic equations. 
We place the polygon $\Omega$ into the rectangle
\[
  \Omega_0 = \{ {\bm x} = (x^{(1)}, x^{(2)}) \ | \
  a_{\alpha} \le  x^{(\alpha)} \le b_{\alpha}, \
  \alpha =1,2 \},
\]
where $l_{\alpha} = b_{\alpha} - a_{\alpha}, \ \alpha=1,2.$
Consider the function
\[
  W({\bm x}) =  - \mu (
  (x^{(1)} - a_1) (x^{(1)} - b_1) +
  (x^{(2)} - a_2) (x^{(2)} - b_2))
\]
with some positive constant $\mu$. We write this majorant function as
\[
  W({\bm x}) = - \mu (
  (x^{(1)} - x^{(1)}_i)^2 +
  (x^{(2)} - x^{(2)}_i)^2) +
  g_i({\bm x}),
\]
where $g_i({\bm x})$ is a linear function.

For linear functions, we have
\[
  \Lambda g({\bm x}) = 0 .
\]
To show this, it is sufficient to consider the function $g({\bm x})=x^{(1)}$. 
By virtu of (\ref{105}), we obtain
\[
  \Lambda x^{(1)} =
  - \frac{1}{V_{i}}
  \sum_{j \in \mathcal{W}(i)} l_{ij}
  \frac{x^{(1)}_{j}- x^{(1)}_{i}}{d(\bm{x}_i,\bm{x}_j)} .
\]
We have
\[
  \frac{x^{(1)}_{j}- x^{(1)}_{i}}{d(\bm{x}_i,\bm{x}_j)} =
  \cos (\varphi_{ji}) ,
\]
where $\varphi_{ji}$ is the angle between the segment
$[{\bm x}_i, {\bm x}_j]$ and the axis $Ox^{(1)}$. 
In this case, $l_{ij} \cos (\varphi_{ji})$ is the projection of the $j$-th face 
of the Voronoi polygon  $\Omega_i$ on the axis $Ox^{(1)}$. For a closed polygon
\[
  \Lambda x^{(1)} =
  - \frac{1}{V_{i}}
  \sum_{j \in \mathcal{W}(i)} l_{ij}\cos (\varphi_{ji}) = 0.
\]

Taking this into account, for the function $W({\bm x})$, we obtain directly
\[
  (\Lambda W({\bm x}))_i =
  \mu (\Lambda d^2({\bm x}, {\bm x}_i))_i =
  \frac{\mu }{V_{i}}
  \sum_{j \in \mathcal{W}(i)} l_{ij}
  d(\bm{x}_i,\bm{x}_j) = 4 .
\]
That is why for $\mu = 0.25$, we have
\[
  \Lambda W({\bm x}) = 1,
  \quad {\bm x} \in \omega .
\]
At the boundary nodes
\[
  W({\bm x})  \ge 0,
  \quad {\bm x} \in \partial \omega ,
\]
and therefore the function $W({\bm x})$ is a majorant for the problem
(\ref{108}). For the constant $M_0$ in the inequality (\ref{104}),
the following estimate holds:
\[
  M_0 \le \max_{{\bm x} \in \omega} W({\bm x}) =
  \frac{l_1^2}{16} + \frac{l_2^2}{16} .
\]
This completes the proof of the lemma.
\qed

This makes possible to formulate the main result concerning the properties
of the difference operator of diffusive transport.

\begin{theorem}\label{t-6}
For the grid operator of diffusive transport determined from (\ref{44}), the inequality
\begin{equation}\label{112}
D = D^* \ge \frac{\kappa}{{M}_0} {E}
\end{equation}
is valid for the grid functions from the space $H = L_2(\omega)$. 
\end{theorem}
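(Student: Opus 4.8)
The plan is to combine the two ingredients that have already been assembled earlier in this subsection: the explicit quadratic-form representation (\ref{103}) of the Voronoi diffusion operator and the discrete Friedrichs inequality of Lemma~\ref{l-4}. The self-adjointness $D = D^*$ has already been verified directly from the symmetry of the bilinear form $(Dy,w)$, so only the quantitative lower bound remains to be proved. (Note that the reference to ``(\ref{44})'' in the statement should read (\ref{102}), the difference diffusive transport operator on the Delaunay--Voronoi grid; equation (\ref{44}) is the divergent convective operator.)

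First I would start from the identity (\ref{103}),
\[
(Dy,y) = \frac{1}{2} \sum_{\bm{x}_i \in \omega} \sum_{j \in \mathcal{W}(i)} l_{ij}\, d(\bm{x}_i,\bm{x}_j)\, k(\bm{x}_{ij}) \left(\frac{y_j-y_i}{d(\bm{x}_i,\bm{x}_j)}\right)^{2},
\]
and invoke the uniform lower bound on the diffusion coefficient, $k(\bm{x}_{ij}) \ge \kappa > 0$. Since every summand is non-negative, this replacement gives
\[
(Dy,y) \ge \frac{\kappa}{2} \sum_{\bm{x}_i \in \omega} \sum_{j \in \mathcal{W}(i)} l_{ij}\, d(\bm{x}_i,\bm{x}_j) \left(\frac{y_j-y_i}{d(\bm{x}_i,\bm{x}_j)}\right)^{2}.
\]
Next I would apply Lemma~\ref{l-4}: dividing the inequality (\ref{104}) by $M_0$ shows that the double sum on the right is bounded below by $(2/M_0)\|y\|^{2}$, so that the half-sum above is at least $(1/M_0)\|y\|^{2}$. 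Substituting this into the previous estimate yields $(Dy,y) \ge (\kappa/M_0)\|y\|^{2}$ for all grid functions vanishing on $\partial\omega$, which is precisely the operator inequality (\ref{112}) once combined with the already-known self-adjointness.

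Since Lemma~\ref{l-4} carries the entire analytical content --- it is established through the auxiliary majorant $W(\bm{x})$ and the discrete maximum principle for the Laplace operator $\Lambda$ --- the present argument is purely a matter of chaining the coefficient bound with the Friedrichs estimate, and I do not expect a genuine obstacle here. The only points deserving care are bookkeeping of constants: one must confirm that the $\kappa$ in (\ref{112}) is the same uniform lower bound $k(\bm{x}) \ge \kappa$ used when passing from $k(\bm{x}_{ij})$ to $\kappa$ in the quadratic form, and that the value $M_0 = l_1^2/16 + l_2^2/16$ furnished by Lemma~\ref{l-4} is inherited unchanged. All the geometric subtleties of the Voronoi decomposition were already absorbed into the proof of the discrete Friedrichs lemma, so nothing further about the mesh is needed.
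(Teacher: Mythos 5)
Your proof is correct and follows exactly the route the paper intends: Theorem~\ref{t-6} is stated as an immediate consequence of the self-adjointness and the quadratic-form identity (\ref{103}) combined with the discrete Friedrichs inequality of Lemma~\ref{l-4}, which is precisely your chaining of the coefficient bound $k(\bm{x}_{ij}) \ge \kappa$ with (\ref{104}), and the constants work out as you computed ($\frac{\kappa}{2}\cdot\frac{2}{M_0} = \frac{\kappa}{M_0}$). You are also right that the citation of (\ref{44}) in the statement is a typo for (\ref{102}).
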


It is important that the constant $M_0$ in the Friedrichs inequality
(\ref{104}) is independent of nodes of the computational domain, and the estimate itself
is quite similar to the estimate for the differential operator.

\subsection{Convective transport operators}

In the construction of difference operators of convective transport,
we start with the operator in the divergent form (\ref{44}). Assume that
\begin{equation}\label{113}
  C_2 u = (C_2 u)_{i} \approx \frac{1}{V_{i}} \int\limits_{\Omega_i}
  \mathcal{C}_2 u d \bm{x}
\end{equation}
for the interior nodes of the grid. For the right-hand side, we have
\[
  \int\limits_{\Omega_i} \mathcal{C}_2 u d \bm{x} =
  \int\limits_{\partial \Omega_i} (\bm{v},\bm{n}) u d \bm{x}.
\]
Similarly to the case of the diffusive transport approximation,
the normal component of the velocity is referred to the midpoint of the segment connecting
grid nodes. Introducing notation
\[
  b_{ij} = (\bm{v},\bm{n})(\bm{x}_{ij}) ,
\]
from (\ref{113}), the difference operator of convective transport  is written as
\begin{equation}\label{114}
  (C_2 y)_{i} = \frac{1}{V_{i}}
  \sum_{j \in \mathcal{W}(i)} l_{ij} b_{ij}
  \frac{y_j+y_i}{2},
  \quad \bm{x}_i \in \omega .
\end{equation}

Now we discuss approximations of the convective transport operator in
the nondivergent form (\ref{43}). There do exist several opportunities. 
The first one is connected with the use of a sufficiently
complicated (not so evident) structure for the difference
convective transport operator in the nondivergent form based on
special formulas of approximate integration. But we have
another way. Using the idea  of the method of support operators, first, we design
a simple approximation of the convective transport operator in
the divergent form (\ref{114}). To obtain a difference
approximation of the operator (\ref{43}), we search a difference operator
that is adjoint to (\ref{114}). In doing so, we get
a difference operator of the convective transport in the nondivergent form.
Such an opportunity is essential for developing approximations on irregular grids.

Straightforward calculations yield
\[
\begin{split}
  (C_2 y, w) & =
  \frac{1}{2}  \sum_{\bm{x}_i \in \omega}
  \sum_{j \in \mathcal{W}(i)}
  l_{ij} b_{ij} y_i w_i +
  \frac{1}{2}  \sum_{\bm{x}_i \in \omega}
  \sum_{j \in \mathcal{W}(i)}
  l_{ij} b_{ij} y_j w_i \\
  & =
  \frac{1}{2}  \sum_{\bm{x}_i \in \omega}
  \sum_{j \in \mathcal{W}(i)}
  l_{ij} b_{ij} y_i w_i -
  \frac{1}{2}  \sum_{\bm{x}_i \in \omega}
  \sum_{j \in \mathcal{W}(i)}
  l_{ij} b_{ij} y_i w_j \\
  & =
  - (y, C_1 w),
\end{split}
\]
where we take into account that $b_{ij} = - b_{ji}$. This allows to
define the difference operator
\begin{equation}\label{115}
  (C_1 y)_{i} = \frac{1}{V_{i}}
  \sum_{j \in \mathcal{W}(i)} l_{ij} b_{ij}
  \frac{y_j-y_i}{2},
  \quad \bm{x}_i \in \omega .
\end{equation}
By construction, the difference operators of convective transport in
the divergent and nondivergent forms determined in accordance with
(\ref{114}), (\ref{115}) are adjoint to each other with within the sign, i.e.,
\begin{equation}\label{116}
  C_1^* = - C_2 .
\end{equation}

As for the operator of convective transport in the skew-symmetric form (\ref{45}),
using the representation
\[
  C_0 = \frac{1}{2} (C_1 + C_2) ,
\]
from (\ref{114}) and (\ref{115}), we get the most compact approximation
\begin{equation}\label{117}
  (C_0 y)_{i} = \frac{1}{2 V_{i}}
  \sum_{j \in \mathcal{W}(i)} l_{ij} b_{ij} y_j,
  \quad \bm{x}_i \in \omega .
\end{equation}
The primary feature of this difference operator is
\begin{equation}\label{118}
  C_0^* = - C_0 ,
\end{equation}
and moreover, this skew-symmetric property is true for any velocity field --- it is valid
for arbitrary vectors $\bm{v}$, not necessarily satisfying some difference analogue of
the incompressibility constraint
\begin{equation}\label{119}
 \div {\bm v} \equiv
 \sum_{\alpha=1}^{2} 
 \frac{\partial v_\alpha}{\partial x^{(\alpha)}}  = 0, \quad {\bm x} \in
\Omega .
\end{equation}

To study difference analogs of the boundedness of the convective transport operator
and its subordination to the diffusive transport operator,
we discuss the important features of the approximations (\ref{114}) and (\ref{115}) in detail.

For numerical solving continuum mechanics problems, it is essential
to have consistent approximations of the convective transport operator 
in the divergent and nondivergent forms. The consistency
is treated in the sense that one difference operator coincides with
other operator if the corresponding difference incompressibility constraint holds.
This issue is very important due to the fact that this equivalence takes place
for the differential equations, and just it ensures the fulfillment of several conservation laws.
In particular, for the incompressible Navier-Stokes equations,
the convective transport operator in the momentum equation
makes no contribution neither to the kinetic energy, nor to the individual momentum components 
(we speak of energy neutrality and neutrality with respect to the momentum). 
Unfortunately, elementary approximations ensure only one of these properties --- either for the kinetic
energy or for the momentum.

For an incompressible fluid (the constraint (\ref{119})),
at the differential level, we can use any of the above-mentioned three
equivalent forms of the convective terms (\ref{43})--(\ref{45}).
It follows from the formula of vector analysis:
\[
   {\rm div} ({\bm v} u) =
   ({\bm v}\cdot {\rm grad}u) +
   u~ {\rm div} {\bm v},
\]
which, in turn, is based on the differentiation formula
of the product of two functions. For the operators of convective transport, we have
\begin{equation}\label{120}
  \mathcal{C}_2 u = \mathcal{C}_1 u + \div \bm{v}~ u .
\end{equation}

In constructing difference approximations for the convective transport operator,
it seems natural to develop such difference operators that satisfies the property
(\ref{120}) of differential operators, i.e., the property of equivalence of various difference
approximations.

From (\ref{114}), (\ref{115}), we get
\[
  C_2 y = C_1 y + \frac{1}{V_{i}}
  \sum_{j \in \mathcal{W}(i)} l_{ij} b_{ij} y_i,
  \quad \bm{x}_i \in \omega .
\]
Transform this equality to the form that is similar to (\ref{120}):
\begin{equation}\label{121}
  C_2 y = C_1 y + \div_h\bm{v}~ y ,
\end{equation}
where the difference operator of divergence is
\begin{equation}\label{122}
  \div_h\bm{v} =  \frac{1}{V_{i}}
  \sum_{j \in \mathcal{W}(i)} l_{ij} b_{ij},
  \quad \bm{x}_i \in \omega .
\end{equation}

An approximation of the divergence for a vector by means of (\ref{122}) is obtained
using the definition
\[
  \div \bm{v} =
  \lim_{\delta \rightarrow 0} \frac{\int\limits_{\partial V} {\bm f} \ d {\bm S}}{\int\limits_V
  dV},
\]
where $V$ denotes the area, $\partial V$ stands for the boundary of the domain, and $\delta$
is the domain diameter. The expression (\ref{122}) may be treated as
the corresponding quadrature formula for the right-hand side in
the integration over the control volume for the node $\bm{x}_i \in \omega$.

In view of (\ref{121}), we write
\begin{equation}\label{123}
\begin{split}
  & C_1 y = C_0 y - \frac{1}{2}  \div_h\bm{v}~ y, \\ 
  & C_2 y = C_0 y + \frac{1}{2} \div_h\bm{v}~ y.
\end{split}  
\end{equation}
In view of (\ref{118}), from (\ref{123}), it follows immediately that
\begin{equation}\label{124}
  |(C_{\alpha} y,y)| \le M_1 \|y\|^2,
  \quad \alpha =1,2,
\end{equation}
with a constant $M_1$ that depends only on the compressibility of a medium
(at the discrete level), i.e.,
\begin{equation}\label{125}
  M_1 = \frac 12 \|\div_h {\bm v} \|_{\infty}.
\end{equation}
Here we use notation
\[
  \| y \|_{\infty} = \max_{\bm{x}_i \in \omega}
  |y(\bm{x}_i)|
\]
for the norm in $L_{\infty}(\omega)$. To prove (\ref{124}),
multiply equation (\ref{123}) scalarly by $y$ and
apply the skew-symmetric property of the operator $C_0$.

To obtain a difference analogue for the inequality representing
the subordination of the difference convective transport operator
to the difference diffusive transport operator, we start with the upper bound for the expression
\begin{equation}\label{126}
  \|C_1 y\|^{2} =
  \sum_{\bm{x}_i \in \omega} \frac{1}{V_{i}}
  \left (
  \sum_{j \in \mathcal{W}(i)} l_{ij} b_{ij}
  \frac{y_j-y_i}{2} \right )^2 .
\end{equation}
For the right-hand side, we have
\[
  \sum_{\bm{x}_i \in \omega} \frac{1}{V_{i}}
  \left (
  \sum_{j \in \mathcal{W}(i)} l_{ij} b_{ij}
  \frac{y_j-y_i}{2} \right )^2
  \le
\]
\[
  \quad
  \le \max_{\bm{x}_i \in \omega} \max_{j \in \mathcal{W}(i)}
  |b_{ij}|^{2}
  \sum_{\bm{x}_i \in \omega} \frac{1}{4 V_{i}}
  \left (
  \sum_{j \in \mathcal{W}(i)}
  l_{ij} d(\bm{x}_i,\bm{x}_j)
  \frac{y_j-y_i}{d(\bm{x}_i,\bm{x}_j)} \right )^2 .
\]
Taking into account the inequality
\[
  \left (\sum_{j \in \mathcal{W}(i)}
  \xi_j \theta_j \right )^{2} \le
  \sum_{j \in \mathcal{W}(i)} \xi_j^2
  \sum_{j \in \mathcal{W}(i)} \theta_j^2 ,
\]
and assuming
\[
  \xi_j = (l_{ij} d(\bm{x}_i,\bm{x}_j))^{1/2},
  \quad \theta_j = (l_{ij} d(\bm{x}_i,\bm{x}_j))^{1/2}
  \frac{y_j-y_i}{d(\bm{x}_i,\bm{x}_j)},
\]
we obtain
\[
  \sum_{\bm{x}_i \in \omega} \frac{1}{4 V_{i}}
  \left (
  \sum_{j \in \mathcal{W}(i)}
  l_{ij} d(\bm{x}_i,\bm{x}_j)
  \frac{y_j-y_i}{d(\bm{x}_i,\bm{x}_j)} \right )^2 \le
\]
\[
  \le
  \sum_{\bm{x}_i \in \omega} \frac{1}{4 V_{i}}
  \sum_{j \in \mathcal{W}(i)}
  l_{ij} d(\bm{x}_i,\bm{x}_j)
  \sum_{j \in \mathcal{W}(i)}
  l_{ij} d(\bm{x}_i,\bm{x}_j)
  \left ( \frac{y_j-y_i}{d(\bm{x}_i,\bm{x}_j)} \right )^2 .
\]
In view of
\[
  \sum_{j \in \mathcal{W}(i)}
  l_{ij} d(\bm{x}_i,\bm{x}_j)
  = 4V_i ,
\]
substitution into (\ref{126}) yields
\begin{equation}\label{127}
  \|C_1 y\|^{2}
  \le \max_{\bm{x}_i \in \omega} \max_{j \in \mathcal{W}(i)}
  |b_{ij}|^{2}
  \sum_{\bm{x}_i \in \omega} \frac{1}{V_{i}}
  \sum_{j \in \mathcal{W}(i)}
  l_{ij} d(\bm{x}_i,\bm{x}_j)
  \left ( \frac{y_j-y_i}{d(\bm{x}_i,\bm{x}_j)} \right )^2 .
\end{equation}

Comparing (\ref{127}) with (\ref{103}), we obtain
\begin{equation}\label{128}
  \|C y\|^{2} \le M_2 (Dy,y),
\end{equation}
where $C= C_1$ and the constant
\[
  M_2 = \frac{2}{\kappa_1}
  \max_{\bm{x}_i \in \omega} \max_{j \in \mathcal{W}(i)}
  |b_{ij}|^{2} ,
\]
with $k(\bm{x}) \geq  \kappa_1 > 0$.

For the difference convective transport operator in
the divergent form (\ref{114}), we use the difference analogue
of the Friedrichs inequality in the form (\ref{104}). 
By the representation (\ref{121}), we obtain immediately:
\[
  \|C_2y\|^{2} = \|C_1 y + \div_{h}\bm{v} y\|^{2} \le
  2 \|C_1 y\|^{2} + 2 \|\div_{h}\bm{v} y\|_{2} .
\]
From (\ref{103}) and (\ref{104}), for the last term in the right-hand side, we have
\[
  \|\div_{h} \bm{v} y\|_{2} \le \|\div_{h} \bm{v}\|_{\infty}^{2}
  \frac{M_{0}}{2}
  \sum_{\bm{x}_i \in \omega}
  \sum_{j \in \mathcal{W}(i)}
  l_{ij} d(\bm{x}_i,\bm{x}_j)
  \left (\frac{y_i-y_i}{d(\bm{x}_i,\bm{x}_j)} \right)^2 .
\]
Taking into account (\ref{103}) (with $C= C_1$), we arrive at the estimate (\ref{128})
for the operator $C= C_2$, where
\[
  M_2 = \frac{2}{\kappa_1} \left (
  2 \max_{\bm{x}_i \in \omega} \max_{j \in \mathcal{W}(i)}
  |b_{ij}|^{2} +
  M_{0} \|\div_{h} \bm{v}\|_{\infty}^{2} \right ).
\]

For the difference operator of convective transport in the skew-symmetric
form (\ref{117}), in a similar way, we establish the inequality (\ref{128})
with the constant
\[
  M_2 = \frac{1}{\kappa_1} \left (
  3 \max_{\bm{x}_i \in \omega} \max_{j \in \mathcal{W}(i)}
  |b_{ij}|^{2} +
  M_{0} \|\div_{h} \bm{v}\|_{\infty}^{2} \right ).
\]

\begin{theorem}\label{t-7}
For the grid operators of convective transport defined in accordance with (\ref{114}), (\ref{115}) and
(\ref{117}), in the space of grid functions $H = L_2(\omega)$, the properties
(\ref{116}) and (\ref{118}) are valid along with
the estimate of the operator energy boundedness (\ref{124}), and 
the estimate of subordination (\ref{128})
to the difference operator of diffusive transport (\ref{102}) hold.
\end{theorem}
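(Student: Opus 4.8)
The plan is to assemble the four asserted properties from the explicit forms (\ref{114}), (\ref{115}), (\ref{117}) of the operators, all of which reduce to summation-by-parts manipulations of the double sum over interior nodes $\bm{x}_i \in \omega$ and their neighbors $j \in \mathcal{W}(i)$. The decisive structural fact is the antisymmetry of the face fluxes $b_{ij} = -b_{ji}$, together with the symmetry $l_{ij} = l_{ji}$ of the Voronoi edge lengths.

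First I would establish the adjointness (\ref{116}). Starting from $(C_2 y, w)$ with $C_2$ as in (\ref{114}), I split the numerator $y_j + y_i$ and re-index the cross term $\sum_i \sum_j l_{ij} b_{ij} y_j w_i$ by interchanging $i$ and $j$; the relation $b_{ij} = -b_{ji}$ converts this into $-\sum_i \sum_j l_{ij} b_{ij} y_i w_j$, and the whole expression collapses to $-(y, C_1 w)$ with $C_1$ given by (\ref{115}). Hence $C_1^* = -C_2$. The skew-symmetry (\ref{118}) is then immediate from the definition $C_0 = \frac{1}{2}(C_1 + C_2)$: taking adjoints and using (\ref{116}) gives $C_0^* = \frac{1}{2}(C_1^* + C_2^*) = -\frac{1}{2}(C_2 + C_1) = -C_0$. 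I would stress that, in contrast to the differential setting, no discrete incompressibility constraint is needed here, since $b_{ij} = -b_{ji}$ is built into the construction of the coefficients.

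For the energy boundedness (\ref{124}) I would use the decomposition (\ref{123}), namely $C_\alpha y = C_0 y \mp \frac{1}{2}\div_h\bm{v}\, y$. Forming the inner product with $y$ and invoking $(C_0 y, y) = 0$ leaves only $\mp\frac{1}{2}(\div_h\bm{v}\, y, y)$, bounded in modulus by $\frac{1}{2}\|\div_h\bm{v}\|_\infty \|y\|^2$, which is precisely (\ref{124}) with the constant (\ref{125}).

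The subordination estimate (\ref{128}) carries the actual work. For $C = C_1$ I would bound $\|C_1 y\|^2$ by extracting $\max_{i,j}|b_{ij}|^2$ and applying the Cauchy--Schwarz inequality to the inner sum with the weights $\xi_j = (l_{ij} d(\bm{x}_i,\bm{x}_j))^{1/2}$ and $\theta_j = \xi_j (y_j - y_i)/d(\bm{x}_i,\bm{x}_j)$; the geometric identity $\sum_{j \in \mathcal{W}(i)} l_{ij} d(\bm{x}_i,\bm{x}_j) = 4 V_i$ clears the $1/V_i$ normalization, and comparison with the quadratic form (\ref{103}) for $D$ yields the stated constant $M_2 = 2\kappa_1^{-1}\max|b_{ij}|^2$. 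For $C = C_2$ I would invoke the splitting (\ref{121}), estimate $\|C_2 y\|^2 \le 2\|C_1 y\|^2 + 2\|\div_h\bm{v}\, y\|^2$, and control the zeroth-order term by the discrete Friedrichs inequality (\ref{104}) of Lemma~\ref{l-4}; the case $C = C_0$ follows from $C_0 = \frac{1}{2}(C_1 + C_2)$ in the same manner. The main obstacle is precisely this matching step: arranging the Cauchy--Schwarz weights so that what remains coincides exactly with the diffusion energy (\ref{103}), which hinges on the Voronoi identity $\sum_{j} l_{ij} d(\bm{x}_i,\bm{x}_j) = 4 V_i$ and, through Lemma~\ref{l-4}, on the discrete Friedrichs inequality.
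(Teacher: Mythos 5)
Your proposal is correct and follows essentially the same route as the paper: the same re-indexing argument with $b_{ij}=-b_{ji}$, $l_{ij}=l_{ji}$ for the adjointness (\ref{116}) and skew-symmetry (\ref{118}), the same decomposition (\ref{123}) with $(C_0y,y)=0$ for the boundedness (\ref{124}), (\ref{125}), and the same Cauchy--Schwarz argument with weights $\xi_j=(l_{ij}d(\bm{x}_i,\bm{x}_j))^{1/2}$, the identity $\sum_{j\in\mathcal{W}(i)}l_{ij}d(\bm{x}_i,\bm{x}_j)=4V_i$, the splitting (\ref{121}), and the discrete Friedrichs inequality of Lemma~\ref{l-4} for the subordination (\ref{128}). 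No gaps; your identification of the weight-matching step against the diffusion energy (\ref{103}) as the crux is exactly where the paper's argument also concentrates.
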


The above estimates (\ref{124}) and (\ref{128}) for difference operators
of convective transport are fully consistent with the continuous case.
They serve us as the basis for the study of difference convection-diffusion problems.

\subsection{Monotone approximations on triangular grids}

Monotone approximations on triangular grids are constructed \cite{vabmon1} similarly
to other grids. We separate the positive and negative parts of the normal
velocity component putting
\[
  b_{ij} = b_{ij}^{+} + b_{ij}^{-},
\]
where
\[
  b_{ij}^{+} = \frac{1}{2} (b_{ij} + | b_{ij} | ),
\]
\[
  b_{ij}^{-} = \frac{1}{2} (b_{ij} - | b_{ij} | ) .
\]

To approximate the right-hand side of (\ref{113}), we will use
the value of the grid function either in the central or in the peripheral
node depending on the sign of the velocity. This leads us to
the difference operator of convective transport in the form 
\begin{equation}\label{129}
  (C_2 y)_{i} = \frac{1}{V_{i}}
  \sum_{j \in \mathcal{W}(i)} l_{ij}
  (b_{ij}^{-} y_j + b_{ij}^{+} y_i),
  \quad \bm{x}_i \in \omega .
\end{equation}

If we apply the difference divergence operator, then,
for the difference analog of (\ref{43}), we have the expression
\begin{equation}\label{130}
  (C_1 y)_{i} = \frac{1}{V_{i}}
  \sum_{j \in \mathcal{W}(i)} l_{ij} b_{ij}^{-}
  (y_j-y_i),
  \quad \bm{x}_i \in \omega .
\end{equation}

Thus, we have developed the approximations (\ref{129}) and (\ref{130})
for the convective transport operators in the divergent (\ref{44}) and
nondivergent (\ref{43}) forms using the upwind differences.

For the boundary value problems (\ref{88}), (\ref{89}) and (\ref{89}), (\ref{94}),
we put into the correspondence the difference problems
\begin{equation}\label{131}
  Cy + Dy = \varphi(\bm{x}),
  \quad \bm{x} \in \omega ,
\end{equation}
for the grid functions 
$y(\bm{x}) = 0, \ \bm{x} \in \partial \omega$. 
For the right-hand side of (\ref{131}), suppose, e.g.,
\[
  \varphi(\bm{x}) =
  \frac{1}{V_i} \int\limits_{\Omega_i} f(\bm{x}) \ d \bm{x},
  \quad \bm{x} \in \omega.
\]

To employ the fulfillment of the  maximum principle at the discrete level, 
we rewrite the difference problem (\ref{131}) in the form
\begin{equation}\label{132}
  \alpha_i y_i -
  \sum_{j \in \mathcal{W}(i)} \beta_{ij} y_j = \phi_{i},
  \quad \bm{x}_i \in \omega,
\end{equation}
\begin{equation}\label{133}
  y_i = 0,
  \quad \bm{x} \in \partial \omega .
\end{equation}
Assume that $\overline{\omega}$  is a connected grid.

For the difference problem (\ref{132}), (\ref{133}), the maximum principle is valid,
i.e., the difference scheme is monotone \cite{2001Samarskii} 
under the following restrictions:
\begin{equation}\label{134}
  \alpha_i  > 0,
  \quad \beta_{ij} > 0,
  \quad   j \in \mathcal{W}(i),
\end{equation}
\begin{equation}\label{135}
  \delta_i \equiv \alpha_i  -
  \sum_{j \in \mathcal{W}(i)} \beta_{ij} \geq 0,
  \quad \bm{x}_i \in \omega.
\end{equation}

The difference scheme (\ref{131}) for the problem (\ref{88}), (\ref{89}) 
based on the approximations (\ref{102}) and (\ref{130}) may be written in
the form (\ref{132}), (\ref{133}) with the coefficients
\[
   \alpha_i =  - \frac{1}{V_{i}}
  \sum_{j \in \mathcal{W}(i)} l_{ij} b_{ij}^{-} +
  \frac{1}{V_{i}}
  \sum_{j \in \mathcal{W}(i)} l_{ij}
  k(\bm{x}_{ij})
  \frac{1}{d(\bm{x}_i,\bm{x}_j)},
\]
\[
  \beta_{ij} =
  - \frac{1}{V_{i}}
  l_{ij} b_{ij}^{-} +
  \frac{1}{V_{i}}
  k(\bm{x}_{ij})
  \frac{l_{ij}}{d(\bm{x}_i,\bm{x}_j)},
  \quad   j \in \mathcal{W}(i),
\]
\[
  \delta_i = 0,
  \quad \bm{x}_i \in \omega .
\]
The monotonicity conditions (\ref{134}), (\ref{135}) are unconditionally valid.

Now consider the scheme (\ref{131}) with
the difference operator of convective transport  $C = C_1$ defined
according to (\ref{115}). The scheme (\ref{102}), (\ref{115}), (\ref{131}) may be
represented in the canonical form (\ref{132}), (\ref{133}) with
\[
   \alpha_i =  - \frac{1}{2 V_{i}}
  \sum_{j \in \mathcal{W}(i)} l_{ij} b_{ij} +
  \frac{1}{V_{i}}
  \sum_{j \in \mathcal{W}(i)} l_{ij}
  k(\bm{x}_{ij})
  \frac{1}{d(\bm{x}_i,\bm{x}_j)},
\]
\[
  \beta_{ij} =
  - \frac{1}{2 V_{i}}
  l_{ij} b_{ij} +
  \frac{1}{V_{i}}
  k(\bm{x}_{ij})
  \frac{l_{ij}}{d(\bm{x}_i,\bm{x}_j)},
  \quad   j \in \mathcal{W}(i),
\]
\[
  \delta_i = 0,
  \quad \bm{x}_i \in \omega .
\]

Define a local grid Peclet number as follows:
\[
  {\rm Pe}_{ij} =
   \frac{|b_{ij}| d(\bm{x}_i,\bm{x}_j)}
   {k(\bm{x}_{ij})}
  \quad   j \in \mathcal{W}(i),
  \quad \bm{x}_i \in \omega .
\]
The monotonicity condition (\ref{134}) leads to the restrictions
\begin{equation}\label{136}
  {\rm Pe}_{ij} < 2,
  \quad   j \in \mathcal{W}(i),
  \quad \bm{x}_i \in \omega .
\end{equation}
Such restrictions are typical if we apply the standard central-difference
approximations on regular grids.

For the convection-diffusion equation with the divergent convective
terms (\ref{89}), (\ref{94}), the use of the upwind
approximations (\ref{102}) and (\ref{129}) yields
\[
   \alpha_i =   \frac{1}{V_{i}}
  \sum_{j \in \mathcal{W}(i)} l_{ij} b_{ij}^{+} +
  \frac{1}{V_{i}}
  \sum_{j \in \mathcal{W}(i)} l_{ij}
  k(\bm{x}_{ij})
  \frac{1}{d(\bm{x}_i,\bm{x}_j)},
\]
\[
  \beta_{ij} =
  - \frac{1}{V_{i}}
  l_{ij} b_{ij}^{-} +
  \frac{1}{V_{i}}
  k(\bm{x}_{ij})
  \frac{l_{ij}}{d(\bm{x}_i,\bm{x}_j)},
  \quad   j \in \mathcal{W}(i),
\]
\[
  \delta_i = {\rm div}_h\bm{v},
  \quad \bm{x}_i \in \omega .
\]
Thus, the standard monotonicity conditions (\ref{134}), (\ref{135})
are valid only if ${\rm div}_h\bm{v} \ge 0$.

A similar situation occurs in the consideration of difference schemes
on rectangular grids. In this case, the unconditional fulfillment
of the maximum principle for schemes with the upwind differences
designed for the difference equation (\ref{131}) may be associated with
diagonal dominance by columns rather than by rows (as the conditions (\ref{134}), (\ref{135})).
The second possibility, which seems more promising for schemes on unstructured grids, 
involves the establishment of the maximum principle in the standard formulation for the conjugate problem.

Consider the operator that is adjoint to $C_2$ and is defined according to (\ref{129}).
Taking into account  that
$l_{ij}=l_{ji}, \ b_{ij}^{+} = - b_{ji}^{-}$, 
we obtain
\[
  (C_2 y,v) =
  \sum_{\bm{x}_i \in \omega}
  \sum_{j \in \mathcal{W}(i)} l_{ij}
  (b_{ij}^{-} y_j + b_{ij}^{+} y_i) v_i =
  \sum_{\bm{x}_i \in \omega}
  \sum_{j \in \mathcal{W}(i)} l_{ij}
  y_i b_{ij}^{+} (v_i - v_j) .
\]
Therefore
\begin{equation}\label{137}
  C_2^{*} v =
  \frac{1}{V_{i}}
  \sum_{j \in \mathcal{W}(i)} l_{ij} b_{ij}^{+}
  (v_i - v_j),
  \quad \bm{x}_i \in \omega .
\end{equation}

As for the adjoint problem
\begin{equation}\label{138}
  C_2^{*} v + D v = \varphi(\bm{x}),
  \quad \bm{x} \in \omega ,
\end{equation}
the unconditional fulfillment of the maximum principle
written in the standard formulation is established in a usual fashion. 
Recall that we speak of the formulation for the maximum principle in the following form ---
if the conditions (\ref{134}), (\ref{135}) are true, 
then the solution of the problem \ref{132}), (\ref{133}) is non-negative (nonpositive)
for the non-negative (nonpositive) right-hand side (\ref{132}).

Now we show that from the fulfillment of the maximum principle for the adjoint
problem, it follows that the maximum principle is satisfied for the original problem.
For each $\bm{x}_i \in \omega$, we define the grid function
\[
  \delta_h (\bm{x} - \bm{x}_i) =
  \left \{
  \begin{array}{ll}
  {\displaystyle \frac{1}{V_i},} & \bm{x} = \bm{x}_i,
  \\[10pt]
  0, & \bm{x} \neq \bm{x}_i .
  \end{array}
  \right .
\]
Suppose that the grid function $G(\bm{x}, \bm{x}_i)$ for
the given $\bm{x}_i \in \omega$ is the solution of the problem
\begin{equation}\label{139}
  C_2^* G + D G = \delta_h (\bm{x} - \bm{x}_i) ,
  \quad \bm{x} \in \omega .
\end{equation}
Due to the fact that the maximum principle holds for the adjoint difference problem (\ref{129}),
we have $G(\bm{x}, \bm{x}_i) \ge 0$.

Multiply equation (\ref{139}) scalarly by the solution of the original boundary value problem
\[
  C_2 y + D y = \varphi(\bm{x}),
  \quad \bm{x} \in \omega .
\]
By virtu of (\ref{139}), the solution is represented as
\[
  y(\bm{x}_i) = (G(\bm{x},
  \bm{x}_i),\varphi(\bm{x})) .
\]
Thus, we get $y(\bm{x}_i) \ge 0, \ \bm{x} \in \omega$.
Therefore, the maximum principle holds also for the original problem
(\ref{102}), (\ref{129}), (\ref{131}).

The study of the difference scheme (\ref{102}), (\ref{114}), (\ref{131}) is conducted in as similar way. 
The monotonicity of the above difference scheme (\ref{102}), (\ref{114}), (\ref{131}) 
is established under the restrictions (\ref{136}). Our investigation results in
the following statement.

\begin{theorem}\label{t-8}
The upwind difference schemes 
(\ref{102}), (\ref{130}), (\ref{131}) and (\ref{102}), (\ref{129}), (\ref{131})
for the convection-diffusion equations
(\ref{102}), (\ref{130}), (\ref{131}) and (\ref{102}), (\ref{129}), (\ref{131})
are unconditionally monotone, 
whereas the schemes
(\ref{102}), (\ref{115}), (\ref{131}) and (\ref{102}), (\ref{114}), (\ref{131})
satisfy the maximum principle under the restrictions (\ref{136}). 
\end{theorem}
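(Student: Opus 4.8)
The plan is to reduce each of the four schemes to the canonical five-point form (\ref{132}), (\ref{133}) and then invoke the discrete maximum principle, which guarantees sign-definiteness of the solution once the coefficient conditions (\ref{134}), (\ref{135}) hold. For three of the four schemes the canonical coefficients $\alpha_i$, $\beta_{ij}$, $\delta_i$ have already been displayed above, so verifying (\ref{134}), (\ref{135}) is mechanical; the one genuinely delicate case is the conservative (divergent) upwind scheme on a compressible velocity field, where the row-sum defect $\delta_i = \div_h \bm{v}$ need not be non-negative. That case is the main obstacle and will be handled by a duality argument.

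First I would treat the two upwind schemes. For the nondivergent upwind scheme (\ref{102}), (\ref{130}), (\ref{131}) the displayed coefficients give $\delta_i = 0$, and since $b_{ij}^- \le 0$ the off-diagonal entries $\beta_{ij}$ and the diagonal $\alpha_i$ are strictly positive with no restriction on the data; hence (\ref{134}), (\ref{135}) hold unconditionally and the scheme is monotone. For the divergent upwind scheme (\ref{102}), (\ref{129}), (\ref{131}) the same computation yields $\beta_{ij} > 0$ and $\alpha_i > 0$, but now $\delta_i = \div_h \bm{v}$, so (\ref{135}) can fail whenever the discrete divergence is negative. This is exactly where the straightforward row-based principle breaks down.

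To repair the compressible case I would pass to the adjoint problem. The operator $C_2^{\ast}$ computed in (\ref{137}) produces, together with $D$, a canonical scheme (\ref{138}) whose coefficients satisfy (\ref{134}), (\ref{135}) unconditionally --- this is diagonal dominance by rows for the adjoint, equivalently dominance by columns for the original. Consequently the adjoint problem obeys the maximum principle in standard form. I then introduce the discrete delta function and solve (\ref{139}) for the grid Green's function $G(\bm{x},\bm{x}_i)$, which is therefore non-negative. Pairing (\ref{139}) scalarly with the solution $y$ of the original divergent problem gives the representation $y(\bm{x}_i) = (G(\bm{x},\bm{x}_i), \varphi)$, whence $y \ge 0$ whenever $\varphi \ge 0$. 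This establishes the maximum principle for the divergent upwind scheme with no constraint on $\div_h \bm{v}$, completing the unconditional-monotonicity claim for both upwind schemes.

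Finally I would dispose of the two central-difference schemes (\ref{102}), (\ref{115}), (\ref{131}) and (\ref{102}), (\ref{114}), (\ref{131}). Reducing to canonical form, positivity of the off-diagonal coefficients now demands $k(\bm{x}_{ij})/d(\bm{x}_i,\bm{x}_j) - \tfrac{1}{2}|b_{ij}| > 0$, which is precisely the local grid Peclet restriction ${\rm Pe}_{ij} < 2$ of (\ref{136}). Under (\ref{136}) one has $\beta_{ij} > 0$ and $\alpha_i > 0$; for the nondivergent scheme $\delta_i = 0$, so (\ref{135}) holds directly, while for the divergent scheme either $\div_h \bm{v} \ge 0$ suffices or, in full generality, the duality argument above applies verbatim. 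Collecting these four cases yields the statement.
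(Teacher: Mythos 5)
Your proposal is correct and follows essentially the same route as the paper: reduction of each scheme to the canonical form (\ref{132}), (\ref{133}) with direct verification of the coefficient conditions (\ref{134}), (\ref{135}) (unconditional for the nondivergent upwind scheme, under the Peclet restriction (\ref{136}) for the central-difference schemes), and the adjoint/Green's-function duality argument via (\ref{137})--(\ref{139}) to handle the divergent upwind scheme, where $\delta_i = \div_h \bm{v}$ may be negative. The paper's proof is exactly this, including the observation that row dominance for the adjoint problem corresponds to column dominance for the original one.
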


The above approximations for elliptic operators of convection-diffusion
are used for discretization in space on irregular grids for
numerical solving time-dependent problems.

\section{Discretization in time} 

Discretization in space results in the Cauchy problem for systems of ODEs
treated as an operator-differential equation in the appropriate spaces.
Two- or three-level difference schemes are used for numerical solving these equations. 
This part of the work discusses issues of constructing unconditionally stable schemes for
the approximate solution of unsteady convection-diffusion problems.
The investigation is based on the general theory of stability (well-posedness)
for operator-difference schemes.

\subsection{Two-level operator-difference schemes}

We start with the key concepts of the stability theory for operator-difference schemes 
considered in finite-dimensional Hilbert spaces.
Next, for two-level difference schemes, we formulate criteria of stability 
with respect to the initial data. And finally, typical estimates for stability 
with respect to the initial data and the right-hand side are presented.

For simplicity, we define a uniform grid in time as follows:
\[
  \bar\omega_\tau =
  \omega_\tau\cup \{T\} =
  \{t^n=n\tau,
  \quad n=0,1,...,N_0,
  \quad \tau N_0=T\} .
\]
Denote by $A,B:H\to H$ linear operators in $H$ depending, in general, on $\tau$, $t^n$.
Consider the Cauchy problem for an operator-difference equation
\begin{equation}\label{140}
  B(t^n)\frac {y^{n+1}-y^n}{\tau}+A(t^n)y^n=\varphi^n,
  \quad t^n\in\omega_\tau,
\end{equation}
\begin{equation}\label{141}
  y^0=u^0,
\end{equation}
where $y^n=y(t^n)\in H$ is a desired function and $\varphi^n,u^0\in H$ are given. We
use the index-free notation of the theory of difference schemes:
\[
  y=y^n,
  \quad \hat y=y^{n+1},
  \quad \check y=y^{n-1},
\]
\[
  y^{\bar t}=\frac {y-\check y}{\tau},
  \quad y_t=\frac {\hat y-y}{\tau}.
\]
Then equation (\ref{140}) may be written as
\begin{equation}\label{142}
  By_t+Ay=\varphi,
  \quad t\in\omega_\tau.
\end{equation}

We define a two-level difference scheme as a set of the Cauchy problems
(\ref{140}), (\ref{141}) that depend on the parameter $\tau$. The formulation
(\ref{140}), (\ref{141}) (as well as (\ref{141}), (\ref{142})) 
is called the canonical form of two-level schemes.

For solvability of the Cauchy problem at a new time level, it is assumed
that $B^{-1}$ does exist. Then equation (\ref{142}) may be written as
\begin{equation}\label{143}
  \hat y = Sy + \tau\tilde\varphi,
  \quad S=E-\tau B^{-1}A,
  \quad \tilde\varphi = B^{-1}\varphi,
\end{equation}
where, as usual, $E$ is the identity operator. The operator $S$ is called the
transition operator of the two-level scheme (the transition from a current
time level to the next one).

A two-level scheme is called stable if there exist positive constants $m_1$ and $m_2$, 
independent of $\tau$, $u^0$,  and $\varphi$, such that for any 
$u^0\in H$, $\varphi\in H$, $t\in\bar\omega_\tau$,
for the solution of (\ref{140}), (\ref{141}), the following estimate is valid:
\begin{equation}\label{144}
  \|y^{n+1}\|\le m_1\|u^0\|+
  m_2\max_{0\le \theta \le t^n}\|
  \varphi(\theta)\|_*,
  \quad t^n \in \omega_{\tau},
\end{equation}
where $\|\cdot\|$ and $\|\cdot\|_*$ are some norms. The inequality (\ref{144}) 
reflects the continuous dependence of the solution of (\ref{140}), (\ref{141})
on the input data

The difference scheme
\begin{equation}\label{145}
  B(t^n)\frac {y^{n+1}-y^n}{\tau}+A(t^n)y^n=0,
  \quad t^n\in\omega_\tau,
\end{equation}
\begin{equation}\label{146}
  y^0=u^0
\end{equation}
is called stable with respect to the initial data if for the solution of
(\ref{145}), (\ref{146}), the following estimate holds:
\begin{equation}\label{147}
  \|y^{n+1}\|\le m_1\|u^0\|,
  \quad t^n \in \omega_{\tau} .
\end{equation}
The two-level difference scheme
\begin{equation}\label{148}
  B(t^n)\frac {y^{n+1}-y^n}{\tau}+A(t^n)y^n=\varphi^n,
  \quad t^n\in\omega_\tau,
\end{equation}
\begin{equation}\label{149}
  y^0=0
\end{equation}
is called stable with respect to the right-hand side if the solution satisfies the inequality
\begin{equation}\label{150}
  \|y^{n+1}\|\le
  m_2\max_{0\le \theta \le t^n}\|
  \varphi(\theta)\|_*,
  \quad t^n \in \omega_{\tau} .
\end{equation}

The difference scheme (\ref{145}), (\ref{146}) is said to be $\rho$-stable (uniformly
stable) with respect to the initial data in $H_D$ if there exist constants $\rho>0$
and $m_1$, independent of $\tau$ and $n$, such that for any $n$ and all $y^n\in H$, the solution
$y^{n+1}$ of the difference equation (\ref{145}) satisfies the estimate
\begin{equation}\label{151}
  \|y^{n+1}\|_D \le\rho \|y^n\|_D,
  \quad t^n \in \omega_{\tau} ,
\end{equation}
and $\rho^n\le m_1$.

In the theory of difference schemes, one of the following quantities is selected as $\rho$:
\[
\begin{array}{lll}
  \rho=1,\\[5pt]
  \rho=1+c\tau,\quad c>0,\\[5pt]
  \rho=\exp{(c\tau)},
\end{array}
\]
where a constant $c$ is independent of $\tau$, $n$.

In view of (\ref{145}), rewrite equation (\ref{143}) in the form
\begin{equation}\label{152}
  y^{n+1}=Sy^n.
\end{equation}
The requirement of $\rho$-stability is equivalent to the fulfillment of the bilateral
operator inequality
\begin{equation}\label{153}
  -\rho D\le DS \le \rho D,
\end{equation}
if $DS$ is self-adjoint ($DS = S^*D$). For an arbitrary operator of transition in
(\ref{152}), the condition of $\rho$-stability is given by
\begin{equation}\label{154}
S^* D S \le \rho^2 D.
\end{equation}

Let us formulate the discrete analog of Gronwall's lemma.
\begin{lemma}\label{l-5}
From the estimate for the difference solution at the $n+1$-st time level
\begin{equation}\label{155}
  \|y^{n+1}\| \le \rho \|y^n\|+\tau\|\varphi^n\|_* ,
\end{equation}
it follows that the a priori estimate
\begin{equation}\label{156}
  \|y^{n+1}\| \le \rho^{n+1} \|y^0\| +
  \sum_{k=0}^{n}\tau\rho^{n-k} \|\varphi^k\|_*
\end{equation}
holds.
\end{lemma}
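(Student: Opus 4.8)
The plan is to prove the claim by induction on $n$, since the hypothesis (\ref{155}) is precisely a one-step recursion and the conclusion (\ref{156}) is its unrolled, closed form. The fact that $\rho > 0$ is what lets me multiply the inductive hypothesis through by $\rho$ without reversing the inequality, so there is no sign subtlety to worry about.

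First I would verify the base case $n = 0$. Taking $n = 0$ in (\ref{155}) gives $\|y^{1}\| \le \rho \|y^0\| + \tau \|\varphi^0\|_*$, and the right-hand side of (\ref{156}) at $n=0$ is $\rho^{1}\|y^0\| + \sum_{k=0}^{0} \tau \rho^{-k}\|\varphi^k\|_* = \rho \|y^0\| + \tau \|\varphi^0\|_*$. These agree exactly, so the base case is immediate.

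For the inductive step, I would assume (\ref{156}) holds with $n$ replaced by $n-1$, namely
\[
  \|y^{n}\| \le \rho^{n} \|y^0\| +
  \sum_{k=0}^{n-1} \tau \rho^{n-1-k} \|\varphi^k\|_* ,
\]
then apply the single-step bound (\ref{155}) to estimate $\|y^{n+1}\| \le \rho \|y^{n}\| + \tau \|\varphi^n\|_*$, and substitute the inductive hypothesis. Multiplying the hypothesis by $\rho$ turns $\rho^{n}$ into $\rho^{n+1}$ and shifts each exponent $\rho^{n-1-k}$ up to $\rho^{n-k}$, so the sum becomes $\sum_{k=0}^{n-1} \tau \rho^{n-k} \|\varphi^k\|_*$. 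The leftover term $\tau \|\varphi^n\|_* = \tau \rho^{0} \|\varphi^n\|_*$ is exactly the missing $k=n$ summand, so it merges into the sum to yield $\sum_{k=0}^{n} \tau \rho^{n-k}\|\varphi^k\|_*$. This reproduces (\ref{156}) and closes the induction.

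There is no real obstacle here: the argument is a routine telescoping of a geometric recursion, and the only point worth stating explicitly is the positivity of $\rho$ used when scaling the inductive hypothesis. The result is the discrete counterpart of the continuous Gronwall lemma invoked earlier in the a priori estimates of Theorem~\ref{t-2}, and it will serve the same role in the stability analysis of the difference schemes (\ref{140}), (\ref{141}).
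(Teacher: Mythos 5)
Your proof is correct: the induction on $n$, with the base case $n=0$ and the step of multiplying the inductive hypothesis by $\rho>0$ and absorbing the leftover $\tau\|\varphi^n\|_*$ as the $k=n$ summand, is exactly the routine unrolling of the recursion (\ref{155}). The paper states Lemma~\ref{l-5} without proof, treating it as a standard fact, so your argument supplies precisely the intended (and only natural) justification; there is nothing to add or correct.
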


Thus, from the levelwise estimate, we obtain an a priori estimate for the
difference solution at any time level.

Let us formulate the basic criteria for stability of two-level schemes with respect
to the initial data \cite{2001Samarskii,SamMatVab}. The most important is the following 
theorem, proved by Samarskii, on the exact (coinciding necessary and sufficient) condition for stability
in $H_A$.

\begin{theorem}\label{t-9}
Assume that in equation \ref{145}), the operator $A$ is a positive and self-adjoint
operator independing of $n$. The condition
\begin{equation}\label{157}
  B\ge \frac {\tau}{2} A,
  \quad t\in\omega_\tau
\end{equation}
is necessary and sufficient for stability in $H_A$, i.e., for the fulfillment of the
estimate
\begin{equation}\label{158}
  \|y^{n+1}\|_A \le \|u^0\|_A,
  \quad t\in\omega_\tau .
\end{equation}
\end{theorem}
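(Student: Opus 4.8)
The plan is to reduce both directions to a single energy identity obtained by the standard technique of testing the homogeneous equation against the discrete time-derivative. Writing scheme (\ref{145}) in the form $By_t + Ay = 0$ with $y_t = (\hat y - y)/\tau$, I would take the scalar product with $2\tau y_t$ to get $2\tau(By_t, y_t) + 2(Ay, \hat y - y) = 0$. Because $A = A^* > 0$ and is the same operator at both time levels, the cross term rearranges through the elementary identity
\[
  2(Ay, \hat y - y) = (A\hat y, \hat y) - (Ay, y) - (A(\hat y - y), \hat y - y),
\]
and substituting $\hat y - y = \tau y_t$ I expect to arrive at the key relation
\[
  \|\hat y\|_A^2 = \|y\|_A^2 - 2\tau\left( \left( B - \frac{\tau}{2} A \right) y_t, y_t \right),
  \quad \|z\|_A^2 = (Az, z).
\]
Note that only the symmetric part of $B$ enters, so the operator inequality (\ref{157}) is understood here in the quadratic-form sense $(Bv, v) \ge \frac{\tau}{2}(Av, v)$.

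For sufficiency, the hypothesis $B \ge \frac{\tau}{2}A$ makes the subtracted quadratic form nonnegative, so the identity immediately yields the levelwise contraction $\|\hat y\|_A \le \|y\|_A$. Iterating this over the time levels (equivalently, applying Lemma~\ref{l-5} with $\rho = 1$ and $\varphi \equiv 0$) gives the a priori bound (\ref{158}).

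For necessity, I would first observe that (\ref{158}), required for every $u^0 \in H$, is equivalent to the one-step estimate $\|Sv\|_A \le \|v\|_A$ for all $v \in H$ at each level, since $y^{n+1} = S y^n$ and the initial data are arbitrary. Feeding $\hat y = Sv$ into the energy identity turns this into $((B - \frac{\tau}{2}A)v_t, v_t) \ge 0$ with $v_t = -B^{-1}Av$. The hard part will be upgrading this nonnegativity from vectors of the special form $v_t$ to all of $H$: here I would use that $A$ is positive definite, hence invertible, and that $B^{-1}$ exists by assumption, so the map $v \mapsto -B^{-1}Av$ is a bijection of $H$ and $v_t$ ranges over the entire space. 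Therefore $((B - \frac{\tau}{2}A)w, w) \ge 0$ for all $w \in H$, which is exactly (\ref{157}). Since $A$ is independent of $n$ whereas $B = B(t^n)$ may vary, this reasoning is carried out at each time level separately, establishing the condition for all $t \in \omega_\tau$.
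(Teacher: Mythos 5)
Your proof is correct and follows essentially the same route as the paper: the identical energy identity $\|\hat y\|_A^2 = \|y\|_A^2 - 2\tau\left(\left(B - \frac{\tau}{2}A\right)y_t, y_t\right)$ (the paper derives it via the representation $y = \frac{1}{2}(y+\hat y) - \frac{\tau}{2}\tau y_t$ rather than your expansion of the cross term, a purely cosmetic difference), the same immediate contraction argument for sufficiency, and the same necessity argument in which stability forces the quadratic form to be nonnegative on vectors $w = -B^{-1}Av$, which sweep out all of $H$ because $A$ and $B$ are invertible.
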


\begin{proof}
Multiplying equation (\ref{145}) scalarly by $y_t$, we get
\begin{equation}\label{159}
  (By_t,y_t)+(Ay,y_t)=0.
\end{equation}
Using the representation
\[
  y= \frac{1}{2}(y+\hat{y})- \frac{1}{2} \tau y_t,
\]
rewrite (\ref{159}) as
\begin{equation}\label{160}
  \left ( \left (B-\frac \tau2 A \right )y_t,y_t \right )+
  \frac {1}{2\tau} (A(\hat{y}+y),\hat{y}-y)=0.
\end{equation}
For the self-adjoint operator $A$, we have $(Ay,\hat{y})=(y,A\hat{y})$ and
\[
  (A(\hat{y}+y),\hat{y}-y)=(A\hat{y},\hat{y})-(Ay,y).
\]
Substituting these relations into (\ref{160}) and using the condition (\ref{157}), we
obtain the inequality
\begin{equation}\label{161}
  \|y^{n+1}\|_A\le\|y^n\|_A,
\end{equation}
which ensures the desired estimate (\ref{158}).

To prove the necessity of the inequality (\ref{158}), assume that the scheme is
stable in  $H_A$, i.e., the inequality (\ref{158}) holds. We prove that this implies the
operator inequality (\ref{157}). Consider (\ref{160}) at the initial time level $n=0$:
\[
  2\tau \left ( \left (B-\frac{\tau}{2} A \right )w,w \right )+
  (Ay_1,y_1)=(Ay_0,y_0),
  \quad w = \frac {y_1 - y_0}{\tau} .
\]
In view of (\ref{158}), this identity holds only if
\[
  \left ( \left (B-\frac {\tau}{2} A \right )w,w \right )\ge 0.
\]
Let $y_0=u_0\in H$ be an arbitrary element, then the element $w=-B^{-1}Au_0\in H$ 
is arbitrary, too. Indeed, for any element $w\in H$, we obtain $u_0=-A^{-1}Bw\in H$
since $A^{-1}$ exists. Thus, the inequality holds for any $w\in H$,
i.e., we have the operator inequality (\ref{157}).
\end{proof}

The condition (\ref{157}) is necessary and sufficient for stability not only in
$H_A$, but also in other norms. We now formulate (without proof) the stability
result for $H_B$.

\begin{theorem}\label{t-10}
Assume that in (\ref{145}), (\ref{146}), the operators
$A$ and $B$ are constant and
\begin{equation}\label{162}
  B=B^* > 0,
  \quad A=A^* > 0.
\end{equation}
Then the condition (\ref{157}) is necessary and sufficient for stability of the
scheme (\ref{145}), (\ref{146}) with respect to the initial data in $H_B$ with $\rho=1$.
\end{theorem}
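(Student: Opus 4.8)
The plan is to reduce the required $H_B$-stability estimate to a single self-adjoint operator inequality and then to show that this inequality is equivalent to the condition (157). Since the operators $A$ and $B$ are constant, stability with respect to the initial data in $H_B$ with $\rho=1$ means, by (151) with $D=B$, that $\|y^{n+1}\|_B\le\|y^n\|_B$ for the solution of $y^{n+1}=Sy^n$ in (152), where $S=E-\tau B^{-1}A$; iterating this contraction gives $\|y^{n+1}\|_B\le\|u^0\|_B$, i.e. (147) with $m_1=1$. First I would therefore invoke the general $\rho$-stability criterion (154) with $D=B$ and $\rho=1$, which states precisely that such level-wise contraction in $H_B$ is characterized by the operator inequality
\[
  S^* B S \le B .
\]
Thus the entire theorem reduces to proving the equivalence of this inequality with (157).

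Next I would compute $S^* B S$ explicitly. Using $B=B^*$ and $A=A^*$ (so that $(B^{-1}A)^*=AB^{-1}$), one has $S^*=E-\tau AB^{-1}$, and hence
\[
  S^* B S = (E-\tau AB^{-1})\,B\,(E-\tau B^{-1}A) = B - 2\tau A + \tau^2 A B^{-1} A .
\]
Consequently $S^* B S\le B$ is equivalent to $\tau^2 A B^{-1} A\le 2\tau A$, that is, to
\[
  \tau\, A B^{-1} A \le 2 A .
\]
The core of the proof is to convert this last inequality into (157). For sufficiency, assuming $B\ge\frac{\tau}{2}A$, the order-reversing property of inversion for positive definite operators yields $B^{-1}\le\frac{2}{\tau}A^{-1}$; applying this to the vector $Ay$ gives $\tau(B^{-1}Ay,Ay)\le 2(A^{-1}Ay,Ay)=2(Ay,y)$ for all $y$, which is exactly $\tau A B^{-1}A\le 2A$. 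For necessity, since $A>0$ is invertible on the finite-dimensional space $H$, the substitution $z=Ay$ ranges over all of $H$, so $\tau A B^{-1}A\le 2A$ is equivalent to $\tau B^{-1}\le 2A^{-1}$; inverting once more (again by antitonicity) returns $\frac{\tau}{2}A\le B$, i.e. (157).

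The hard part will be the careful handling of the antitonicity of operator inversion: it must be applied only to genuinely positive definite operators, and in the necessity direction one must justify that $y\mapsto Ay$ is a bijection so that the reduced inequality $\tau B^{-1}\le 2A^{-1}$ holds on all of $H$ rather than merely on $\operatorname{Range}A$ (this is where $A>0$ and $\dim H<\infty$ are used). Once these points are secured, the chain of equivalences
\[
  S^* B S\le B \;\Leftrightarrow\; \tau A B^{-1}A\le 2A \;\Leftrightarrow\; \tau B^{-1}\le 2A^{-1} \;\Leftrightarrow\; B\ge\tfrac{\tau}{2}A
\]
closes the argument in both directions, establishing that (157) is necessary and sufficient for stability in $H_B$ with $\rho=1$.
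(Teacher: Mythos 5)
Your proof is correct, but it takes a genuinely different route from the one the paper intends. The paper states Theorem~\ref{t-10} without proof; the argument it has in mind is the one actually written out for Theorem~\ref{t-11}: because $A=A^*$ and $B=B^*$ are constant, the operator $BS=B(E-\tau B^{-1}A)=B-\tau A$ is self-adjoint, so the linear criterion (\ref{153}) applies with $D=B$, and $\rho$-stability in $H_B$ with $\rho=1$ is equivalent to the bilateral inequality $-B\le B-\tau A\le B$; the right half holds automatically since $A>0$, and the left half is exactly (\ref{157}). You instead invoke the quadratic criterion (\ref{154}), compute $S^*BS=B-2\tau A+\tau^2 AB^{-1}A$ (your algebra checks out, using $S^*=E-\tau AB^{-1}$), and reduce $\tau AB^{-1}A\le 2A$ to (\ref{157}) by two applications of the antitonicity of inversion on positive definite self-adjoint operators, together with the substitution $z=Ay$, which is legitimate because $A=A^*>0$ is invertible on the finite-dimensional space $H$. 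What the paper's route buys is brevity: once one notices that $BS$ is self-adjoint, (\ref{157}) drops out of a one-line linear inequality. What your route buys is independence from that observation: criterion (\ref{154}) is valid for an arbitrary transition operator, so the same computation survives in situations where $DS$ fails to be self-adjoint, and your bookkeeping makes explicit where each hypothesis enters --- $B>0$ for the existence of $B^{-1}$ and the first inversion, $A>0$ for the surjectivity of $y\mapsto Ay$ and the second inversion.
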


The consideration of general time-dependent problems is based on using $\rho$-stability.

\begin{theorem}\label{t-11}
Let $A$ and $B$ be constant operators and
\[
  A=A^*,
  \quad B=B^*>0.
\]
Then the condition
\begin{equation}\label{163}
  \frac{1-\rho}{\tau}B \le A \le \frac{1+\rho}{\tau}B
\end{equation}
is necessary and sufficient for the $\rho$-stability of the scheme (\ref{145}), (\ref{146}) in $H_B$,
i.e., for the fulfilment of
\[
\|y^{n+1}\|_B\le\rho\|y^n\|_B.
\]
\end{theorem}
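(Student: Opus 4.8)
The plan is to reduce the desired two-sided bound (\ref{163}) to the $\rho$-stability criterion (\ref{153}) already recorded in the paper, the decisive point being that the transition operator is self-adjoint with respect to the energy structure induced by $B$. First I would write down the transition operator of the scheme (\ref{145}), (\ref{146}): setting $\varphi = 0$ in (\ref{143}) gives $y^{n+1} = S y^n$ with $S = E - \tau B^{-1} A$, so that $\rho$-stability in $H_B$ means precisely $\|S y\|_B \le \rho \|y\|_B$ for every $y \in H$. The key computation is that $B S$ is self-adjoint in $H$: since $A = A^*$ and $B = B^*$ (hence $B^{-1} = (B^{-1})^*$), one finds $B S = B - \tau A$ and, using $S^* = E - \tau A B^{-1}$, also $S^* B = B - \tau A$, so that $B S = S^* B$. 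Thus $D S$ is self-adjoint in the sense required by (\ref{153}) with the choice $D = B$.

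Because $D S = B S$ is self-adjoint, the general condition (\ref{154}) collapses to the linear two-sided inequality (\ref{153}): the scheme is $\rho$-stable in $H_B$ if and only if $-\rho B \le B S \le \rho B$. Substituting $B S = B - \tau A$ turns this into $-\rho B \le B - \tau A \le \rho B$. Rearranging the right inequality gives $(1-\rho) B \le \tau A$, i.e. $\frac{1-\rho}{\tau} B \le A$, while the left inequality gives $\tau A \le (1+\rho) B$, i.e. $A \le \frac{1+\rho}{\tau} B$. Together these are exactly (\ref{163}). Since every step in this chain is an equivalence, necessity and sufficiency are established simultaneously, which is the economical feature I want to exploit here.

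The only step that genuinely needs care is the identity $S^* B = B S$, i.e. the self-adjointness of $B S$: it rests on $B^{-1}$ being self-adjoint, through the relation $(B u, B^{-1} A v) = (A u, v)$, combined with $A = A^*$. Everything after that is the mechanical rearrangement of the scalar inequalities $-\rho (B u, u) \le ((B - \tau A) u, u) \le \rho (B u, u)$. If one prefers to avoid citing (\ref{153}), the same conclusion follows directly by observing that $S$ is self-adjoint in the energy inner product $(u,v)_B = (B u, v)$, so that $\|S\|_B \le \rho$ is equivalent to $-\rho \le (S u, u)_B (u,u)_B^{-1} \le \rho$ for all $u \ne 0$; since $(S u, u)_B = (B u, u) - \tau (A u, u)$, this yields the identical pair of inequalities and hence (\ref{163}).
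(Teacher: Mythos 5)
Your proof is correct and follows essentially the same route as the paper: both reduce $\rho$-stability in $H_B$ to the criterion (\ref{153}) with $D=B$, compute $BS = B - \tau A$, and rearrange the bilateral operator inequality $-\rho B \le B - \tau A \le \rho B$ into (\ref{163}). The only difference is that you explicitly verify the self-adjointness hypothesis $BS = S^*B$ that the paper leaves implicit, which fills a small gap rather than changing the argument.
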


\begin{proof}
Writing (\ref{145}) in the form of (\ref{152}), we get from (\ref{153}) the following
condition for stability in $H_B$:
\[
  -\rho B\le B - \tau A \le \rho B.
\]
This bilateral operator inequality can be formulated in a more traditional
representation using inequalities in the form of (\ref{163}) for the scheme operators.
\end{proof}

We emphasize that in this theorem there is no assumption that the operator
$A$ is positive (or at least non-negative). Under the additional assumption
on the positiveness of $A$, we get that the condition (\ref{163}) is necessary and
sufficient for the $\rho$-stability of the scheme (\ref{145}), (\ref{146}) in $H_A$.

If $\rho \ge 1$, then stability, as in Theorem~\ref{t-9}, is established for two-level
difference schemes with the non-self-adjoint operator $B$.

\begin{theorem}\label{t-12}
Let $A$ be a self-ajoint, positive, and constant operator. Then
under the condition
\begin{equation}\label{164}
  B \ge \frac {\tau}{1 + \rho} A ,
\end{equation}
the scheme (\ref{145}), (\ref{146}) is $\rho$-stable in $H_A$.
\end{theorem}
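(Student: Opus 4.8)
The plan is to prove the levelwise bound $\|y^{n+1}\|_A \le \rho\,\|y^n\|_A$ that defines $\rho$-stability in $H_A$ (the estimate (\ref{151}) with $D=A$) by a direct energy computation rather than through the bilateral inequality (\ref{153}). The latter route is closed here because $B$ is \emph{not} assumed self-adjoint, so the operator $AS = A - \tau A B^{-1} A$ need not be self-adjoint and (\ref{153}) does not apply. I will also use that the statement belongs to the regime $\rho \ge 1$ highlighted just before the theorem; this is essential at the last step. Throughout I abbreviate $y = y^n$, $\hat y = y^{n+1}$, set $v = B^{-1}Ay$ so that $Bv = Ay$ and $\hat y = y - \tau v$, and write $(u,w)_A := (Au,w)$, $\|u\|_A^2 := (Au,u)$ for the energy inner product, using $A=A^*>0$ freely.

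First I would record the exact energy identity. Expanding $\|\hat y\|_A^2 = (A(y-\tau v),\,y-\tau v)$ and using $A=A^*$ together with $(Av,y) = (v,Ay) = (v,Bv) = (Bv,v)$ gives
\[
  \|\hat y\|_A^2 = \|y\|_A^2 - 2\tau\,(Bv,v) + \tau^2\,(Av,v).
\]
Substituting the hypothesis $B \ge \tfrac{\tau}{1+\rho}A$, i.e.\ $(Bv,v) \ge \tfrac{\tau}{1+\rho}(Av,v)$, and noting that the coefficient $-2\tau$ of $(Bv,v)$ is negative, I obtain
\[
  \|\hat y\|_A^2 \le \|y\|_A^2 + \frac{\rho-1}{\rho+1}\,\tau^2\,(Av,v).
\]

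The step I expect to be the main obstacle is to bound the residual term $\tau^2(Av,v) = \tau^2\|v\|_A^2$ by $\|y\|_A^2$ \emph{without} self-adjointness of $B$. The device I would use is that $A^{-1}Bv = A^{-1}Ay = y$, so $(Bv,v) = (A^{-1}Bv,\,v)_A = (y,v)_A$; the symmetric lower bound on $B$ then feeds through the Cauchy--Schwarz inequality in $H_A$ instead of through a spectral argument:
\[
  \frac{\tau}{1+\rho}\,\|v\|_A^2 \le (Bv,v) = (y,v)_A \le \|y\|_A\,\|v\|_A .
\]
Dividing by $\|v\|_A$ (the case $\|v\|_A=0$ being trivial) yields $\tau\|v\|_A \le (1+\rho)\|y\|_A$, that is, $\tau^2(Av,v) \le (1+\rho)^2\|y\|_A^2$. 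The delicate point is precisely that this bound is obtained from the symmetric part of $B$ alone, which is all the hypothesis provides.

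Finally I would combine the two estimates. Since $\rho \ge 1$, the factor $\tfrac{\rho-1}{\rho+1}$ is nonnegative, so multiplying the auxiliary bound by it preserves the inequality and gives
\[
  \|\hat y\|_A^2 \le \|y\|_A^2 + \frac{\rho-1}{\rho+1}(1+\rho)^2\|y\|_A^2 = \bigl(1+(\rho^2-1)\bigr)\|y\|_A^2 = \rho^2\|y\|_A^2 .
\]
Taking square roots gives $\|y^{n+1}\|_A \le \rho\,\|y^n\|_A$, which is precisely the $\rho$-stability of the scheme (\ref{145}), (\ref{146}) in $H_A$. As a sanity check, for $\rho=1$ the residual term drops out of the energy identity and the argument collapses to the proof of Theorem~\ref{t-9}.
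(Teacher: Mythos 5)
Your proof is correct, and it follows the paper's energy-method skeleton for its first half: with $v = -y_t$, your expansion $\|\hat y\|_A^2 = \|y\|_A^2 - 2\tau(Bv,v) + \tau^2(Av,v)$ is precisely the paper's identity (\ref{165}), and your first use of (\ref{164}) gives the same intermediate bound $\|\hat y\|_A^2 - \|y\|_A^2 \le \frac{\rho-1}{\rho+1}\,\tau^2(Ay_t,y_t)$. The two arguments genuinely diverge in how this residual term is handled. The paper expands $\tau^2(Ay_t,y_t) = (A(\hat y - y),\hat y - y)$, collects terms into $(A\hat y,\hat y) - \rho(Ay,y) + (\rho-1)(A\hat y,y) \le 0$, and after Cauchy--Schwarz solves a quadratic inequality in $\eta = \|\hat y\|_A/\|y\|_A$, which factors as $(\eta-\rho)(\eta+1)\le 0$, to conclude $\eta\le\rho$. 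You instead apply (\ref{164}) a \emph{second} time, combined with $Bv = Ay$ and Cauchy--Schwarz in $H_A$, to obtain the increment bound $\tau\|v\|_A \le (1+\rho)\|y\|_A$, and then substitute; this trades the quadratic-factoring device for a second use of the operator inequality and is somewhat more mechanical (the degenerate case $v=0$ is trivial, and there are no roots to locate). Both routes need $\rho \ge 1$ at the analogous spot --- yours to keep the factor $\frac{\rho-1}{\rho+1}$ nonnegative, the paper's to bound $(\rho-1)(A\hat y,y)$ from below by $-(\rho-1)\|\hat y\|_A\,\|y\|_A$ --- and your opening observation that the criterion (\ref{153}) is unavailable because $B$ is not self-adjoint is exactly why the paper, too, argues directly by energy identities here. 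A useful by-product of your version: it exposes two misprints in the paper's proof, which states the quadratic as $\eta^2-(\rho-1)\eta+\rho\le 0$ (it should be $-\rho$, otherwise the inequality has no real solutions for $\rho$ near $1$) and concludes with $\|\hat y\|_A \le \|y\|_A$ (it should be $\|\hat y\|_A \le \rho\|y\|_A$).
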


\begin{proof}
Adding and subtracting from the basic energy identity (see (\ref{160}))
\begin{equation}\label{165}
  2 \tau \left ( \left (B-\frac{\tau}{2} A \right )y_t,y_t \right )+
  (A\hat{y},\hat{y})-(Ay,y) = 0
\end{equation}
the expression
\[
  2 \tau^2 \frac {1}{1 + \rho} (Ay_t,y_t) ,
\]
we get
\[
  2 \tau \left (\left (B-\frac {\tau}{1 + \rho} A \right )y_t,y_t \right )  +
  (A\hat{y},\hat{y})-(Ay,y) -
\]
\[
 - \frac {1-\rho}{1 + \rho} \tau^2 (Ay_t,y_t) = 0.
\]
In view of (\ref{164}) and the self-adjointness of $A$, we obtain immediately
\[
  (A\hat{y},\hat{y})-\rho (Ay,y) + (\rho -1)(A\hat {y},y) \le 0.
\]
The inequality
\[
  |(A\hat {y},y)| \le \|\hat {y}\|_A \|y\|_A
\]
with notation
\[
  \eta = \frac {\|\hat {y}\|_A} {\|y\|_A},
\]
yields the inequality
\[
  \eta^2 - (\rho-1) \eta + \rho \le 0.
\]
It holds for all $1 \le \eta \le \rho$, and so we go to the desired estimate
\[
  \|\hat {y}\|_A  \le \|y\|_A,
\]
which ensures stability in $H_A$.
\end{proof}

Now we consider a priori estimates that express stability with respect to
the right-hand side. Such estimates are employed to study convergence of
difference schemes for time-dependent problems.

First, we show that stability with respect to the initial data in $H_D,~ D = D^* >0 $ 
results in stability with respect to the right-hand side in the norm
$\|\varphi\|_*= \|B^{-1}\varphi\|_R$.

\begin{theorem}\label{t-13}
Assume that (\ref{140}), (\ref{141}) is $\rho$-stable in $H_R$ with respect to
the initial data, i.e., the estimate (\ref{151}) holds with $\varphi^n = 0$. Then the scheme
(\ref{140}), (\ref{141}) is stable with respect to the right-hand side and the following
a priori estimate is true:
\begin{equation}\label{166}
  \|y^{n+1}\|_R \le
  \rho^{n+1}\|u^0\|_R+\sum_{k=0}^{n}\tau\rho^{n-k}
  \|B^{-1}\varphi^k\|_R.
\end{equation}
\end{theorem}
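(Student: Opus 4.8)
The plan is to reduce the inhomogeneous scheme to the homogeneous one by the standard Duhamel (variation-of-constants) decomposition, then apply the hypothesis on $\rho$-stability together with the discrete Gronwall lemma (Lemma~\ref{l-5}). First I would write the transition relation (\ref{143}) for the inhomogeneous scheme as $y^{n+1} = Sy^n + \tau B^{-1}\varphi^n$, where $S = E - \tau B^{-1}A$ is the transition operator. Iterating this recursion from the initial level gives the explicit representation
\[
  y^{n+1} = S^{n+1} u^0 + \sum_{k=0}^{n} \tau\, S^{n-k} B^{-1}\varphi^k ,
\]
which cleanly separates the contribution of the initial data from that of the right-hand side.

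Next I would take the $H_R$-norm of this identity and apply the triangle inequality to obtain
\[
  \|y^{n+1}\|_R \le \|S^{n+1} u^0\|_R + \sum_{k=0}^{n} \tau\, \|S^{n-k} B^{-1}\varphi^k\|_R .
\]
The key observation is that the hypothesis of $\rho$-stability in $H_R$ (the estimate (\ref{151}) with $\varphi^n=0$) is exactly the statement that the transition operator $S$ satisfies $\|Sw\|_R \le \rho\|w\|_R$ for every $w\in H$; equivalently, via (\ref{154}) with $D=R$, that $S^*RS \le \rho^2 R$. Applying this bound $m$ times yields $\|S^m w\|_R \le \rho^m \|w\|_R$ for all $m\ge 0$. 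Substituting $m = n+1$ in the first term and $m = n-k$ in each summand gives precisely the claimed a priori estimate (\ref{166}).

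The only subtle point — and the one I would watch most carefully — is the uniform applicability of the power bound $\|S^m w\|_R \le \rho^m\|w\|_R$. When $A$ and $B$ depend on $t^n$, the transition operator $S = S(t^n)$ varies with the level, so $S^{n-k}$ should really be read as the ordered product $S(t^n)S(t^{n-1})\cdots S(t^{k+1})$; each factor individually obeys the single-step bound (\ref{151}), so the product contracts by $\rho$ at every step and the composite bound by $\rho^{n-k}$ survives. This is in fact exactly the content of Lemma~\ref{l-5}: the levelwise inequality $\|y^{n+1}\|_R \le \rho\|y^n\|_R + \tau\|B^{-1}\varphi^n\|_R$ (which follows directly by taking norms in $y^{n+1} = Sy^n + \tau B^{-1}\varphi^n$ and using the single-step contraction) telescopes to (\ref{156}), and identifying $\|\varphi^k\|_* = \|B^{-1}\varphi^k\|_R$ turns (\ref{156}) into (\ref{166}). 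I would therefore present the argument most economically by first establishing the one-step estimate and then invoking the discrete Gronwall lemma, rather than manipulating the operator powers explicitly.
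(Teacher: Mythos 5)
Your proposal is correct and, in its final form, is essentially the paper's own proof: the paper likewise rewrites the scheme as $y^{n+1}=Sy^n+\tau B^{-1}\varphi^n$, uses the equivalence of $\rho$-stability with the single-step bound $\|Sy^n\|_R\le\rho\|y^n\|_R$ to get the levelwise estimate, and then invokes the discrete Gronwall lemma (Lemma~\ref{l-5}). Your preliminary Duhamel expansion with operator powers is just an unrolled version of the same telescoping, and your remark about time-dependent operators (reading $S^{n-k}$ as an ordered product of level-dependent transition operators) correctly identifies why the economical Gronwall route is the cleaner presentation.
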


\begin{proof}
Since $B^{-1}$ exists, we have that equation (\ref{140}) may be written as
\begin{equation}\label{167}
  y^{n+1} = Sy^n+\tau\tilde\varphi^n,
  \quad S=E-\tau B^{-1}A,
  \quad \tilde\varphi^n=B^{-1}\varphi^n.
\end{equation}
From (\ref{167}), we get
\begin{equation}\label{168}
  \|y^{n+1}\|_R \le \|Sy^n\|_R+\tau\|B^{-1}\varphi^n\|_R.
\end{equation}
The requirement of $\rho$-stability with respect to the initial data is equivalent
to the boundedness of the norm of the transition operator $S$:
\[
  \|Sy^n\|_R \le \rho \|y^n\|_R,
  \quad t\in\omega_\tau .
\]
Because of this, from  (\ref{168}), we obtain
\[
  \|y^{n+1}\|_R \le \rho\|y^n\|_R+\tau\|B^{-1}\varphi^n\|_R.
\]
Using the discrete analog of Gronwall's lemma, we obtain the desired estimate
(\ref{166}), which expresses the stability of the scheme with respect to the initial
data and the right-hand side.
\end{proof}

In particular, if $D=A$ or $D=B$ (under the condition $A=A^*>0$ or $B=B^*>0$), 
then, from (\ref{166}), we obtain elementary estimates for stability
in the energy space $H_A$ or $H_B$.

Some new estimates for the two-level difference scheme (\ref{140}), (\ref{141}) can
be obtained by coarsening the stability criterion (\ref{167}).

\begin{theorem}\label{t-14}
Let $A$ be a self-ajoint, positive, and constant operator and
assume that $B$ satisfies the condition
\begin{equation}\label{169}
  B \ge \frac{1+\varepsilon}{2} \tau A
\end{equation}
with a constant $\varepsilon >0$ independing of $\tau$. Then the scheme (\ref{140}), (\ref{141})
satisfies the a priori estimate
\begin{equation}\label{170}
  \|y^{n+1}\|_A^2 \le
  \|u^0\|_A^2 + \frac{1+\varepsilon}{2\varepsilon}
  \sum_{k=0}^{n} \tau \|\varphi^k\|_{B^{-1}}^2 .
\end{equation}
\end{theorem}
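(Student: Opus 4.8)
The plan is to derive a single levelwise energy inequality and then telescope it, reusing the machinery behind the energy identity (\ref{160}) of Theorem~\ref{t-9} but this time keeping the right-hand side. First I would take the scalar product of the difference equation (\ref{142}), written as $By_t + Ay = \varphi$, with $y_t$ to get $(By_t,y_t) + (Ay,y_t) = (\varphi,y_t)$. Substituting the identity $y = \frac{1}{2}(\hat y + y) - \frac{\tau}{2}y_t$ into the second term and using the self-adjointness of $A$ to collapse the cross terms via $(A(\hat y + y), \hat y - y) = \|\hat y\|_A^2 - \|y\|_A^2$, I obtain the inhomogeneous energy identity
\[
  2\tau\left(\left(B - \frac{\tau}{2}A\right)y_t, y_t\right) + \|\hat y\|_A^2 - \|y\|_A^2 = 2\tau(\varphi, y_t).
\]

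Next I would exploit the hypothesis (\ref{169}). Rewriting it as $\frac{\tau}{2}A \le \frac{1}{1+\varepsilon}B$ gives the operator inequality $B - \frac{\tau}{2}A \ge \frac{\varepsilon}{1+\varepsilon}B$, so the first term is bounded below by $2\tau\frac{\varepsilon}{1+\varepsilon}(By_t,y_t)$. The remaining task is to control $2\tau(\varphi,y_t)$ and absorb it into this coercive term. Since $B = B^* > 0$, the Cauchy--Schwarz inequality in the $B$-inner product yields $|(\varphi,y_t)| \le \|\varphi\|_{B^{-1}}\,(By_t,y_t)^{1/2}$, where $\|\varphi\|_{B^{-1}}^2 = (B^{-1}\varphi,\varphi)$; note that this is exactly the norm appearing in the target estimate (\ref{170}).

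The key step---and the only place where a little care is needed---is the calibration of the Young inequality. Writing $p = (By_t,y_t)^{1/2}$ and $q = \|\varphi\|_{B^{-1}}$ and applying $2pq \le a\,p^2 + a^{-1}q^2$ with the precise choice $a = \frac{2\varepsilon}{1+\varepsilon}$, the contribution $2\tau\frac{\varepsilon}{1+\varepsilon}p^2$ is cancelled exactly, leaving $2\tau(\varphi,y_t) - 2\tau\frac{\varepsilon}{1+\varepsilon}(By_t,y_t) \le \tau\frac{1+\varepsilon}{2\varepsilon}\|\varphi\|_{B^{-1}}^2$. The energy identity then reduces to the levelwise estimate $\|y^{n+1}\|_A^2 - \|y^n\|_A^2 \le \frac{1+\varepsilon}{2\varepsilon}\tau\|\varphi^n\|_{B^{-1}}^2$. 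Finally I would sum this telescoping inequality over the levels $k = 0,1,\dots,n$ and insert the initial condition $y^0 = u^0$ to recover precisely the a priori estimate (\ref{170}). The main obstacle is purely the choice of the Young constant $a$ so that the residual coefficient matches $\frac{1+\varepsilon}{2\varepsilon}$; once the operator inequality from (\ref{169}) is in hand, everything else is the routine energy argument already developed for the homogeneous scheme.
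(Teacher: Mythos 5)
Your proposal is correct and follows essentially the same route as the paper's proof: the energy identity obtained by multiplying (\ref{142}) scalarly by $2\tau y_t$, the Cauchy--Schwarz bound $|(\varphi,y_t)| \le \|\varphi\|_{B^{-1}}\|y_t\|_B$ combined with a Young inequality whose parameter is calibrated to $\frac{2\varepsilon}{1+\varepsilon}$, absorption of the resulting $(By_t,y_t)$ term via the hypothesis (\ref{169}), and a final telescoping sum. The only cosmetic difference is the order of steps: the paper first applies Young's inequality with a free parameter $\varepsilon_1$ and then chooses $\varepsilon_1 = \frac{\varepsilon}{1+\varepsilon}$ so that $(1-\varepsilon_1)B - \frac{\tau}{2}A \ge 0$, whereas you first extract the coercivity bound $B - \frac{\tau}{2}A \ge \frac{\varepsilon}{1+\varepsilon}B$ and then cancel it exactly; the two calibrations coincide.
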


\begin{proof}
Multiplying equation (\ref{140}) scalarly by $2\tau y_t$, we obtain, similarly to
(\ref{165}), the energy identity
\begin{equation}\label{171}
 2\tau((B-\frac{\tau}{2} A)y_t,y_t) +
 (A\hat y,\hat y) =
 (Ay,y) + 2 \tau (\varphi,y_t).
\end{equation}
The right-hand side of the above expression can be estimated as
\[
  2\tau (\varphi,y_t)  \le
  2\tau \|\varphi\|_{B^{-1}} \|y_t\|_B \le
\]
\[
  \le 2\tau\varepsilon_1 \|y_t\|_B^2 +
  \frac{\tau}{2\varepsilon_1} \|\varphi\|_{B^{-1}}^2
\]
with a positive constant $\varepsilon_1$. Substituting this estimate into (\ref{171}), we get
\[
  2\tau \left ( \left ((1-\varepsilon_1)B-\frac{\tau}{2} A \right )y_t,y_t \right ) +
  (A\hat y,\hat y) \le (Ay,y) +
  \frac {\tau}{2\varepsilon_1} \|\varphi\|_{B^{-1}}^2.
\]
If the condition (\ref{169}) holds, then it is possible to select $\varepsilon_1$ such that
\[
  \frac {1}{1-\varepsilon_1} = 1+\varepsilon,
\]
and so
\[
  (1-\varepsilon_1)B - \frac {\tau}{2} A =
  (1-\varepsilon_1) (B-\frac{1+\varepsilon}{2}\tau A) \ge 0,
\]
\[
  (A\hat y,\hat y) \le (Ay,y) +
  \frac{1+\varepsilon}{2\varepsilon} \tau \|\varphi\|_{B^{-1}}^2.
\]
The last inequality implies the estimate (\ref{170}).
\end{proof}

\begin{theorem}\label{t-15}
Let $A$ be a self-ajoint, positive, and constant operator, and
assume that $B$ satisfies the condition
\begin{equation}\label{172}
  B \ge G + \frac{\tau}{2} A,
  \quad G = G^* > 0 .
\end{equation}
Then the solution of (\ref{140}), (\ref{141}) satisfies the a priori estimate
\begin{equation}\label{173}
  \|y^{n+1}\|_A^2 \le \|u^0\|_A^2 +
  \frac {1}{2}\sum_{k=0}^n\tau\|\varphi^k\|^2_{G^{-1}}.
\end{equation}
\end{theorem}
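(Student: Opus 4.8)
The statement mirrors Theorem 14 almost exactly, the only change being that the condition (169), namely $B \ge \frac{1+\varepsilon}{2}\tau A$, is replaced by (172), namely $B \ge G + \frac{\tau}{2}A$ with $G = G^* > 0$, and correspondingly the estimate (170) is replaced by (173), where the weight in the sum is now $\frac12\|\varphi^k\|^2_{G^{-1}}$ rather than $\frac{1+\varepsilon}{2\varepsilon}\|\varphi^k\|^2_{B^{-1}}$. So the plan is to follow the proof of Theorem 14 and adjust only the splitting used to absorb the right-hand side term $2\tau(\varphi,y_t)$.

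The plan is as follows. First I would multiply equation (140) scalarly by $2\tau y_t$ and obtain the same energy identity (171),
\[
  2\tau\left(\left(B-\frac{\tau}{2}A\right)y_t,y_t\right) + (A\hat y,\hat y) = (Ay,y) + 2\tau(\varphi,y_t),
\]
which uses only the self-adjointness of $A$ and is independent of which bound we impose on $B$. The key difference is how I estimate the cross term on the right. Rather than using the $B$-norm, I would use the operator $G$ as the natural pairing: by Cauchy--Schwarz in $H_G$ and Young's inequality,
\[
  2\tau(\varphi,y_t) \le 2\tau\|\varphi\|_{G^{-1}}\|y_t\|_G \le \tau\|y_t\|_G^2 + \tau\|\varphi\|^2_{G^{-1}}.
\]

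Substituting this into the identity yields
\[
  2\tau\left(\left(B-\frac{\tau}{2}A\right)y_t,y_t\right) + (A\hat y,\hat y) \le (Ay,y) + \tau(Gy_t,y_t) + \tau\|\varphi\|^2_{G^{-1}},
\]
so that
\[
  2\tau\left(\left(B-G-\frac{\tau}{2}A\right)y_t,y_t\right) + (A\hat y,\hat y) \le (Ay,y) + \tau\|\varphi\|^2_{G^{-1}}.
\]
Here the condition (172), $B \ge G + \frac{\tau}{2}A$, makes the first term non-negative and lets me drop it, giving the level-wise inequality $(A\hat y,\hat y) \le (Ay,y) + \tau\|\varphi\|^2_{G^{-1}}$. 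Summing this telescoping inequality over the levels $k=0,\ldots,n$ (i.e.\ iterating $\|y^{k+1}\|_A^2 \le \|y^k\|_A^2 + \tau\|\varphi^k\|^2_{G^{-1}}$) produces the desired estimate (173); the factor $\tfrac12$ there comes from the $2\tau$ on the left of the energy identity matched against the single $\tau$ on the right of the Young step. The main point to get right—and the only place requiring care—is the bookkeeping of constants in the Young inequality, so that the splitting coefficient $1$ in $\tau\|y_t\|_G^2 + \tau\|\varphi\|^2_{G^{-1}}$ is exactly what (172) absorbs, leaving precisely $\tfrac12$ in the final sum after accounting for the leading $2\tau$. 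No further obstacle arises, since positivity of $G$ guarantees $\|\varphi\|_{G^{-1}}$ is well defined and $A=A^*>0$ guarantees $\|\cdot\|_A$ is a genuine norm.
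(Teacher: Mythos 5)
Your overall strategy is the same as the paper's: start from the energy identity (171), estimate the cross term $2\tau(\varphi,y_t)$ by Cauchy--Schwarz and Young in the $G$-pairing, absorb the resulting $(Gy_t,y_t)$ term using (172), and telescope. But your constant bookkeeping fails at exactly the step you flag as the only place requiring care, and as written your argument proves a weaker estimate than (173). Your Young step
\[
2\tau(\varphi,y_t) \le 2\tau\|\varphi\|_{G^{-1}}\|y_t\|_G \le \tau\|y_t\|_G^2 + \tau\|\varphi\|_{G^{-1}}^2
\]
puts the coefficient $\tau$ (not $2\tau$) on $\|y_t\|_G^2$ and therefore leaves the coefficient $\tau$ (not $\tau/2$) on $\|\varphi\|_{G^{-1}}^2$. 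After absorption, the levelwise inequality you actually obtain is $(A\hat y,\hat y) \le (Ay,y) + \tau\|\varphi\|_{G^{-1}}^2$, and summing gives
\[
\|y^{n+1}\|_A^2 \le \|u^0\|_A^2 + \sum_{k=0}^n \tau\|\varphi^k\|_{G^{-1}}^2 ,
\]
which is weaker than (173) by a factor of $2$ in the sum. Your closing claim that the factor $\tfrac12$ "comes from the $2\tau$ on the left of the energy identity matched against the single $\tau$ on the right" is not a valid accounting: the $2\tau$ on the left multiplies the term $((B-\frac{\tau}{2}A)y_t,y_t)$, which is discarded, and contributes nothing to the $\varphi$ term. (There is also a minor algebra slip: moving $\tau(Gy_t,y_t)$ to the left side produces $2\tau((B-\frac12 G-\frac{\tau}{2}A)y_t,y_t)$, not $2\tau((B-G-\frac{\tau}{2}A)y_t,y_t)$; your displayed inequality remains true only because it is a further weakening.)

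The fix is to tune the Young inequality to what (172) actually lets you absorb. Since (172) gives $2\tau((B-\frac{\tau}{2}A)y_t,y_t) \ge 2\tau(Gy_t,y_t)$, you may spend the full weight $2\tau$ on the $G$-term:
\[
2\tau(\varphi,y_t) \le 2\tau\|\varphi\|_{G^{-1}}\|y_t\|_G \le 2\tau(Gy_t,y_t) + \frac{\tau}{2}\|\varphi\|_{G^{-1}}^2 ,
\]
using $2ab \le 2a^2 + \frac12 b^2$. Substituting this into (171) and dropping the non-negative term $2\tau((B-G-\frac{\tau}{2}A)y_t,y_t)$ gives the levelwise bound $(A\hat y,\hat y) \le (Ay,y) + \frac{\tau}{2}\|\varphi\|_{G^{-1}}^2$, and telescoping yields (173) with the correct factor $\tfrac12$. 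This asymmetric splitting is exactly the estimate used in the paper's proof.
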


\begin{proof}
In the identity (\ref{171}), we employ the estimate
\[
  2\tau (\varphi,y_t) \le
  2\tau (G y_t,y_t) +
  \frac {\tau}{2} (G^{-1}\varphi, \varphi).
\]
Substituting this estimate into (\ref{171}) and taking into account (\ref{172}), we get
\[
  (A\hat y,\hat y) \le (Ay,y) +
  \frac{1}{2} \tau \|\varphi\|_{G^{-1}}^2
\]
that, by a discrete analog of Gronwall's lemma, gives (\ref{173}).
\end{proof}

The convergence study of difference schemes is conducted in various classes
of smoothness of the solution of the original differential problem, 
and therefore we must have a wide range of estimates. In particular, the right-hand side
should be estimated in different and simply calculated norms. Only typical a
priori estimates for solutions of operator-difference schemes are considered
here.

We now apply the above results to elementary schemes with weights for
an operator-differential equation of first order. The Cauchy problem
\begin{equation}\label{174}
\frac{d u}{d t} + Au = f(t), \quad t > 0,
\end{equation}
\begin{equation}\label{175}
  u(0) = u^0,
\end{equation}
with $A \geq 0$ is associated with the two-level scheme with weights
\begin{equation}\label{176}
  \frac {y^{n+1} - y^n}{\tau} +
  A (\sigma y^{n+1} + (1 - \sigma) y^n) =
  \varphi^n,
  \quad t^n \in \omega_\tau,
\end{equation}
\begin{equation}\label{177}
  y^0 = u^0 .
\end{equation}

The scheme (\ref{176}), (\ref{177}) may be written in the canonical form (\ref{142}) with
the operators
\begin{equation}\label{178}
  B = E + \sigma \tau A,
  \quad A  > 0.
\end{equation}

\begin{theorem}\label{t-16}
The scheme with weights ( \ref{176}), (\ref{177}) is stable in $H$ with respect
to the initial data if and only if the following operator inequality holds:
\begin{equation}\label{179}
  A^* + \left (\sigma - \frac{1}{2} \right ) \tau A^*A \ge 0.
\end{equation}
\end{theorem}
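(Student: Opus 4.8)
The plan is to reduce stability with respect to the initial data to a single operator inequality and then to convert that inequality into (\ref{179}) by two successive changes of variable. Throughout I work with the homogeneous scheme (\ref{145}) (take $\varphi = 0$), so that $\hat y = Sy$ with the transition operator $S = E - \tau B^{-1}A$ from (\ref{143}). Stability in $H$ with respect to the initial data (with $\rho = 1$) means precisely $\|\hat y\| \le \|y\|$ for every $y \in H$, which by (\ref{154}) with $D = E$, $\rho = 1$ is the operator inequality $S^*S \le E$. Rather than form $S^*S$ directly, I would argue energetically.

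Writing $w = y_t$, the homogeneous equation (\ref{142}) reads $Bw = -Ay$, i.e. $w = -B^{-1}Ay$, and from $\hat y = y + \tau w$ one obtains the identity
\[
\|\hat y\|^2 - \|y\|^2 = 2\tau (w,y) + \tau^2 \|w\|^2 = -2\tau (B^{-1}Ay,y) + \tau^2 \|B^{-1}Ay\|^2 .
\]
Thus stability is equivalent to requiring that this expression be nonpositive for all $y \in H$. The key step is the change of variable $v = B^{-1}Ay$: since $A > 0$ is invertible and $B^{-1}$ exists by assumption, $B^{-1}A$ is a bijection of $H$, so $v$ ranges over all of $H$ as $y$ does, with $y = A^{-1}Bv$. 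Substituting, the nonpositivity condition becomes $(A^{-1}Bv,v) \ge \tfrac{\tau}{2}\|v\|^2$ for all $v$, and using $A^{-1}B = A^{-1} + \sigma\tau E$ from (\ref{178}) this reads $(A^{-1}v,v) \ge (\tfrac12 - \sigma)\tau\|v\|^2$.

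A second substitution $v = Au$ (again a bijection) then turns this into
\[
(Au,u) + \Big(\sigma - \tfrac12\Big)\tau (A^*Au,u) \ge 0, \quad u \in H,
\]
and since $(Au,u) = (A^*u,u)$ in the real inner product while $(A^*Au,u) = \|Au\|^2$, the left-hand side is exactly the quadratic form of $A^* + (\sigma - \tfrac12)\tau A^*A$. Because every step is an equivalence, nonnegativity of this form for all $u$ is both necessary and sufficient for stability, which is (\ref{179}).

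The main obstacle I anticipate is the bookkeeping around the non-self-adjointness of $A$: unlike Theorems~\ref{t-9}--\ref{t-12}, here $A \ne A^*$, so I must keep the adjoint explicit, justify that $(Au,u) = (A^*u,u)$ (the skew-symmetric part of $A$ contributes nothing to the quadratic form), and verify that both changes of variable are genuine bijections of $H$ so that the universal quantifier over $y$ transfers correctly to $u$. Once these points are handled, the remainder is a routine energy computation.
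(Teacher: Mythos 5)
Your proof is correct, and it reaches the paper's conclusion through the same algebraic pivot but by a more self-contained route. The paper's own proof is a reduction: it multiplies the scheme (\ref{176}) by $A^{-1}$, so that the transformed scheme has operators $\tilde B = A^{-1} + \sigma\tau E$ and $\tilde A = E$, invokes Theorem~\ref{t-9} (whose hypotheses are now met, since $\tilde A = E$ is self-adjoint, positive and constant) to obtain the necessary and sufficient condition $A^{-1} + \left(\sigma - \frac{1}{2}\right)\tau E \ge 0$, and then multiplies this inequality by $A^*$ on the left and $A$ on the right to get (\ref{179}). You bypass Theorem~\ref{t-9} entirely: expanding $\|\hat y\|^2 - \|y\|^2$ for the transition operator and substituting $v = B^{-1}Ay$ gives you that same intermediate inequality $(A^{-1}v,v) \ge \left(\frac{1}{2} - \sigma\right)\tau\|v\|^2$, and your second substitution $v = Au$ is exactly the paper's left-right multiplication step, made rigorous as a change of variable (this is also what justifies the paper's conjugation, since the operator in (\ref{179}) is not self-adjoint and the inequality must be read in the quadratic-form sense). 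What your version buys is that the whole equivalence is one chain of elementary, reversible steps, with the bijectivity of $B^{-1}A$ and of $A$ doing the work that the necessity half of Theorem~\ref{t-9} does in the paper; in effect you re-prove inline the special case $\tilde A = E$ of that theorem. What the paper's version buys is brevity, by reusing already-proved machinery. Your attention to the delicate points --- that $(Au,u) = (A^*u,u)$ over a real space, and that both substitutions are genuine bijections because $A > 0$ is invertible on the finite-dimensional $H$ and $B^{-1}$ exists by assumption --- is exactly what the argument requires, and those claims are right.
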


\begin{proof}
By $A> 0$, there exists $A^{-1}$. Multipying (\ref{176}) by $A^{-1}$, we go from
(\ref{142}), (\ref{178}) to the scheme
\[
  \tilde B \frac {y^{n+1} - y^n}{\tau} + \tilde A y^n = \tilde \varphi^n,
  \quad t^n \in \omega_\tau
\]
where
\[
  \tilde B = A^{-1} + \sigma \tau E,
  \quad \tilde A = E.
\]
The necessary and sufficient condition for stability of this scheme with respect
to the initial data in $H = H_{\tilde A}$ (Theorem~\ref{t-9}) is formulated as the inequality
\[
  A^{-1} + \left (\sigma - \frac{1}{2} \right ) \tau E \ge 0.
\]
Multiplying it from the left by $A^*$ and from the right by $A$, we obtain (\ref{179}).
This completes the proof of the theorem.
\end{proof}

If $\sigma \geq 0.5$, then the operator-difference scheme (\ref{176}), (\ref{177}) 
is unconditionally stable (stable for any $\tau > 0$).

\subsection{Difference schemes for convection-diffusion poblems}

Discretization in space of the Cauchy problem (\ref{37}), (\ref{41})  
yields the problem (see, e.g., (\ref{81}), (\ref{82})):
\begin{equation}\label{180}
  \frac{d y}{d t} + A y = \varphi(t),
  \quad A = C + D, 
  \quad 0 < t \leq T,  
\end{equation} 
\begin{equation}\label{181}
  y(0) = u^0 .
\end{equation}
With the above approximations, the grid operators of convective and diffusive
transport inherit the basic properties of differential operators
in the appropriate spaces of grid functions.
Among these properties, we recall the following as the major ones.
The constant (time-independent) grid diffusion operator is self-adjoint
and positive definite:
\begin{equation}\label{182}
  \frac{d}{d t} D = D \frac{d}{d t},
  \quad D = D^* \ge  \frac{1}{M_0} \kappa_1 E ,
  \quad \kappa_1 > 0,
  \quad M_0 > 0 .
\end{equation}
For the grid operator of convective transport in various forms
($C = C(t) = C_\alpha, \ \alpha = 0,1,2$), we have
 \begin{equation}\label{183}
  C_0 = - C_0^* ,
\end{equation} 
\begin{equation}\label{184}
 |(C_\alpha y,y) | \le M_1 \|y\|^2,
 \quad \alpha =1,2 ,
\end{equation} 
\begin{equation}\label{185}
  \|C_\alpha y\|^2 \le M_2 (Dy,y) ,
  \quad \alpha =0,1,2 , 
\end{equation}
with the corresponding positive constants $M_1$ and $M_2$.

To solve numerically the problem (\ref{180}), (\ref{181}), we consider the two-level
scheme with weights
\begin{equation}\label{186}
\begin{split}
  \frac {y^{n+1} - y^n}{\tau} &+
  C (\sigma_1 y^{n+1} + (1 - \sigma_1) y^n) \\
  & + D (\sigma_2 y^{n+1} + (1 - \sigma_2) y^n) = \varphi^n,
  \quad t^n \in \omega_\tau,  
\end{split}
\end{equation} 
\begin{equation}\label{187}
  y_0 = u_0 .
\end{equation} 
Here, e.g., we have
\[
  C = C(0.5 (t^{n+1} + t^n)),
  \quad \varphi^n = \varphi(0.5 (t^{n+1} + t^n)).
\]

Among the most important variants of the difference scheme with weights
(\ref{186}), (\ref{187}), we highlight the scheme with equal weights ($\sigma_1= \sigma_2$) and
the scheme, where convective transport is taken from a previous time level
($\sigma_1=0$).

We start with the convective transport operator in the skew-symmetric
form, i.e., $C = - C^* = C_0$. Problems with the convective transport 
operator in the nondivergent ($C=C_1$) and divergent ($C=C_2$) forms will be
considered later. Assume that in the difference scheme (\ref{186}), we have
\begin{equation}\label{188}
  \sigma_1 = \sigma_2 = \sigma .
\end{equation} 
In view of (\ref{188}), instead of (\ref{186}), we consider the difference scheme
\begin{equation}\label{189}
  \frac {y^{n+1} - y^n}{\tau}+
  (C_0 + D) (\sigma y^{n+1} + (1 - \sigma) y^n) = \varphi^n,
  \quad t^n \in \omega_\tau.
\end{equation} 

The scheme (\ref{187}), (\ref{189}) under investigation
may be written in the canonical form for the two-level difference scheme (\ref{140}), (\ref{141})
with the operators
\begin{equation}\label{190}
  B = E + \sigma \tau A,
  \quad A = C_0 + D > 0.
\end{equation}
The main peculiarity of difference schemes for the convection-diffusion equation 
is connected with non-self-adjointness of the operators $B$ and $A$. Therefore, 
it is impossible to use the above results on stability of operator-difference
schemes, which were formulated for constant self-adjoint operators.

The second important feature is associated with the fact that operators
of the difference scheme are variable in time. We consider the problems with
the time-dependent difference operator of convective transport. To obtain a
priori estimates for such problems, it is often necessary to require additionally
Lipschitz continuity of the difference operators with respect to time.

Conditions for stability of the scheme (\ref{140}), (\ref{141}),  (\ref{190})
have been presented above in the form of Theorem~\ref{t-16}.
Let us supplement this result with the corresponding stability estimate 
of the difference solution with respect to the right-hand side and the initial data. 

\begin{theorem}\label{t-17}
The difference scheme (\ref{140}), (\ref{141}),  (\ref{190}) is unconditionally stable for
$\sigma \ge 0.5$, and the difference solution satisfies the a priori estimate
\begin{equation}\label{191}
  \|y^{n+1}\|^2 \le \|u^0\|^2 + 
  \frac{1}{2} \sum_{k=0}^{n} \tau \|\varphi^k\|_{D^{-1}}^2.
\end{equation} 
\end{theorem}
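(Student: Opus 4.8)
The plan is to run the energy method directly on the scheme, exploiting the fact that the skew-symmetric convective part makes no contribution to the relevant energy balance, so that only the self-adjoint positive diffusion operator $D$ survives. The chief difficulty is that both $A = C_0 + D$ and $B = E + \sigma\tau A$ are non-self-adjoint, so the stability theorems proved above for constant self-adjoint operators (Theorems~\ref{t-9}--\ref{t-16}) do not apply directly; the way around this is precisely property (\ref{183}). First I would write the scheme (\ref{189}) in the compact form $y_t + A\bar y = \varphi$, where $y_t = (\hat y - y)/\tau$ and $\bar y = \sigma\hat y + (1-\sigma)y$ is the weighted average appearing as the argument of $A$. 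Taking the scalar product with $2\tau\bar y$ gives the energy identity
\[
  2\tau(y_t,\bar y) + 2\tau(A\bar y,\bar y) = 2\tau(\varphi,\bar y).
\]

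The decisive simplification is that, by (\ref{183}), $(C_0\bar y,\bar y)=0$, so that $(A\bar y,\bar y) = (D\bar y,\bar y)$: the non-self-adjoint term drops out of the diagonal energy and what remains is controlled by the positive operator $D$. Next I would rewrite the time-difference term through the identity $\bar y = \tfrac12(\hat y + y) + (\sigma - \tfrac12)\tau y_t$, which yields $2\tau(y_t,\bar y) = \|\hat y\|^2 - \|y\|^2 + 2(\sigma - \tfrac12)\tau^2\|y_t\|^2$. Substituting both relations recasts the identity as
\[
  \|\hat y\|^2 - \|y\|^2 + 2\left(\sigma - \tfrac12\right)\tau^2\|y_t\|^2 + 2\tau(D\bar y,\bar y) = 2\tau(\varphi,\bar y).
\]

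For the right-hand side I would apply the generalized Cauchy--Schwarz inequality $(\varphi,\bar y) \le \|\varphi\|_{D^{-1}}\|\bar y\|_D$ together with Young's inequality in the sharp form $2\tau(\varphi,\bar y) \le 2\tau(D\bar y,\bar y) + \tfrac{\tau}{2}\|\varphi\|_{D^{-1}}^2$, the point being that the diffusion term $2\tau(D\bar y,\bar y)$ then cancels exactly. Imposing $\sigma \ge 1/2$ makes the coefficient $2(\sigma - \tfrac12)$ nonnegative, so the $\|y_t\|^2$ term may be discarded, leaving the levelwise bound $\|y^{n+1}\|^2 \le \|y^n\|^2 + \tfrac{\tau}{2}\|\varphi^n\|_{D^{-1}}^2$. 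Summing this recurrence from $k=0$ to $n$ (equivalently, invoking the discrete Gronwall Lemma~\ref{l-5} with $\rho = 1$) and using $y^0 = u^0$ delivers the claimed estimate (\ref{191}), which in particular establishes unconditional stability for $\sigma \ge 0.5$. I expect the only point requiring care to be keeping the Young constant sharp enough that the diffusion energy cancels precisely rather than merely being dominated.
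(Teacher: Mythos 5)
Your proposal is correct and follows essentially the same route as the paper: the same reduction to $y_t + A\bar y = \varphi$ with the weighted average, the same use of the skew-symmetry (\ref{183}) to replace $(A\bar y,\bar y)$ by $(D\bar y,\bar y)$, the same splitting $\bar y = \tfrac12(\hat y + y) + (\sigma-\tfrac12)\tau y_t$, and the same sharp Young inequality in the $D$-energy so that the diffusion term cancels exactly, followed by summation. The only difference is cosmetic (you multiply by $2\tau\bar y$ rather than by $v^{n+1}$), and your write-up in fact fixes two small typos in the paper's version (the missing factor $\tau$ in front of $y_t$ and the equality sign that should be an inequality in the bound for $(\varphi^n,v^{n+1})$).
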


\begin{proof}
Rewrite the scheme (\ref{140}), (\ref{190}) (see (\ref{189})) as follows:
\begin{equation}\label{192}
  \frac{y^{n+1} - y^n}{\tau} + A v^{n+1} = \varphi^n,
  \quad n = 0,1,..., N_0-1 ,
\end{equation} 
where
\[
  v^{n+1} = \sigma y^{n+1} + (1-\sigma y^n) =
  \left (\sigma - \frac 12 \right ) y_t + \frac 12 (y^{n+1} + y^n),
\]
\[
  y_t = \frac {y^{n+1} - y^n}{\tau} .
\]
Multiplying equation (\ref{192}) scalarly by $v^{n+1}$, we obtain
\begin{equation}\label{193}
\begin{split}
 \left (\sigma - \frac 12 \right ) \tau (y_t,y_t)  & +
 (Av^{n+1},v^{n+1}) \\
  & +
 \frac {1}{2 \tau} ((y^{n+1},y^{n+1}) - (y^n,y^n)) =  ( \varphi^n, v^{n+1}) .
\end{split}
\end{equation}
In the condition (\ref{190}), we have
$(Ay,y)  = (Dy,y)$. 
For the right-hand side, we use the estimate
\[
  ( \varphi^n, v^{n+1}) = (D v^{n+1}, v^{n+1}) + \frac{1}{4} (D^{-1} \varphi^n, \varphi^n).
\]
With this in mind, from (\ref{193}), under the conditions of the theorem we get the estimate
\[
 \|y^{n+1}\|^2 \le \|y^n\|^2 + \frac{\tau }{2} (D^{-1} \varphi^n, \varphi^n).
\]
Thus, we come to the desired estimate (\ref{191}).
\end{proof}

The a priori estimate (\ref{191}) obtained above for the
difference solution is a grid analog of the a priori estimate (\ref{60})
for the solution of the differential problem (\ref{41}), (\ref{59}), because
the convective transport operator in the skew-symmetric form under consideration
corresponds to the constant $\mathcal{M}_1 = 0$ in (\ref{60}).

Now we consider the case, where the skew-symmetry of the difference
operator of convective transport is not valid. We will study the problem with
the convective transport written in the nondivergent form, i.e., $C=C_1$. The
case the convective transport in the divergence form ($C = C_2$) is investigated
in a similar way.

Let us examine the scheme (\ref{140}), (\ref{141}), where
\begin{equation}\label{194}
  B = E + \sigma \tau A,
  \quad A = C_1 + D.
\end{equation}
It is important to distinguish two classes of problems. The simplest case is
associated with the assumption that the operator $A$ is non-negative. Such
a situation takes place, e.g., if $M_1 M_0 - \kappa_1 \le 0$  --- convective transport has
only an insignificant effect.
Indeed, in view of (\ref{182}), (\ref{184}), in the case (\ref{194}), we have
\[
\begin{split}
  (Ay,y) & = (C_1y,y) + (Dy,y) \geq - M_1 \|y\|^2 +  \frac{1}{M_0} \kappa_1 \|y\|^2 \\
  & =
  \frac{1}{M_0} (\kappa_1 - M_1 M_0) \|y\|^2 .
\end{split}
\]
Because of this, for the operator $A$, we have the following lower bound:
\begin{equation}\label{195}
  A \geq  \frac{1}{M_0} (\kappa_1 - M_1 M_0)  E .
\end{equation} 

Another case deals with slightly compressible
flows, where $A \ge 0$ under the condition $M_2 M_0 - \kappa_1 \le 0$. 
In this situation, in view of (\ref{182}) and (\ref{185}), we obtain
\[
\begin{split}
  (Ay,y) & = (C_1y,y) + (Dy,y) \geq - \|C_1y \| \|y\| +  (Dy,y) \\
  & \geq \left (1 - \left (\frac{M_2 M_0}{\kappa_1 } \right)^{1/2} \right )  (Dy,y) .
\end{split}
\]
Under these restrictions on parameters of the problem,
we can apply the results on the unconditional stability (Theorem~\ref{16}) for
the difference scheme (\ref{140}), (\ref{141}), (\ref{194})
in $H$ for $\sigma \ge 0.5$.

In the general case, we cannot rely on the non-negativity of the operator
$A$. This leads to the fact that the conventional schemes with weights are not unconditionally stable 
under the standard restrictions $\sigma \ge 0.5$. Let us consider
the difference scheme (\ref{140}), (\ref{141}), (\ref{194}) as an example.

The solvability of the scheme (\ref{140}), (\ref{141}), (\ref{194}) ($B > 0$), in view of the fact
that the operator $A$ is not non-negative, takes place under the constraint of
an appropriately small time step --- we speak of conditional solvability. 
Taking into account (\ref{194}), (\ref{195}) with
$M_1 M_0 - \kappa_1 > 0$, we get the following restriction on a time step:
\begin{equation}\label{196}
  \tau \le \tau_1 = \frac {M_0}{\sigma (M_1 M_0 - \kappa)} .
\end{equation}
In this case (see Theorem~\ref{2}, the estimate (\ref{60}) for the solution of the differential problem), 
it is necessary to be oriented to obtaining an appropriate estimate that expresses conditions for $\varrho$-stability.
 
We have already formulated the necessary and sufficient condition for $\varrho$-stability
in the case with the constant self-adjoint operators $B$ and $A$. Therefore,
our study will be based on the schemes with weights of type (\ref{140}), (\ref{141}), (\ref{194})
considered above.

Let us define new grid functions $v^n$:
\begin{equation}\label{197}
  y^n = \varrho^n v^n,
  \quad n = 0,1,..., N_0,
  \quad \varrho > 0 .
\end{equation}
A condition for $\varrho$-stability for $y^n$ is evidently equivalent to stability ($\varrho=1$)
for $v^n$. Substitution of (\ref{197}) into (\ref{145}) yields the difference scheme
\begin{equation}\label{198}
  \tilde B \frac {v^{n+1} - v^n}{\tau} + \tilde A v^n = 0,
  \quad t_n \in \omega_\tau,
\end{equation} 
where
\begin{equation}\label{199}
  \tilde B = \varrho E + \sigma \varrho \tau A,
  \quad
  \tilde A = \frac {\varrho -1}{\tau} E + (1 + \sigma (\varrho -1)) A.
\end{equation}
It is possible to use the following representation for the operators of the
difference scheme (\ref{198}):
\begin{equation}\label{200}
  \tilde B = G + \tilde \sigma \tau \tilde A,
\end{equation}
which treat the scheme (\ref{198}) as a scheme with weights. 
In view of the representation (\ref{199}), we obtain in (\ref{200}):
\begin{equation}\label{201}
  G = \frac {\varrho} {1 + \sigma (\varrho -1)} E,
  \quad \tilde \sigma = \frac {\sigma \varrho} {1 + \sigma (\varrho -1)} .
\end{equation} 

Similarly to Theorem~\ref{t-16}, we prove the stability of the scheme (\ref{198}), (\ref{200})
in $H_G$, i.e., in $H$ with $\tilde \sigma \ge 0.5$ under the constraint $\tilde A \ge 0$. Taking
into account (\ref{201}), we get the desired condition on a weight of the difference
scheme (\ref{198}), (\ref{199}):
\begin{equation}\label{202}
  \sigma \ge \frac {1} {1 + \varrho} .
\end{equation} 

The non-negativity of the operator $\tilde A$ is connected with an appropriate
choice of $\varrho$. In view of the stability estimate for the differential problem (see
Theorem~\ref{t-2}, the estimate (\ref{60})), it is natural to set
\begin{equation}\label{203}
  \varrho = 1 + M_1 \tau .
\end{equation}
Taking into account the estimate (\ref{195}), 
the condition $\tilde A \ge 0$ (see (\ref{199})) is fullfiled for
\[
  M_1 M_0 - (1 + \sigma \tau M_1) (M_1 M_0 - \kappa_1) \ge 0.
\]
This inequality yields the following restriction on a permissible time step:
\begin{equation}\label{204}
  \tau \le \tau_2 = \frac {\kappa_1}{\sigma M_1 (M_1 M_0 - \kappa_1)} .
\end{equation}
A comparison with the estimate (\ref{196}) shows that the time step restriction
(\ref{204}) is slightly stronger ($\tau_2 < \tau_1$, we recall, $M_1 M_0 > \kappa_1$). Summarizing,
we obtain the following statement.

\begin{theorem}\label{t-18}
The scheme with weights (\ref{140}), (\ref{141}), (\ref{194}) under the constraint
$M_1 M_0 > \kappa_1$ is $\varrho$-stable in $H$, where $\varrho$ is defined according to (\ref{203}), if the
weight $\sigma$ satisfies the restriction (\ref{202}) and a time step meets the condition
(\ref{204}).
\end{theorem}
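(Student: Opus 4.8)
The plan is to remove the growth factor $\varrho$ by a rescaling of the unknowns and thereby convert the question of $\varrho$-stability into one of ordinary stability, which can then be settled by the energy-type criterion already developed in Theorem~\ref{t-16}. Concretely, I would set $y^n = \varrho^n v^n$ as in (\ref{197}) and observe that, since $\|y^{n+1}\| = \varrho^{n+1}\|v^{n+1}\|$ and $\varrho\|y^n\| = \varrho^{n+1}\|v^n\|$, the $\varrho$-stability estimate $\|y^{n+1}\|\le\varrho\|y^n\|$ is \emph{equivalent} to the plain stability estimate $\|v^{n+1}\|\le\|v^n\|$ for the rescaled sequence. Substituting (\ref{197}) into the homogeneous scheme (\ref{145}) is a purely algebraic step and produces the transformed scheme (\ref{198}) with the operators $\tilde B$, $\tilde A$ given in (\ref{199}).

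Next I would recast (\ref{198}) as a weighted scheme in the canonical form (\ref{200}), which identifies the effective mass operator $G$ and the effective weight $\tilde\sigma$ as in (\ref{201}). Because $\sigma\ge\frac{1}{1+\varrho}>0$ and $\varrho\ge 1$, the scalar $1+\sigma(\varrho-1)$ is positive, so $G$ is a positive multiple of $E$; consequently stability in $H_G$ coincides with stability in $H$. Since $A = C_1 + D$ is in general non-self-adjoint, I cannot quote a self-adjoint criterion verbatim, but the energy argument underlying Theorem~\ref{t-16} applies to the rescaled scheme and yields a sufficient condition for stability in $H_G$, namely the pair of requirements $\tilde\sigma\ge\frac12$ and $\tilde A\ge 0$ (the first controls the $\tilde A^*\tilde A$ term, the second guarantees non-negativity of the $\tilde A^*$ term).

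It then remains to translate these two requirements back into conditions on $\sigma$ and $\tau$. Inserting (\ref{201}), the inequality $\tilde\sigma\ge\frac12$ clears denominators to $\sigma(1+\varrho)\ge 1$, which is exactly (\ref{202}). The condition $\tilde A\ge 0$ is the delicate one, and I expect it to be the main obstacle: because $M_1 M_0 > \kappa_1$, the lower bound (\ref{195}) shows that $A$ itself need not be non-negative, so the zeroth-order term $\frac{\varrho-1}{\tau}E$ in (\ref{199}) must be large enough to offset the negative part of $(1+\sigma(\varrho-1))A$. Making the natural choice $\varrho = 1 + M_1\tau$ of (\ref{203}) motivated by the differential estimate (\ref{60}), and feeding the bound (\ref{195}) into (\ref{199}), reduces $\tilde A\ge 0$ to the scalar inequality $M_1 M_0 - (1+\sigma\tau M_1)(M_1 M_0 - \kappa_1)\ge 0$; solving this for $\tau$ gives precisely the step restriction (\ref{204}). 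Collecting (\ref{202}) and (\ref{204}) then establishes $\varrho$-stability of the original scheme in $H$ with $\varrho = 1 + M_1\tau$, which is the assertion of the theorem.
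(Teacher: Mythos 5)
Your proposal is correct and follows essentially the same route as the paper: the rescaling (\ref{197}) reducing $\varrho$-stability to ordinary stability, the transformed scheme (\ref{198})--(\ref{199}) recast as a weighted scheme via (\ref{200})--(\ref{201}), the sufficient conditions $\tilde\sigma\ge\frac12$ and $\tilde A\ge 0$ in the spirit of Theorem~\ref{t-16}, and their translation via (\ref{195}) and the choice (\ref{203}) into the conditions (\ref{202}) and (\ref{204}). Your added remark on why the non-self-adjointness of $A=C_1+D$ is harmless (the two conditions jointly imply the criterion (\ref{179})) is a useful elaboration of the paper's terser "similarly to Theorem~\ref{t-16}" step.
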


This statement complements Theorem~\ref{t-17}, which ensures the stability of
the scheme (\ref{140}), (\ref{141}), (\ref{194}) under the constraint
$M_1 M_0 \le \kappa_1$ in $H$ with $\sigma \ge 0.5$.
Possible non-negativity of the operator $A= C_1 +D$ leads to the situation,
where we must use $\varrho$-stability. In addition, we impose (see (\ref{204})) restrictions on a time step.

In solving convection-diffusion problems, it is reasonable to focus on difference
schemes, where a part of the operator $A$ (it is, of
course, the convective transport operator) is taken from the previous time
level \cite{SamVabConv}. Such explicit-implicit schemes from the above class of two-level
schemes with weights are considered in \cite{ExplicitImlicit}. Suppose now that in the
difference scheme (\ref{186}), we have
\begin{equation}\label{205}
  \sigma_1 = 0,
  \quad \sigma_2 = \sigma.
\end{equation}
The homogeneous ($\varphi_n = 0$) scheme (\ref{186}), (\ref{205}) is reduced to the canonical
form (\ref{140}) if we define
\begin{equation}\label{206}
  B = E + \sigma \tau D,
  \quad A = C + D.
\end{equation}
For any $\tau > 0$, we have $B > 0$, and therefore the discrete equation (\ref{186}), (\ref{205})
is solvable at every time level. Let us formulate a sufficient condition for
$\varrho$-stability of the difference scheme for the convection-diffusion equation in
$H_D$.

\begin{theorem}\label{t-19}
The solution of the explicit-implicit scheme (\ref{186}), (\ref{205})
with $\sigma \ge 0.5$ satisfies the estimate
\begin{equation}\label{207}
  \|y^{n+1}\|_D \le \varrho \|y^{n}\|_D
\end{equation} 
where
\begin{equation}\label{208}
  \varrho = 1 + \frac {M_2}{4} \tau ,
\end{equation} 
and $M_2$ is the constant from the inequality (\ref{185}).
\end{theorem}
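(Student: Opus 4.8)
The plan is to establish the levelwise bound (\ref{207}) by a discrete energy argument in $H_D$, using the subordination estimate (\ref{185}) in the same spirit as the differential estimate (\ref{62}) was obtained by testing the equation with $du/dt$. First I would write the homogeneous scheme (\ref{186}), (\ref{205}) in its canonical form (\ref{206}), i.e. $By_t + Ay = 0$ with $B = E + \sigma\tau D$ and $A = C + D$, and multiply it scalarly in $H$ by $2\tau y_t$. Since $(By_t,y_t) = \|y_t\|^2 + \sigma\tau(Dy_t,y_t)$, this produces the energy identity
\[
  2\tau\|y_t\|^2 + 2\sigma\tau^2(Dy_t,y_t) + 2\tau(Cy,y_t) + 2\tau(Dy,y_t) = 0 .
\]

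Next I would rewrite the diffusive term using the self-adjointness of $D$ (see (\ref{182})) through the identity $2\tau(Dy,y_t) = (D\hat y,\hat y) - (Dy,y) - \tau^2(Dy_t,y_t)$, turning the relation into an exact expression for $(D\hat y,\hat y) = \|\hat y\|_D^2$ in terms of $\|y\|_D^2$ and the remaining quadratic terms. The only contribution without a definite sign is the convective cross term $-2\tau(Cy,y_t)$, and this is where the main work lies: because $C$ is not self-adjoint, I would bound it by Cauchy--Schwarz and a Young inequality, $-2\tau(Cy,y_t) \le 2\tau\|y_t\|^2 + \tfrac{\tau}{2}\|Cy\|^2$, choosing the Young constant precisely so that the resulting $2\tau\|y_t\|^2$ cancels the negative $-2\tau\|y_t\|^2$ already present in the identity.

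After this cancellation the assumption $\sigma \ge 1/2$ makes the coefficient $(2\sigma-1)\tau^2(Dy_t,y_t)$ nonnegative, so this term may be discarded (using also $D \ge 0$ from (\ref{182})), leaving $\|\hat y\|_D^2 \le \|y\|_D^2 + \tfrac{\tau}{2}\|Cy\|^2$. Applying the subordination estimate (\ref{185}), namely $\|Cy\|^2 \le M_2 (Dy,y) = M_2\|y\|_D^2$, gives $\|\hat y\|_D^2 \le \left(1 + \tfrac{M_2\tau}{2}\right)\|y\|_D^2$. Taking square roots and using $\sqrt{1+x} \le 1 + x/2$ for $x \ge 0$ yields exactly $\|\hat y\|_D \le \left(1 + \tfrac{M_2}{4}\tau\right)\|y\|_D$, which is (\ref{207}) with $\varrho$ as in (\ref{208}).

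The key obstacle is controlling the non-self-adjoint convective contribution, and the decisive point is that testing with $y_t$ generates a $\|y_t\|^2$ reserve, coming from the identity operator in $B = E + \sigma\tau D$, that is exactly large enough to absorb the cross term after an optimally balanced Young inequality. Consequently no restriction on the time step $\tau$ is required, and the full effect of convection is pushed into the growth factor $\varrho$ through the subordination constant $M_2$; this is precisely the explicit-implicit mechanism that makes the scheme unconditionally $\varrho$-stable in $H_D$.
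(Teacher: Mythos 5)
Your proof is correct and follows essentially the same route as the paper: multiplying the homogeneous scheme by $2\tau y_t$, absorbing the convective cross term via the Young inequality $|(Cy,y_t)| \le \|y_t\|^2 + \tfrac14\|Cy\|^2$ against the $2\tau\|y_t\|^2$ reserve, invoking the subordination estimate (\ref{185}), and concluding with $1+\tfrac{M_2}{2}\tau \le \bigl(1+\tfrac{M_2}{4}\tau\bigr)^2$. Your expanded form of the energy identity is just the paper's identity (\ref{209}) written out term by term, so the two arguments coincide in every essential step.
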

\begin{proof}
Multiply (\ref{186}) scalarly by $2\tau y_t = 2(y^{n+1}-y^n)$ and, in view of (\ref{206}),
obtain the energy identity
\begin{equation}\label{209} 
  \tau ((2B - \tau D)y_t,y_t) + (Dy^{n+1},y^{n+1}) -
  (Dy^n,y^n) + 2\tau (C y^n,y_t) = 0.
\end{equation}
Taking into account the representation (\ref{206}) and the constraint $\sigma \ge 0.5$,
from (\ref{209}), it follows the inequality
\begin{equation}\label{210}
  2\tau (y_t,y_t) + (D y^{n+1},y^{n+1}) -
  (D y^n,y^n) \le 2\tau |(C y^n,y_t)|.
\end{equation}
In view of (\ref{185}), the right-hand side is evaluated as follows:
\[
  |(C y^n,y_t)| \le \|y_t\|^2 +
  \frac {1}{4} \|C y^n\|^2 \le
  \|y_t\|^2 + \frac {M_2}{4} (D y^n,y^n) .
\]
Substitution into (\ref{210}) yields
\[
  (D y^{n+1},y^{n+1}) \le \left (1 + \frac {M_2}{2}\tau \right ) (D y^n,y^n) .
\]
Therefore, in view of inequality
\[
  1 + \frac {M_2}{2}\tau \le
  \left ( 1 + \frac {M_2}{4}\tau \right )^2 ,
\]
we obtain the desired stability estimate  (\ref{207}), (\ref{208}).
\end{proof}

The $\varrho$-stability estimate (\ref{207}), (\ref{208}), derived here, is fully consistent with
the corresponding estimate for the differential problem
(see, e.g., the estimate (\ref{62}) and the proof of Theorem~\ref{t-2}). 
An important point is that, in contrast to Theorem~\ref{t-18}, we obtained stability with the standard
restrictions on a weight $\sigma$ in a stronger norm. Moreover, the implementation
of the explicit-implicit scheme is much simpler from the computational point
of view -- we must invert a self-adjoint elliptic grid operator.

Considering two-level difference schemes, we have highlighted two main
classes of difference schemes for unsteady convection-diffusion problems.
The first class is based on the use of the simplest schemes with
equal weights for the convective and diffusive transport. The second and 
the most promising class of difference schemes (explicit-implicit schemes) is associated with
the explicit treatment of the convective transport. Here we do not
analyze the complete set of three-level difference schemes. We focus on
the study of explicit-implicit schemes. Using three-level
difference schemes, we can obtain the second-order approximation in time.

To solve numerically the problem (\ref{180}), (\ref{181}),
we employ the three-level explicit-implicit scheme with weights
\begin{equation}\label{211}
\begin{split}
  \frac {y^{n+1} - y^{n-1}}{2\tau} & +
  D (\sigma y^{n+1} + (1 - 2 \sigma) y^n + \sigma y^{n-1}) \\
  & +
  C y^{n} =
  \varphi^n,
  \quad n=1,2,..., N_0-1
\end{split}
\end{equation}
with
\begin{equation}\label{212}
  y^0 = u^0 ,
  \quad y^1 = u^1.
\end{equation}
In (\ref{211}), we put, e.g., $C = C(t^n),~ \varphi^n = \varphi(t^n)$.
To specify the second initial condition ($u^1$ in (\ref{212})) with
the second order, in the simplest case, we involve a two-level scheme, so that
\[
\frac {y^{1} - y^0}{\tau} +
  (C + D) \frac {y^{1} + y^0}{2} = \varphi^0 .
\]
The difference scheme (\ref{211}), (\ref{212}) approximates (\ref{180}), (\ref{181})
with the second order in time.

The explicit-implicit scheme (\ref{211}) is written in the canonical form
\begin{equation}\label{213}
\begin{split}
  B(t^n) \frac {y^{n+1} - y^{n-1}}{2\tau}  & +
  R (t^n) (y^{n+1} - 2 y^n + y^{n-1})  \\ 
  & + A (t^n) y^n = \varphi^n,
  \quad n = 1,2,..., N_0-1
\end{split}
\end{equation}
with
\begin{equation}\label{214}
  B = E,
  \quad R = \sigma D,
  \quad A = C + D.
\end{equation}
To evaluate the difference solution, we introduce the norm associated only with
the diffusive transport, i.e.,
\begin{equation}\label{215}
\begin{split}
 {\cal E}^{n+1} & = \frac 14 (D(y^{n+1}+y^n),y^{n+1} + y^n) \\
  & + \left (\sigma - \frac 14 \right ) (D(y^{n+1} - y^n),y^{n+1} - y^n).
\end{split}
\end{equation}
Stability is established taking into account the subordination of the 
convective transport operator to the diffusive transport operator --- 
we say about the estimate (\ref{185}).

\begin{theorem}\label{t-20}
If $\sigma > 0.25$, then the difference scheme (\ref{211}), (\ref{212})
is $\varrho$-stable with
\begin{equation}\label{216}
  \varrho = 1 + M_2 \frac {4\sigma}{4 \sigma - 1} \tau,
\end{equation}
and the solution satisfies the a priori estimate
\begin{equation}\label{217}
  {\cal E}^{n+1} \le \varrho {\cal E}^{n} + \tau \|\varphi^n\|^2 .
\end{equation}
\end{theorem}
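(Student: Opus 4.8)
The plan is to run the standard energy method for the three-level scheme in its canonical form (\ref{213}), (\ref{214}), testing the equation against the discrete central time difference and organizing the diffusive contributions into the quadratic functional $\mathcal{E}^{n+1}$ of (\ref{215}). First I would multiply (\ref{213}) scalarly by $y^{n+1}-y^{n-1}$. Since $B=E$, the leading term yields the nonnegative quantity $\frac{1}{2\tau}\|y^{n+1}-y^{n-1}\|^2$, which will serve to absorb the convective and source terms. For $R(y^{n+1}-2y^n+y^{n-1})=\sigma D(y^{n+1}-2y^n+y^{n-1})$ I write $y^{n+1}-2y^n+y^{n-1}=(y^{n+1}-y^n)-(y^n-y^{n-1})$ and $y^{n+1}-y^{n-1}=(y^{n+1}-y^n)+(y^n-y^{n-1})$; because $D=D^*$ (property (\ref{182})) the mixed products cancel, leaving $\sigma\big((D(y^{n+1}-y^n),y^{n+1}-y^n)-(D(y^n-y^{n-1}),y^n-y^{n-1})\big)$.

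The diffusive part $(Dy^n,y^{n+1}-y^{n-1})$ of $Ay^n=(C+D)y^n$ is then rewritten, again using $D=D^*$, via the algebraic identity
\[
(Dy^n,y^{n+1}-y^{n-1})=\tfrac14\big(P_{n+1}-P_n\big)-\tfrac14\big((Da,a)-(Db,b)\big),
\]
where $a=y^{n+1}-y^n$, $b=y^n-y^{n-1}$ and $P_{m}=(D(y^{m}+y^{m-1}),y^{m}+y^{m-1})$. Adding the $\sigma$-term assembles exactly $\mathcal{E}^{n+1}-\mathcal{E}^n$, so the energy identity collapses to
\[
\frac{1}{2\tau}\|y^{n+1}-y^{n-1}\|^2+(\mathcal{E}^{n+1}-\mathcal{E}^n)+(Cy^n,y^{n+1}-y^{n-1})=(\varphi^n,y^{n+1}-y^{n-1}).
\]
Next I would bound the two remaining products by Young's inequality, $|(\varphi^n,y^{n+1}-y^{n-1})|\le\frac{1}{4\tau}\|y^{n+1}-y^{n-1}\|^2+\tau\|\varphi^n\|^2$ and $|(Cy^n,y^{n+1}-y^{n-1})|\le\frac{1}{4\tau}\|y^{n+1}-y^{n-1}\|^2+\tau\|Cy^n\|^2$, so that the two $\frac{1}{4\tau}$-terms exactly cancel the nonnegative $\frac{1}{2\tau}\|y^{n+1}-y^{n-1}\|^2$. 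Applying the subordination estimate (\ref{185}), $\|Cy^n\|^2\le M_2(Dy^n,y^n)$ (valid for every form $C=C_\alpha$, which is precisely why convection may be treated explicitly), I arrive at
\[
\mathcal{E}^{n+1}-\mathcal{E}^n\le \tau M_2\,(Dy^n,y^n)+\tau\|\varphi^n\|^2.
\]

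The crux is then the sharp comparison $(Dy^n,y^n)\le\frac{4\sigma}{4\sigma-1}\,\mathcal{E}^n$. Writing $y^n=\tfrac12(p+q)$ with $p=y^n+y^{n-1}$, $q=y^n-y^{n-1}$ gives $(Dy^n,y^n)=\tfrac14\big((Dp,p)+2(Dp,q)+(Dq,q)\big)$, and the weighted Cauchy--Schwarz $2(Dp,q)\le\lambda(Dp,p)+\lambda^{-1}(Dq,q)$ (legitimate since $D=D^*>0$) must be applied with the single optimal value $\lambda=1/(4\sigma-1)$; this choice makes both the $(Dp,p)$ and the $(Dq,q)$ coefficients equal $\frac{4\sigma}{4\sigma-1}$ times those in $\mathcal{E}^n=\tfrac14(Dp,p)+(\sigma-\tfrac14)(Dq,q)$. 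This is the step I expect to be the main obstacle: any non-optimal splitting produces a constant strictly larger than the advertised $\frac{4\sigma}{4\sigma-1}$ of (\ref{216}), and the hypothesis $\sigma>\tfrac14$ is exactly what keeps $\mathcal{E}^n$ a genuine nonnegative norm and $\lambda$ finite. Substituting this comparison yields $\mathcal{E}^{n+1}\le\big(1+M_2\frac{4\sigma}{4\sigma-1}\tau\big)\mathcal{E}^n+\tau\|\varphi^n\|^2$, which is precisely the $\varrho$-stability estimate (\ref{217}) with $\varrho$ as in (\ref{216}); a final appeal to the discrete Gronwall Lemma~\ref{l-5} would then convert this one-step recurrence into a full a priori bound across all time levels.
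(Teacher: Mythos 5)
Your proof is correct and follows essentially the same route as the paper's: testing the canonical form (\ref{213}) with $y^{n+1}-y^{n-1}$, assembling $\mathcal{E}^{n+1}-\mathcal{E}^{n}$ from the diffusive terms, absorbing the convective and source products via Young's inequality against $\frac{1}{2\tau}\|y^{n+1}-y^{n-1}\|^2$, invoking the subordination estimate (\ref{185}), and establishing the key comparison $\|y^n\|_D^2 \le \frac{4\sigma}{4\sigma-1}\,\mathcal{E}^n$ by a Cauchy--Schwarz inequality in the $D$-inner product with optimally chosen weight. The only cosmetic difference is in that last step: the paper works from the representation $\mathcal{E}^n = \|y^n\|_D^2 - \tau(Dy^n,y_{\bar t}) + \sigma\tau^2\|y_{\bar t}\|_D^2$ with parameter $\beta = 1/(4\sigma)$, while you expand $y^n = \frac{1}{2}(p+q)$ and take $\lambda = 1/(4\sigma-1)$; the two computations are equivalent and yield the identical constant.
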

\begin{proof}
For the scheme (\ref{213}), (\ref{214}), we have
\[
\begin{split}
 \frac 1{2\tau} \|w^{n+1}+w^n\|^2 & + {\cal E}^{n+1} =
- (Cy^n,w^{n+1} + w^n) \\
  & +
  (\varphi^n, w^{n+1}+w^n) + {\cal E}^{n},
\end{split}
\]
where
\[
  w^{n+1} =  y^{n+1}-y^n .
\]
For the first two terms in the right-hand side, we obtain
\[
| (Cy^n,w^{n+1} + w^n) | \le \frac {1}{4 \tau} \|w^{n+1} + w^n\|^2 +
\tau \|C y^n\|^2 ,
\]
\[
| (\varphi^n, w^{n+1}+w^n) | \le \frac {1}{4 \tau} \|w^{n+1} + w^n\|^2
+ \tau \|\varphi^n\|^2 .
\]
Thus, in view of (\ref{215}), we arrive at the inequality
\begin{equation}\label{218} 
{\cal E}^{n+1} \le {\cal E}^{n} +
\tau M_2 \|y^n\|^2_D + \tau \|\varphi^n\|^2 .
\end{equation}
Next, we use the estimate
\begin{equation}\label{219}
  \|y^n\|^2_D \le \frac{4\sigma}{4\sigma -1} {\cal E}^n .
\end{equation}
According to (\ref{212}), we get
\[
\begin{split}
 {\cal E}^n & = (Dy^n,y^n) - \tau (Dy^n,y_{\bar t}) +
  \sigma \tau^2 (D y_{\bar t}, y_{\bar t}) \\
  & \ge \|y^n\|^2_D - \tau \|y^n\| \ \|y_{\bar t}\| +
  \sigma \tau^2 \|y_{\bar t}\|^2_D  \\
  & \ge (1 - \beta) \|y^n\|^2_D +
  (\sigma - \frac {1}{4\beta}) \tau^2 \|y_{\bar t}\|^2_D .
\end{split}
\]
For $\sigma > 0.25$, we select the parameter $\beta = 1/(4\sigma)$
and obtain the estimate (\ref{219}). Substitution of (\ref{219}) into
(\ref{218}) yields the levelwise estimate (\ref{216}), (\ref{218}).
\end{proof}

\subsection{Unconditionally stable schemes} 

For convection-diffusion problems with convective transport in the divergent and nondivergent
forms, we have constructed (Theorem~\ref{t-18}) conditionally stable schemes with weights.
Restrictions on a time step (see (\ref{204})) are governed by features of the problem and
do not related, in general, with parameters of discretization in space.
Conditionally stable schemes with weights are developed only for problems with the convective
transport in the skew-symmetric form (Theorem ~\ref{t-17}).

Different nature of convective and diffusive transport as well as reaction processes
appear, in particular, in significantly distinct representative rates of these phenomena.
Such heterogeneity can be taken into account when choosing discretization in time.
The most pronounced occurrence of the heterogeneity of discretization in time is expressed in
explicit-implicit schemes. In this case, for numerical solving the unsteady problem,
a part of the problem operator terms is approximated by explicit relationships, 
whereas the other part is treated implicitly.

Explicit-implicit schemes are widely used for the numerical solution of convection-diffusion problems.
Various variants of inhomogeneous discretization in time are given in \cite{ascher1995implicit}.
One or another explicit approximations are applied to the convective transport operator,
whereas the diffusive transport operator is approximated implicitly.
Thus, the most severe restrictions on a time step due to diffusion are removed. 
In view of the subordination of the convective transport operator  to 
the diffusive transport operator, we have already proved unconditional stability
of the above-considered explicit-implicit schemes for time-dependent convection-diffusion problems.

Similar techniques are used in the analysis of diffusion-reaction problems.
In this case (see, e.g., \cite{ruuth1995implicit}), the diffusive transport is treated
implicitly, whereas for reactions (source terms), explicit approximations are used.
Such explicit approximations demonstrate obvious advantages for problems with nonlinear
terms describing reaction processes.
 
In convection-diffusion-reaction problems, the problem operator may be sign-indefinite.
This means that the system may be nondissipative, i.e., the solution norm for the homogeneous
problem does not decrease during the time evolution. Thus, the exponential growth of the solution 
may be observed, and such behavior of the solution must be reflected at the discrete level.
Unconditionally stable schemes for such problems are constructed in the work \cite{VabVas}. 
They are based on the splitting of the problem operator into two terms, where one of the terms
has explicit approximations in time, whereas the other is approximated implicitly.
Implicit approximations are applied to the part of the problem operator that causes the dissipative
properties of the problem. In the case of the skew-symmetric operator of convective transport,
such a splitting is used for the operator of reaction.

The standard schemes, which are used in computational practice, should be corrected
even for solving dissipative problems. For example, both
the standard fully implicit scheme (backward Euler)
and symmetric scheme (Crank-Nicholson)
does not produce the exact solution for the test problem ($\lambda > 0$):
\[
  \frac{d u}{dt} + \lambda u = 0,
  \quad u(0) = u^0 .  
\]
In \cite{Mickens2002}, there is discussed a modification of standard schemes that is based
on the use of $(\exp(\lambda\tau) - 1) / \lambda$
instead of the original time step $\tau$ in the application to the fully implicit scheme.
More recent results concerned with constructing and employing
such nonstandard discretizations in time can be found, e.g., in \cite{Mickens2005}.
Here we mention new possibilities in designing unconditionally stable schemes for solving
unsteady convection-diffusion problems that involve the introduction of new variables.

Time-dependent convection-diffusion problems with the convective transport in the divergent
(\ref{37})--(\ref{39}) and the nondivergent (\ref{36})--(\ref{38}) forms
may be written as the Cauchy problem for the operator equation (compare with (\ref{41})):
\begin{equation}\label{220}
 \frac{d u}{d t} + {\cal A} u = f(t), \quad {\cal A} =
{\cal C}_0 +  {\cal R} + {\cal D}.
\end{equation}
Here we introduce the reaction operator
\[
   {\cal R} u = r({\bm x},t) u .
\] 
In the case (\ref{37})--(\ref{39}), we have
\[
  r({\bm x},t)  = - \frac{1}{2} \div {\bm v} .
\]
Similarly, for equation (\ref{36})--(\ref{38}), we obtain
\[
  r({\bm x},t)  = \frac{1}{2} \div {\bm v} .
\]
For the reaction operator, we get
\begin{equation}\label{221}
  {\cal R} = {\cal R}^* ,
  \quad \mathsf{m}   {\cal E} \leq {\cal R} \leq \mathcal{M} {\cal E}.
\end{equation}
Using roughened estimates fr the reaction operator, we can put
\[
 \mathsf{m} = - \mathcal{M}_1,
  \quad  \mathcal{M} = \mathcal{M}_1 .
\] 

After discretization in space, from  (\ref{220}), we obtain the equation
\begin{equation}\label{222}
  \frac{d y}{d t} + A y = \varphi(t),
  \quad A = C_0 + R + D, 
  \quad 0 < t \leq T,  
\end{equation}
supplemented by the initial condition (\ref{181})..

For the operator $R$, we have 
\begin{equation}\label{223}
  R y = r({\bm x},t) y,
  \quad  {\bm x} \in \omega . 
\end{equation} 
In this case, we get
\begin{equation}\label{224}
  R = R^*,
  \quad m E \leq R \leq M E .   
\end{equation}
For instance, the convective transport operator of in the nondivergent form seems like this:
\[
  r({\bm x},t)  = - \frac{1}{2} \div_h {\bm v} 
\]
for an appropriate approximation of the divergence operator.
 
To construct unconditionally stable schemes for solving the problem
(\ref{181}), (\ref{222}) without the assumption of non-negativity of the problem operator,
we apply explicit-implicit approximations.
The bottleneck is connected with the reaction operator, and therefore for $m < 0$, 
we split it into two terms:
\begin{equation}\label{225}
  R = R_+ + R_-,
  \quad R_+ = R_+^*,
  \quad R_- = R_-^*,
  \quad 0 \leq R_+ \leq M E,
  \quad m E \leq R_- < 0 .   
\end{equation}
By (\ref{223}), it is sufficient to put
\[
  R_+ y = r_+({\bm x},t) y,
  \quad  R_- y = r_-({\bm x},t) y,
  \quad  {\bm x} \in \omega ,
\] 
where
\[
  r_+ = \max (0, r),
  \quad r = r_+ + r_- .
\] 

Using two-level explicit-implicit schemes, we  may rely only on the first-order accuracy 
with respect to time. Therefore, we focus on the fully implicit approximations of the main
operator terms. We employ the difference scheme
\begin{equation}\label{226}
  \frac {y^{n+1} - y^n}{\tau} +
  (C^n + D + R_+^n) y^{n+1} + R_-^n y^n = 0,
  \quad n = 0,1,...,N_0-1 .
\end{equation}

\begin{theorem}\label{t-21}
The explicit-implicit scheme (\ref{141}), (\ref{223})--(\ref{226}) with $m < 0$
is unconditionally $\varrho$-stable in $H$ for
\begin{equation}\label{227}
  \varrho = 1 - m \tau ,
\end{equation}
and the difference solution satisfies the estimate
\begin{equation}\label{228}
  \|y^{n+1}\| \le \varrho \|y^n\|,
  \quad n = 0,1,...,N_0-1 .  
\end{equation} 
\end{theorem}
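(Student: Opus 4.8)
The plan is to turn the one-step scheme (\ref{226}) into an explicit update of the form $B\,y^{n+1} = (E - \tau R_-^n)\,y^n$ and then estimate the two sides separately in the norm of $H$. Collecting the implicitly treated terms (here $C = C_0$ is the skew-symmetric convective operator), set
\[
  B = E + \tau (C^n + D + R_+^n),
\]
so that (\ref{226}) reads $B\,y^{n+1} = (E - \tau R_-^n)\,y^n$. The guiding idea is that every operator treated implicitly --- the skew-symmetric convection $C_0$, the positive diffusion $D$, and the non-negative part $R_+$ of the reaction operator --- forces the lower bound $\|B y^{n+1}\| \ge \|y^{n+1}\|$, whereas the only explicitly treated operator $R_-$ is solely responsible for the growth factor $\varrho = 1 - m\tau$.

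First I would bound the left-hand side from below. Writing $w = y^{n+1}$ and expanding the square,
\[
  \|B w\|^2 = \|w\|^2 + 2\tau\,((C^n + D + R_+^n) w, w) + \tau^2 \|(C^n + D + R_+^n) w\|^2 .
\]
By the skew-symmetry (\ref{183}) we have $(C^n w, w) = 0$, while $(D w, w) \ge 0$ by (\ref{182}) and $(R_+^n w, w) \ge 0$ by (\ref{225}); hence both the cross term and the last term are non-negative, giving $\|B y^{n+1}\| \ge \|y^{n+1}\|$.

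Next I would bound the right-hand side from above. Since $R_-^n = (R_-^n)^*$ with $m E \le R_-^n < 0$ by (\ref{225}), the operator $E - \tau R_-^n$ is self-adjoint with spectrum contained in the interval $[1,\,1 - m\tau]$; its operator norm therefore equals $1 - m\tau$, so that $\|(E - \tau R_-^n) y^n\| \le (1 - m\tau)\|y^n\|$. Combining the two bounds through the identity $\|B y^{n+1}\| = \|(E - \tau R_-^n) y^n\|$ yields
\[
  \|y^{n+1}\| \le \|B y^{n+1}\| = \|(E - \tau R_-^n) y^n\| \le (1 - m\tau)\|y^n\|,
\]
which is exactly the estimate (\ref{228}) with $\varrho$ given by (\ref{227}); note that $\varrho > 1$ because $m < 0$, so the scheme permits the exponential growth of the discrete solution anticipated by the differential estimates.

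The step I expect to require the most care is the lower bound on the implicit side, since $B$ is \emph{not} self-adjoint because of the convective term and no spectral argument is available for it. The essential observation is that the skew-symmetry $(C_0 y, y) = 0$ holds for an arbitrary velocity field, including the compressible case, so that convection makes no contribution to the energy balance and only the manifestly non-negative operators $D$ and $R_+$ survive in the cross term. I would also note that unconditional solvability comes for free: $(B w, w) = \|w\|^2 + \tau\,((D + R_+^n) w, w) \ge \|w\|^2 > 0$ for $w \neq 0$, so $B^{-1}$ exists for every $\tau > 0$ and the scheme is well-defined at each time level.
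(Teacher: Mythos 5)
Your proof is correct, but it takes a genuinely different route from the paper's. The paper runs the standard energy argument: it multiplies the scheme (\ref{226}) scalarly by $y^{n+1}$, drops the non-negative terms using $(C_0 y^{n+1},y^{n+1})=0$, $(D y^{n+1},y^{n+1})\ge 0$, $(R_+^n y^{n+1},y^{n+1})\ge 0$, estimates the surviving right-hand side $(y^{n},y^{n+1})-\tau (R_-^n y^{n},y^{n+1})$ by Cauchy--Schwarz together with $\|R_-^n\|\le -m$, arrives at $\|y^{n+1}\|^2 \le (1-2m\tau)\|y^{n}\|^2$, and finally invokes $1-2m\tau \le (1-m\tau)^2$ to extract $\varrho = 1-m\tau$. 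You instead factor the time step as $B\,y^{n+1} = (E-\tau R_-^n)\,y^{n}$ and bound the two operators separately: the lower bound $\|Bw\|\ge \|w\|$ (obtained by expanding $\|Bw\|^2$, which uses exactly the same three non-negativity facts, i.e.\ the accretivity of $C_0+D+R_+^n$), and the spectral bound $\|E-\tau R_-^n\|\le 1-m\tau$ for the self-adjoint explicit part. The structural ingredients are identical, but your operator-norm factorization produces the factor $1-m\tau$ directly, with no squaring trick, and gives unconditional solvability of the implicit step as a free by-product; the paper's inner-product identity, on the other hand, is the technique it reuses throughout (compare Theorems~\ref{t-17} and \ref{t-19}) and adapts more readily to schemes with weights and to estimates in energy norms such as $H_D$. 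One cosmetic slip in your write-up: from $mE \le R_-^n < 0$ you may conclude only that the spectrum of $E-\tau R_-^n$ lies in $[1,\,1-m\tau]$ and hence $\|E-\tau R_-^n\| \le 1-m\tau$, not that the norm \emph{equals} $1-m\tau$ (equality would require $m$ to be attained in the spectrum of $R_-^n$); since you only use the upper bound, nothing breaks.
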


\begin{proof}
Multiplying equation (\ref{226}) scalarly in $H$ by $y^{n+1}$,
and taking into account the skew-symmetry of the operator $C_0$,  positive definiteness of
the operator $D$, and relation (\ref{225}), we obtain
\begin{equation}\label{229}
  \|y^{n+1}\|^2 \leq
  (y^{n+1}, y^{n}) - (R_-^n y^{n}, y^{n}) .
\end{equation}
In view of
\[
 (y^{n+1}, y^{n}) \leq \frac{1}{2} ( \|y^{n+1}\|^2 + \|y^{n}\|^2 ),
\] 
\[
  |- (R_-^n y^{n}, y^{n}) | \leq
  m  \|y^{n+1}\| \|y^{n}\|,
\] 
from (\ref{229}), we arrive at the inequality
\[
 \|y^{n+1}\|^2 \leq (1 - 2 m \tau) \|y^{n}\|^2 .
\] 
By virtu of
\[
 (1 - 2 m \tau) \leq (1 - m \tau)^2 ,
\]
this yields the inequality of $\varrho$-stability (\ref{228}) with
$\varrho$ defined by (\ref{227}).
\end{proof}

Among possible generalizations of explicit-implicit scheme (\ref{141}), (\ref{226}),
special attention should be given to schemes of the second-order accuracy 
with respect to time. The symmetric scheme provides an example of such a scheme:
\begin{equation}\label{230}
\begin{split}
  \frac {y^{n+1} - y^{n-1}}{2\tau} & +
  (C^n + D + R_+^n) \frac{y^{n+1} + 2 y^{n} + y^{n+1}}{4} \\
  & + R_-^n y^n = 0,
  \quad n = 1,2,...,N_0-1 ,
\end{split}
\end{equation}
where now $C^n = C(t^n), \ R^n = R(t^n)$.
To start calculations with the second order in time, we put, e.g.,
\[
  \frac {y^{1} - y^0}{\tau} +
  \frac{1}{2} ((C^1 + D + R^1) y^{1} + (C^0 + D + R^0) y^{0} )  = 0, 
\]
Because of this and taking into account the initial condition (\ref{141}),
the difference equation (\ref{230}) is considered for a given $y^{0}$ and $y^{1}$.

In addition to (\ref{230}), special mention should be given to the scheme
\[
\begin{split}
  \frac {3 y^{n+1} - 4 y^{n} + y^{n-1}}{2\tau} & +
  (C^n + D + R_+^n) y^{n+1} \\
  & + R_-^n (2 y^n - y^{n-1}) = 0,
  \quad n = 1,2,...,N_0-1 . 
\end{split}  
\]
Preserving the second-order approximation in time, for this scheme,
the implicity of the main part of the problem operator is expressed more essentially. 

In equation (\ref{222}), for the operator $A$, by (\ref{182}) and (\ref{183}), we have
\[
  A \geq m E +   \frac{1}{M_0} \kappa_1 E .
\]
In our study, we use a more rough estimate 
\begin{equation}\label{231}
  A \geq m E ,
\end{equation}
and consider the most interesting case $m < 0$.

To construct unconditionally stable schemes for the differential problem
(\ref{141}), (\ref{222}), (\ref{224}) under the condition (\ref{231}),
we define a new function $w$:
\begin{equation} \label{232}
  y = \exp(- m t) w.
\end{equation}
Substitution of (\ref{232}) into (\ref{141}), (\ref{222})
for the homogeneous right-hand side gives
the following problem for $w$:
\begin{equation} \label{233}
  \frac{d w}{dt} + \tilde{A} w = 0, \quad \tilde{A} = A - m E, 
  \quad 0 < t \leq T,
\end{equation}
\begin{equation} \label{234}
  w\left(0\right) = u^0.
\end{equation}
For this transformation, the problem operator $\tilde{A}$ is non-negative ($\tilde{A} \geq 0$).

To solve the problem (\ref{233}), (\ref{234}), we apply the two-level scheme with weights:
\begin{equation} \label{235}
  \frac{w^{n+1} - w^n}{\tau} + \tilde{A}^n \left( \sigma w^{n+1} + \left(1 -
  \sigma\right) w^n \right) = 0, \quad t^n \in \omega_\tau,
\end{equation}
\begin{equation}\label{236}
  w^0 = u^0.
\end{equation}
This scheme under the standard constraints $\sigma \geq 0.5$ 
is unconditionally stable (Theorem~\ref{t-17}).

Let us write the difference equation (\ref{235}) for the desired grid function
$y^n$. Taking into account $t^{n+1} = t^n + \tau$, we put
\[
  y^n = \exp(- m t^n) w^n, 
  \quad y^{n+1} = \exp(- m t^n) \exp(- m \tau) w^{n+1} .
\]
Because of this, from (\ref{235}), (\ref{236}), we obtain the following difference scheme for $y^n$:
\begin{equation} \label{237}
  \frac{\exp(m \tau) y^{n+1} - y^n}{\tau} + \left( A - m E\right) \left(
  \sigma \exp(m \tau) y^{n+1} + \left(1 - \sigma\right) y^n \right) = 0, 
\end{equation}
\begin{equation} \label{238}
  y^0 = u^0.
\end{equation}

In contrast to the nonstandard schemes discussed in \cite{Mickens2002,Mickens2005},
a positive effect is achieved not only through the use of new approximations in time,
but also by correcting the problem operator.

\begin{theorem}\label{t-22}
The difference scheme (\ref{237}), (\ref{238}) for $\sigma \geq 0.5$ 
is unconditionally $\varrho$-stable in the $H$ with
\begin{equation} \label{239}
  \varrho = \exp(- m \tau) ,
\end{equation}
and the solution satisfies the a priori estimate
\begin{equation} \label{240}
  \|y^{n+1}\| \leq \varrho \|y^n\| .
\end{equation}
\end{theorem}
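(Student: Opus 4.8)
The plan is to exploit the fact that the scheme (\ref{237}), (\ref{238}) is, by construction, the scheme with weights (\ref{235}), (\ref{236}) for the transformed unknown $w$ written back in terms of $y$. Since the change of variables $y = \exp(-mt)w$ is a purely scalar, time-dependent rescaling, I expect stability for $w$ to transfer directly to a $\varrho$-stability estimate for $y$, with the growth factor $\varrho$ absorbing the exponential. So the first step is to make the equivalence explicit: substituting $w^n = \exp(mt^n)y^n$ and $w^{n+1} = \exp(mt^n)\exp(m\tau)y^{n+1}$ into (\ref{235}) and cancelling the common positive scalar $\exp(mt^n)$ returns precisely (\ref{237}) with $\tilde{A} = A - mE$. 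Hence it suffices to control $\|w^n\|$.

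Next I would invoke the hypothesis (\ref{231}), $A \ge mE$, which yields $(\tilde{A}y,y) = (Ay,y) - m\|y\|^2 \ge 0$, i.e. the operator $\tilde{A} = A - mE$ in (\ref{233}) is non-negative. Because $\tilde{A}$ has exactly the structure of the operator in (\ref{190}) --- the skew-symmetric convective part $C_0$ added to the self-adjoint non-negative part $R + D - mE$ --- the auxiliary scheme (\ref{235}), (\ref{236}) is covered by the unconditional stability result of Theorem~\ref{t-17} for $\sigma \ge 0.5$. This gives the levelwise estimate $\|w^{n+1}\| \le \|w^n\|$.

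Then I would transform back to $y$. Since $\exp(-mt^n)$ is a scalar we have $\|y^n\| = \exp(-mt^n)\|w^n\|$, and using $t^{n+1} = t^n + \tau$ together with the $w$-estimate gives
\[
  \|y^{n+1}\| = \exp(-mt^{n+1})\|w^{n+1}\| \le \exp(-m\tau)\exp(-mt^n)\|w^n\| = \exp(-m\tau)\|y^n\| ,
\]
which is exactly the estimate (\ref{240}) with $\varrho = \exp(-m\tau)$ as in (\ref{239}). For $m < 0$ this factor exceeds one, so the scheme faithfully admits the exponential growth inherent in the nondissipative problem.

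The algebraic back-substitution is routine and I anticipate no real difficulty there. The point that must be checked with care is that the auxiliary scheme for $w$ genuinely falls under the earlier stability theory despite the non-self-adjointness introduced by $C_0$: one should confirm that mere non-negativity $\tilde{A} \ge 0$ suffices, rather than self-adjointness of the evolution operator. This is ensured by the exact stability condition (\ref{179}) of Theorem~\ref{t-16}, whose two terms $\tilde{A}^*$ and $(\sigma - \tfrac12)\tau\,\tilde{A}^*\tilde{A}$ are each non-negative quadratic forms when $\sigma \ge 0.5$ (the first because $(\tilde{A}^*y,y) = (\tilde{A}y,y) \ge 0$, the second because it is of the form $(\,\cdot\,)^*(\,\cdot\,)$). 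Once this is settled, the estimate (\ref{240}) follows at once.
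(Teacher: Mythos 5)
Your proof is correct, and it takes the route that the paper's own proof mentions only in its opening sentence before doing something else: the paper notes that stability rests on the transition to the problem with the non-negative operator $\tilde{A}$ together with Theorem~\ref{t-17} --- precisely your argument --- but then carries out a \emph{direct} proof instead. Namely, it rewrites (\ref{237}) as (\ref{241}) with $\tilde{A} = A - mE$, multiplies scalarly by $p^{n+1} = \sigma\exp(m\tau)y^{n+1} + (1-\sigma)y^n$, and reads off $\|\exp(m\tau)y^{n+1}\| \le \|y^n\|$ from the resulting energy identity, using only $\sigma \ge 0.5$ and $\tilde{A} \ge 0$. The two arguments are the same estimate in different variables: since $w^{n+1} - w^n = \tau\exp(mt^n)\,r^{n+1}$ and $\sigma w^{n+1} + (1-\sigma)w^n = \exp(mt^n)\,p^{n+1}$, the paper's identity is exactly your $w$-identity divided by $\exp(2mt^n)$. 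What each buys: your version reuses the two-level theory already established and makes transparent that $\varrho = \exp(-m\tau)$ is nothing but the Jacobian of the substitution (\ref{232}); the paper's version is self-contained and levelwise, so no question arises of whether the earlier theorems apply verbatim. That question is the one soft spot in your write-up: Theorem~\ref{t-16}, which you invoke to confirm that non-negativity of $\tilde{A}$ suffices, is stated for a \emph{constant} operator satisfying $A > 0$ (its proof inverts $A$), whereas here $\tilde{A}$ is in general time-dependent (through $C$ and $R$) and, under the rough bound (\ref{231}) alone, only non-negative. The clean justification is the levelwise energy argument from the proof of Theorem~\ref{t-17}: multiply the $w$-equation by $v^{n+1} = \sigma w^{n+1} + (1-\sigma)w^n$ and use $(\tilde{A}v^{n+1},v^{n+1}) \ge 0$ at each time level; this needs neither constancy nor invertibility (and, if strict positivity is wanted, the sharper bound $A \ge mE + \kappa_1 M_0^{-1}E$ stated just before (\ref{231}) supplies it). With that substitution your argument is complete and fully consistent with the paper's conclusion (\ref{239}), (\ref{240}).
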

\begin{proof}
The above proofs were based on the transition to the problem with a non-negative operator and
the use of the previous Theorem~\ref{t-17}.
It is possible to conduct a direct proof of stability for the scheme (\ref{237}), (\ref{238}).
Rewrite the scheme under consideration in the form 
\begin{equation} \label{241}
  \frac{\exp(m \tau) y^{n+1} - y^n}{\tau} + \tilde{A} p^{n+1}= 0, \quad t^n \in
  \omega_\tau,
\end{equation}
where
\[
\begin{split}
  p^{n+1} &= \sigma \exp(m \tau) y^{n+1} + \left(1 - \sigma\right) y^n \\
  & =
  \tau \left( \sigma - \frac{1}{2} \right) r^{n+1} +
  \frac{1}{2} \left (\exp(m \tau) y^{n+1} - y^n \right ),
\end{split}
\]
\[
  r^{n+1} = \frac{\exp(m \tau) y^{n+1} - y^n}{\tau}.
\]
Multiplying equation (\ref{241}) scalarly by $p^{n+1}$, we obtain
\[
\begin{split}
\tau \left( \sigma - \frac{1}{2} \right) \left(r^{n+1}, r^{n+1} \right)
 & + \tilde{A} \left(p^{n+1}, p^{n+1}\right)  \\
 & + \frac{1}{2 \tau}\left( \left(\exp(m \tau) y^{n+1}, \exp(m \tau) y^{n+1}\right) 
 - \left(y^n, y^n\right) \right)   = 0 .
\end{split} 
\]
From this equation, under the conditions $\sigma \geq 0.5$ and $\tilde{A} \geq 0$,
it follows that the stability estimate (\ref{239}), (\ref{240}) holds.
\end{proof}

It is important to note that, in contrast to the conventional scheme with weights (see Theorem~\ref{t-18}), here stability is obtained with no restriction on a time step. The value of $\varrho$ defined by (\ref{239})
is fully consistent with the corresponding constant for the solution
of the differential problem. The transition to a new time level involves the solution of the grid problem
\begin{equation}\label{242}
  (E +  \sigma \tau (A - m E ))  y^{n+1} =  \chi^n.
\end{equation}
The equation (\ref{242}) is a system of linear algebraic
equations with a positive definite and non-self-adjoint matrix;
it can be solved using standard iterative methods.

\section{Stability in Banach spaces}

The main results on stability of difference schemes for the unsteady
convection-diffusion equation were obtained above considering the problem in
Hilbert spaces of grid functions. Here we study difference schemes in Banach spaces,
where stability of difference schemes is established in the uniform and integral norms.

In our study we can employ the maximum principle for difference schemes
as it was done in investigating monotone approximations.
The second and more promising approach presented below is to use
the concept of the logarithmic norm. In this section, monotone schemes of the second-order accuracy in space
are constructed for the time-dependent convection-diffusion.

\subsection{One-dimensional problems} 

To simplify the material presented here, we start with the 1D convection-diffusion problems.
Consider the time-dependent convection-diffusion equation with convective terms in the
nondivergent form:
\begin{equation}\label{243}
  \frac{\partial u}{\partial t} +
   v (x,t) \frac{\partial u}{\partial x} -
  \frac{\partial }{\partial x} \left ( k(x) \frac{\partial u}{\partial x} \right ) =
  f(x,t) 
\end{equation} 
for
\[
   \quad 0 < x < l,
   \quad 0 < t \leq T . 
\]
This equation is supplemented with homogeneous Dirichlet boundary conditions:
\begin{equation}\label{244}
  u(0, t) = 0,
  \quad u(l,t) = 0 , 
   \quad 0 < t \leq T . 
\end{equation}
In addition, the initial condition is given:
\begin{equation}\label{245}
  u(x,0) = u^0(x),
  \quad 0 < x < l .
\end{equation} 

The second important example is the unsteady equation
of convection-diffusion in the divergent form:
\begin{equation}\label{246}
  \frac{\partial u}{\partial t} +
  \frac{\partial }{\partial x} (v (x,t) u) -
  \frac{\partial }{\partial x} \left ( k(x) \frac{\partial u}{\partial x} \right ) =
  f(x,t) .
\end{equation} 

Consider the set of functions $u(x,t)$ satisfying the boundary conditions (\ref{244}). 
The transient problem of convection-diffusion is written in the form of 
the operator-differential equation
\begin{equation}\label{247}
\frac {du}{dt} + {\cal A} u = f(t),
\quad {\cal A} = {\cal A}(t) = {\cal C}(t) + {\cal D},
\end{equation}
where ${\cal C}(t)$ denotes the convective transport operator, and
${\cal D}$ stands for the operator of diffusive transport.
The Cauchy problem for the evolutionary equation (\ref{247})
is supplemented with the initial condition
\begin{equation}\label{248}
  u(0) = u^0 .
\end{equation} 

We recall some a priori estimates for the convection-diffusion problems (\ref{243})--(\ref{245}) and
(\ref{244})--(\ref{246}), which are derived from the maximum principle.
The corresponding a priori estimates are derived in the spaces
$L_\infty(0,l)$ and $L_1(0,l)$, where the norms are, respectively,
\[
  \|v\|_\infty = \max_{0<x<l} | v(x) |,
  \quad \|v\|_1 = \int_{0}^{l}  | v(x) | d x .
\]
The solution of the time-dependent convection-diffusion problem (\ref{243})--(\ref{245})
(the convective transport in the nondivergent form)
satisfies the a priori estimate in $ L_\infty(0,l)$:
\begin{equation}\label{249}
  \|u(x,t)\|_\infty \le
  \|u^0(x)\|_\infty +
  \int_0^t \|f(x,\theta)\|_\infty d\theta .
\end{equation}
We present also the estimate for the convection-diffusion equation with convective
terms in the divergent form. The solution of the problem (\ref{244})--(\ref{246})
satisfies the a priori estimate in $L_1(0,l)$:
\begin{equation}\label{250}
  \|u(x,t)\|_1 \le
  \|u^0(x)\|_1 +
  \int_0^t \|f(x,\theta)\|_1 d\theta .
\end{equation}
The a priori estimates (\ref{249}), (\ref{250}) serve us as a guide in considering discrete problems.

\subsection{Stability of two-level schemes} 

Let us obtain sufficient conditions for the stability of two-level difference schemes
for the Cauchy problem for a system of ODEs.
Further, these general conditions will be applied to particular cases of model convection-diffusion equations
with the convective terms in the nondivergent and divergent forms.

Consider a system of linear ODEs of first order:
\begin{equation}\label{251}
  \frac{dw_i}{dt} + \sum_{j=1}^{m} a_{ij}(t) w_j = \phi_i(t),
  \quad i = 1,2,\ldots,m .
\end{equation}
Assume that $w = w(t) = \{w_1,w_2,\ldots,w_m\},~  A =  [a_{ij}]$, then we can write
(\ref{251}) in matrix (operator) form as
\begin{equation}\label{252}
  \frac{dw}{dt} + A(t) w = \phi(t) .
\end{equation}
We will construct difference schemes for numerical solving 
the Cauchy problem (\ref{252}) for $t > 0$ and the initial condition
\begin{equation}\label{253}
   w(0) = u^0.
\end{equation} 

We will investigate the stability of the difference solution of the problem (\ref{252}), (\ref{253})
in $L_\infty$ and $L_1$. 
For a norm of a vector and a norm of a matrix, consistent with it in $L_\infty$, we have
\begin{equation}\label{254}
  \|w\|_\infty = \max_{1\le i \le m} |w_i|,
  \quad \|A\|_\infty = \max_{1\le i \le m} \sum_{j=1}^{m} |a_{ij}|.
\end{equation} 
Similarly, in $L_1$, we obtain
\begin{equation}\label{255}
  \|w\|_1 = \sum_{i=1}^{m} |w_i|,
  \quad \|A\|_1 = \max_{1\le j \le m} \sum_{i=1}^{m} |a_{ij}|.
\end{equation} 

The problem (\ref{252}), (\ref{253})  will be considered under the following constraints. 
Assume that the diagonal elements of the matrix $A$ are non-negative,
and there is row-wise or column-wise diagonal dominance, i.e., we have
\begin{equation}\label{256}
  a_{ii} \ge \sum_{i\not=j=1}^{m} |a_{ij}|,
  \quad i = 1,2,\ldots,m
\end{equation}
(weak diagonal dominance by rows) or
\begin{equation}\label{257}
  a_{ii} \ge \sum_{i\not=j=1}^{m} |a_{ji}|,
  \quad i = 1,2,\ldots,m
\end{equation}
(weak diagonal dominance by columns).

The logarithmic norm of the matrix $A$ is defined \cite{DekkerVerwer1984,HairerNorsettWanner1987}
by the number
\[
  \mu[A] = \lim_{\delta \to 0+} \frac {\|E + \delta A\| - 1}{\delta} .
\]
For the logarithmic norm of a matrix in $L_\infty$ (consistent with (\ref{254})) and in
$L_1$ (consistent with (\ref{255})), we have the expressions
\[
  \mu_\infty [A] = \max_{1\le i \le m} \Big ( a_{ii} +
  \sum_{i\not=j=1}^{m} |a_{ij}| \Big ),
\]
\[
  \mu_1 [A] = \max_{1\le j \le m} \Big ( a_{jj} +
  \sum_{j\not=i=1}^{m} |a_{ij}| \Big ) .
\]
In view of the restrictions (\ref{256}), (\ref{257}), we have that the logarithmic norm
of the matrix $-A$ in the Cauchy problem (\ref{252}), (\ref{253}) satisfies the inequality
\begin{equation}\label{258}
  \mu[-A] \le 0
\end{equation}
in the corresponding space (in $L_\infty$ for (\ref{256}) and in $L_1$ for (\ref{257})).

Among the properties of the logarithmic norm (see \cite{DekkerVerwer1984,DesoerHaneda1972}), we highlight
the following:
\begin{enumerate}
\item
$\mu [cA] = c \mu[A], \quad c = \const \ge 0;$
\item
$\mu [cE + A] = c + \mu[A], \quad c = \const;$
\item
$\|Aw\| \ge \max \{ - \mu[-A],~ - \mu[A] \}~ \|w\| $.
\end{enumerate}
The emphasis is placed on the property 3, which allows to get easily the lower
bound of the norm $Aw$. This bound can be combined with the standard upper
bound of $Aw$: $\|Aw\| \le \|A\|~ \|w\|$.

Let us study the stability of difference schemes for the problem (\ref{252}), (\ref{253}). 
We denote the approximate solution at the time level $t^n = n \tau$
(where $\tau$ is a time step) as $y^n$, and write the two-level difference scheme with weights
\begin{equation}\label{259}
  \frac {y^{n+1} - y^n} {\tau} +
  A (\sigma y^{n+1} + (1-\sigma) y^n) = \varphi^n,
\end{equation}
where, e.g., $A = A(\sigma t^{n+1} + (1- \sigma) t^n)$, with the initial data
\begin{equation}\label{260}
  y^0 = u^0 .
\end{equation}
A sufficient condition for stability of the scheme (\ref{259}), (\ref{260}) is formulated
as the following statement.

\begin{theorem}\label{t-23}
Assume that in the Cauchy problem (\ref{252}), (\ref{253}) the
matrix $A$ satisfies the restriction (\ref{256}) (or (\ref{257})). Then the difference
scheme with weights (\ref{259}), (\ref{260}) is unconditionally stable for $\sigma =1$,
and it is conditionally stable for $\sigma < 1$ in $L_\infty$ (in $L_1$) if and only if
\begin{equation}\label{261}
  \tau \le \frac {1}{1-\sigma} \Big (\max_{1\le i\le m} a_{ii} \Big )^{-1} .
\end{equation}
In this case, the difference solution satisfies the a priori estimate
\begin{equation}\label{262}
  \|y^{n+1}\| \le \|u^0\| + \sum_{k=0}^{n} \tau \|\varphi^k\| .
\end{equation} 
\end{theorem}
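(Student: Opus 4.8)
The plan is to reduce the scheme to a one-step recursion and derive a levelwise estimate of the form $\|y^{n+1}\| \le \|y^n\| + \tau\|\varphi^n\|$ in the relevant Banach norm, after which the discrete Gronwall lemma (Lemma~\ref{l-5} with $\rho = 1$) delivers the a priori estimate (\ref{262}) at once. First I would rewrite (\ref{259}) in the two-layer form $(E + \sigma\tau A)y^{n+1} = (E - (1-\sigma)\tau A)y^n + \tau\varphi^n$, so that the transition reads $y^{n+1} = S y^n + \tau(E+\sigma\tau A)^{-1}\varphi^n$ with $S = (E+\sigma\tau A)^{-1}(E-(1-\sigma)\tau A)$. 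Everything then rests on the two bounds $\|(E+\sigma\tau A)^{-1}\| \le 1$ and $\|E-(1-\sigma)\tau A\| \le 1$ in the norm under consideration, since submultiplicativity gives $\|S\| \le 1$ while $\|(E+\sigma\tau A)^{-1}\| \le 1$ controls the forcing term.

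For the implicit part I would invoke the logarithmic-norm machinery already set up: by (\ref{258}) we have $\mu[-A] \le 0$ in $L_\infty$ under (\ref{256}) and in $L_1$ under (\ref{257}), and the standard resolvent estimate $\|(E - \sigma\tau(-A))^{-1}\| \le (1 - \sigma\tau\mu[-A])^{-1}$ yields $\|(E+\sigma\tau A)^{-1}\| \le 1$ for every $\tau > 0$. The cleanest self-contained route for the explicit part is the componentwise maximum-principle argument: in $L_\infty$, choose the index $k$ with $|y^{n+1}_k| = \|y^{n+1}\|_\infty$, write out the $k$-th equation, and estimate $|y^{n+1}_j| \le \|y^{n+1}\|_\infty$ and $|y^n_j| \le \|y^n\|_\infty$. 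Collecting terms and writing $d_k = a_{kk} - \sum_{j\ne k}|a_{kj}| \ge 0$ (this is exactly (\ref{256})), one arrives at $(1 + \sigma\tau d_k)\|y^{n+1}\|_\infty \le (1 - (1-\sigma)\tau d_k)\|y^n\|_\infty + \tau\|\varphi^n\|_\infty$. Here the time-step bound (\ref{261}) enters precisely to keep the diagonal coefficient $1 - (1-\sigma)\tau a_{kk}$ of the explicit operator non-negative, which in turn forces $1 - (1-\sigma)\tau d_k \ge 0$; since then both ratios $\tfrac{1-(1-\sigma)\tau d_k}{1+\sigma\tau d_k}$ and $\tfrac{1}{1+\sigma\tau d_k}$ are at most one, the levelwise estimate follows. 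The $L_1$ case is the dual statement and follows verbatim after passing to the transposed (adjoint) system, using column dominance (\ref{257}) in place of (\ref{256}).

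For $\sigma = 1$ the factor $1 - (1-\sigma)\tau a_{kk}$ reduces to $1$ and the constraint (\ref{261}) disappears, giving unconditional stability; this is also immediate from $\|S\| = \|(E+\tau A)^{-1}\| \le 1$. The necessity of (\ref{261}) for $\sigma < 1$ I would establish by sharpness of the sign condition on the explicit operator $E - (1-\sigma)\tau A$: if $\tau$ exceeds the bound, then for the index realizing $\max_i a_{ii}$ this operator loses its sign-preserving (non-negative diagonal) structure, and a diagonal test matrix $A$ meeting the dominance hypotheses reduces the scheme to scalar recursions whose amplification destroys the maximum-principle estimate (\ref{262}).

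I anticipate the necessity half to be the main obstacle, since sufficiency is a routine consequence of the resolvent bound together with the maximum-principle manipulation. The delicate point is to argue that (\ref{261})---rather than a weaker spectral-radius condition---is the sharp threshold for stability in the Banach norms, i.e.\ that what is being characterized is preservation of the discrete maximum principle and not merely norm-contractivity of $S$; making this precise requires exhibiting the right extremal configuration and checking it against both (\ref{256}) and (\ref{257}).
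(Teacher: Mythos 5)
Your sufficiency argument is sound and is essentially the proof in the paper: the paper performs the same reduction of (\ref{259}) to $(E+\sigma\tau A)y^{n+1}=(E-(1-\sigma)\tau A)y^n+\tau\varphi^n$, controls the implicit factor through the logarithmic norm (its property~3 together with (\ref{258}), of which your resolvent bound $\|(E+\sigma\tau A)^{-1}\|\le(1-\sigma\tau\mu[-A])^{-1}\le 1$ is a repackaging), bounds $\|E-(1-\sigma)\tau A\|\le 1$ from row dominance (\ref{256}) plus the step restriction (\ref{261}), and then sums the resulting levelwise inequality; it also dispatches $L_1$ exactly as you do, by passing to the transposed situation. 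Your componentwise maximum-principle derivation of $(1+\sigma\tau d_k)\|y^{n+1}\|_\infty\le\bigl(1-(1-\sigma)\tau d_k\bigr)\|y^n\|_\infty+\tau\|\varphi^n\|_\infty$ is a correct equivalent of that norm computation, so on everything the paper actually proves you match it.

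The gap is in the converse, and your proposed route would fail. For a diagonal test matrix the scheme decouples into scalar recursions with amplification factor $q=\bigl(1-(1-\sigma)\tau a\bigr)/\bigl(1+\sigma\tau a\bigr)$, and $|q|\le 1$ holds iff $(1-2\sigma)\tau a\le 2$, which is strictly weaker than (\ref{261}); in particular, for $\sigma\ge 1/2$ every diagonal system satisfies (\ref{262}) for every $\tau>0$ even when (\ref{261}) is grossly violated, because the implicit denominator compensates for the negative explicit diagonal. So the loss of the sign of $1-(1-\sigma)\tau a_{ii}$ does not by itself destroy the norm estimate, and no diagonal example can exhibit the threshold (\ref{261}); what that sign condition characterizes directly is the entrywise non-negativity (M-matrix structure) exploited for monotonicity in Theorem~\ref{t-24}. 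Note also that the paper is in the same position: despite the ``if and only if'' in the statement, its proof establishes sufficiency only. If you want an actual witness for necessity (which can only be meant class-wise), it must be genuinely non-diagonal and asymmetric; for instance
\[
A=\begin{pmatrix} a & -a\\ 0 & 0\end{pmatrix},
\quad a>0,
\]
satisfies (\ref{256}), and a direct computation gives
\[
S=(E+\sigma\tau A)^{-1}\bigl(E-(1-\sigma)\tau A\bigr)
=\begin{pmatrix} q & \dfrac{\tau a}{1+\sigma\tau a}\\[6pt] 0 & 1\end{pmatrix},
\qquad q=\frac{1-(1-\sigma)\tau a}{1+\sigma\tau a},
\]
whose first absolute row sum equals $1$ when $(1-\sigma)\tau a\le 1$ but equals $\bigl((2-\sigma)\tau a-1\bigr)/(1+\sigma\tau a)>1$ precisely when $(1-\sigma)\tau a>1$; then $\|S\|_\infty>1$, so (\ref{262}) already fails at $n=0$ with $\varphi\equiv 0$, and for this matrix (\ref{261}) is exactly the stability threshold. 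The transposed matrix, which satisfies (\ref{257}), provides the $L_1$ counterpart.
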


\begin{proof}
From (\ref{259}), it follows that
\[
  (E +  \sigma \tau A) y^{n+1} =
  (E - (1- \sigma) \tau A )y^n + \tau \varphi^n ,
\]
and therefore
\begin{equation}\label{263}
  \|(E +  \sigma \tau A) y^{n+1}\| \le
  \|(E - (1- \sigma) \tau A )y^n\| + \tau \|\varphi^n\| .
\end{equation}
For the left-hand side of (\ref{263}), by the above-mentioned properties of the
logarithmic norm and in view of (\ref{258}), we have
\[
\begin{split}
  \|(E +  \sigma \tau A) y^{n+1}\|  &\ge
  - \mu [- E -  \sigma \tau A] \ \|y^{n+1}\| \\
  & = (1 + \sigma \mu [-A]) \|y^{n+1}\| \ge  \|y^{n+1}\| .
\end{split}  
\]
For the first term in the right-hand side of (\ref{263}), we obtain
\[
  \|(E - (1- \sigma) \tau A )y^n\| \le
  \|E - (1- \sigma) \tau A \| \ \|y^n\| .
\]
We investigate this estimate in more detail for $L_\infty$. The case $L_1$
is studied in a similar manner. Considering (\ref{254}) and taking into account the
condition of diagonal dominance (\ref{256})), we have
\[
\begin{split}
  \|E - (1- \sigma) \tau A \|  & =
  \max_{1\le i \le m} \Big | 1 - (1-\sigma) \tau
  \Big (a_{ii} + \sum_{i\not=j=1}^{m} a_{ij} \Big ) \Big | \\
  & \le
  \max_{1\le i \le m} \Big (|1 - (1-\sigma) \tau a_{ii}| +
  (1-\sigma) \tau \sum_{i\not=j=1}^{m} |a_{ij}| \Big) \\
  & \le
  \max_{1\le i \le m} (|1 - (1-\sigma) \tau a_{ii}| +
  (1-\sigma) \tau a_{ii}) \le 1
\end{split}   
\]
with $0\le \sigma \le 1$ and under the restriction (\ref{261}) on the time step.

The substitution into (\ref{263}) yields the inequality
\[
  \|y^{n+1}\| \le \|y^n\| + \tau \|\varphi^n\| ,
\]
which immediately implies the desired estimate (\ref{262}) for stability with
respect to the right-hand side and the initial data.
\end{proof}

The above estimates for stability (\ref{262}) in $L_\infty$ and $L_1$
are directly associated with the monotonicity of the difference solution
of the  problem \ref{259}), (\ref{260}) under the assumption that the off-diagonal
elements of the matrix $A$ are non-positive. Let us prove the following statement.

\begin{theorem}\label{t-24}
Assume that in the schemes (\ref{259}), (\ref{260}), the conditions of diagonal
dominance (\ref{256}) (or (\ref{257})) are fulfilled for
\begin{equation}\label{264}
  a_{ij} \leq  0, 
  \quad i \neq j,
  \quad i,j = 1,2,\ldots,m 
\end{equation} 
and let
\[
  u^0  \ge 0,
  \quad \varphi^n \ge 0,
  \quad n = 0,1,\ldots,
\]
then
\[
  y^{n+1} \ge 0,
  \quad n = 1,2,\ldots,
\]
for any $\tau >0$ if $\sigma =1$, and if $0 \le \sigma < 1$,
this is true under the constraints on a time step (\ref{261}) .
\end{theorem}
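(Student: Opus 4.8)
The plan is to recast the scheme (\ref{259}) in its transition (implicit) form and to read off the step $y^n \mapsto y^{n+1}$ as solving a linear system whose matrix is inverse-positive, applied to a manifestly non-negative right-hand side. Concretely, I would collect the implicit part on the left and the explicit part on the right, writing
\begin{equation*}
  (E + \sigma\tau A)\, y^{n+1} = (E - (1-\sigma)\tau A)\, y^n + \tau \varphi^n .
\end{equation*}
The argument then decomposes into three independent checks: inverse-positivity of the left operator $B = E + \sigma\tau A$, entrywise non-negativity of the right operator $G = E - (1-\sigma)\tau A$, and an induction on $n$.

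First I would verify that $B = E + \sigma\tau A$ generates a monotone (maximum-principle) operator. The $i$-th diagonal coefficient is $1 + \sigma\tau a_{ii} \ge 1 > 0$, while by the sign condition (\ref{264}) every off-diagonal coefficient $\sigma\tau a_{ij}$, $i \ne j$, is non-positive. Using the diagonal dominance (\ref{256}) (respectively (\ref{257})) one obtains
\begin{equation*}
  1 + \sigma\tau a_{ii} - \sum_{i \ne j = 1}^{m} |\sigma\tau a_{ij}|
  = 1 + \sigma\tau \Big( a_{ii} - \sum_{i \ne j=1}^{m} |a_{ij}| \Big) \ge 1 > 0 .
\end{equation*}
Hence $B$ has positive diagonal, non-positive off-diagonal entries, and strict diagonal dominance, so after setting $\alpha_i = 1 + \sigma\tau a_{ii}$ and $\beta_{ij} = -\sigma\tau a_{ij}$ it is exactly of the canonical monotone form (\ref{132}) satisfying (\ref{134}), (\ref{135}). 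By the difference maximum principle already established (the reductio ad absurdum argument of Theorem~\ref{t-5}), the system $B z = g$ with $g \ge 0$ has a non-negative solution $z \ge 0$.

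Next I would show that the right-hand side is non-negative whenever $y^n \ge 0$ and $\varphi^n \ge 0$. The matrix $G = E - (1-\sigma)\tau A$ has diagonal entries $1 - (1-\sigma)\tau a_{ii}$, which are non-negative precisely under the time-step restriction (\ref{261}), since $(1-\sigma)\tau a_{ii} \le (1-\sigma)\tau \max_i a_{ii} \le 1$, and off-diagonal entries $-(1-\sigma)\tau a_{ij} \ge 0$ by (\ref{264}). Thus $G$ is entrywise non-negative, so $G y^n + \tau\varphi^n \ge 0$. For $\sigma = 1$ the factor $(1-\sigma)$ vanishes, $G = E$, and non-negativity holds for every $\tau > 0$ with no restriction; this is the unconditional case of the statement.

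Finally, combining the two facts gives $y^{n+1} = B^{-1}(G y^n + \tau\varphi^n) \ge 0$ as soon as $y^n \ge 0$. Starting from $y^0 = u^0 \ge 0$ and using $\varphi^n \ge 0$, an induction on $n$ yields $y^{n+1} \ge 0$ for all $n$, which is the claim. The hard part is the first step, the inverse-positivity of $B$; everything else is a routine sign check governed by (\ref{264}) and (\ref{261}). I would dispatch it by reducing $B y^{n+1} = g$ to the canonical monotone form (\ref{132}), (\ref{133}) and invoking the maximum principle proved earlier, so that no separate $M$-matrix machinery is required.
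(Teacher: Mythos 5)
Your proof is correct and follows essentially the same route as the paper's: the same recasting $(E+\sigma\tau A)\,y^{n+1} = (E-(1-\sigma)\tau A)\,y^n + \tau\varphi^n$, the same sign checks on the explicit part under (\ref{261}), (\ref{264}), and the same induction on $n$. The only difference is presentational: where you reduce $B = E+\sigma\tau A$ to the canonical monotone form (\ref{132})--(\ref{135}) and invoke the discrete maximum principle, the paper simply observes that $B$ is an M-matrix (positive diagonal, non-positive off-diagonal entries, strict diagonal dominance, covering both the row case (\ref{256}) and the column case (\ref{257}) at once) and is therefore inverse-positive --- the same underlying fact in different clothing.
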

\begin{proof}
For the transition from the current time level to the next one, we have
\begin{equation}\label{265}
  y^{n+1} + \sigma \tau A y^{n+1} = g^n,
  \quad n = 0,1,\ldots ,
\end{equation} 
where
\begin{equation}\label{266}
  g^n = y^n - (1-\sigma) \tau A y^n + \tau \varphi^n .
\end{equation}
Suppose that $y^{n} \ge 0$ (for $n=0$ this is true from the assumptions of the theorem).
We show that from this it follows also the non-negativity of $y^{n+1}$ ($y^{n+1} \ge 0$).

We prove that under the assumptions of the diagonal dominance
(\ref{256}) (or (\ref{257}))
and under the restrictions on a time step (\ref{261}),
for a non-negative $y^{n}$ and $\varphi^n$, we get $g^n \ge 0$. .
In view of (\ref{266}), we obtain
\[
\begin{split}
  g^n_i &= (1 - (1-\sigma)\tau a_{ii}) y^n_i - (1-\sigma)\tau \sum_{j\neq i, j = 1 }^{m} a_{ij} y^n_j  \\
  & \geq  (1 - (1-\sigma)\tau a_{ii}) y^n_i  \geq 0.
\end{split} 
\] 

In the conditions of the theorem, the matrix of the system of linear algebraic equations
(\ref{265}) is an M-matrix, i.e., we have: strong diagonal dominance,
positive diagonal elements, and non-positive off-diagonal elements of the matrix. 
Because of this, from $g^n \ge 0$, it follows that $y^{n+1} \ge 0$.
\end{proof}

Apply the derived results to studying stability and monotonicity of
difference schemes for time-dependent problems of convection-diffusion 
in the nondivergent and divergent forms.

\subsection{Difference schemes for convection-diffusion equations} 

For simplicity, we restrict ourselves to uniform grids. On the interval $[0,l]$, we introduce
a grid
\[
   \bar{\omega} \equiv \omega \cup  \partial \omega =   \{x~ |~ x = x_i  = ih,
   \quad i = 0,1,\ldots,N,  \quad Nh = l\} ,
\]
where $\omega$ is the set of interior nodes:
\[
   \omega = \{x~ |~ x = x_i  = ih,
   \quad i = 1,2,\ldots,N-1,  \quad Nh = l\} .
\]

After discretization in space of the model convection-diffusion problems with homogeneous
boundary conditions (\ref{243})--(\ref{245}) and (\ref{244})--(\ref{246}),
we arrive at the problem (\ref{252}), (\ref{253}), where $m = N-1$
and the approximate solution $w_i(t) = w(x,t), \ x \in \omega$.
The difference diffusion operator is specified, e.g., as follows:
\begin{equation}\label{267}
\begin{split}
  D w = & - \frac{1}{h^2} k (x+0.5 h) (w(x+h, t) - w(x,t)) \\
  &+ \frac{1}{h^2} k (x-0.5 h) (w(x, t) - w(x-h,t)),
  \quad x \in  \omega    
\end{split}
\end{equation} 
with
\begin{equation}\label{268}
  w(x,t) = 0,
  \quad x \in \partial\omega .  
\end{equation} 

Approximation of convective transport is conducted in such a way
that $v(x,t)$ are defined at the half-integer grid points $\bar{\omega}$.
For operators of convective transport in the nondivergent form (equation (\ref{243})),
in view of (\ref{244}), we put
\begin{equation}\label{269}
\begin{split}
  C w & =  \frac{1}{2 h} v (x+0.5 h,t) (w(x+h, t) - w(x,t)) \\
  &+ \frac{1}{2h} v (x-0.5 h,t) (w(x, t) - w(x-h,t)),
  \quad x \in  \omega .  
\end{split} 
\end{equation}
A similar approximation of the second order with respect to $h$ for
the convective transport operator in the nondivergent form (equation (\ref{246})) leads to
\begin{equation}\label{270}
\begin{split}
  C w & =  \frac{1}{2 h} v (x+0.5 h,t) (w(x+h, t) + w(x,t)) \\
  &- \frac{1}{2h} v (x-0.5 h,t) (w(x, t) + w(x-h,t)),
  \quad x \in  \omega .  
\end{split} 
\end{equation}   

Let us formulate the condition for stability and monotonicity
of the schemes with weights (\ref{259}), (\ref{260}) attributed to the problem
(\ref{252}), (\ref{253}), where
\begin{equation}\label{271}
  A = C + D 
\end{equation} 
and $D, C$ are specified according to (\ref{267})--(\ref{269}) or
(\ref{267}), (\ref{268}), (\ref{270}). 

\begin{theorem}\label{t-25}
The difference scheme (\ref{259}), (\ref{260})
with (\ref{267})--(\ref{269}), (\ref{271})
(or (\ref{267}), (\ref{268}), (\ref{270}), (\ref{271}))
is monotone, and the difference solution
satisfies the a priori estimate (\ref{262}) in $L_\infty$ (or in $L_1$)
under the restriction
\begin{equation}\label{272}
  \frac{h |v(x\pm 0.5h,t)| }{k(x\pm 0.5h)} \leq 2,
  \quad x \in \omega   
\end{equation}
for any $\tau >0$ if $\sigma =1$, and if $0 \le \sigma < 1$,
then this is true under the constraint on a time step
\begin{equation}\label{273}
  \tau \le \frac {1}{(1-\sigma)\gamma} ,
\end{equation} 
with
\[
  \gamma = \max_{x \in \omega} \Big (
  \frac{1}{h^2} (k (x+0.5 h) + k (x-0.5 h))
  - \frac{1}{2 h} (v (x+0.5 h,t) - v (x-0.5 h,t)) \Big )   
\] 
for (\ref{270}), and with
\[
  \gamma = \max_{x \in \omega} \Big (
  \frac{1}{h^2} (k (x+0.5 h) + k (x-0.5 h))
  + \frac{1}{2 h} (v (x+0.5 h,t) - v (x-0.5 h,t)) \Big )   
\] 
in the case (\ref{271}).
\end{theorem}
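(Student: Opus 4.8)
The plan is to reduce the assertion to the two abstract results already proved for first–order ODE systems: the stability estimate of Theorem~\ref{t-23} and the monotonicity statement of Theorem~\ref{t-24}. After the spatial discretization (\ref{267})--(\ref{271}), the scheme (\ref{259}), (\ref{260}) is literally of the form treated there, with $A=C+D$ a tridiagonal matrix acting on grid functions that vanish on $\partial\omega$. So everything comes down to reading off the three nonzero entries in each row of $A$ and checking the structural hypotheses (\ref{256}) (or (\ref{257})) and (\ref{264}) pointwise in $t$.

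First I would assemble $A=C+D$. The diffusion part (\ref{267}) contributes at an interior node $x$ the diagonal $h^{-2}(k(x-0.5h)+k(x+0.5h))$ and the off-diagonal coefficients $-h^{-2}k(x-0.5h)$, $-h^{-2}k(x+0.5h)$ for the left and right neighbours. In \emph{both} the nondivergent form (\ref{269}) and the divergent form (\ref{270}) the convective part adds the same off-diagonal contributions $-\tfrac{1}{2h}v(x-0.5h)$ and $+\tfrac{1}{2h}v(x+0.5h)$; the two discretizations differ only in the diagonal entry. Hence the left and right off-diagonal entries of $A$ are
\[
  -\frac{1}{h^2}k(x-0.5h)-\frac{1}{2h}v(x-0.5h), \qquad
  -\frac{1}{h^2}k(x+0.5h)+\frac{1}{2h}v(x+0.5h),
\]
and both are non-positive \emph{exactly} when $\dfrac{h\,|v(x\pm0.5h,t)|}{k(x\pm0.5h)}\le 2$, i.e. under the Péclet restriction (\ref{272}). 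This single computation is what produces the sign condition (\ref{264}) required by Theorem~\ref{t-24}.

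Next I would verify diagonal dominance. Summing the moduli of the two off-diagonal entries (which, under (\ref{272}), equal their negatives) and comparing with the diagonal shows that for the nondivergent scheme (\ref{269}) the diagonal equals the row sum of off-diagonal moduli, so the weak row dominance (\ref{256}) holds — with strict inequality only at nodes adjacent to $\partial\omega$, where a Dirichlet term drops out. The same bookkeeping carried out column-wise gives, for the divergent scheme (\ref{270}), the weak column dominance (\ref{257}). In either case the diagonal entries are non-negative, and one identifies $\max_i a_{ii}$ with the constant $\gamma$ exhibited in the theorem for the corresponding case; the abstract step restriction (\ref{261}) then becomes precisely (\ref{273}).

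With these facts the proof closes at once: Theorem~\ref{t-24} yields monotonicity (unconditionally for $\sigma=1$, and under (\ref{273}) for $0\le\sigma<1$), while Theorem~\ref{t-23} yields the a priori estimate (\ref{262}) in $L_\infty$ for the row-dominant nondivergent case and in $L_1$ for the column-dominant divergent case. I expect the only delicate point to be the boundary bookkeeping in the dominance check: the interior balance is an \emph{equality}, so these schemes are only \emph{weakly} diagonally dominant, which is exactly why the weak forms of the hypotheses in Theorems~\ref{t-23} and~\ref{t-24} are indispensable; one must confirm that dropping the boundary contributions never violates (\ref{256})/(\ref{257}), which is immediate since removing a non-negative $|a_{ij}|$ from the off-diagonal sum can only strengthen the inequality.
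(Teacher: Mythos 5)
Your proposal is correct and follows essentially the same route as the paper's proof: write out the tridiagonal entries of $A=C+D$, observe that the P\'eclet condition (\ref{272}) is exactly the non-positivity of the off-diagonal entries (condition (\ref{264})), check weak diagonal dominance (by rows for (\ref{269}), by columns for (\ref{270})), identify $\max_i a_{ii}$ with $\gamma$ so that (\ref{261}) becomes (\ref{273}), and invoke Theorems~\ref{t-23} and~\ref{t-24}. The only difference is that you make explicit what the paper compresses into ``diagonal dominance is assured'' and ``investigated in a similar way'' --- namely the zero row-sum/column-sum computation and the boundary bookkeeping --- and your explicit diagonal entries give the mathematically correct pairing of each $\gamma$ with its scheme (the minus sign with (\ref{269}), the plus sign with (\ref{270})), which the theorem's own labels appear to garble.
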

\begin{proof}
Consider the case of the convection-diffusion equation
(\ref{243})--(\ref{245}) (approximations (\ref{267})--(\ref{269}), (\ref{271})) in detail.
The problem (\ref{244})--(\ref{246}) (approximations (\ref{267}), (\ref{268}), (\ref{270}))
are investigated is a similar way.

To apply Theorems~\ref{t-23} and \ref{t-24},
write explicitly the elements of $A$.
For (\ref{267})--(\ref{269}), (\ref{271})), we have
\[
  a_{ii} = \frac{1}{h^2}(k_{i+1/2} + k_{i-1/2})
  - \frac{1}{2h} v_{i+1/2} + \frac{1}{2h} v_{i-1/2} ,
\] 
\[
  a_{i,i-1} = - \frac{1}{h^2} k_{i-1/2}
  - \frac{1}{2h} v_{i-1/2} ,
\] 
\[
  a_{i,i+1} = - \frac{1}{h^2}k_{i+1/2} 
  + \frac{1}{2h} v_{i+1/2} ,
\] 
where $k_{i\pm 1/2} = k(x\pm 0.5h), \ x \in \omega$. 

The condition of nonpositivity of off-diagonal elements
(\ref{264}) holds for
\begin{equation}\label{274}
  \frac{1}{h^2} k_{i-1/2} + \frac{1}{2h} v_{i-1/2} \geq 0,
  \quad   \frac{1}{h^2}k_{i+1/2} - \frac{1}{2h} v_{i+1/2} \geq 0 .
\end{equation}
In this case, diagonal dominance is assured.
A spatial computational grid with the step from the 
conditions (\ref{272}) satisfies the inequalities (\ref{274}).
Restrictions on a time step (\ref{261}) are reduced to the particular condition (\ref{273}).
Thus, the conditions of Theorems~\ref{t-23} and \ref{t-24} hold.
This provides the stability and monotonicity
of the difference solution of the convection-diffusion problem
Under the above restrictions on the time step.
\end{proof}

To overcome restrictions on the spatial grid (\ref{272}), we apply
upwind approximations for the convective terms.
We introduce notation
\[
v(x,t) = v^+(x,t) + v^-(x,t),
\]
\[
v^+(x,t) = \frac 12 (v(x,t) + |v(x,t)|) \ge 0,
\]
\[
v^-(x,t) = \frac 12 (v(x,t) - |v(x,t)|) \le 0.
\]
Instead (\ref{269}), we put
\begin{equation}\label{275}
\begin{split}
  C w & =  \frac{1}{h} v^- (x+0.5 h,t) (w(x+h, t) - w(x,t)) \\
  &+ \frac{1}{h} v^+ (x-0.5 h,t) (w(x, t) - w(x-h,t)) .
\end{split} 
\end{equation}
For the convective transport in the divergent form, we get
\begin{equation}\label{276}
\begin{split}
  C w & =  \frac{1}{h} ( v^- (x+0.5 h,t) w(x+h, t) - v^- (x-0.5 h,t)w(x,t)) \\
  &+ \frac{1}{h} (v^+ (x+0.5 h,t) w(x, t) - v^+ (x-0.5 h,t)w(x-h,t)) .
\end{split} 
\end{equation}  

\begin{theorem}\label{t-26}
The difference scheme (\ref{259}), (\ref{260})
with (\ref{267}), (\ref{269}), (\ref{271}), (\ref{275})
(or (\ref{267}), (\ref{268}), (\ref{271}), (\ref{276}))
is monotone, and the difference solution
satisfies the a priori estimate (\ref{262}) in $L_\infty$ (or in $L_1$)
for any $\tau >0$ if $\sigma =1$, and  if $0 \le \sigma < 1$,
then this is true under the constraints on a time step (\ref{273})
with
\[
  \gamma = \max_{x \in \omega} \Big (
  \frac{1}{h^2} (k (x+0.5 h) + k (x-0.5 h))
  - \frac{1}{h} (v^- (x+0.5 h,t) - v^+ (x-0.5 h,t)) \Big )   
\] 
for (\ref{275}), and with
\[
  \gamma = \max_{x \in \omega} \Big (
  \frac{1}{h^2} (k (x+0.5 h) + k (x-0.5 h))
  + \frac{1}{h} (v^+ (x+0.5 h,t) - v^- (x-0.5 h,t)) \Big )   
\] 
in the case (\ref{276}).
\end{theorem}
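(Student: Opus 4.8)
The plan is to reduce the statement directly to the two general results already established for the model system, namely Theorem~\ref{t-23} (stability in $L_\infty$/$L_1$ via the logarithmic norm and diagonal dominance) and Theorem~\ref{t-24} (monotonicity via the M-matrix property). The only work specific to the present case is to write out the entries of the matrix $A = C + D$ for the upwind approximations and to verify the two structural hypotheses of those theorems: nonpositivity of the off-diagonal entries and (weak) diagonal dominance of the appropriate type. As in the proof of Theorem~\ref{t-25}, I would carry out the computation in detail for the nondivergent approximation (\ref{275}) and remark that the divergent case (\ref{276}) is handled in the same manner, the only difference being that one checks column-wise dominance (\ref{257}) rather than row-wise (\ref{256}), so that stability is obtained in $L_1$ rather than $L_\infty$.

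First I would assemble the matrix entries. Combining the diffusion stencil (\ref{267}) with the upwind convective stencil (\ref{275}) and writing $k_{i\pm1/2}=k(x\pm0.5h)$, $v^\pm_{i\pm1/2}=v^\pm(x\pm0.5h,t)$, one gets for an interior node $x_i\in\omega$
\[
  a_{i,i+1} = \frac{1}{h} v^-_{i+1/2} - \frac{1}{h^2} k_{i+1/2},
  \quad
  a_{i,i-1} = -\frac{1}{h} v^+_{i-1/2} - \frac{1}{h^2} k_{i-1/2},
\]
\[
  a_{ii} = \frac{1}{h^2}\left(k_{i+1/2}+k_{i-1/2}\right)
  - \frac{1}{h}\left(v^-_{i+1/2} - v^+_{i-1/2}\right).
\]
The decisive observation---and the whole reason for introducing upwinding---is that the off-diagonal entries are nonpositive \emph{unconditionally}: since $v^-\le 0$ and $k>0$ we have $a_{i,i+1}\le 0$, and since $v^+\ge 0$ we have $a_{i,i-1}\le 0$, for \emph{any} mesh size $h$. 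This is exactly where the present theorem improves on Theorem~\ref{t-25}: the convective contribution is attached to the node it flows from with the favorable sign, so the cell-Peclet restriction (\ref{272}) is no longer needed to secure the sign of the coefficients.

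Next I would verify diagonal dominance. Taking absolute values, $|a_{i,i+1}| = \tfrac{1}{h^2}k_{i+1/2} - \tfrac{1}{h}v^-_{i+1/2}$ and $|a_{i,i-1}| = \tfrac{1}{h^2}k_{i-1/2} + \tfrac{1}{h}v^+_{i-1/2}$, and their sum coincides identically with $a_{ii}$; thus (\ref{256}) holds, with equality at interior nodes and strict inequality in rows adjacent to the boundary (where a term drops out because $w=0$ on $\partial\omega$). Hence the hypotheses of Theorems~\ref{t-23} and~\ref{t-24} are met. Reading off $\max_i a_{ii}$ gives precisely the constant $\gamma$ displayed in the statement for (\ref{275}), so the time-step condition (\ref{261}) specializes to (\ref{273}); for $\sigma=1$ no restriction arises, and for $0\le\sigma<1$ the restriction (\ref{273}) appears. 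Applying Theorem~\ref{t-24} then yields monotonicity ($y^{n+1}\ge 0$ for nonnegative data), and Theorem~\ref{t-23} yields the a priori estimate (\ref{262}). For the divergent form (\ref{276}) the analogous entries satisfy $a_{ij}\le 0$, $i\ne j$, again for any $h$, and the corresponding sum over a \emph{column} equals $a_{jj}$, giving (\ref{257}) and the matching $\gamma$; the argument then runs in $L_1$. There is no genuine obstacle here beyond careful bookkeeping of the half-integer indices; the substantive point to make explicit is simply that the sign structure needed by Theorems~\ref{t-23} and~\ref{t-24} is automatic for the upwind scheme, independently of $h$.
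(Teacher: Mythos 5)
Your proposal is correct and follows exactly the route the paper intends: the paper states Theorem~\ref{t-26} without a separate proof, relying on the same argument as Theorem~\ref{t-25} (write out the entries of $A=C+D$, check nonpositivity of off-diagonal entries and row/column diagonal dominance, then invoke Theorems~\ref{t-23} and~\ref{t-24}), and your matrix entries, the identification of $\gamma=\max_i a_{ii}$, and the observation that upwinding makes the sign conditions hold unconditionally in $h$ are all accurate.
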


In particular, the fully implicit scheme ($\sigma =1$) is
unconditionally stable and monotone. The principal shortcomings
of the above schemes are connected with the upwind approximations for convective
terms (\ref{275}), (\ref{276})) --- these schemes indicate the first-order approximation in space.
Schemes on the basis of the central difference approximations
(\ref{269}), (\ref{270})) are more accurate --- they have the second-order spatial approximation.

\subsection{Exponential schemes} 

It is convenient to construct monotone schemes by means of transforming
the original convection-diffusion equation, i.e., by eliminating the convective terns.
The equation (\ref{243}) may be written as
\begin{equation}\label{277}
  \frac{\partial u}{\partial t} 
   - \frac{1}{\chi (x,t)} 
  \frac{\partial }{\partial x} \left ( k(x) \chi (x,t) \frac{\partial u}{\partial x} \right ) =
  f(x,t) ,
\end{equation} 
where
\begin{equation}\label{278}
  \chi (x,t) = \exp \left  ( - \int\limits_0^x \frac{v(s,t)}{k(s)} ds \right ) .
\end{equation}
The equation (\ref{246}) is reduced to
\begin{equation}\label{279}
  \frac{\partial u}{\partial t} 
   -  \frac{\partial }{\partial x} \left (\frac{k(x)}{\chi (x,t)}   
    \frac{\partial (\chi (x,t) u) }{\partial x} \right ) =
  f(x,t) .
\end{equation}
Further, we can design discretizations in space, i.e., exponential schemes \cite{allsou,schgum}.
 
Similarly to (\ref{267}), for the grid functions satisfying (\ref{268}), 
it is possible to put in equation (\ref{277}):
\begin{equation}\label{280}
\begin{split}
  A w &=  - \frac{1}{h^2 \chi (x,t)} k (x+0.5 h) \chi (x+0.5 h,t) (w(x+h, t) - w(x,t)) \\
  &+ \frac{1}{h^2 \chi (x,t)} k (x-0.5 h) \chi (x-0.5 h,t)(w(x, t) - w(x-h,t)) ,
\end{split}
\end{equation} 
where
\[
  \chi (x-0.5h,t) =
  \exp \left (- \int\limits_0^{x-0.5h} \frac {v(s,t)}{k(s)} ds \right ) .
\]
Taking into account that
\[
  \chi (x-0.5h,t) =
  \chi (x)
  \exp \left (- \int\limits_x^{x-0.5h} \frac {v(s)}{k(s)}  ds \right ) ,
\]
with a precision of $O(h^2)$ we put
\[
  \chi (x-0.5h,t) = \chi (x) \exp ( \theta (x,t) h) 
\]
with notation
\[
  \theta (x,t) = \frac {v(x,t)}{2 k(x)} .
\]
Therefore, instead of (\ref{280}), we can use the following approximation:
\begin{equation}\label{281}
\begin{split}
  A w =  &- \frac{1}{h^2} k (x+0.5 h) \exp ( \theta (x,t) h) (w(x+h, t) - w(x,t)) \\
  &+ \frac{1}{h^2} k (x-0.5 h) \exp ( - \theta (x,t) h)(w(x, t) - w(x-h,t)) .
\end{split}
\end{equation} 

For equation (\ref{279}), similarly to (\ref{280}), we put
\[
\begin{split}
  A w =  &- \frac{k (x+0.5 h)}{h^2 \chi (x+0.5 h,t)} 
  ( \chi (x+h,t) w(x+h, t) - \chi (x,t)w(x,t)) \\
  &+ \frac{k (x-0.5 h)}{h^2 \chi (x-0.5 h,t)}  
  (\chi (x,t)w(x, t) - \chi (x-h,t)w(x-h,t)) .
\end{split}
\]
Simplifying this expression, we obtain
\begin{equation}\label{282}
\begin{split}
  A w =  &- \frac{1}{h^2} k (x+0.5 h) 
  \exp ( - \theta (x+ h,t) h) w(x+h, t) \\
  & + \frac{1}{h^2} k (x+0.5 h) \exp (\theta (x,t) h)w(x,t) \\
  &+ \frac{1}{h^2} k (x-0.5 h) 
  \exp ( - \theta (x,t) h) w(x, t) \\
  & - \frac{1}{h^2} k (x-0.5 h) \exp (\theta (x-h,t) h)w(x-h,t) .
\end{split}
\end{equation} 

Using the above-introduced approximations for the convection-diffusion operator,
we can construct monotone schemes. The primary statement is formulated as follows.

\begin{theorem}\label{t-27}
If on the set of grid functions (\ref{268})
the operator $A$ is defined according to (\ref{281}) (or (\ref{282})),
then the difference scheme (\ref{259}), (\ref{260}),
is monotone, and the difference solution
satisfies the a priori estimate (\ref{262}) in the $L_\infty$ (or in $L_1$)
for any $\tau >0$ if $\sigma =1$, and if $0 \le \sigma < 1$
then this is true under the constraints on a time step (\ref{273})
with
\[
  \gamma = \max_{x \in \omega} 
  \frac{1}{h^2} (k (x+0.5 h) \exp ( \theta (x,t) h) +  k (x-0.5 h) \exp ( - \theta (x,t) h) ) .
\] 
\end{theorem}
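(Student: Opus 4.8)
The plan is to reduce the statement to Theorems~\ref{t-23} and~\ref{t-24}, following exactly the strategy used for the central-difference and upwind schemes in Theorems~\ref{t-25} and~\ref{t-26}. After discretization in space the problem takes the form (\ref{252}), (\ref{253}) with $m=N-1$, and the exponential approximations (\ref{281}), (\ref{282}) determine the entries of the matrix $A=[a_{ij}]$. So the first step I would carry out is to read off the nonzero entries of $A$ at an interior node $x=x_i\in\omega$, writing $k_{i\pm 1/2}=k(x\pm 0.5h)$ and $\theta_i=\theta(x_i,t)$.

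For the nondivergent exponential scheme (\ref{281}) this gives
\[
  a_{ii} = \frac{1}{h^2} k_{i+1/2}\exp(\theta_i h) + \frac{1}{h^2} k_{i-1/2}\exp(-\theta_i h),
\]
\[
  a_{i,i+1} = -\frac{1}{h^2} k_{i+1/2}\exp(\theta_i h),
  \quad a_{i,i-1} = -\frac{1}{h^2} k_{i-1/2}\exp(-\theta_i h).
\]
The crucial observation---and the whole point of the exponential weighting---is that the off-diagonal entries are manifestly nonpositive, since $k>0$ and the exponentials are strictly positive; hence condition (\ref{264}) holds for \emph{any} $h$. Moreover, summing the absolute values of the off-diagonal entries reproduces $a_{ii}$ exactly, so the weak row diagonal dominance (\ref{256}) holds with equality, again with no restriction on the grid step. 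This is in sharp contrast to the central-difference scheme of Theorem~\ref{t-25}, where nonpositivity forced the P\'eclet-type restriction (\ref{272}); here the exponential factors absorb the convective transport into the modified diffusion coefficients and the balance is automatic.

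For the divergent scheme (\ref{282}) the same bookkeeping yields the identical diagonal entry $a_{ii}$, whereas the off-diagonal entries now carry exponential arguments evaluated at the neighboring nodes,
\[
  a_{i,i+1} = -\frac{1}{h^2} k_{i+1/2}\exp(-\theta_{i+1} h),
  \quad a_{i,i-1} = -\frac{1}{h^2} k_{i-1/2}\exp(\theta_{i-1} h),
\]
which are again nonpositive, so (\ref{264}) holds. The $L_1$ estimate requires column dominance (\ref{257}) rather than row dominance, so the step I would perform is to collect the entries of the $i$-th column, namely $a_{i-1,i}=-h^{-2}k_{i-1/2}\exp(-\theta_i h)$ and $a_{i+1,i}=-h^{-2}k_{i+1/2}\exp(\theta_i h)$; their absolute values sum precisely to $a_{ii}$, so (\ref{257}) again holds with equality for every $h$.

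With these two verifications in hand, the theorem follows directly from Theorems~\ref{t-23} and~\ref{t-24}: nonpositivity of the off-diagonal entries together with diagonal dominance yields monotonicity and the a priori estimate (\ref{262}), unconditionally for $\sigma=1$ and under (\ref{261}) for $0\le\sigma<1$. Since $\gamma=\max_{x\in\omega}a_{ii}$ and $a_{ii}$ coincides for both (\ref{281}) and (\ref{282}) with the expression displayed in the theorem, the time-step bound (\ref{261}) specializes to (\ref{273}) with the stated $\gamma$. I expect the only genuinely delicate point to be the correct accounting of the shifted exponential arguments in the divergent case---keeping track of $\theta_{i\pm 1}$ versus $\theta_i$ in the off-diagonal and transposed entries---but once that is done the row and column sums telescope exactly, so no inequality, and hence no mesh restriction, is needed.
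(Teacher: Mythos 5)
Your proposal is correct and follows essentially the same route as the paper's proof: you read off the matrix entries generated by (\ref{281}) and (\ref{282}), verify nonpositivity of the off-diagonal elements together with row dominance (\ref{256}) for the nondivergent case and column dominance (\ref{257}) for the divergent case, and then invoke Theorems~\ref{t-23} and~\ref{t-24} with $\gamma=\max_{x\in\omega}a_{ii}$. In fact you work out the shifted-index column entries $a_{i\pm 1,i}$ explicitly, which the paper dismisses as ``obviously true,'' so your write-up is, if anything, more complete than the original.
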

\begin{proof}
In the case of (\ref{281}, for the matrix elements, we have
\[
  a_{ii} = \frac{1}{h^2}(k_{i+1/2} \exp(\theta_i)  + k_{i-1/2} \exp(-\theta_i)) ,
\] 
\[
  a_{i,i-1} = - \frac{1}{h^2} k_{i-1/2} \exp(-\theta_i) ,
\] 
\[
  a_{i,i+1} = - \frac{1}{h^2}k_{i+1/2} \exp(\theta_i) .
\]
Checking diagonal dominance by rows and the non-negativity of the off-diagonal
elements is evident.
 
In the case (\ref{282}), we obtain
\[
  a_{ii} = \frac{1}{h^2}(k_{i+1/2} \exp(\theta_i)  + k_{i-1/2} \exp(-\theta_i)) ,
\] 
\[
  a_{i,i-1} = - \frac{1}{h^2} k_{i-1/2} \exp(\theta_{i-1}) ,
\] 
\[
  a_{i,i+1} = - \frac{1}{h^2}k_{i+1/2} \exp(-\theta_{i+1}) .
\]
In view of the non-negativity of the off-diagonal elements,
the condition of diagonal dominance by columns (\ref{257}) takes the form
\[
  a_{ii} \geq  - a_{i-1,i} - a_{i+1,i} ,
\] 
and it is obviously true.
\end{proof}

Thus, the conditions for stability and monotonicity are the same
as for schemes with the upwind approximations of convective terms
(Theorem~\ref{t-26}).
However, discretization in space is of second order 
as for schemes with the central-difference approximations
(Theorem~\ref{t-25}).
Some complications in evaluating coefficients of the difference operator
leads to a slight increasing of the computational costs.

\subsection{Multidimensional problems} 

Possibilities of constructing  second-order monotone schemes
for time-dependent equations of convection-diffusion are examined
on the model 2D problems (\ref{36})--(\ref{38}) and (\ref{37})--(\ref{39})
in the rectangle $\Omega$.

The convection-diffusion operators in multidimensional problems are represented as
the sum of the 1D convection-diffusion operators.
Because of this, in constructing monotone schemes for multidimensional problems, we can
apply the above approximations designed for the 1D operators of convection-diffusion.

Similarly to (\ref{277}), (\ref{278}), rewrite equation (\ref{36}) as
\begin{equation}\label{283}
   \frac{\partial u}{\partial t} -
   \sum_{\alpha =1}^{2} 
   \frac{1}{\chi_\alpha  ({\bm x},t)} 
   \frac{\partial }{\partial x_\alpha } \left ( k({\bm x}) \chi_\alpha ({\bm x},t) \frac{\partial u}{\partial x_\alpha } \right ) =
  f({\bm x},t) ,
\end{equation} 
where now
\begin{equation}\label{284}
\begin{split}
  &\chi_1  ({\bm x},t) = \exp \left  ( - \int\limits_0^{x_1} \frac{v_1(s,x_2,t)}{k(s,x_2)} ds \right ) , \\
  &\chi_2  ({\bm x},t) = \exp \left  ( - \int\limits_0^{x_2} \frac{v_2(x_1,s,t)}{k(x_1,s)} ds \right ) .  
\end{split}
\end{equation} 
A similar transformation for (\ref{39}) yields
\begin{equation}\label{285}
   \frac{\partial u}{\partial t} -
   \sum_{\alpha =1}^{2} 
   \frac{\partial }{\partial x_\alpha } \left ( \frac{k({\bm x})}{\chi_\alpha  ({\bm x},t)}  
   \frac{\partial (\chi_\alpha ({\bm x},t) u)}{\partial x_\alpha } \right ) =
  f({\bm x},t) .
\end{equation} 

For simplicity, we use a uniform grid in each spatial direction.
For grids in separate directions
$x_\alpha,~ \alpha =1,2$, we use notation introduced above:
\[
   \bar{\omega} \equiv \omega \cup \partial \omega = 
   \bar{\omega}_1 \times \bar{\omega}_2, 
   \quad \omega = \omega_1 \times \omega_2 .
\]

After discretization in space of the boundary value problems
(\ref{37}), (\ref{38}), (\ref{283} and
(\ref{37}), (\ref{38}), (\ref{284}), 
we arrive at the problem (\ref{252}), (\ref{253}), 
where
\begin{equation}\label{286}
  A = A_1 + A_2 ,
\end{equation} 
and $A_\alpha, \ \alpha =1,2$ are 1D grid operators
of convection-diffusion.
On the set of grid functions such that
\begin{equation}\label{287}
  w({\bm x},t) = 0,
  \quad {\bm x} \in \partial\omega  ,
\end{equation}
for equation (\ref{283}), similarly to (\ref{281}), we put
\begin{equation}\label{288}
\begin{split}
  A_1 w =  &- \frac{1}{h_1^2} k (x_1+0.5 h_1,x_2) \exp ( \theta ({\bm x},t) h_1) w(x_1+h_1,x_2, t)  \\
  &+ \frac{1}{h_1^2} k (x_1+0.5 h_1,x_2) \exp ( \theta ({\bm x},t) h_1) w({\bm x},t) \\
  &+ \frac{1}{h_1^2} k (x_1-0.5 h_1,x_2) \exp ( - \theta ({\bm x},t) h_1)w({\bm x}, t) \\
  &- \frac{1}{h_1^2} k (x_1-0.5 h_1,x_2) \exp ( - \theta ({\bm x},t) h_1) w(x_1-h_1,x_2,t),
\end{split}
\end{equation} 
\begin{equation}\label{289}
\begin{split}
  A_2 w =  &- \frac{1}{h_2^2} k (x_1,x_2+0.5 h_2) \exp ( \theta ({\bm x},t) h_2) w(x_1,x_2+h_2, t)\\
  &+ \frac{1}{h_2^2} k (x_1,x_2+0.5 h_2) \exp ( \theta ({\bm x},t) h_2) w({\bm x},t) \\
  &+ \frac{1}{h_2^2} k (x_1,x_2-0.5 h_2) \exp ( - \theta ({\bm x},t) h_2) w({\bm x}, t) \\
  &- \frac{1}{h_2^2} k (x_1,x_2-0.5 h_2) \exp ( - \theta ({\bm x},t) h_2) w(x_1,x_2-h_2,t),
\end{split}
\end{equation} 
where
\[
  \theta ({\bm x},t) = \frac {v({\bm x},t)}{2 k({\bm x})} ,
  \quad {\bm x} \in \omega  .
\]

In the case of (\ref{285}), we have (see (\ref{280}))
\begin{equation}\label{290}
\begin{split}
  A_1 w =  &- \frac{1}{h_1^2} k (x_1+0.5 h_1, x_2) 
  \exp ( - \theta (x_1+ h_1,x_2,t) h_1) w(x_1+h_1,x_2, t) \\
  & + \frac{1}{h_1^2} k (x_1+0.5 h_1,x_2) \exp (\theta ({\bm x},t) h)w({\bm x},t) \\
  &+ \frac{1}{h_1^2} k (x_1-0.5 h_1,x_2) 
  \exp ( - \theta ({\bm x},t) h_1) w({\bm x}, t) \\
  & - \frac{1}{h_1^2} k (x_1-0.5 h_1,x_2) \exp (\theta (x_1-h_1,x_2,,t) h)w(x_1-h_1,x_2,t) ,
\end{split}
\end{equation} 
\begin{equation}\label{291}
\begin{split}
  A_2 w =  &- \frac{1}{h_2^2} k (x_1, x_2+0.5 h_2) 
  \exp ( - \theta (x_1,x_2+ h_2,t) h_1) w(x_1,x_2+h_2, t) \\
  & + \frac{1}{h_2^2} k (x_1,x_2+0.5 h_2) \exp (\theta ({\bm x},t) h)w({\bm x},t) \\
  &+ \frac{1}{h_2^2} k (x_1,x_2-0.5 h_2) 
  \exp ( - \theta ({\bm x},t) h_1) w({\bm x}, t) \\
  & - \frac{1}{h_2^2} k (x_1,x_2-0.5 h_2) \exp (\theta (x_1,x_2-h_2,,t) h)w(x_1,x_2-h_2,t) .
\end{split}
\end{equation} 

Similarly to Theorem~\ref{t-27}, the following statement is proved.

\begin{theorem}\label{t-28}
If on the set of grid functions (\ref{287})
the operator $A$ is defined according to (\ref{286}), (\ref{288}), (\ref{289})
(or (\ref{286}), (\ref{290}), (\ref{291})), 
then the difference scheme (\ref{259}), (\ref{260})
is monotone, and the difference solution
satisfies the a priori estimate (\ref{262}) in the $L_\infty$  (or in $L_1$)
for any $\tau >0$ if $\sigma =1$, and if $0 \le \sigma < 1$,
then this is true under the constraints on a time step (\ref{272})
with
\[
\begin{split}
  \gamma= \max_{{\bm x} \in \omega} \Big \{
  & \frac{1}{h_1^2} k (x_1+0.5 h_1,x_2) \exp ( \theta ({\bm x},t) h_1) \\
  & + \frac{1}{h_1^2} k (x_1-0.5 h_1,x_2) \exp ( - \theta ({\bm x},t) h_1)  \\
  & + \frac{1}{h_2^2} k (x_1,x_2+0.5 h_2) \exp ( \theta ({\bm x},t) h_2) \\
  & + \frac{1}{h_2^2} k (x_1,x_2-0.5 h_2) \exp ( - \theta ({\bm x},t) h_2)  \Big \} .
\end{split}
\] 
\end{theorem}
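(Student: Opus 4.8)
The plan is to follow the pattern of Theorem~\ref{t-27}, reducing the multidimensional statement to a verification of the hypotheses of Theorems~\ref{t-23} and~\ref{t-24}: positivity of the diagonal entries, the sign condition (\ref{264}) on the off-diagonal entries, and weak diagonal dominance (by rows for the estimate in $L_\infty$, by columns for the estimate in $L_1$). The only genuinely new feature relative to the one-dimensional case is the additive splitting $A = A_1 + A_2$ of (\ref{286}); I would exploit the fact that $A_1$ and $A_2$ act along different coordinate directions, so that on the five-point stencil their off-diagonal contributions occupy disjoint entries while their diagonal contributions simply add.

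First I would write out explicitly, for a fixed interior node $\bm{x} \in \omega$, the five nonzero entries of the row of $A$ in the nondivergent case (\ref{288}), (\ref{289}). The diagonal entry is
\[
  a_{ii} = \frac{1}{h_1^2}\bigl(k(x_1+0.5h_1,x_2)\exp(\theta h_1) + k(x_1-0.5h_1,x_2)\exp(-\theta h_1)\bigr) + \frac{1}{h_2^2}\bigl(k(x_1,x_2+0.5h_2)\exp(\theta h_2) + k(x_1,x_2-0.5h_2)\exp(-\theta h_2)\bigr),
\]
while the four off-diagonal entries, coupling $\bm{x}$ to the neighbours $(x_1\pm h_1,x_2)$ and $(x_1,x_2\pm h_2)$, are each of the form $-\tfrac{1}{h_\alpha^2}k(\cdot)\exp(\pm\theta h_\alpha) < 0$, so (\ref{264}) holds. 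Since $k>0$, every diagonal entry is positive, and in each coordinate direction the two off-diagonal contributions sum precisely to the negative of the corresponding half of $a_{ii}$; summing over $\alpha=1,2$ therefore gives a vanishing interior row sum, i.e.\ weak diagonal dominance by rows (strict at nodes adjacent to $\partial\omega$, where one coupling is absorbed into the zero boundary data (\ref{287})). Reading off $\max_{\bm{x}\in\omega} a_{ii}$ yields exactly the constant $\gamma$ stated in the theorem.

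For the divergent case (\ref{290}), (\ref{291}) the computation is identical in spirit, but the exponential weights attached to the two neighbours are now evaluated at those neighbouring nodes, so the natural balance is between a column entry and the diagonal, exactly as for the one-dimensional operator~(\ref{282}). I would verify that the off-diagonal entries remain non-positive and that the \emph{column} sums vanish in the interior, giving weak diagonal dominance by columns (\ref{257}), and hence the $L_1$ version of the estimate.

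With these structural facts in hand the conclusion is immediate: Theorem~\ref{t-24} gives monotonicity, and Theorem~\ref{t-23} gives the a priori estimate (\ref{262}), unconditionally for $\sigma=1$ and under the reduced time-step restriction (\ref{273}) with the computed $\gamma$ when $0\le\sigma<1$. I expect the only point requiring care to be the bookkeeping of the additive splitting --- confirming that the partial diagonals of $A_1$ and $A_2$ add while the off-diagonal entries of the two directions never overlap --- but this is routine once the stencil is written out, and no estimate beyond those already used in Theorem~\ref{t-27} is needed.
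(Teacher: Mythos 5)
Your proposal is correct and follows essentially the same route as the paper: the paper proves Theorem~\ref{t-28} simply ``similarly to Theorem~\ref{t-27}'', i.e.\ by writing out the five-point stencil entries of $A=A_1+A_2$, checking positive diagonal, non-positive off-diagonal entries, and weak diagonal dominance by rows for (\ref{288}), (\ref{289}) (respectively by columns for (\ref{290}), (\ref{291})), and then invoking Theorems~\ref{t-23} and~\ref{t-24} with $\gamma=\max_i a_{ii}$, exactly as you do. You also correctly read the time-step condition as (\ref{273}) rather than the (\ref{272}) cited in the statement, which is a typo in the paper.
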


\subsection{Locally one-dimensional schemes} 

Computational implementation of the exponential schemes
(\ref{259}), (\ref{260}), (\ref{286})--(\ref{289}) and
(\ref{259}), (\ref{260}), (\ref{286}), (\ref{287}), (\ref{290}), (\ref{291}))
involves the inversion of the non-selfadjoint elliptic grid operators
$E + \sigma\tau A$, where the matrix has strong diagonal dominance either by rows or by columns.
To determine the numerical solution at a new time level,
we can apply iterative methods.
Another possibility is to use locally one-dimensional schemes, which are based
on the splitting (\ref{286}) \cite{Yanenko,2001Samarskii}.
Intending to 3D generalizations, we restrict ourselfs to componentwise splitting schemes
\cite{Marchuk,SamVabAdditive}.

Rewrite the difference equation (\ref{259}) as follows:
\begin{equation}\label{292}
  y^{n+1} = S y^{n} + \tau \varphi^n,  
\end{equation} 
where $S$ is the transition operator. For the scheme with weights (\ref{259}), we have
\begin{equation}\label{293}
  S = (E + \sigma\tau A)^{-1} (E + (\sigma-1) \tau A) . 
\end{equation}
From the stability condition (\ref{260}), (\ref{292}), we get
\begin{equation}\label{294}
  \|S \| \leq 1. 
\end{equation}
Monotonicity is ensured by the fact that the matrices
$(E + \sigma\tau A)^{-1}$ and $E + (\sigma-1) \tau A$
are M-matrices.

Splitting schemes are constructed using transition operators for the individual
terms in the additive representation (\ref{286}). Let us define
\begin{equation}\label{295}
  S_\alpha (\tau)   = (E + \sigma\tau A_\alpha )^{-1} (E + (\sigma-1) \tau A_\alpha ) ,
  \quad \alpha = 1,2 .   
\end{equation} 
Instead of (\ref{293}), we will employ
\begin{equation}\label{296}
  S = S_1 (\tau) S_2 (\tau) .
\end{equation} 
The stability condition (\ref{294}) is true if
\begin{equation}\label{297}
  \|S_\alpha \| \leq 1,
  \quad \alpha = 1,2 .     
\end{equation}
For the monotonicity of the scheme (\ref{292}), (\ref{296}), it is sufficient to require
that the individual matrices $S_\alpha, \ \alpha = 1,2$ will be M-matrices.
For any value of $\sigma$, only the first-order accuracy with respect to $\tau$ is possible.

Numerical implementation of the scheme (\ref{260}),  (\ref{292}), (\ref{295}), (\ref{296})
can be conducted using locally one-dimensional schemes with weights, i.e.,
\begin{equation}\label{298}
\begin{split}
  \frac{y^{n+\alpha/2} - y^{n+(\alpha-1)/2}  }{\tau } 
  &+ A_\alpha (\sigma   y^{n+\alpha/2} + (1 - \sigma) y^{n+(\alpha-1)/2} ) \\
  & =
  \varphi^n_\alpha ,   \quad \alpha = 1,2 ,   
\end{split} 
\end{equation}  
where, e.g.,
\[
  \varphi^n_1 = 0,
  \quad \varphi^n_2 = \varphi^n . 
\] 

\begin{theorem}\label{t-29}
If on the set of grid functions (\ref{287})
the operators $A_\alpha, \ \alpha = 1,2$ are defined according to (\ref{288}), (\ref{289})
(or (\ref{280}), (\ref{281})), 
then the locally one-dimensional difference scheme (\ref{260}),  (\ref{298})
is monotone, and the difference solution
satisfies the a priori estimate (\ref{262}) in $L_\infty$ (or in $L_1$)
for any $\tau >0$ if $\sigma =1$, and if $0 \le \sigma < 1$,
then this is true under the constraints on a time step (\ref{273})
with
\[
\begin{split}
  \gamma= \max_{{\bm x} \in \omega} \Big \{
  & \frac{1}{h_1^2} k (x_1+0.5 h_1,x_2) \exp ( \theta ({\bm x},t) h_1) \\
  & + \frac{1}{h_1^2} k (x_1-0.5 h_1,x_2) \exp ( - \theta ({\bm x},t) h_1) , \\
  & \frac{1}{h_2^2} k (x_1,x_2+0.5 h_2) \exp ( \theta ({\bm x},t) h_2) \\
  & + \frac{1}{h_2^2} k (x_1,x_2-0.5 h_2) \exp ( - \theta ({\bm x},t) h_2)  \Big \} .
\end{split}
\] 
\end{theorem}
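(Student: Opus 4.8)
The plan is to reduce the two-dimensional locally one-dimensional scheme to a sequence of one-dimensional exponential schemes, each of which has already been analyzed in Theorem~\ref{t-27}, and then to combine the resulting one-directional estimates through the multiplicative structure of the transition operator $S = S_1(\tau) S_2(\tau)$ from (\ref{296}). Thus essentially no new estimate is needed; the work lies in verifying that the per-direction hypotheses hold and that the composition preserves both monotonicity and the norm bound.

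First I would observe that the scheme (\ref{298}) realizes the transition $y^n \mapsto y^{n+1}$ as two successive fractional steps, each being a scheme with weights of exactly the form (\ref{259}) but with the one-dimensional operator $A_\alpha$ in place of the full operator $A$. Writing each fractional step as
\[
  (E + \sigma\tau A_\alpha) y^{n+\alpha/2} = (E - (1-\sigma)\tau A_\alpha) y^{n+(\alpha-1)/2} + \tau\varphi_\alpha^n,
\]
I recognize the transition operator $S_\alpha(\tau)$ of (\ref{295}). The key point is that each $A_\alpha$ built from the exponential coefficients (\ref{288}), (\ref{289}) (and, in the divergent case, (\ref{290}), (\ref{291})) inherits, direction by direction, precisely the matrix structure established in the proof of Theorem~\ref{t-27}: non-negative diagonal entries, non-positive off-diagonal entries, and weak diagonal dominance by rows (for the nondivergent form) or by columns (for the divergent form).

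Second I would invoke the one-dimensional results directly. For each $\alpha$, the structure just described places the fractional step within the scope of Theorems~\ref{t-23} and~\ref{t-24}: the matrix $E + \sigma\tau A_\alpha$ is an M-matrix with row (respectively column) sums at least one, so its inverse is non-negative with norm at most one, while $E - (1-\sigma)\tau A_\alpha$ has non-negative entries under the stated time-step restriction. Consequently each one-directional transition operator is positivity-preserving and satisfies $\|S_\alpha\| \le 1$ in $L_\infty$ (respectively $L_1$), which is exactly condition (\ref{297}). For $\sigma = 1$ these properties hold for every $\tau > 0$; for $0 \le \sigma < 1$ they require the per-direction bound $\tau \le (1-\sigma)^{-1}(\max_i a_{ii})^{-1}$, and taking the maximum of the two diagonal bounds produces the combined constant $\gamma$ displayed in the theorem.

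Third I would assemble the full-step estimate. Because $S = S_1 S_2$ by (\ref{296}), the bound (\ref{294}) follows from $\|S\| \le \|S_1\|\,\|S_2\| \le 1$, and positivity preservation is closed under composition, so the full scheme is monotone. Tracking the right-hand side, which by construction enters only at the last fractional step ($\varphi_1^n = 0$, $\varphi_2^n = \varphi^n$), gives the level-wise inequality $\|y^{n+1}\| \le \|y^n\| + \tau\|\varphi^n\|$, the coefficient of $\varphi^n$ being controlled by $\|(E+\sigma\tau A_2)^{-1}\| \le 1$; summation over time levels then yields (\ref{262}). The main obstacle I anticipate is not any individual estimate but the bookkeeping at this last stage: one must confirm that the inhomogeneous term, which is multiplied by only one of the two resolvent factors, still contributes with coefficient at most $\tau$, so that the clean additive form (\ref{262}) is recovered rather than an estimate degraded by the splitting.
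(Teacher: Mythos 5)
Your proposal is correct and follows essentially the same route as the paper: the paper's proof likewise verifies stability and monotonicity for each fractional step of (\ref{298}) separately (the first step giving $\| y^{n+1/2}\| \le \|y^n\|$, the second $\|y^{n+1}\| \le \|y^{n+1/2}\| + \tau\|\varphi^n\|$, each under its own per-direction bound on $\gamma$), and then chains the two inequalities, with monotonicity following from the M-matrix structure of each one-dimensional factor exactly as you argue via Theorems~\ref{t-23} and~\ref{t-24}. Your additional bookkeeping about the right-hand side entering only through the second factor, and the combined $\gamma$ being the maximum of the two per-direction diagonal bounds, matches the paper's treatment.
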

\begin{proof}
Conditions for stability and monotonicity are verified for each individual equation
(\ref{298}). In particular, for the first equation, we have
\[
  \| y^{n+1/2} \| \leq \| y^{n} \|  
\] 
for
\[
\begin{split}
  \gamma= \max_{{\bm x} \in \omega} \Big \{
  & \frac{1}{h_1^2} k (x_1+0.5 h_1,x_2) \exp ( \theta ({\bm x},t) h_1) \\
  & + \frac{1}{h_1^2} k (x_1-0.5 h_1,x_2) \exp ( - \theta ({\bm x},t) h_1) \Big \}  .
\end{split}
\]  
For the second equation, we get
\[
  \| y^{n+1} \| \leq \| y^{n+1/2} \| + \tau \| \varphi^n\| 
\] 
for
\[
\begin{split}
  \gamma= \max_{{\bm x} \in \omega} \Big \{
  & \frac{1}{h_2^2} k (x_1,x_2+0.5 h_2) \exp ( \theta ({\bm x},t) h_2) \\
  & + \frac{1}{h_2^2} k (x_1,x_2-0.5 h_2) \exp ( - \theta ({\bm x},t) h_2)  \Big \}  .
\end{split}
\]
Monotonicity of locally one-dimensional schemes under consideration is established in a similar way.
\end{proof}

Another classes of splitting schemes can be applied, too.
In this regard, we highlight the class of additively averaged schemes.

Instead of a multiplicative representation of the transition operator (\ref{296}),
we can employ the additive representation
\begin{equation}\label{299}
  S = \frac{1}{2}  (S_1 (2\tau) + S_2 (2\tau)) 
\end{equation}
with preserving the first-order approximation in time for the scheme (\ref{292}).

For the scheme (\ref{292}), (\ref{295}), (\ref{299}), we present another variant of numerical implementation.
Define the auxiliary functions $y_\alpha^{n+1}, \ \alpha=1,2$ from
\begin{equation}\label{300}
  \frac{y_\alpha^{n+1} - y_\alpha^{n}}{2 \tau } 
  + A_\alpha (\sigma   y_\alpha^{n+1} + (1 - \sigma) y_\alpha^{n} ) \\
  = 0 .
\end{equation}
For the approximate solution at a new time level, we put
\begin{equation}\label{301}
  y^{n+1} = \frac{1}{2} (y_1^{n+1} + y_2^{n+1}) + \tau  \varphi^n .
\end{equation} 

Conditions of stability and monotonicity
for this additively averaged locally one-dimensional scheme
are formulated in the following theorem.

\begin{theorem}\label{t-30}
If on the set of grid functions (\ref{287})
the operators $A_\alpha, \ \alpha = 1,2$ are defined according to (\ref{288}), (\ref{289})
(or (\ref{290}), (\ref{291})), 
then the additively averaged locally one-dimensional difference scheme (\ref{260}),  (\ref{300}), (\ref{301}),
is monotone, and the difference solution
satisfies the a priori estimate (\ref{262}) in $L_\infty$ (or in $L_1$)
for any $\tau >0$ if $\sigma =1$, and if $0 \le \sigma < 1$,
this is true under the constraints on a time step (\ref{273})
with
\[
\begin{split}
  \gamma= 2 \max_{{\bm x} \in \omega} \Big \{
  & \frac{1}{h_1^2} k (x_1+0.5 h_1,x_2) \exp ( \theta ({\bm x},t) h_1) \\
  & + \frac{1}{h_1^2} k (x_1-0.5 h_1,x_2) \exp ( - \theta ({\bm x},t) h_1) , \\
  & \frac{1}{h_2^2} k (x_1,x_2+0.5 h_2) \exp ( \theta ({\bm x},t) h_2) \\
  & + \frac{1}{h_2^2} k (x_1,x_2-0.5 h_2) \exp ( - \theta ({\bm x},t) h_2)  \Big \}  .
\end{split}
\] 
\end{theorem}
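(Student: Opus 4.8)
The plan is to reduce the analysis of the additively averaged scheme to the one-dimensional results already obtained for the exponential operators $A_\alpha$, in the same spirit as the proof of Theorem~\ref{t-29}; the only genuinely new ingredient is that each partial solve now advances with the doubled step $2\tau$, and this is exactly what accounts for the factor $2$ in the stated $\gamma$.

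First I would recast the scheme (\ref{260}), (\ref{300}), (\ref{301}) in transition-operator form. Reading (\ref{300}) with the common previous-level value $y^n$ as the starting point for both partial solves, the auxiliary functions are $y_\alpha^{n+1} = S_\alpha(2\tau) y^n$, where $S_\alpha$ is the operator defined in (\ref{295}); substituting into (\ref{301}) and comparing with (\ref{299}) gives $y^{n+1} = S y^n + \tau \varphi^n$ with $S = \tfrac12 (S_1(2\tau) + S_2(2\tau))$. Thus the levelwise behaviour is governed entirely by the two one-dimensional transition operators evaluated at the step $2\tau$.

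Next I would invoke the one-dimensional theory for each direction separately. Each $A_\alpha$ built from (\ref{288}), (\ref{289}) (or (\ref{290}), (\ref{291})) has positive diagonal, nonpositive off-diagonal entries and the requisite diagonal dominance, exactly as checked for the scalar exponential operators in the proof of Theorem~\ref{t-27}. Applying the stability argument behind Theorems~\ref{t-23} and~\ref{t-24} to the scalar step $2\tau$ then yields both $\|S_\alpha(2\tau)\| \le 1$ in $L_\infty$ (respectively $L_1$) and the fact that $S_\alpha(2\tau)$ is a nonnegative matrix: indeed $(E + 2\sigma\tau A_\alpha)^{-1}$ is nonnegative as the inverse of an M-matrix, while $E + 2(\sigma-1)\tau A_\alpha$ has nonnegative entries precisely when $2\tau \le ((1-\sigma)\max\{\cdots\})^{-1}$. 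Rewriting this scalar constraint in terms of the doubled step is what produces the extra factor $2$, so that the single admissibility condition (\ref{273}) with the $\gamma$ of the theorem covers both directions simultaneously, and the case $\sigma = 1$ is unconditional.

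Finally I would combine the two directions by averaging, the step where the additive scheme is actually simpler than the multiplicative one of Theorem~\ref{t-29}. Since $S$ is a convex combination of nonnegative matrices it is itself nonnegative, so $y^n \ge 0$ and $\varphi^n \ge 0$ force $y^{n+1} \ge 0$, which is monotonicity; and by the triangle inequality $\|S\| \le \tfrac12(\|S_1(2\tau)\| + \|S_2(2\tau)\|) \le 1$ in the chosen norm, whence $\|y^{n+1}\| \le \|y^n\| + \tau\|\varphi^n\|$. The discrete Gronwall lemma (Lemma~\ref{l-5}) then delivers the a priori estimate (\ref{262}). The one point demanding care is the bookkeeping of the factor $2$: both the norm contraction and the M-matrix property must be verified for the step $2\tau$ rather than $\tau$, and one must confirm that passing from the two individual bounds to their average degrades neither property in the uniform nor in the integral norm.
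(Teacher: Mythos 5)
Your proof is correct and follows essentially the route the paper intends: the paper states Theorem~\ref{t-30} without writing out a separate proof, relying on the transition-operator framework (\ref{292})--(\ref{299}) and the verification pattern of Theorem~\ref{t-29}, which is exactly what you execute. In particular, your reduction to the one-dimensional exponential operators with the doubled step $2\tau$ (the source of the factor $2$ in $\gamma$), the nonnegativity of each $S_\alpha(2\tau)$ as an M-matrix inverse times an entrywise nonnegative matrix, and the observation that both the norm bound and entrywise nonnegativity survive the convex combination $S = \frac{1}{2}(S_1(2\tau)+S_2(2\tau))$ match the paper's intended argument.
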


Additively average schemes, on the one hand, demonstrate lower accuracy in comparison
with schemes of componentwise splitting, but on the other hand, 
they are more promising in terms of parallel Computing --- 
the components $y_\alpha^{n+1}, \ \alpha=1,2$ are determined (see (\ref{300})) independently of each other.

\end{document}